\renewcommand*{\backrefalt}[4]{%
    \ifcase #1 \footnotesize{(Not cited.)}%
    \or        \footnotesize{(Cited on page~#2)}%
    \else      \footnotesize{(Cited on pages~#2)}%
    \fi}
\theoremstyle{plain}
\newtheorem{theorem}{Theorem}[section]
\newtheorem{lemma}[theorem]{Lemma}
\theoremstyle{definition}
\newtheorem{assumption}[theorem]{Assumption}
\theoremstyle{remark}
\icmltitlerunning{High-Probability Bounds for Stochastic Optimization and Variational Inequalities: the Case of Unbounded Variance}
\definecolor{bgcolor}{rgb}{0.8,1,1}
\definecolor{bgcolor2}{rgb}{0.8,1,0.8}
\definecolor{niceblue}{rgb}{0.0,0.19,0.56}
\definecolor{PineGreen}{RGB}{0,110,51}
\definecolor{BrickRed}{RGB}{143,20,2}
\newcommand{\cmark}{{\color{PineGreen}\ding{51}}}%
\newcommand{\xmark}{{\color{BrickRed}\ding{55}}}%
\newcommand{\R}{\mathbb{R}}
\newcommand{\eqdef}{\stackrel{\text{def}}{=}}
\def\<#1,#2>{\left\langle #1,#2\right\rangle}
\newcolumntype{Y}{>{\centering\arraybackslash}X}
\newcommand{\algname}[1]{{\sf  #1}\xspace}
\newcommand{\circledOne}{\text{\ding{172}}}
\newcommand{\circledTwo}{\text{\ding{173}}}
\newcommand{\circledThree}{\text{\ding{174}}}
\newcommand{\circledFour}{\text{\ding{175}}}
\newcommand{\circledFive}{\text{\ding{176}}}
\newcommand{\circledSix}{\text{\ding{177}}}
\newcommand{\circledSeven}{\text{\ding{178}}}
\newcommand{\Id}{\mathrm{Id}}
\newcommand{\cD}{{\cal D}}
\newcommand{\cO}{{\cal O}}
\newcommand{\cP}{{\cal P}}
\newcommand{\cX}{{\cal X}}
\newcommand{\Var}{\mathrm{Var}}
\newcommand{\EE}{\mathbb{E}}
\newcommand{\PP}{\mathbb{P}}
\newcommand{\tx}{\widetilde{x}}
\newcommand{\tX}{\widetilde{X}}
\newcommand{\tF}{\widetilde{F}}
\newcommand{\tnabla}{\widetilde{\nabla}}
\def\clip{\texttt{clip}}
\def\avg{\texttt{avg}}
\def\gap{\texttt{Gap}}
\newlength{\dhatheight}
\def\la{\langle}
\def\ra{\rangle}
\begin{document}

\twocolumn[
\icmltitle{High-Probability Bounds for Stochastic Optimization and\\ Variational Inequalities: the Case of Unbounded Variance}

% It is OKAY to include author information, even for blind
% submissions: the style file will automatically remove it for you
% unless you've provided the [accepted] option to the icml2022
% package.

% List of affiliations: The first argument should be a (short)
% identifier you will use later to specify author affiliations
% Academic affiliations should list Department, University, City, Region, Country
% Industry affiliations should list Company, City, Region, Country

% You can specify symbols, otherwise they are numbered in order.
% Ideally, you should not use this facility. Affiliations will be numbered
% in order of appearance and this is the preferred way.
% \icmlsetsymbol{equal}{*}

\begin{icmlauthorlist}
\icmlauthor{Abdurakhmon Sadiev}{kaust}
\icmlauthor{Marina Danilova}{mipt}
\icmlauthor{Eduard Gorbunov}{mbzuai}
\icmlauthor{Samuel Horv\'ath}{mbzuai}
\icmlauthor{Gauthier Gidel}{udem,cifar}
\icmlauthor{Pavel Dvurechensky}{wias}
\icmlauthor{Alexander Gasnikov}{mipt,skoltech,iitp}
\icmlauthor{Peter Richt\'arik}{kaust}
\end{icmlauthorlist}

\icmlaffiliation{kaust}{King Abdullah University of Science and Technology, KSA}
\icmlaffiliation{mipt}{Moscow Institute of Physics and Technology, Russia}
\icmlaffiliation{mbzuai}{Mohamed bin Zayed University of Artificial Intelligence, UAE}
\icmlaffiliation{udem}{Universit\'e de Montr\'eal and Mila, Canada}
\icmlaffiliation{cifar}{Canada CIFAR AI Chair}
\icmlaffiliation{wias}{Weierstrass Institute for Applied Analysis and Stochastics, Germany}
\icmlaffiliation{iitp}{Institute for Information Transmission Problems RAS, Russia}
\icmlaffiliation{skoltech}{Skolkovo Institute of Science and Technology, Russia}

\icmlcorrespondingauthor{Eduard Gorbunov}{eduard.gorbunov@mbzuai.ac.ae}

% You may provide any keywords that you
% find helpful for describing your paper; these are used to populate
% the "keywords" metadata in the PDF but will not be shown in the document
\icmlkeywords{High-probability convergence, stochastic optimization, gradient clipping, unbounded variance}

\vskip 0.3in
]

% this must go after the closing bracket ] following \twocolumn[ ...

% This command actually creates the footnote in the first column
% listing the affiliations and the copyright notice.
% The command takes one argument, which is text to display at the start of the footnote.
% The \icmlEqualContribution command is standard text for equal contribution.
% Remove it (just {}) if you do not need this facility.

\printAffiliationsAndNotice{}  % leave blank if no need to mention equal contribution
% \printAffiliationsAndNotice{\icmlEqualContribution} % otherwise use the standard text.

\begin{abstract}
During recent years the interest of optimization and machine learning communities in high-probability convergence of stochastic optimization methods has been growing. One of the main reasons for this is that high-probability complexity bounds are more accurate and less studied than in-expectation ones. However, SOTA high-probability non-asymptotic convergence results are derived under strong assumptions such as the boundedness of the gradient noise variance or of the objective's gradient itself. In this paper, we propose several algorithms with high-probability convergence results under less restrictive assumptions. In particular, we derive new high-probability convergence results under the assumption that the gradient/operator noise has bounded central $\alpha$-th moment for $\alpha \in (1,2]$ in the following setups: (i) smooth non-convex / Polyak-{\L}ojasiewicz / convex / strongly convex / quasi-strongly convex  minimization problems, (ii) Lipschitz / star-cocoercive and monotone / quasi-strongly monotone variational inequalities. These results justify the usage of the considered methods for solving problems that do not fit standard functional classes studied in stochastic optimization.
\end{abstract}

\section{Introduction}\label{sec:intro}

Training of machine learning models is usually performed via stochastic first-order optimization methods, e.g., Stochastic Gradient Descent (\algname{SGD}) \citep{robbins1951stochastic}
\begin{equation}
    x^{k+1} = x^k - \gamma \nabla f_{\xi^k}(x^k), \label{eq:SGD_update}
\end{equation}
where $\nabla f_{\xi^k}(x^k)$ represents the stochastic gradient of the objective/loss function $f$ at point $x^k$. Despite numerous empirical studies and observations validating the good performance of such methods, it is also important for the field to understand their theoretical convergence properties, e.g., under what assumptions a method converges and what the rate is. However, since the methods of interest are stochastic, one needs to specify what type of convergence is considered before moving on to further questions.

Typically, the convergence of the stochastic methods is studied only in expectation, i.e., for some performance metric\footnote{Examples of performance metrics for minimization of function $f$: $\cP(x) = f(x) - f(x^*)$, $\cP(x) = \|\nabla f(x)\|^2$, $\cP(x) = \|x - x^*\|^2$, where $x^* \in \arg\min_{x\in\R^d} f(x)$.} $\cP(x)$, upper bounds are derived for the number of iterations $K$ needed to achieve $\EE[\cP(x^K)] \leq \varepsilon$, where $x^K$ is the output of the method after $K$ steps, $\varepsilon$ is an optimization error, and $\EE[\cdot]$ is the full expectation. These bounds can be ``blind'' to some important properties like light-/heavy-tailedness of the noise distribution and, as a result, such guarantees do not accurately describe the methods' convergence in practice \citep{gorbunov2020stochastic}. In contrast, high-probability convergence guarantees are more sensitive to the noise distribution and thus are more accurate. Such results provide upper bounds for the number of iterations $K$ needed to achieve $\PP\{\cP(x^K) \leq \varepsilon\} \geq 1 - \beta$ for some confidence level $\beta \in (0,1]$, where $\PP\{\cdot\}$ denotes some probability measure determined by a setup.

% Recent works on high-probability convergence \citep{nazin2019algorithms, davis2021low, gorbunov2020stochastic, gorbunov2021near, gorbunov2022clipped, cutkosky2021high} focus mostly on the relaxing the assumptions under which these guarantees are derived. This is a very important direction with the ultimate goal of bridging the theory and practice. \emph{Our paper contributes to this line of works in two main aspects: we derive new high-probability results allowing the variance of the noise and the gradient of the objective to be unbounded.} 
With the ultimate goal of bridging the theory and practice of stochastic methods, recent works on high-probability convergence guarantees \citep{nazin2019algorithms, davis2021low, gorbunov2020stochastic, gorbunov2021near, gorbunov2022clipped, cutkosky2021high} focus on an important direction of the relaxing the assumptions under which these guarantees are derived. \emph{Our paper further extensively complements this line of works in two main aspects: for a plethora of settings, we derive new high-probability results allowing the variance of the noise and the gradient of the objective to be unbounded.}

\subsection{Technical Preliminaries}
Before we move on to the main part of the paper, we introduce the problems considered in the work and all necessary preliminaries. In particular, we consider stochastic unconstrained optimization problems 
\begin{equation}
    \min\limits_{x\in\R^d} \left\{f(x) = \EE_{\xi\sim\cD}\left[f_{\xi}(x)\right]\right\}, \label{eq:min_problem}
\end{equation}
where $\xi$ is a random variable with distribution $\cD$. Such problems often arise in machine learning, where $f_{\xi}(x)$ represents the loss function on the data sample $\xi$ \citep{shalev2014understanding}.

Another class of problems that we consider this work is unconstrained variational inequality problems (VIP), i.e., non-linear equations \citep{harker1990finite, ryu2021large}:
\begin{equation}
    \text{find } x^*\in \R^d \text{ such that } F(x^*) = 0, \label{eq:VIP}
\end{equation}
where $F(x) = \EE_{\xi\sim \cD}[F_{\xi}(x)]$. These problems arise in adversarial/game formulations of machine learning tasks \citep{goodfellow2014generative, gidel2019variational}.

\textbf{Notation.} We use standard notation: $\|x\| = \sqrt{\langle x, x\rangle}$ denotes the standard Euclidean norm in $\R^d$, $\EE_{\xi}[\cdot]$ denotes an expectation w.r.t.\ the randomness coming from random variable $\xi$, $B_{R}(x) = \{y \in \R^d\mid \|y-x\| \leq R\}$ is a ball with center at $x$ and radius $R$. We define restricted gap-function as $\gap_{R}(x) = \max_{y \in B_{R}(x^*)}\langle F(y), x - y \rangle$ -- a standard convergence criterion for monotone VIP \citep{nesterov2007dual}. Finally, $\cO(\cdot)$ hides numerical factors and $\widetilde\cO(\cdot)$ hides poly-logarithmic and numerical factors.

\textbf{Assumptions on a subset.} Although we consider unconstrained problems, our analysis does not require any assumptions to hold on the whole space. For our purposes, it is sufficient to introduce all assumptions only on some subset of $\R^d$, since we prove that the considered methods do not leave some ball around the solution or some level-set of the objective function with high probability. This allows us to consider quite large classes of problems.

\textbf{Stochastic oracle.} We assume that at given point $x$ we have an access to the unbiased stochastic oracle returning $\nabla f_{\xi}(x)$ or $F_{\xi}(x)$ that satisfy the following conditions.
\begin{assumption}\label{as:bounded_alpha_moment}
    We assume that there exist some set $Q \subseteq \R^d$ and values $\sigma \geq 0$, $\alpha \in (1,2]$ such that for all $x \in Q$
    \begin{itemize}
        \item[(i)] for problem \eqref{eq:min_problem} $\EE_{\xi\sim \cD}[\nabla f_{\xi}(x)] = \nabla f(x)$ and
    \begin{equation}
        \EE_{\xi\sim \cD}[\left\|\nabla f_{\xi}(x) - \nabla f(x)\right\|^\alpha] \leq \sigma^\alpha, \label{eq:bounded_alpha_moment_gradient}
    \end{equation}
        \item[(ii)] for problem \eqref{eq:VIP} $\EE_{\xi\sim \cD}[F_{\xi}(x)] = F(x)$ and
    \begin{equation}
        \EE_{\xi\sim \cD}[\left\|F_{\xi}(x) - F(x)\right\|^\alpha] \leq \sigma^\alpha. \label{eq:bounded_alpha_moment_operator}
    \end{equation}
    \end{itemize}
\end{assumption}
When $\alpha = 2$, the above assumption recovers the standard uniformly bounded variance assumption \citep{nemirovski2009robust, ghadimi2012optimal, ghadimi2013stochastic}. However, Assumption~\ref{as:bounded_alpha_moment} allows the variance of the estimator to be \emph{unbounded} when $\alpha \in (1,2)$, i.e., the noise can follow some heavy-tailed distribution. For example, the distribution of the gradient noise in the training of large attention models resembles L\'evy $\alpha$-stable distribution with $\alpha < 2$ \citep{zhang2020adaptive}. There exist also other versions of Assumption~\ref{as:bounded_alpha_moment}, see \citep{patel2022global}.

\textbf{Assumptions on $f$.} We start with a very mild assumption since without it, problem~\eqref{eq:min_problem} does not make sense.
\begin{assumption}\label{as:lower_boundedness}
    We assume that there exist some set $Q \subseteq \R^d$ such that $f$ is uniformly lower-bounded on $Q$: $f_* = \inf_{x \in Q}f(x) > -\infty$.
\end{assumption}

Moreover, when working with minimization problems \eqref{eq:min_problem}, we always assume smoothness of $f$.
\begin{assumption}\label{as:L_smoothness}
    We assume that there exist some set $Q \subseteq \R^d$ and constant $L > 0$ such that for all $x, y \in Q$
    \begin{eqnarray}
        \|\nabla f(x) - \nabla f(y)\| &\leq& L\|x - y\|, \label{eq:L_smoothness}\\
        \|\nabla f(x)\|^2 &\leq& 2L\left(f(x) - f_*\right), \label{eq:L_smoothness_cor_2}
    \end{eqnarray}
    where $f_* = \inf_{x \in Q}f(x) > -\infty$.
\end{assumption}
We notice here that \eqref{eq:L_smoothness_cor_2} follows from \eqref{eq:L_smoothness} for $Q = \R^d$, but in the general case, the implication is slightly more involved (see the details in Appendix~\ref{appendix:useful_facts}). When $Q$ is a compact set, the function $f$ is allowed to be non-$L$-smooth on the whole $\R^d$, which is related to local-Lipschitzness of the gradients \citep{patel2022global, patel2022gradient}.

In each particular special case, we also make \emph{one of the following assumptions} about the structured non-convexity of the objective function. The previous two assumptions hold for a very broad class of functions. The next assumption -- Polyak-{\L}ojasiewicz condition \citep{polyak1963gradient, lojasiewicz1963topological} -- narrows the class of non-convex functions.
\begin{assumption}\label{as:PL}
    We assume that there exist some set $Q \subseteq \R^d$ and constant $\mu > 0$ such that $f$ satisfies Polyak-{\L}ojasiewicz (P{\L}) condition/inequality on $Q$, i.e., for all $x \in Q$ and $x^* = \arg\min_{x\in \R^d} f(x)$
    \begin{equation}
        \|\nabla f(x)\|^2 \geq 2\mu\left(f(x) - f(x^*)\right). \label{eq:PL}
    \end{equation}
\end{assumption}
When function $f$ is $\mu$-strongly convex, it satisfies P{\L} condition. However, {P{\L}} inequality can hold even for non-convex functions. Some analogs of this assumption have been observed for over-parameterized models \citep{liu2022loss}.

We also consider another relaxation of convexity.
\begin{assumption}\label{as:QSC}
    We assume that there exist some set $Q \subseteq \R^d$ and constant $\mu \geq 0$ such that $f$ is $\mu$-quasi-strongly convex, i.e., for all $x \in Q$ and $x^* = \arg\min_{x\in \R^d} f(x)$
    \begin{equation}
        f(x^*) \geq f(x) + \langle \nabla f(x), x^* - x \rangle + \frac{\mu}{2}\|x - x^*\|^2. \label{eq:QSC}
    \end{equation}
    % When $\mu = 0$ function $f$ is called star-convex.
\end{assumption}
As P{\L} condition, this assumption holds for any $\mu$-strongly convex function but does not imply convexity. Nevertheless, for the above two assumptions, some standard deterministic methods such as Gradient Descent (\algname{GD}) converge linearly; see more details and examples in \citep{necoara2019linear}.

In the analysis of the accelerated method, we also need standard (strong) convexity.
\begin{assumption}\label{as:str_cvx}
    We assume that there exist some set $Q \subseteq \R^d$ and constant $\mu \geq 0$ such that $f$ is $\mu$-strongly convex, i.e., for all $x, y \in Q$
    \begin{equation}
        f(y) \geq f(x) + \langle \nabla f(x), y - x \rangle + \frac{\mu}{2}\|y - x\|^2. \label{eq:str_cvx}
    \end{equation}
    When $\mu = 0$ function $f$ is called convex.
\end{assumption}

\textbf{Assumptions on $F$.} In the context of solving \eqref{eq:VIP}, we assume Lipschitzness of $F$ -- a standard assumption for VIP.

\begin{assumption}\label{as:L_Lip}
    We assume that there exist some set $Q \subseteq \R^d$ and constant $L > 0$ such that for all $x, y \in Q$
    \begin{eqnarray}
        \|F(x) - F(y)\| \leq L\|x - y\|, \label{eq:L_Lip}
    \end{eqnarray}
\end{assumption}

Similarly to the case of minimization problems, we make \emph{one or two of the following assumptions} about the structured non-monotonicity of the operator $F$. The first assumption we consider is the standard monotonicity.

\begin{assumption}\label{as:monotonicity}
    We assume that there exist some set $Q \subseteq \R^d$ such that $F$ is monotone on $Q$, i.e., for all $x, y \in Q$
    \begin{equation}
        \langle F(x) - F(y), x - y \rangle \geq  0.\label{eq:monotonicity}
    \end{equation}
\end{assumption}
Monotonicity can be seen as an analog of convexity for VIP. When \eqref{eq:monotonicity} holds with $\mu\|x-y\|^2$ in the r.h.s. instead of just 0, operator $F$ is called $\mu$-strongly monotone.

Next, we consider quasi-strong monotonicity \citep{mertikopoulos2019learning, song2020optimistic, loizou2021stochastic} -- a relaxation of strong monotonicity. There exist examples of non-monotone problems such that the assumption below holds \citep[Appendix A.6]{loizou2021stochastic}.
\begin{assumption}\label{as:QSM}
    We assume that there exist some set $Q \subseteq \R^d$ and constant $\mu > 0$ such that $F$ is $\mu$-quasi strongly monotone on $Q$, i.e., for all $x \in Q$ and $x^*$ such that $F(x^*) = 0$ we have
    \begin{equation}
        \langle F(x), x - x^* \rangle \geq  \mu \|x - x^*\|^2.\label{eq:QSM}
    \end{equation}
\end{assumption}

Another structured non-monotonicity assumption that we consider in this paper is star-cocoercivity.
\begin{assumption}\label{as:star-cocoercivity}
    We assume that there exist some set $Q \subseteq \R^d$ and constant $\ell > 0$ such that $F$ is star-cocoercive on $Q$, i.e., for all $x \in Q$ and $x^*$ such that $F(x^*) = 0$
    \begin{equation}
        \|F(x)\|^2 \leq \ell\langle F(x), x - x^* \rangle.\label{eq:star-cocoercivity}
    \end{equation}
\end{assumption}
This assumption can be seen as a relaxation of the standard cocoercivity: $\|F(x) - F(y)\|^2 \leq \ell \langle F(x) - F(y), x - y \rangle$. However, unlike cocoercivity, star-cocoercivity implies neither monotonicity nor Lipschitzness of operator $F$ \citep[Appendix A.6]{loizou2021stochastic}.

\subsection{Closely Related Works and Our Contributions}

\begin{table*}[t]
    \centering
    \scriptsize
    \caption{\scriptsize Summary of known and new high-probability complexity results for solving smooth problem \eqref{eq:min_problem}. Column ``Setup'' indicates the assumptions made in addition to Assumptions~\ref{as:bounded_alpha_moment} and \ref{as:L_smoothness}. All assumptions are made only on some ball around the solution with radius $\sim R \geq \|x^0 - x^*\|$ (unless the opposite is indicated). By the complexity we mean the number of stochastic oracle calls needed for a method to guarantee that $\PP\{\text{Metric} \leq \varepsilon\} \geq 1 - \beta$ for some $\varepsilon> 0$, $\beta \in (0,1]$ and ``Metric'' is taken from the corresponding column. For simplicity, we omit numerical and logarithmic factors in the complexity bounds. Column ``$\alpha$'' shows the allowed values of $\alpha$, ``UD?'' shows whether the analysis works on unbounded domains, and ``UG?'' indicates whether the analysis works without assuming boundedness of the gradient. Notation: $L$ = Lipschitz constant; $D$ = diameter of the domain (for the result from \citep{nazin2019algorithms}); $\sigma$ = parameter from Assumption~\ref{as:bounded_alpha_moment}; $R$ = any upper bound on $\|x^0 - x^*\|$; $\mu$ = (quasi-)strong convexity/Polyak-{\L}ojasiewicz parameter; $\Delta$ = any upper bound on $f(x^0) - f_*$; $G$ = parameter such that $\EE_{\xi\sim\cD}\|\nabla f_\xi(x)\|^\alpha \leq G^\alpha$ (for the result from \citep{cutkosky2021high}). The results of this paper are highlighted in blue.
    }
    \label{tab:comparison_of_rates_minimization}
    % \vspace{-2.5mm}
    \begin{threeparttable}
        \begin{tabular}{|c|c c c c c c c|}
        \hline
        Setup & Method & Citation & Metric & Complexity & $\alpha$ & UD? & UG?\\
        \hline\hline
        \multirow{8}{*}{\makecell{As.~\ref{as:str_cvx}\\ ($\mu = 0$)}} & \algname{RSMD} & \citep{nazin2019algorithms}\tnote{\color{blue}(1)} & $f(\overline{x}^K) - f(x^*)$ & $\max\left\{\frac{LD^2}{\varepsilon}, \frac{\sigma^2 D^2}{\varepsilon^2}\right\}$ & {\color{BrickRed}$2$} & \xmark & \cmark\\
        & \algname{clipped-SGD} & \makecell{\citep{gorbunov2020stochastic}\\\citep{gorbunov2021near}} & $f(\overline{x}^K) - f(x^*)$ & $\max\left\{\frac{LR^2}{\varepsilon}, \frac{\sigma^2 R^2}{\varepsilon^2}\right\}$ & {\color{BrickRed}$2$} & \cmark & \cmark \\
        & \algname{clipped-SSTM} & \makecell{\citep{gorbunov2020stochastic}\\\citep{gorbunov2021near}} & $f(y^K) - f(x^*)$ & $\max\left\{\sqrt{\frac{LR^2}{\varepsilon}}, \frac{\sigma^2 R^2}{\varepsilon^2}\right\}$ & {\color{BrickRed}$2$} & \cmark & \cmark \\
        & \cellcolor{bgcolor}\algname{clipped-SGD} & \cellcolor{bgcolor}Theorems~\ref{thm:clipped_SGD_main_theorem} \& \ref{thm:clipped_SGD_convex_case_appendix} & \cellcolor{bgcolor}$f(\overline{x}^K) - f(x^*)$ &\cellcolor{bgcolor} $\max\left\{\frac{LR^2}{\varepsilon}, \left(\frac{\sigma R}{\varepsilon}\right)^{\frac{\alpha}{\alpha-1}}\right\}$ &\cellcolor{bgcolor} {\color{PineGreen}$(1,2]$} &\cellcolor{bgcolor} \cmark &\cellcolor{bgcolor} \cmark \\
        & \cellcolor{bgcolor} \algname{clipped-SSTM} &\cellcolor{bgcolor}Theorems~\ref{thm:clipped_SSTM_main_theorem} \& \ref{thm:clipped_SSTM_convex_case_appendix} &\cellcolor{bgcolor} $f(y^K) - f(x^*)$ &\cellcolor{bgcolor} $\max\left\{\sqrt{\frac{LR^2}{\varepsilon}}, \left(\frac{\sigma R}{\varepsilon}\right)^{\frac{\alpha}{\alpha-1}}\right\}$ & \cellcolor{bgcolor}{\color{PineGreen}$(1,2]$} &\cellcolor{bgcolor} \cmark &\cellcolor{bgcolor} \cmark \\
        \hline
        \multirow{8}{*}{\makecell{As.~\ref{as:str_cvx}\\ ($\mu > 0$)}} & \algname{restarted-RSMD} & \citep{nazin2019algorithms}\tnote{\color{blue}(1)} & $f(\overline{x}^K) - f(x^*)$ & $\max\left\{\frac{L}{\mu}, \frac{\sigma^2}{\mu\varepsilon}\right\}$ & {\color{BrickRed}$2$} & \xmark & \cmark\\
        & \algname{proxBoost} & \citep{davis2021low}\tnote{\color{blue}(1)} & $f(\overline{x}^K) - f(x^*)$ & $\max\left\{\sqrt{\frac{L}{\mu}}, \frac{\sigma^2}{\mu\varepsilon}\right\}$\tnote{\color{blue}(2)} & {\color{BrickRed}$2$} & \cmark & \cmark\\
        & \algname{R-clipped-SGD} & \makecell{\citep{gorbunov2020stochastic}\\\citep{gorbunov2021near}} & $f(\overline{x}^K) - f(x^*)$ & $\max\left\{\frac{L}{\mu}, \frac{\sigma^2}{\mu\varepsilon}\right\}$ & {\color{BrickRed}$2$} & \cmark & \cmark \\
        & \algname{R-clipped-SSTM} & \makecell{\citep{gorbunov2020stochastic}\\\citep{gorbunov2021near}} & $f(y^K) - f(x^*)$ & $\max\left\{\sqrt{\frac{L}{\mu}}, \frac{\sigma^2}{\mu\varepsilon}\right\}$ & {\color{BrickRed}$2$} & \cmark & \cmark \\
        & \cellcolor{bgcolor}\algname{R-clipped-SSTM} & \cellcolor{bgcolor}Theorems~\ref{thm:clipped_SSTM_main_theorem} \& \ref{thm:R_clipped_SSTM_main_theorem_appendix} & \cellcolor{bgcolor}$f(y^K) - f(x^*)$ & \cellcolor{bgcolor}$\max\left\{\sqrt{\frac{L}{\mu}}, \left(\frac{\sigma^2}{\mu\varepsilon}\right)^{\frac{\alpha}{2(\alpha-1)}}\right\}$ &\cellcolor{bgcolor} {\color{PineGreen}$(1,2]$} & \cellcolor{bgcolor}\cmark &\cellcolor{bgcolor} \cmark \\
        \hline
        \makecell{As.~\ref{as:QSC}\\ ($\mu > 0$)} & \cellcolor{bgcolor}\algname{clipped-SGD} & \cellcolor{bgcolor}Theorems~\ref{thm:clipped_SGD_main_theorem} \& \ref{thm:main_result_QSC_SGD} &\cellcolor{bgcolor} $\|x^K - x^*\|^2$ &\cellcolor{bgcolor} $\max\left\{\frac{L}{\mu}, \left(\frac{\sigma^2}{\mu^2\varepsilon}\right)^{\frac{\alpha}{2(\alpha-1)}}\right\}$ &\cellcolor{bgcolor} {\color{PineGreen}$(1,2]$} &\cellcolor{bgcolor} \cmark &\cellcolor{bgcolor} \cmark \\
        \hline
        \multirow{5.5}{*}{As.~\ref{as:lower_boundedness}} & \algname{MSGD} & \citep{li2020high}\tnote{\color{blue}(1)} & $\frac{1}{K+1}\sum\limits_{k=0}^K\|\nabla f(x^k)\|^2$ & $\max\left\{\frac{L^2 \Delta^2}{\varepsilon}, \frac{\sigma^4}{\varepsilon^2}\right\}$ & {\color{BrickRed}\xmark}\tnote{\color{blue}(3)} & \cmark & \cmark\\
        & \algname{clipped-NMSGD} & \citep{cutkosky2021high}\tnote{\color{blue}(1)} & $\left(\frac{1}{K+1}\sum\limits_{k=0}^K\|\nabla f(x^k)\|\right)^2$ \tnote{\color{blue}(4)} & $\left(\frac{G^2}{\varepsilon}\right)^{\frac{3\alpha-2}{2\alpha-2}}$ & {\color{PineGreen}$(1,2]$} & \cmark & \xmark\\
        &\cellcolor{bgcolor} \algname{clipped-SGD} &\cellcolor{bgcolor} Theorems~\ref{thm:clipped_SGD_main_theorem} \& \ref{thm:clipped_SGD_non_convex_main} \tnote{\color{blue}(5)}&\cellcolor{bgcolor} $\frac{1}{K+1}\sum\limits_{k=0}^K\|\nabla f(x^k)\|^2$ &\cellcolor{bgcolor} $\max\left\{\frac{L \Delta}{\varepsilon}, \left(\frac{\sqrt{L\Delta} \sigma}{\varepsilon}\right)^{\frac{\alpha}{\alpha-1}}\right\}$ &\cellcolor{bgcolor} {\color{PineGreen}$(1,2]$} &\cellcolor{bgcolor} \cmark & \cellcolor{bgcolor}\cmark \\
        \hline
        As.~\ref{as:PL} & \cellcolor{bgcolor}\algname{clipped-SGD} & \cellcolor{bgcolor}Theorems~\ref{thm:clipped_SGD_main_theorem} \& \ref{thm:clipped_SGD_PL_main} \tnote{\color{blue}(5)}& \cellcolor{bgcolor}$f(x^K) - f(x^*)$ & \cellcolor{bgcolor}$\max\left\{\frac{L}{\mu}, \left(\frac{L\sigma^2}{\mu^2\varepsilon}\right)^{\frac{\alpha}{2(\alpha-1)}}\right\}$ &\cellcolor{bgcolor} {\color{PineGreen}$(1,2]$} &\cellcolor{bgcolor} \cmark &\cellcolor{bgcolor} \cmark \\
        \hline
    \end{tabular}
    \begin{tablenotes}
        {\scriptsize \item [{\color{blue}(1)}] All assumptions are made on the whole domain.
        \item [{\color{blue}(2)}] Complexity has extra logarithmic factor of $\ln(\nicefrac{L}{\mu})$.
        \item [{\color{blue}(3)}] \citet{li2020high} assume that the noise is sub-Gaussian: $\EE\left[\exp\left(\nicefrac{\|\nabla f_{\xi}(x) - \nabla f(x)\|^2}{\sigma^2}\right)\right] \leq \exp(1)$ for all $x$ from the domain.
        \item [{\color{blue}(4)}] We notice that $\left(\frac{1}{K+1}\sum_{k=0}^K\|\nabla f(x^k)\|\right)^2 \leq \frac{1}{K+1}\sum_{k=0}^K\|\nabla f(x^k)\|^2$ and in the worst case the left-hand side is $K+1$ times smaller than the right-hand side.
        \item [{\color{blue}(5)}] All assumptions are made on the level set $Q = \{x \in \R^d\mid \exists y\in \R^d:\; f(y) \leq f_* + 2\Delta \text{ and } \|x-y\|\leq \nicefrac{\sqrt{\Delta}}{20\sqrt{L}}\}$.
        }
    \end{tablenotes}
    \end{threeparttable}
    % \vspace{-5mm}
\end{table*}

\begin{table*}[t]
    \centering
    \scriptsize
    \caption{\scriptsize Summary of known and new high-probability complexity results for solving \eqref{eq:VIP}. Column ``Setup'' indicates the assumptions made in addition to Assumption~\ref{as:bounded_alpha_moment}. All assumptions are made only on some ball around the solution with radius $\sim R \geq \|x^0 - x^*\|$ (unless the opposite is indicated). By the complexity we mean the number of stochastic oracle calls needed for a method to guarantee that $\PP\{\text{Metric} \leq \varepsilon\} \geq 1 - \beta$ for some $\varepsilon> 0$, $\beta \in (0,1]$ and ``Metric'' is taken from the corresponding column. For simplicity, we omit numerical and logarithmic factors in the complexity bounds. Column ``$\alpha$'' shows the allowed values of $\alpha$, ``UD?'' shows whether the analysis works on unbounded domains, and ``UG?'' indicates whether the analysis works without assuming boundedness of the gradient. Notation: $\tx^K_{\text{avg}} = \frac{1}{K+1}\sum_{k=0}^K \tx^k$ (for \algname{clipped-SEG}), $x^K_{\avg} = \frac{1}{K+1}\sum_{k=0}^K x^k$ (for \algname{clipped-SGDA}); $L$ = Lipschitz constant; $D$ = diameter of the domain (used in \citep{juditsky2011solving}); $\gap_{D}(x) = \max_{y \in \cX}\langle F(y), x - y\rangle$, where $\cX$ is a bounded domain with diameter $D$ where the problem is defined (used in \citep{juditsky2011solving}); $D$ = diameter of the domain (for the result from \citep{juditsky2011solving}); $\sigma$ = parameter from Assumption~\ref{as:bounded_alpha_moment}; $R$ = any upper bound on $\|x^0 - x^*\|$; $\mu$ = quasi-strong monotonicity parameter; $\ell$ = star-cocoercivity parameter. The results of this paper are highlighted in blue.}
    \label{tab:comparison_of_rates_VIP}
    % \vspace{-2.5mm}
    \begin{threeparttable}
        \begin{tabular}{|c|c c c c c c c|}
        \hline
        Setup & Method & Citation & Metric & Complexity & $\alpha$ & UD? & UG?\\
        \hline\hline
        \multirow{4}{*}{As.~\ref{as:L_Lip} \& \ref{as:monotonicity}} & \algname{Mirror-Prox} & \citep{juditsky2011solving}\tnote{\color{blue}(1)} & $\gap_{D}(\tx_{\avg}^K)$ & $\max\left\{\frac{LD^2}{\varepsilon}, \frac{\sigma^2 D^2}{\varepsilon^2}\right\}$ & \xmark\tnote{\color{blue}(2)} & \xmark & \cmark\\
        & \algname{clipped-SEG} & \citep{gorbunov2022clipped} & $\gap_{R}(\tx_{\avg}^K)$ & $\max\left\{\frac{LR^2}{\varepsilon}, \frac{\sigma^2 R^2}{\varepsilon^2}\right\}$ & {\color{BrickRed}$2$} & \cmark & \cmark \\
        & \cellcolor{bgcolor}\algname{clipped-SEG} & \cellcolor{bgcolor}Theorems~\ref{thm:clipped_SEG_main_theorem} \& \ref{thm:main_result_gap_SEG} & \cellcolor{bgcolor}$\gap_{R}(\tx_{\avg}^K)$ &\cellcolor{bgcolor} $\max\left\{\frac{LR^2}{\varepsilon}, \left(\frac{\sigma R}{\varepsilon}\right)^{\frac{\alpha}{\alpha-1}}\right\}$ &\cellcolor{bgcolor} {\color{PineGreen}$(1,2]$} &\cellcolor{bgcolor} \cmark &\cellcolor{bgcolor} \cmark \\
        \hline
        \multirow{2.5}{*}{As.~\ref{as:L_Lip} \& \ref{as:QSM}} & \algname{clipped-SEG} & \citep{gorbunov2022clipped} & $\|x^k - x^*\|^2$ & $\max\left\{\frac{L}{\mu}, \frac{\sigma^2}{\mu^2\varepsilon}\right\}$ & {\color{BrickRed}$2$} & \cmark & \cmark\\
        & \cellcolor{bgcolor}\algname{clipped-SEG} & \cellcolor{bgcolor}Theorems~\ref{thm:clipped_SEG_main_theorem} \& \ref{thm:main_result_str_mon_SEG}& \cellcolor{bgcolor}$\|x^k - x^*\|^2$ &\cellcolor{bgcolor} $\max\left\{\frac{L}{\mu}, \left(\frac{\sigma^2}{\mu^2\varepsilon}\right)^{\frac{\alpha}{2(\alpha-1})}\right\}$ &\cellcolor{bgcolor} {\color{PineGreen}$(1,2]$} &\cellcolor{bgcolor} \cmark &\cellcolor{bgcolor} \cmark \\
        \hline\hline
        \multirow{2.5}{*}{As.~\ref{as:monotonicity} \& \ref{as:star-cocoercivity}} & \algname{clipped-SGDA} & \citep{gorbunov2022clipped} & $\gap_{R}(x_{\avg}^K)$ & $\max\left\{\frac{\ell R^2}{\varepsilon}, \frac{\sigma^2R^2}{\varepsilon^2}\right\}$ & {\color{BrickRed}$2$} & \cmark & \cmark\\
        & \cellcolor{bgcolor}\algname{clipped-SGDA} & \cellcolor{bgcolor}Theorems~\ref{thm:clipped_SGDA_main_theorem} \& \ref{thm:main_result_gap_SGDA} & \cellcolor{bgcolor}$\gap_{R}(x_{\avg}^K)$ &\cellcolor{bgcolor} $\max\left\{\frac{\ell R^2}{\varepsilon}, \left(\frac{\sigma R}{\varepsilon}\right)^{\frac{\alpha}{\alpha-1}}\right\}$ &\cellcolor{bgcolor} {\color{PineGreen}$(1,2]$} &\cellcolor{bgcolor} \cmark &\cellcolor{bgcolor} \cmark \\
        \hline
        \multirow{3}{*}{As.~\ref{as:star-cocoercivity}} & \algname{clipped-SGDA} & \citep{gorbunov2022clipped} & $\frac{1}{K+1}\sum\limits_{k=0}^{K}\|F(x^k)\|^2$ & $\max\left\{\frac{\ell^2 R^2}{\varepsilon}, \frac{\ell^2\sigma^2R^2}{\varepsilon^2}\right\}$ & {\color{BrickRed}$2$} & \cmark & \cmark\\
        & \cellcolor{bgcolor}\algname{clipped-SGDA} & \cellcolor{bgcolor}Theorems~\ref{thm:clipped_SGDA_main_theorem} \& \ref{thm:main_result_non_mon_SGDA} & \cellcolor{bgcolor}$\frac{1}{K+1}\sum\limits_{k=0}^{K}\|F(x^k)\|^2$ &\cellcolor{bgcolor} $\max\left\{\frac{\ell^2 R^2}{\varepsilon}, \left(\frac{\ell \sigma R}{\varepsilon}\right)^{\frac{\alpha}{\alpha-1}}\right\}$ &\cellcolor{bgcolor} {\color{PineGreen}$(1,2]$} &\cellcolor{bgcolor} \cmark &\cellcolor{bgcolor} \cmark \\
        \hline
        \multirow{2.5}{*}{As.~\ref{as:QSM} \& \ref{as:star-cocoercivity}} & \algname{clipped-SGDA} & \citep{gorbunov2022clipped} & $\|x^K - x^*\|^2$ & $\max\left\{\frac{\ell}{\mu}, \frac{\sigma^2}{\mu^2\varepsilon}\right\}$ & {\color{BrickRed}$2$} & \cmark & \cmark\\
        & \cellcolor{bgcolor}\algname{clipped-SGDA} & \cellcolor{bgcolor}Theorems~\ref{thm:clipped_SGDA_main_theorem} \& \ref{thm:main_result_str_mon_SGDA} & \cellcolor{bgcolor}$\|x^K - x^*\|^2$ &\cellcolor{bgcolor} $\max\left\{\frac{\ell}{\mu}, \left(\frac{\sigma^2}{\mu^2\varepsilon}\right)^{\frac{\alpha}{2(\alpha-1})}\right\}$ &\cellcolor{bgcolor} {\color{PineGreen}$(1,2]$} &\cellcolor{bgcolor} \cmark &\cellcolor{bgcolor} \cmark \\
        \hline
    \end{tabular}
    \begin{tablenotes}
        {\scriptsize \item [{\color{blue}(1)}] All assumptions are made on the whole domain.
        \item [{\color{blue}(2)}] \citet{juditsky2011solving} assume that the noise is sub-Gaussian: $\EE\left[\exp\left(\nicefrac{\|F_{\xi}(x) - F(x)\|^2}{\sigma^2}\right)\right] \leq \exp(1)$ for all $x$ from the domain.
        }
    \end{tablenotes}
    \end{threeparttable}
    % \vspace{-6mm}
\end{table*}

In this subsection, we overview closely related works and describe the contributions of our work. Additional related works are discussed in Appendix~\ref{appendix:additional_related}.

\textbf{Convex optimization and monotone VIPs.} Classical high-probability results for (strongly) convex minimization \citep{nemirovski2009robust, ghadimi2012optimal} and monotone VIP \citep{juditsky2011solving} are derived under the so-called light-tails assumption, meaning that the noise in the stochastic gradients/operators is assumed to be sub-Gaussian: $\EE_{\xi \sim \cD}[\exp(\nicefrac{\|\nabla f_\xi(x) - \nabla f(x)\|^2}{\sigma^2})] \leq \exp(1)$ or $\EE_{\xi \sim \cD}[\exp(\nicefrac{\|F_\xi(x) - F(x)\|^2}{\sigma^2})] \leq \exp(1)$. In these settings, optimal (up to logarithmic factors) rates of convergence are derived in the mentioned papers.

The first high-probability results with logarithmic dependence\footnote{Note that from in-expectation convergence guarantee, one can always get a high-probability one using Markov's inequality. For example, under bounded variance, smoothness, and strong convexity assumptions \algname{SGD} achieves $\EE\|x^k - x^*\|^2 \leq \varepsilon$ after $k = \widetilde\cO(\max\{\nicefrac{L}{\mu}, \nicefrac{\sigma^2}{\mu\varepsilon}\})$ iterations. Therefore, taking $k$ such that $\EE\|x^k - x^*\|^2 \leq \varepsilon\beta$ we get from Markov's inequality that $\PP\{\|x^k - x^*\|^2 \leq \varepsilon\} \leq \beta$. However, in this case, we get bound $k = \widetilde\cO(\max\{\nicefrac{L}{\mu}, \nicefrac{\sigma^2}{\mu\varepsilon\beta}\})$, having undesirable inverse-power dependence on $\beta$.} on $\nicefrac{1}{\beta}$ under just bounded variance assumption are given by \citet{nazin2019algorithms}, where the authors show non-accelerated rates of convergence for a version of Mirror Descent with a special truncation operator for smooth convex and strongly convex problems defined on the bounded sets. Then, \citet{davis2021low} derive accelerated rates in the strongly convex case using robust distance estimation techniques. \citet{gorbunov2020stochastic, gorbunov2021near} propose an accelerated method with clipping for unconstrained (strongly) convex problems with Lipschitz / H\"older continuous gradients and derive the first high-probability results for \algname{clipped-SGD}. In the context of VIP, \citet{gorbunov2022clipped} derive the first high-probability results for the stochastic methods for solving VIP under bounded variance assumption and different assumptions on structured non-monotonicity.

However, there are no high-probability results (with logarithmic dependence on the confidence level) for smooth (strongly) convex minimization problems and Lipschitz VIP without imposing bounded variance assumption. Only recently, \citet{zhang2022parameter} derived optimal regret-bounds under Assumption~\ref{as:bounded_alpha_moment} in the convex case with bounded gradients on $\R^d$. However, the bounded gradients assumption is quite restrictive when assumed on the whole space. Thus, a noticeable gap in the stochastic optimization literature remains.

\textbf{\emph{Contribution.}} We obtain new high-probability convergence results under Assumption~\ref{as:bounded_alpha_moment} for smooth convex minimization problems and Lipschitz VIP; see the summary in Tables~\ref{tab:comparison_of_rates_minimization} and \ref{tab:comparison_of_rates_VIP}. In particular, for Clipped Stochastic Similar Triangles Method (\algname{clipped-SSTM}) \citep{gorbunov2020stochastic} and its restarted version, we derive high-probability convergence results  for smooth convex and strongly convex problems. The high-probability complexity in the strongly convex case matches (up to logarithmic factors) the known in-expectation lower bound \citep{zhang2020adaptive} and deterministic lower bound \citep{nemirovskij1983problem}. In other words, we derive the first optimal high-probability complexity results for smooth strongly convex optimization. Noticeably, the derived results have clear separation between accelerated part and stochastic part that emphasizes a potential of \algname{clipped-SSTM} for efficient parallelization. Next, we derive high-probability results for \algname{clipped-SGD} for smooth star-convex and quasi-strongly convex objectives under Assumption~\ref{as:bounded_alpha_moment}. Finally, under the same assumption, we prove the high-probability convergence of Clipped Stochastic Extragradient (\algname{clipped-SEG}) \citep{korpelevich1976extragradient, juditsky2011solving, gorbunov2022clipped} for Lipschitz monotone and quasi-strongly monotone VIP and also obtain high-probability results for Clipped Stochastic Gradient Descent-Ascent \algname{(clipped-SGDA)} for star-cocoercive and monotone / quasi-strongly monotone VIP. In the special case of $\alpha = 2$, our analysis recovers SOTA high-probability results under bounded variance assumption.

\textbf{Non-convex optimization.} Under the light-tails and smoothness assumption \citet{li2020high} derive high-probability convergence rates to the first-order stationary point for \algname{SGD}. These rates match the known in-expectation guarantees for \algname{SGD} and are optimal up to logarithmic factors \citep{arjevani2022lower}. Recently, \citet{cutkosky2021high} derived the first high-probability results for non-convex optimization under Assumption~\ref{as:bounded_alpha_moment} for a version of \algname{SGD} with gradient clipping and normalization of the momentum. The results are obtained for the non-standard metric -- $\frac{1}{K+1}\sum_{k=0}^K\|\nabla f(x^k)\|$ -- and match in-expectation lower bound for the expected (non-squared) norm of the gradient from \citep{zhang2020adaptive}. However, \citet{cutkosky2021high} make an additional assumption that the norm of the gradient is bounded\footnote{More precisely, instead of Assumption~\ref{as:bounded_alpha_moment}, \citet{cutkosky2021high} assume $\EE_{\xi\sim}\|\nabla f_\xi(x)\|^\alpha \leq G^\alpha$ for some $G > 0$. This assumption implies Assumption~\ref{as:bounded_alpha_moment} and boundedness of $\|\nabla f(x)\|$.} on $\R^d$, which is quite restrictive.

\textbf{\emph{Contribution.}} We derive the first high-probability result with logarithmic dependence on the confidence level for finding first-order stationary points of smooth (possibly, non-convex) functions without bounded gradients assumption. The result is derived for simple \algname{clipped-SGD}. Moreover, we extend the analysis to the functions satisfying Polyak-{\L}ojasiewicz condition; see Table~\ref{tab:comparison_of_rates_minimization} for the summary.

\textbf{Gradient clipping} received a lot of attention in the machine learning community due to its successful empirical applications in the training of deep neural networks \citep{pascanu2013difficulty, goodfellow2016deep}. The clipping operator is defined as $\clip(x,\lambda) = \min\left\{1 , \nicefrac{\lambda}{\|x\|}\right\}x$ ($\clip(x,\lambda) = 0$, when $x = 0$). 
% \begin{equation}
%     \clip(x,\lambda) = \begin{cases}\min\left\{1 , \frac{\lambda}{\|x\|}\right\}x, & \text{if } x \neq 0,\\ 0,& \text{otherwise.} \end{cases} \label{eq:clipping_operator}
% \end{equation}
From the theoretical perspective, gradient clipping is used for multiple different purposes: to handle structured non-smoothness in the objective function \citep{zhang2020gradient}, to robustify aggregation \citep{karimireddy2021learning} and to provide privacy guarantees \citep{abadi2016deep} in the distributed training. Moreover, as we already mentioned before, gradient clipping is used to handle heavy-tailed noise (satisfying Assumption~\ref{as:bounded_alpha_moment}) in the stochastic gradients \citep{zhang2020adaptive} and, in particular, to derive better high-probability guarantees under bounded variance assumption \citep{nazin2019algorithms, gorbunov2020stochastic}. However, there are no results showing the necessity of modifying standard methods like \algname{SGD} and its accelerated variants to achieve high-probability convergence with logarithmic dependence on the confidence level under bounded variance assumption.

\textbf{\emph{Contribution.}} We construct an example of a strongly convex smooth problem and stochastic oracle with bounded variance such that to achieve $\PP\{\|x^k - x^*\|^2 > \varepsilon\} \leq \beta$ \algname{SGD} requires $\Omega\left(\nicefrac{\sigma^2}{\mu\sqrt{\varepsilon\beta}}\right)$ iterations, i.e., the algorithm has inverse-power dependence on the confidence level. This justifies the importance of using some non-linearity such as gradient clipping to achieve logarithmic dependence on the confidence level even in the bounded variance case.

\section{Failure of Standard \algname{SGD}}
It is known that \algname{SGD} $x^{k+1} = x^k - \gamma \nabla f_{\xi^k}(x^k)$ can diverge in expectation, when Assumption~\ref{as:bounded_alpha_moment} is satisfied with $\alpha < 2$ \citep[Remark 1]{zhang2020adaptive}. However, it does converge in expectation when $\alpha = 2$, i.e., when the variance is bounded. In contrast, there are no high-probability convergence results for \algname{SGD} having logarithmic dependence on $\nicefrac{1}{\beta}$. The next theorem establishes the impossibility of deriving such high-probability results.

\begin{theorem}\label{thm:SGD_high-prob_conv}
    For any $\varepsilon > 0$ and sufficiently small $\beta \in (0,1)$ there exist problem \eqref{eq:min_problem} such that Assumptions~\ref{as:bounded_alpha_moment}, \ref{as:L_smoothness}, and \ref{as:str_cvx} hold with $Q = \R^d$, $\alpha = 2$, $0 < \mu \leq L$ and for the iterates produced by \algname{SGD} with any stepsize $\gamma > 0$
    \begin{equation}
        \PP\left\{\|x^k - x^*\|^2 \geq \varepsilon\right\} \leq \beta \;\; \Longrightarrow\;\; k = \Omega\left(\frac{\sigma}{\mu\sqrt{\varepsilon\beta}}\right). \notag
    \end{equation}
\end{theorem}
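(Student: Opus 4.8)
The plan is to construct a single one–dimensional instance on which, for \emph{every} stepsize, \algname{SGD} either never reaches the $1-\beta$ confidence level or needs $\Omega(\sigma/(\mu\sqrt{\varepsilon\beta}))$ iterations to do so. Take $d=1$, $f(x)=\tfrac{\mu}{2}x^2$ (so $L=\mu$, $x^*=0$; Assumptions~\ref{as:L_smoothness} and \ref{as:str_cvx} hold on $Q=\R$, each with equality), start at $x^0$ with $\|x^0-x^*\|=8\sqrt{\varepsilon}$, and use the stochastic gradient $\nabla f_\xi(x)=\mu x+\zeta$, where $\zeta=0$ with probability $1-p$ and $\zeta=\pm\sigma/\sqrt{p}$ with probability $p/2$ each, with $p:=3\beta$. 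Then $\EE[\zeta]=0$ and $\EE[\|\zeta\|^2]=\sigma^2$, so Assumption~\ref{as:bounded_alpha_moment} holds with $\alpha=2$. Since $\|x^0-x^*\|^2=64\varepsilon\ge\varepsilon$, the bad event $\{\|x^k-x^*\|^2\ge\varepsilon\}$ has probability $1$ at $k=0$, so any $k$ satisfying the premise has $k\ge 1$; hence the claim is automatic when $\sigma/(\mu\sqrt{\varepsilon\beta})$ is below a fixed constant, and it suffices to treat $\beta$ small enough that this quantity exceeds $6$ (equivalently $\beta\le\mu^2\varepsilon/(36\sigma^2)$) --- this is the role of ``sufficiently small $\beta$''.

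Write $q:=1-\gamma\mu$, so that $x^k=q^kx^0-\gamma\sum_{i=0}^{k-1}q^{k-1-i}\zeta^i$, and split on the stepsize at $\gamma_0:=\sqrt{\varepsilon p}/\sigma=\sqrt{3\varepsilon\beta}/\sigma$ (which is $<1/\mu$ in the regime of interest). \textbf{Large stepsizes} ($\gamma\ge\gamma_0$): a single nonzero draw has magnitude $\gamma\sigma/\sqrt{p}\ge\gamma_0\sigma/\sqrt{p}=\sqrt{\varepsilon}$; conditioning on $\zeta^0,\dots,\zeta^{k-2}$ (which fix $x^{k-1}$), the event that $\zeta^{k-1}$ is nonzero with sign opposite to that of $qx^{k-1}$ has conditional probability $\ge p/2$, and on it $|x^k|=|qx^{k-1}|+\gamma|\zeta^{k-1}|\ge\sqrt{\varepsilon}$; hence $\PP\{\|x^k-x^*\|^2\ge\varepsilon\}\ge p/2=\tfrac{3}{2}\beta>\beta$ for every $k\ge1$, so the premise is never met and there is nothing to prove. \textbf{Small stepsizes} ($\gamma<\gamma_0$): now $q\in(0,1)$ and a single draw has magnitude $<\sqrt{\varepsilon}$. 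Put $N:=\lfloor\log 4/\log(1/q)\rfloor$; for $k\le N$ one has $|q^kx^0|=8q^k\sqrt{\varepsilon}\ge 2\sqrt{\varepsilon}$, so on the event that at most one of $\zeta^0,\dots,\zeta^{k-1}$ is nonzero --- of probability $\ge(1-p)^k\ge 1-kp$ --- we get $|x^k|\ge|q^kx^0|-\gamma\sigma/\sqrt{p}\ge\sqrt{\varepsilon}$. Thus $\PP\{\|x^k-x^*\|^2\ge\varepsilon\}\ge 1-kp$, which exceeds $\beta$ whenever $k<(1-\beta)/p$, so the premise forces $k\ge\min\{N,(1-\beta)/p\}$ (up to a harmless $\pm1$).

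Finally one checks $\min\{N,(1-\beta)/p\}=\Omega(\sigma/(\mu\sqrt{\varepsilon\beta}))$: using $\log(1/q)=-\log(1-\gamma\mu)\le 2\gamma\mu$ (legitimate since $\gamma\mu<\gamma_0\mu<1/2$) gives $N\ge\tfrac{\log 2}{\gamma\mu}-1$, and $\gamma<\gamma_0$ then yields $N=\Omega(\sigma/(\mu\sqrt{\varepsilon\beta}))$, while $p=3\beta$ together with $\beta\le\mu^2\varepsilon/(36\sigma^2)$ gives $(1-\beta)/p\ge 1/(6\beta)\ge\sigma/(\mu\sqrt{\varepsilon\beta})$. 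The step I expect to be most delicate is making the dichotomy genuinely uniform in $\gamma$: the ``right'' choice is $p$ a constant factor \emph{above} $\beta$ --- so that one spike already overshoots the confidence level for \emph{every} large stepsize, rather than saturating the variance budget with the rarest possible spike as one might first try --- whereas small stepsizes must instead be defeated by the slow deterministic decay of $q^kx^0$; matching the two thresholds ($\gamma_0$ for the stepsize, and the smallness of $\beta$ that makes both $1/\beta$ and $N$ dominate the target) is where the bookkeeping concentrates.
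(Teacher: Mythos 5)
Your construction is correct, and it is a genuinely different route from the paper's. Both proofs use the quadratic $f(x)=\tfrac{\mu}{2}x^2$ and exploit the same tension (deterministic decay rate versus the magnitude/probability of a noise spike), but the paper builds an \emph{algorithm-dependent} instance: the noise is zero at every step except the last, where it is a three-point spike of magnitude $A=\max\{2\sqrt{\varepsilon}/(\gamma\sigma),1\}$ calibrated to the given stepsize, so that the spike probability $1/A^2=\gamma^2\sigma^2/(4\varepsilon)\le\beta$ directly forces $\gamma\le 2\sqrt{\beta\varepsilon}/\sigma$, which combines with the decay requirement $K\ge\frac{1}{\gamma\mu}\ln(|x^0|/\sqrt{\varepsilon})$. (Accordingly, the appendix restates the theorem with the problem chosen \emph{after} $K$ and $\gamma$.) You instead fix a single i.i.d.\ noise distribution with spike probability $p=3\beta$ pinned just above the confidence level, and handle all stepsizes by a dichotomy at $\gamma_0=\sqrt{\varepsilon p}/\sigma$: large stepsizes never satisfy the premise because one spike already overshoots $\sqrt{\varepsilon}$ with conditional probability $p/2>\beta$ at every iteration, while small stepsizes are defeated by the slow decay of $q^kx^0$. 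Your version buys uniformity in $\gamma$ — it proves the main-text form of the statement, where the problem is constructed before the stepsize is chosen — at the price of the extra case analysis and the restriction to sufficiently small $\beta$ (the paper's tailored construction works for any $\beta\in(0,1)$, but only in the weaker "problem may depend on $(\gamma,K)$" formulation).

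One bookkeeping point to repair: in the small-stepsize branch you assert $q=1-\gamma\mu\in(0,1)$ and later $\gamma\mu<\gamma_0\mu<1/2$, but the smallness condition you imposed, $\beta\le\mu^2\varepsilon/(36\sigma^2)$, does not by itself imply $\gamma_0\mu<1/2$ (e.g.\ when $\mu^2\varepsilon/\sigma^2$ is large, $\gamma_0=\sqrt{3\varepsilon\beta}/\sigma$ can exceed $1/\mu$, and then stepsizes $\gamma\in[1/\mu,\gamma_0)$ have $q\le 0$ and your decay estimate for $|q^kx^0|$ breaks). The fix is immediate and within your own hypotheses: add $\beta<\sigma^2/(12\mu^2\varepsilon)$ (say) to the definition of "sufficiently small $\beta$", which makes the problematic stepsize range empty; with the constants $\mu,L,\sigma$ of the instance fixed in advance, the resulting threshold depends only on $\varepsilon$, as the theorem permits. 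With that amendment the argument goes through.
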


The proof is deferred to Appendix~\ref{appendix:failure_of_SGD}. We believe that similar examples can be constructed for any stochastic first-order methods having linear dependence on the stochastic gradients in their update rules. Thus, Theorem~\ref{thm:SGD_high-prob_conv} motivates the use of non-linear operators such as gradient clipping in stochastic methods to achieve logarithmic dependence on the confidence level in the high-probability bounds.

\section{Main Results for Minimization Problems}

% In this section, we present the main results for solving \eqref{eq:min_problem}.

\subsection{\algname{SGD} with Clipping}
We start with \algname{clipped-SGD}:
\begin{gather}
    x^{k+1} = x^k - \gamma \cdot \clip\left(\nabla f_{\xi^k}(x^k), \lambda_k\right), \label{eq:clipped_SGD_update}
\end{gather}
where $\xi^k$ is sampled from $\cD_k$ independently from previous steps. We emphasize here and below that distribution of the noise is allowed to be dependent on $k$: we require just independence of $\xi^k$ from the the previous steps. Our main convergence results for \algname{clipped-SGD} are summarized in the following theorem.

\begin{theorem}[Convergence of \algname{clipped-SGD}]\label{thm:clipped_SGD_main_theorem}
{\color{white}}
Let $k\geq 0$ and $\beta \in (0,1]$ are such that $A = \ln \frac{4(K+1)}{\beta} \geq 1$.\newline
\textbf{Case 1.} Let Assumptions~\ref{as:bounded_alpha_moment}, \ref{as:lower_boundedness}, \ref{as:L_smoothness} hold for $Q = \{x \in \R^d\mid \exists y\in \R^d:\; f(y) \leq f_* + 2\Delta \text{ and } \|x-y\|\leq \nicefrac{\sqrt{\Delta}}{20\sqrt{L}}\}$, $\Delta \geq f(x^0) - f_*$ and $0 < \gamma \leq \cO\left(\min\{\nicefrac{1}{LA}, \nicefrac{\sqrt{\Delta}}{\sigma \sqrt{L}K^{\nicefrac{1}{\alpha}}A^{\nicefrac{(\alpha-1)}{\alpha}}} \}\right)$, $\lambda_k = \lambda = \Theta(\nicefrac{\sqrt{\Delta}}{\sqrt{L}\gamma A})$.\newline
\textbf{Case 2.} Let Assumptions~\ref{as:bounded_alpha_moment}, \ref{as:L_smoothness}, \ref{as:PL} hold for $Q = \{x \in \R^d\mid \exists y\in \R^d:\; f(y) \leq f_* + 2\Delta \text{ and } \|x-y\|\leq \nicefrac{\sqrt{\Delta}}{20\sqrt{L}}\}$, $\Delta \geq f(x^0) - f_*$ and $0 < \gamma =  \cO\left(\min\{\nicefrac{1}{LA}, \nicefrac{\ln(B_K)}{\mu(K+1)}\}\right)$, $B_K = \Theta\left(\max\{2, \nicefrac{(K+1)^{\nicefrac{2(\alpha-1)}{\alpha}}\mu^2 \Delta}{L\sigma^2 A^{\nicefrac{2(\alpha-1)}{\alpha}}\ln^2(B_K)}\}\right)$, $\lambda_k = \Theta(\nicefrac{\exp(-\gamma\mu(1+\nicefrac{k}{2}))\sqrt{\Delta}}{\sqrt{L}\gamma A})$.\newline
\textbf{Case 3.} Let Assumptions~\ref{as:bounded_alpha_moment}, \ref{as:L_smoothness}, \ref{as:str_cvx} with $\mu = 0$ hold for $Q = B_{3R}(x^*)$, $R \geq \|x^0 - x^*\|$ and $0 < \gamma \leq \cO(\min\{\nicefrac{1}{LA}, \nicefrac{R}{\sigma K^{\nicefrac{1}{\alpha}}A^{\nicefrac{(\alpha-1)}{\alpha}}}\})$, $\lambda_k = \lambda = \Theta(\nicefrac{R}{\gamma A})$.\newline
\textbf{Case 4.} Let Assumptions~\ref{as:bounded_alpha_moment}, \ref{as:L_smoothness}, \ref{as:QSC} with $\mu > 0$ hold for $Q = B_{3R}(x^*)$, $R \geq \|x^0 - x^*\|$ and $0 < \gamma =  \cO\left(\min\{\nicefrac{1}{LA}, \nicefrac{\ln(B_K)}{\mu(K+1)}\}\right)$, $B_K = \Theta\left(\max\{2, \nicefrac{(K+1)^{\nicefrac{2(\alpha-1)}{\alpha}}\mu^2 R^2}{\sigma^2 A^{\nicefrac{2(\alpha-1)}{\alpha}}\ln^2(B_K)}\}\right)$, $\lambda_k = \Theta(\nicefrac{\exp(-\gamma\mu(1+\nicefrac{k}{2}))R}{\gamma A})$.\newline
Then to guarantee $\frac{1}{K+1}\sum_{k=0}^k \|\nabla f(x^k)\|^2 \leq \varepsilon$ in \textbf{Case 1}, $f(x^K) - f(x^*) \leq \varepsilon$ in \textbf{Case 2}, $f(\bar{x}^K) - f(x^*) \leq \varepsilon$ in \textbf{Case 3} with $\bar{x}^K = \frac{1}{K+1}\sum_{k=0}^K x^k$, $\|x^K - x^*\|^2 \leq \varepsilon$ in \textbf{Case 4} with probability $\geq 1 - \beta$ \algname{clipped-SGD} requires
\begin{align}
    \text{{\bf \emph{Case 1}}:}&\quad \widetilde\cO\left(\max\left\{\frac{L \Delta}{\varepsilon}, \left(\frac{\sqrt{L\Delta} \sigma}{\varepsilon}\right)^{\frac{\alpha}{\alpha-1}}\right\}\right) \label{eq:SGD_main_result_non_cvx}\\
   \text{{\bf \emph{Case 2}}:}&\quad \widetilde\cO\left(\max\left\{\frac{L}{\mu}, \left(\frac{L\sigma^2}{\mu^2\varepsilon}\right)^{\frac{\alpha}{2(\alpha-1)}}\right\}\right) \label{eq:SGD_main_result_PL}\\
   \text{{\bf \emph{Case 3}}:}&\quad \widetilde\cO\left(\max\left\{\frac{LR^2}{\varepsilon}, \left(\frac{\sigma R}{\varepsilon}\right)^{\frac{\alpha}{\alpha-1}}\right\}\right) \label{eq:SGD_main_result_cvx}\\
    \text{{\bf \emph{Case 4}}:}&\quad \widetilde\cO\left(\max\left\{\frac{L}{\mu}, \left(\frac{\sigma^2}{\mu^2\varepsilon}\right)^{\frac{\alpha}{2(\alpha-1)}}\right\}\right) \label{eq:SGD_main_result_QSC}
\end{align}
oracle calls.
\end{theorem}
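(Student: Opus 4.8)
The plan is to prove all four cases by a single unified induction argument on a high-probability event, adapting the ``clipped-SGD stays in a ball/level-set'' technique of \citet{gorbunov2020stochastic,gorbunov2021near} to the bounded $\alpha$-th moment setting. The core idea: define, for the appropriate potential $\Phi_k$ (namely $\tfrac{1}{K+1}\sum_{j<k}\|\nabla f(x^j)\|^2$-type quantity together with $f(x^k)-f_*$ in Cases 1--2, and $\|x^k-x^*\|^2$ with a geometric weight in Cases 3--4), the event $E_k$ that $\Phi_j$ is controlled for all $j\le k$ and that all iterates $x^0,\dots,x^k$ lie in $Q$. On $E_k$ the clipping bias and the stochastic noise are both well-behaved, so one descent step gives a recursion; the main work is to show $\PP(E_{K})\ge 1-\beta$ by showing $\PP(E_{k+1}\mid E_k)$ loses at most $\beta/(K+1)$ each step.

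First I would set up the one-step inequality. Write the clipped gradient as $\tilde g^k = \clip(\nabla f_{\xi^k}(x^k),\lambda)$ and decompose $\tilde g^k = \nabla f(x^k) + \theta^k_u + \theta^k_b$, where $\theta^k_b = \EE_{\xi^k}[\tilde g^k] - \nabla f(x^k)$ is the (deterministic given $x^k$) bias and $\theta^k_u = \tilde g^k - \EE_{\xi^k}[\tilde g^k]$ is the zero-mean part. The key clipping lemmas — standard in this line of work and provable from Assumption~\ref{as:bounded_alpha_moment} — give, whenever $\|\nabla f(x^k)\|\le \lambda/2$: $\|\theta^k_b\| \le 2^\alpha \sigma^\alpha \lambda^{1-\alpha}$, $\|\theta^k_u\|\le 2\lambda$ almost surely, and $\EE_{\xi^k}\|\theta^k_u\|^2 \le 18\lambda^{2-\alpha}\sigma^\alpha$. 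For Cases 1--2 I would start from the $L$-smoothness descent lemma $f(x^{k+1}) \le f(x^k) - \gamma\langle\nabla f(x^k),\tilde g^k\rangle + \tfrac{L\gamma^2}{2}\|\tilde g^k\|^2$, plug in the decomposition, use $\|\tilde g^k\|\le 2\lambda$, and (in Case 2) the P{\L} inequality to convert $\|\nabla f(x^k)\|^2$ into a contraction factor $(1-\gamma\mu)$ on $f(x^k)-f(x^*)$; for Cases 3--4 I would start from $\|x^{k+1}-x^*\|^2 = \|x^k-x^*\|^2 - 2\gamma\langle\tilde g^k,x^k-x^*\rangle + \gamma^2\|\tilde g^k\|^2$ and use convexity/quasi-strong convexity (Assumption~\ref{as:str_cvx} or \ref{as:QSC}) to handle the inner product. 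In every case this produces, after telescoping, a bound of the schematic form $\Phi_{k+1} \lesssim \Phi_0 + \sum_{j\le k}(\text{bias terms}) + \big|\sum_{j\le k}\omega_j\langle\theta^j_u, v^j\rangle\big| + \sum_{j\le k}\omega_j\big(\|\theta^j_u\|^2 - \EE\|\theta^j_u\|^2\big) + \sum_{j\le k}\omega_j\EE\|\theta^j_u\|^2$, with weights $\omega_j$ equal to $1$ (Cases 1,3) or a geometric sequence (Cases 2,4) and $v^j$ a vector whose norm is controlled by $\sqrt{\Phi_j}$ (e.g. $x^j-x^*$) or by a deterministic bound.

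Next, the concentration step: on the event $E_k$ (so that all the above clipping bounds apply since $\|\nabla f(x^j)\|\le \lambda/2$ is implied by $\Phi_j$ being small — this is where the stepsize/clipping-level choices $\lambda = \Theta(R/(\gamma A))$ etc. enter), the martingale difference sequence $\omega_j\langle\theta^j_u,v^j\rangle$ has bounded increments ($\le 2\lambda\omega_j\|v^j\|$) and conditional variance $\le \omega_j^2\|v^j\|^2 \cdot 18\lambda^{2-\alpha}\sigma^\alpha$, so Freedman's (Bernstein-type) inequality bounds it by $\widetilde\cO$ of the target with probability $1-\beta/(4(K+1))$ per index; similarly the sum $\sum\omega_j(\|\theta^j_u\|^2 - \EE\|\theta^j_u\|^2)$ is a bounded martingale handled the same way; and the residual deterministic sum $\sum\omega_j\EE\|\theta^j_u\|^2 \le 18\lambda^{2-\alpha}\sigma^\alpha\sum\omega_j$ is absorbed by choosing $\gamma$ small enough — this is exactly the constraint $\gamma \le \cO(\sqrt\Delta/(\sigma\sqrt L K^{1/\alpha}A^{(\alpha-1)/\alpha}))$ and its analogues. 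Combining, $\PP(E_{k+1}\mid E_k)\ge 1-\beta/(K+1)$ roughly, hence $\PP(E_{K+1})\ge 1-\beta$ by induction/union bound; on $E_{K+1}$ one reads off $\Phi_{K+1}\le\varepsilon$, and then optimizing the two constraints on $\gamma$ (the smoothness constraint $\gamma\le 1/(LA)$ giving the $L\Delta/\varepsilon$-type term, and the noise constraint giving the $(\sqrt{L\Delta}\sigma/\varepsilon)^{\alpha/(\alpha-1)}$-type term) yields the stated iteration complexities \eqref{eq:SGD_main_result_non_cvx}--\eqref{eq:SGD_main_result_QSC}; in the P{\L}/quasi-strongly-convex Cases 2 and 4 the geometric weights turn the telescoped bound into a linearly-contracting recursion, the $\ln(B_K)/( \mu(K+1))$ stepsize balances the contraction horizon against the noise accumulation, and the $B_K$ bookkeeping handles the implicit definition of the effective number of steps.

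The main obstacle I anticipate is the interplay between the induction hypothesis and the concentration bound in Cases 3--4 (and the ``$v^j$ depends on $\Phi_j$'' issue generally): the martingale increment $\langle\theta^j_u, x^j-x^*\rangle$ has a variance proxy $\propto\|x^j-x^*\|^2$, which is itself only bounded \emph{on} the event $E_j$, so one must run Freedman's inequality carefully against the stopped/clipped process or, equivalently, establish the bound on $\Phi_{k+1}$ conditionally on $E_k$ using the a priori bound $\|x^j-x^*\|\le 3R$ valid on $E_j$, and then separately bootstrap the \emph{tighter} bound $\|x^j-x^*\|^2\le$ (the target) to close the induction. Making this self-consistent — choosing $\lambda$ and $\gamma$ so that the ``loose'' bound $3R$ is enough for $x^j\in Q$ while the accumulated noise is still $\le\varepsilon$ — is the delicate part and is where the explicit constants in the theorem statement come from. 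Since this is exactly the structure already carried out under bounded variance in \citet{gorbunov2020stochastic,gorbunov2021near,gorbunov2022clipped}, I expect the proof to go through with the substitution $\sigma^2\lambda^{-1}\rightsquigarrow\sigma^\alpha\lambda^{1-\alpha}$ in the bias term and $\sigma^2\rightsquigarrow\sigma^\alpha\lambda^{2-\alpha}$ in the variance term, at the cost of the $\alpha/(\alpha-1)$ exponents appearing throughout; the full details are deferred to the appendix theorems referenced in the statement.
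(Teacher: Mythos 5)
Your proposal is correct and follows essentially the same route as the paper's proofs in Appendix E: the same bias/unbiased decomposition of the clipped gradient controlled via Lemma~\ref{lem:bias_and_variance_clip}, the same inductive definition of a high-probability event on which the iterates stay in $Q$ and $\|\nabla f(x^k)\|\leq\lambda_k/2$, the same Bernstein-type concentration applied to truncated (a priori bounded) martingale increments, and the same union bound and stepsize balancing. The self-consistency issue you flag in Cases 3--4 is resolved in the paper exactly as you suggest, by replacing $x^j-x^*-\gamma\nabla f(x^j)$ with a truncated surrogate $\eta_j$ that is bounded almost surely and coincides with it on the induction event.
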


The complete formulation of the result and full proofs are deferred to Appendix~\ref{appendix:clipped_SGD}. As one can see from Table~\ref{tab:comparison_of_rates_minimization}, for $\alpha = 2$ the derived complexity bounds match the best-known ones for \algname{clipped-SGD} in the setups where it was analyzed. Next, we emphasize that the second term under the maximum in \eqref{eq:SGD_main_result_QSC} (quasi-strongly convex functions) is optimal up to logarithmic factors \citep{zhang2020adaptive}. In the convex case, there are no lower bounds, but we conjecture that the second term in \eqref{eq:SGD_main_result_cvx} is optimal (up to logarithms) in this case as well.

Next, in the case of P{\L}-functions, we are not aware of any high-probability convergence results in the literature. In the special case of $\alpha = 2$, the derived complexity bound \eqref{eq:SGD_main_result_PL} matches the best-known in-expectation complexity bound for \algname{SGD} \citep{karimi2016linear, khaled2020better} and the first term coincides (up to logarithms) with the lower bound for deterministic first-order methods in this setup \citep{yue2022lower}.

Finally, in the non-convex case, bound \eqref{eq:SGD_main_result_non_cvx} is the first high-probability result under Assumption~\ref{as:bounded_alpha_moment} without the additional assumption of the boundedness of the gradients. For $\alpha = 2$ it matches (up to logarithms) in-expectation lower bound \citep{arjevani2022lower}. However, when $\alpha < 2$, bound \eqref{eq:SGD_main_result_non_cvx} is inferior to the existing one $\widetilde\cO\left(\left(\nicefrac{G^2}{\varepsilon}\right)^{\nicefrac{(3\alpha-2)}{2(\alpha-1)}}\right)$ by \citet{cutkosky2021high}, which relies on the stronger assumption that $\EE_{\xi\sim \cD}\|\nabla f_{\xi}(x)\|^\alpha \leq G^\alpha$ for some $G > 0$ and all $x\in\R^d$, and also do not match the lower bound by \citet{zhang2020adaptive} derived for $\EE\|\nabla f(x^k)\|$, where $x^k$ is the output of the stochastic first-order method. It is also worth mentioning that \citet{cutkosky2021high} use a different performance metric: $\hat\cP_K = \left(\frac{1}{K+1}\sum_{k=0}^K\|\nabla f(x^k)\|\right)^2$. This metric is always smaller than $\cP_K =\frac{1}{K+1}\sum_{k=0}^K\|\nabla f(x^k)\|^2$, which we use in our result. In the worst case, $\cP_K$ can be $K+1$ times larger than $\hat\cP_K$. Moreover, the lower bound from \citep{zhang2020adaptive} is derived for $\EE\|\nabla f(x^k)\|$ that is also always smaller than the standard quantity of interest $\EE\|\nabla f(x^k)\|^2$. Therefore, the question of optimality of the bound \eqref{eq:SGD_main_result_non_cvx} remains open for $\alpha < 2$. Moreover, it will also be interesting to modify our analysis in this case to derive a better bound for metric $\hat\cP_K$ than \eqref{eq:SGD_main_result_non_cvx}.

\subsection{Acceleration}

Next, we focus on the accelerated version of \algname{clipped-SGD} called Clipped Stochastic Similar Triangles Method \algname{clipped-SSTM} \cite{gorbunov2020stochastic}. The method constructs three sequences of points $\{x^k\}_{k \geq 0}$, $\{y^k\}_{k\geq 0}$, $\{z^k\}_{k \geq 0}$ satisfying the following update rules: $x^0 = y^0 = z^0$ and
\begin{align}
    x^{k+1} &= \frac{A_k y^k + \alpha_{k+1} z^k}{A_{k+1}}, \label{eq:clipped_SSTM_x_update}\\
    z^{k+1} &= z^k - \alpha_{k+1} \cdot \clip\left(\nabla f_{\xi^k}(x^{k+1}), \lambda_k\right), \label{eq:clipped_SSTM_z_update}\\
    y^{k+1} &= \frac{A_k y^k + \alpha_{k+1} z^{k+1}}{A_{k+1}}, \label{eq:clipped_SSTM_y_update}
\end{align}
where $A_0 = \alpha_0 = 0$, $\alpha_{k+1} = \frac{k+2}{2aL}$, $A_{k+1} = A_k + \alpha_{k+1}$, and  $\xi^k$ is sampled from $\cD_k$ independently from previous steps. Our main convergence result for \algname{clipped-SSTM} is given in the following theorem.

\begin{theorem}[Convergence of \algname{clipped-SSTM}]\label{thm:clipped_SSTM_main_theorem} Let Assumptions~\ref{as:bounded_alpha_moment}, \ref{as:L_smoothness}, \ref{as:str_cvx} with $\mu = 0$ hold for $Q = B_{3R}(x^*)$, $R \geq \|x^0 - x^*\|^2$ and $a = \Theta(\max\{A^2, \nicefrac{\sigma K^{\nicefrac{(\alpha+1)}{\alpha}}A^{\nicefrac{(\alpha-1)}{\alpha}}}{LR}\})$, $\lambda_k = \Theta(\nicefrac{R}{(\alpha_{k+1}A)})$, where $A = \ln \frac{4K}{\beta}$, $\beta \in (0,1]$ are such that $A \geq 1$. Then to guarantee $f(y^K) - f(x^*) \leq \varepsilon$ with probability $\geq 1 - \beta$ \algname{clipped-SSTM} requires
\begin{equation}
    \widetilde\cO\left(\max\left\{\sqrt{\frac{LR^2}{\varepsilon}}, \left(\frac{\sigma R}{\varepsilon}\right)^{\frac{\alpha}{\alpha-1}}\right\}\right) \quad \text{oracle calls.} \label{eq:clipped_SSTM_main_result}
\end{equation}
Moreover, with probability $\geq 1-\beta$ the iterates of \algname{clipped-SSTM} stay in the ball $B_{2R}(x^*)$: $\{x^k\}_{k=0}^{K+1}, \{y^k\}_{k=0}^{K}, \{z^k\}_{k=0}^{K} \subseteq B_{2R}(x^*)$.
\end{theorem}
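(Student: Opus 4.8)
The plan is to follow the now-standard inductive ``stay-in-the-ball'' argument for clipped accelerated methods, adapting it to the bounded $\alpha$-th moment assumption. The deterministic backbone is the similar-triangles estimate: using $L$-smoothness on $Q=B_{3R}(x^*)$ and the construction $A_{k+1}=A_k+\alpha_{k+1}$ with $\alpha_{k+1}=\tfrac{k+2}{2aL}$, one derives a one-step inequality of the form
\begin{equation}
A_{k+1}\bigl(f(y^{k+1})-f(x^*)\bigr) + \tfrac12\|z^{k+1}-x^*\|^2 \leq A_k\bigl(f(y^k)-f(x^*)\bigr) + \tfrac12\|z^k-x^*\|^2 + \alpha_{k+1}\langle \omega_k, x^*-z^k\rangle + \alpha_{k+1}^2\|\omega_k\|^2,\notag
\end{equation}
where $\omega_k = \clip(\nabla f_{\xi^k}(x^{k+1}),\lambda_k) - \nabla f(x^{k+1})$ is the clipped-stochastic-gradient error, provided $x^{k+1}\in Q$. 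Summing telescopically, the goal reduces to controlling $\sum_{k}\alpha_{k+1}\langle\omega_k,x^*-z^k\rangle$ and $\sum_k\alpha_{k+1}^2\|\omega_k\|^2$ with high probability, simultaneously with proving by induction on $K$ that all iterates remain in $B_{2R}(x^*)$ (so that the smoothness-based step inequality is legitimate and so that $\|x^*-z^k\|$ is bounded by $\cO(R)$).

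The core of the work is the probabilistic bound on the error sums. The standard decomposition splits $\omega_k$ into bias and stochastic parts: the bias $\|\EE_k[\clip(\nabla f_{\xi^k}(x^{k+1}),\lambda_k)] - \nabla f(x^{k+1})\|$ is controlled via the moment bound \eqref{eq:bounded_alpha_moment_gradient} and Markov-type truncation arguments, giving a bound of order $\sigma^\alpha/\lambda_k^{\alpha-1}$; the centered part is handled by Bernstein's inequality for martingale differences, using that clipping enforces $\|\omega_k\|\leq 2\lambda_k$ almost surely and that the conditional variance is at most $\cO(\sigma^\alpha\lambda_k^{2-\alpha})$. The choice $\lambda_k=\Theta(R/(\alpha_{k+1}A))$ is exactly calibrated so that, on the event that iterates stay in the ball, the accumulated bias plus deviation terms are bounded by $\cO(R^2)$ with probability $\geq 1-\beta$; substituting $\alpha_{k+1}\sim k/(aL)$ and summing over $k\leq K$ turns the requirement into the stated condition $a=\Theta(\max\{A^2,\sigma K^{(\alpha+1)/\alpha}A^{(\alpha-1)/\alpha}/(LR)\})$. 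Then $A_K\sim K^2/(aL)$ and $f(y^K)-f(x^*)\lesssim R^2/A_K$ yields the complexity \eqref{eq:clipped_SSTM_main_result} after balancing the two terms in $a$.

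The main obstacle — and the reason the induction must be run carefully — is the coupling between the ``stay-in-the-ball'' event and the concentration inequalities: Bernstein's inequality requires a uniform almost-sure bound on the summands, but $\langle\omega_k,x^*-z^k\rangle$ is only bounded by $\cO(\lambda_k R)$ on the event $\{z^k\in B_{2R}(x^*)\}$, which is itself what we are trying to establish. The resolution is the usual one: define the events $E_k = \{\forall j\leq k:\ x^j,y^j,z^j\in B_{2R}(x^*)\}$, work with the clamped random variables $\omega_k\mathbbm{1}_{E_k}$ (which are bounded unconditionally), apply Bernstein to those, and then argue inductively that on the high-probability event where all error sums are small, $E_{K+1}$ in fact holds — so the clamped and unclamped quantities coincide. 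One also needs that $z^{k+1}-z^k=-\alpha_{k+1}\clip(\cdot,\lambda_k)$ has norm $\leq\alpha_{k+1}\lambda_k=\cO(R/A)$, so a single step cannot escape $B_{2R}(x^*)$ from $B_{R}(x^*)\cup\{$accumulated error region$\}$; combined with $x^{k+1}$ being a convex combination of $y^k$ and $z^k$, this closes the induction. The remaining computations — verifying the exact constants in $\lambda_k$ and $a$, and checking that $A\geq1$ suffices for all the logarithmic factors — are routine and deferred to the appendix.
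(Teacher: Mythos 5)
Your plan matches the paper's proof in all its main structural elements: the deterministic similar-triangles recursion (Lemma~\ref{lem:main_opt_lemma_clipped_SSTM}), the split of the clipping error into a bias part bounded by $\cO(\sigma^\alpha/\lambda_k^{\alpha-1})$ and a centered part with variance $\cO(\sigma^\alpha\lambda_k^{2-\alpha})$ and almost-sure bound $2\lambda_k$ (Lemma~\ref{lem:bias_and_variance_clip}), Bernstein's inequality for the martingale sums, and the induction over events $E_k$ that decouples the stay-in-the-ball statement from the concentration bounds. Your device of clamping with $\mathbbm{1}_{E_k}$ is a legitimate variant of what the paper does (the paper instead truncates the predictable factor, replacing $x^*-z^k+\alpha_{k+1}\nabla f(x^{k+1})$ by a vector $\eta_k$ that is bounded by $3R$ unconditionally and coincides with it on $E_{k}$); both preserve the martingale-difference property.

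There is, however, one genuine gap. The bias and variance bounds you invoke are \emph{not} unconditional consequences of Assumption~\ref{as:bounded_alpha_moment}: Lemma~\ref{lem:bias_and_variance_clip} requires the additional hypothesis $\|\nabla f(x^{k+1})\|\le \lambda_k/2$. In the accelerated method this is the delicate point, because $\lambda_k=\Theta\bigl(R/(\alpha_{k+1}A)\bigr)\sim aLR/(kA)$ \emph{decreases} with $k$, so one must prove inside the induction that the true gradient norm decreases correspondingly. The paper does this via
\begin{equation}
\|\nabla f(x^{k+1})\|\;\le\;L\|x^{k+1}-y^k\|+\sqrt{2L\bigl(f(y^k)-f(x^*)\bigr)}\;\lesssim\;\frac{LR\,\alpha_{k+1}}{A_k}+\sqrt{\frac{aL^2R^2}{k(k+3)}},\notag
\end{equation}
using the already-established rate $f(y^k)-f(x^*)\le 6aLR^2/(k(k+3))$ from the induction hypothesis together with \eqref{eq:L_smoothness} and \eqref{eq:L_smoothness_cor_2}; comparing this with $\lambda_k/2$ is precisely what forces $a\gtrsim A^2$. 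Your plan attributes the $A^2$ term in $a$ to the accumulation of bias and deviation in the Bernstein sums, but those sums only produce the $\sigma$-dependent term in $a$; without the gradient-versus-clipping-level verification the bias bound $\sigma^\alpha/\lambda_k^{\alpha-1}$ you rely on is unjustified, and the argument would not close. Once this step is added, the rest of your outline goes through as in the paper.
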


The derived high-probability bound matches (see the proof in Appendix~\ref{appendix:clipped_SSTM}) the best-known one in the case of $\alpha = 2$. For $\alpha < 2$ there are no lower bounds in the convex case. However, the first term in \eqref{eq:clipped_SSTM_main_result} is optimal and matches the deterministic lower bound in the convex case \citep{nemirovskij1983problem}. The second term is the same as in the bound for \algname{clipped-SGD} \eqref{eq:SGD_main_result_cvx} and we conjecture that it cannot be improved.

In the strongly convex case, we consider \algname{clipped-SSTM} with restarts (\algname{R-clipped-SSTM}). This method consists of $\tau$ stages. On the $t$-th stage \algname{R-clipped-SSTM} runs \algname{clipped-SSTM} for $K_0$ iterations from the starting point $\hat x^t$, which is the output from the previous stage ($\hat x^t = x^0$), and defines the obtained point as $\hat x^{t+1}$; see Algorithm~\ref{alg:R-clipped-SSTM} in Appendix~\ref{appendix:R_clipped_SSTM}. For this procedure we have the following result.

\begin{theorem}[Convergence of \algname{R-clipped-SSTM}]\label{thm:R_clipped_SSTM_main_theorem}
    Let Assumptions~\ref{as:bounded_alpha_moment}, \ref{as:L_smoothness}, \ref{as:str_cvx} with $\mu > 0$ hold for $Q = B_{3R}(x^*)$, $R \geq \|x^0 - x^*\|^2$ and \algname{R-clipped-SSTM} runs \algname{clipped-SSTM} $\tau = \lceil \log_2(\nicefrac{\mu R^2}{2\varepsilon}) \rceil$ times. Let $K_t = \widetilde\Theta(\max\{\sqrt{\nicefrac{LR_{t-1}^2}{\varepsilon_t}}, (\nicefrac{\sigma R_{t-1}}{\varepsilon_t})^{\nicefrac{\alpha}{(\alpha-1)}}\})$, $a_t = \widetilde\Theta(\max\{1, \nicefrac{\sigma K_t^{\nicefrac{\alpha+1}{\alpha}}}{LR_t}\})$, $\lambda_k^t = \widetilde{\Theta}(\nicefrac{R}{\alpha_{k+1}^t})$ be the parameters for the stage $t$ of \algname{R-clipped-SSTM}, where $R_{t-1} = 2^{-\nicefrac{(t-1)}{2}} R$, $\varepsilon_t = \nicefrac{\mu R_{t-1}^2}{4}$, $\ln \frac{4\tau K_t}{\beta} \geq 1$ for all $t=1,\ldots,\tau$ and some $\beta \in (0,1]$. Then to guarantee $f(\hat x^\tau) - f(x^*) \leq \varepsilon$ with probability $\geq 1 - \beta$ \algname{R-clipped-SSTM} requires
\begin{equation}
    \widetilde\cO\left(\max\left\{\sqrt{\frac{L}{\mu}}, \left(\frac{\sigma^2}{\mu\varepsilon}\right)^{\frac{\alpha}{2(\alpha-1)}}\right\}\right) \quad \text{oracle calls.} \label{eq:R_clipped_SSTM_main_result}
\end{equation}
Moreover, with probability $\geq 1-\beta$ the iterates of \algname{R-clipped-SSTM} at stage $t$ stay in the ball $B_{2R_{t-1}}(x^*)$.
\end{theorem}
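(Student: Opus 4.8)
The plan is to prove this by a textbook restarting argument layered on top of the convex-case guarantee of Theorem~\ref{thm:clipped_SSTM_main_theorem}. Write $\hat x^0 = x^0$ and, for $t=1,\dots,\tau$, let stage $t$ run \algname{clipped-SSTM} from $\hat x^{t-1}$ for $K_t$ iterations with step-size parameter $a_t$, clipping levels $\lambda_k^t$, target accuracy $\varepsilon_t = \nicefrac{\mu R_{t-1}^2}{4}$, and confidence level $\nicefrac{\beta}{\tau}$, producing $\hat x^t$. I would show by induction on $t$ that, on a suitable ``good'' event, $\|\hat x^{t-1}-x^*\|^2 \le R_{t-1}^2$ at the start of every stage; the base case $t=1$ is the hypothesis $R\ge\|x^0-x^*\|$. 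For the inductive step, given $\|\hat x^{t-1}-x^*\|\le R_{t-1}$, the parameters $K_t,a_t,\lambda_k^t$ are chosen precisely so that Theorem~\ref{thm:clipped_SSTM_main_theorem} applies on stage $t$ with radius $R_{t-1}$: with probability $\ge 1-\nicefrac{\beta}{\tau}$ (conditionally on $\hat x^{t-1}$) we get both $f(\hat x^t)-f(x^*)\le\varepsilon_t$ and that all iterates of stage $t$ stay in $B_{2R_{t-1}}(x^*)$. Combining the accuracy bound with the standard consequence of Assumption~\ref{as:str_cvx} at $x^*$, namely $f(\hat x^t)-f(x^*)\ge\tfrac{\mu}{2}\|\hat x^t-x^*\|^2$, gives $\|\hat x^t-x^*\|^2\le\tfrac{2}{\mu}\varepsilon_t=\tfrac12 R_{t-1}^2=R_t^2$, which closes the induction.

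Next I would do the probability bookkeeping. Since the per-stage conclusion holds with probability $\ge 1-\nicefrac{\beta}{\tau}$ for \emph{every} fixed admissible starting point in $B_{R_{t-1}}(x^*)$, I condition on the $\sigma$-algebra generated by stages $1,\dots,t-1$ (with respect to which $\hat x^{t-1}$ is measurable) and use the tower rule; a union bound over $t=1,\dots,\tau$ then shows that with probability $\ge 1-\beta$ all $\tau$ stages succeed simultaneously, which also yields the in-ball statement of the theorem stage by stage. On this event $f(\hat x^\tau)-f(x^*)\le\varepsilon_\tau=\nicefrac{\mu R_{\tau-1}^2}{4}=\mu 2^{-(\tau-1)}R^2/4$, and the choice $\tau=\lceil\log_2(\nicefrac{\mu R^2}{2\varepsilon})\rceil$ guarantees $2^{\tau-1}\ge\nicefrac{\mu R^2}{4\varepsilon}$, hence $f(\hat x^\tau)-f(x^*)\le\varepsilon$.

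It remains to bound the total number of oracle calls $\sum_{t=1}^\tau K_t$, where $K_t=\widetilde\Theta(\max\{\sqrt{\nicefrac{LR_{t-1}^2}{\varepsilon_t}},(\nicefrac{\sigma R_{t-1}}{\varepsilon_t})^{\nicefrac{\alpha}{\alpha-1}}\})$. The first term equals $\sqrt{\nicefrac{LR_{t-1}^2}{(\mu R_{t-1}^2/4)}}=2\sqrt{\nicefrac{L}{\mu}}$, independent of $t$, so it contributes $\widetilde\cO(\tau\sqrt{\nicefrac{L}{\mu}})=\widetilde\cO(\sqrt{\nicefrac{L}{\mu}})$. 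For the second term, $\nicefrac{\sigma R_{t-1}}{\varepsilon_t}=\nicefrac{4\sigma}{\mu R_{t-1}}=\tfrac{4\sigma}{\mu R}2^{(t-1)/2}$, so $(\nicefrac{\sigma R_{t-1}}{\varepsilon_t})^{\nicefrac{\alpha}{\alpha-1}}$ is a geometric progression in $t$ with ratio $2^{\nicefrac{\alpha}{2(\alpha-1)}}\ge 2$; the sum is therefore, up to a factor at most $2$, its last term at $t=\tau$, namely $(\nicefrac{16\sigma^2}{\mu^2 R^2}\cdot 2^{\tau-1})^{\nicefrac{\alpha}{2(\alpha-1)}}$. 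Since $\tau\le 1+\log_2(\nicefrac{\mu R^2}{2\varepsilon})$ we have $2^{\tau-1}\le\nicefrac{\mu R^2}{2\varepsilon}$, so this is at most $(\nicefrac{8\sigma^2}{\mu\varepsilon})^{\nicefrac{\alpha}{2(\alpha-1)}}=\widetilde\cO((\nicefrac{\sigma^2}{\mu\varepsilon})^{\nicefrac{\alpha}{2(\alpha-1)}})$. Adding the two contributions yields exactly \eqref{eq:R_clipped_SSTM_main_result}.

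The main obstacle is not the algebra but making the per-stage invocation of Theorem~\ref{thm:clipped_SSTM_main_theorem} rigorous inside the restart: its hypotheses are stated for a fixed starting point, whereas here $\hat x^{t-1}$ is random, so one needs the conditional version (apply the theorem to the deterministic problem started at any $x$ with $\|x-x^*\|\le R_{t-1}$, then use the tower property on the indicator of the stage-$t$ good event), and one must check that the $\widetilde\Theta$'s hidden in $K_t,a_t,\lambda_k^t$ absorb the stage-dependent logarithmic factor $\ln\frac{4\tau K_t}{\beta}$ uniformly over $t=1,\dots,\tau$ so that every stage meets the parameter constraints of Theorem~\ref{thm:clipped_SSTM_main_theorem} with the shrunken radius $R_{t-1}$. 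Once this is set up, the remaining steps are the routine induction, union bound, and geometric-sum estimate above.
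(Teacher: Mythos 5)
Your proposal is correct and follows essentially the same route as the paper's proof: a stage-wise induction that invokes the convex-case guarantee of Theorem~\ref{thm:clipped_SSTM_main_theorem} with per-stage confidence $\nicefrac{\beta}{\tau}$, converts $f(\hat x^t)-f(x^*)\le\varepsilon_t$ into $\|\hat x^t-x^*\|^2\le R_t^2$ via strong convexity at $x^*$, applies a union bound over stages, and bounds $\sum_t K_t$ by noting the accelerated term is constant per stage while the stochastic term is a geometric series dominated by its last summand. Your extra care about conditioning on the random restart point $\hat x^{t-1}$ is a point the paper handles only implicitly, but it does not change the argument.
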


The obtained complexity bound (see the proof in Appendix~\ref{appendix:R_clipped_SSTM}) is the first optimal (up to logarithms) high-probability complexity bound under Assumption~\ref{as:bounded_alpha_moment} for the smooth strongly convex problems. Indeed, the first term cannot be improved in view of the deterministic lower bound by \citet{nemirovskij1983problem}, and the second term is optimal due to \citet{zhang2020adaptive}.

\section{Main Results for Variational Inequalities}

% In this section, we present the main results for solving \eqref{eq:VIP}.

\subsection{Clipped Stochastic Extragradient}
For (quasi strongly) monotone VIPs we consider Clipped Stochastic Extragradient method (\algname{clipped-SEG}):
\begin{align}
    \tx^k &= x^k - \gamma \cdot \clip(F_{\xi_1^k}(x^k), \lambda_k), \label{eq:clipped_SEG_extr_step}\\
    x^{k+1} &= x^k - \gamma \cdot \clip(F_{\xi_2^k}(\tx^k), \lambda_k), \label{eq:clipped_SEG_upd_step}
\end{align}
where $\xi_1^k$, $\xi_2^k$ are sampled from $\cD_k$ independently from previous steps. Our main convergence results for \algname{clipped-SEG} are summarized below.

\begin{theorem}[Convergence of \algname{clipped-SEG}]\label{thm:clipped_SEG_main_theorem}
{\color{white} curiosity}
\textbf{Case 1.} Let Assumptions~\ref{as:bounded_alpha_moment}, \ref{as:L_smoothness}, \ref{as:monotonicity} hold for $Q = B_{4R}(x^*)$ and $0 < \gamma = \cO\left(\min\{\nicefrac{1}{LA}, \nicefrac{R}{K^{\nicefrac{1}{\alpha}}\sigma A^{\nicefrac{(\alpha-1)}{\alpha}}}\}\right)$, $\lambda_k = \lambda = \Theta\left(\nicefrac{R}{\gamma A}\right)$, where $A = \ln \frac{6(K+1)}{\beta} \geq 1$, $\beta \in (0,1]$.\newline
\textbf{Case 2.} Let Assumptions~\ref{as:bounded_alpha_moment}, \ref{as:L_smoothness}, \ref{as:QSM} with $\mu > 0$ hold for $Q = B_{3R}(x^*)$ and $0 < \gamma =  \cO\left(\min\{\nicefrac{1}{LA}, \nicefrac{\ln(B_K)}{\mu(K+1)}\}\right)$, $B_K = \Theta\left(\max\{2, \nicefrac{(K+1)^{\nicefrac{2(\alpha-1)}{\alpha}}\mu^2 R^2}{\sigma^2 A^{\nicefrac{2(\alpha-1)}{\alpha}}\ln^2(B_K)}\}\right)$, $\lambda_k = \Theta(\nicefrac{\exp(-\gamma\mu(1+\nicefrac{k}{2}))R}{\gamma A})$, where $A = \ln \frac{6(K+1)}{\beta}$, $\beta \in (0,1]$ are such that $A \geq 1$.\newline
Then to guarantee $\gap_{R}(\tx_{\text{avg}}^K) \leq \varepsilon$ in \textbf{Case 1} with $\tx_{\text{avg}}^K = \frac{1}{K+1}\sum_{k=0}^K \tx^k$, $\|x^K - x^*\|^2 \leq \varepsilon$ in \textbf{Case 2} with probability $\geq 1 - \beta$ \algname{clipped-SEG} requires
\begin{align}
    \text{{\bf \emph{Case 1}}:}&\quad \widetilde\cO\left(\max\left\{\frac{LR^2}{\varepsilon}, \left(\frac{\sigma R}{\varepsilon}\right)^{\frac{\alpha}{\alpha-1}}\right\}\right) \label{eq:SEG_main_result_monotone}\\
    \text{{\bf \emph{Case 2}}:}&\quad \widetilde\cO\left(\max\left\{\frac{L}{\mu}, \left(\frac{\sigma^2}{\mu^2\varepsilon}\right)^{\frac{\alpha}{2(\alpha-1)}}\right\}\right) \label{eq:SEG_main_result_QSM}
\end{align}
oracle calls.
\end{theorem}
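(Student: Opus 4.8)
The plan is to follow the now-standard ``probabilistic induction'' template used for \algname{clipped-SGD}, adapted to the extragradient structure. For \textbf{Case 1} (monotone), I would first set up the classical extragradient descent lemma: for any $u \in B_R(x^*)$, using the update rules \eqref{eq:clipped_SEG_extr_step}--\eqref{eq:clipped_SEG_upd_step} and the identity $2\langle a,b\rangle = \|a\|^2+\|b\|^2-\|a-b\|^2$, one gets a bound of the form
\begin{equation*}
2\gamma\langle F(\tx^k), \tx^k - u\rangle \leq \|x^k - u\|^2 - \|x^{k+1}-u\|^2 + (\text{quadratic clipping-error terms}) + 2\gamma\langle \theta^k, u - \tx^k\rangle,
\end{equation*}
where $\theta^k$ collects the bias/variance of the clipped operator estimates at $x^k$ and $\tx^k$, and the quadratic terms come from $\|\tx^k - x^{k+1}\|^2$ controlled via $L$-Lipschitzness of $F$ and the stepsize restriction $\gamma \leq \nicefrac{1}{(LA)}$ times constants. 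Summing over $k=0,\dots,K$ and using monotonicity, $\langle F(\tx^k),\tx^k-u\rangle \geq \langle F(u),\tx^k-u\rangle$, then maximizing over $u\in B_R(x^*)$ yields a bound on $(K+1)\gap_R(\tx_{\text{avg}}^K)$ in terms of $R^2$, the accumulated clipping errors, and a martingale-difference sum. For \textbf{Case 2} (quasi-strongly monotone) I would instead keep the recursion on $\|x^{k+1}-x^*\|^2$, use \eqref{eq:QSM} to extract a contraction factor $(1-\gamma\mu)$, and unroll to get a bound on $\|x^{K}-x^*\|^2$ weighted by $\exp(-\gamma\mu K)$ plus error terms with the time-varying clipping radius $\lambda_k$.

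The technical heart, as usual, is the high-probability control of the error terms via induction on the event $E_k = \{x^0,\dots,x^k, \tx^0,\dots,\tx^{k-1} \in B_{4R}(x^*) \text{ (resp. } B_{3R}(x^*)) \text{ and the running bound holds}\}$. On $E_k$ all iterates lie in $Q$, so Assumptions~\ref{as:bounded_alpha_moment} and \ref{as:L_Lip} apply and $\|F(x^k)\|,\|F(\tx^k)\| \leq LR' + \|F(x^*)\|$ is bounded, hence with $\lambda_k \sim \nicefrac{R}{(\gamma A)}$ and $\gamma$ small, $\|F(x^k)\| \leq \nicefrac{\lambda_k}{2}$, which lets me split the clipped estimator into a bias term of order $\nicefrac{\sigma^\alpha}{\lambda_k^{\alpha-1}}$ and an unbiased bounded part. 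I then decompose each error contribution into: (a) a sum of conditionally-centered bounded terms, handled by Bernstein's inequality for martingales; (b) a sum of conditional expectations of squared deviations, handled by Bernstein again after bounding the conditional variance by $\sigma^\alpha \lambda_k^{2-\alpha}$; and (c) deterministic bias terms bounded directly. Choosing $\gamma$, $\lambda_k$ as in the statement makes each piece at most $\cO(R^2)$ (resp. the appropriate contracted quantity) with probability $\geq 1-\nicefrac{\beta}{(\text{const}\cdot(K+1))}$, closing the induction via a union bound over $k$; the factor $A = \ln\frac{6(K+1)}{\beta}$ is exactly what the Bernstein bounds demand.

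Two places I expect friction. First, the extragradient method evaluates the stochastic operator at the \emph{intermediate} point $\tx^k$, which is itself random and depends on $\xi_1^k$; I need $\xi_2^k$ independent of $\xi_1^k$ (which the algorithm provides) and must be careful that $\tx^k \in Q$ on the good event before invoking Assumption~\ref{as:bounded_alpha_moment} at $\tx^k$ — this requires an auxiliary bound $\|\tx^k - x^k\| \leq \gamma\lambda_k \leq R$, so $\tx^k \in B_{4R}(x^*)$ whenever $x^k \in B_{3R}(x^*)$, which is why the domain radius in Case~1 is $4R$ rather than $3R$. Second, converting the final per-iterate / averaged bounds into the oracle-call complexities \eqref{eq:SEG_main_result_monotone}--\eqref{eq:SEG_main_result_QSM} is a matter of solving for $K$ the condition that the dominant term is $\leq\varepsilon$: the accelerated-free term $\nicefrac{LR^2}{K}$ (resp. $(1-\gamma\mu)^K$) gives the first term in the max, and the clipping-bias term, which scales like $K \cdot \nicefrac{\sigma^\alpha}{\lambda^{\alpha-1}} \cdot \gamma \sim \sigma^\alpha \gamma^{\alpha} A^{\alpha-1} R^{1-\alpha} \cdot K$ after substituting $\lambda \sim \nicefrac{R}{(\gamma A)}$, gives the $(\nicefrac{\sigma R}{\varepsilon})^{\nicefrac{\alpha}{(\alpha-1)}}$ term once $\gamma$ is set to its upper limit — I would present this optimization explicitly but defer the constant-chasing to the appendix, exactly as the theorem statement indicates.
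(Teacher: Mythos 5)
Your overall plan is the right one and matches the paper's proof almost step for step: an ``optimization lemma'' that isolates the deterministic extragradient recursion (the paper imports Lemmas C.1 and C.3 of Gorbunov et al., 2022), a probabilistic induction on the event that all iterates stay in the relevant ball, the bias/variance split of the clipped estimator via Lemma~\ref{lem:bias_and_variance_clip} (valid on the good event because $\|F(x^k)\|, \|F(\tx^k)\| \leq \nicefrac{\lambda_k}{2}$ there), Bernstein's inequality for the centered bounded sums, and direct bounds for the bias and second-moment sums. Your observations about the independence of $\xi_2^k$ from $\xi_1^k$, about why the Case~1 domain must be $B_{4R}(x^*)$ while the $x^k$ themselves stay in $B_{3R}(x^*)$, and your final arithmetic converting the stepsize/clipping choices into \eqref{eq:SEG_main_result_monotone}--\eqref{eq:SEG_main_result_QSM} are all consistent with the paper. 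Case~2 as you describe it (contraction factor $(1-\gamma\mu)$ from \eqref{eq:QSM}, unrolling, time-varying $\lambda_k \propto \exp(-\gamma\mu(1+\nicefrac{k}{2}))$, induction hypothesis $\|x^t-x^*\|^2 \leq 2\exp(-\gamma\mu t)R^2$) is essentially the paper's argument.

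There is one genuine gap, in Case~1, at the point where you ``maximize over $u \in B_R(x^*)$'' and then claim the remaining stochastic term is ``a martingale-difference sum.'' After the maximization, the term $\sum_{k}\langle \theta_k, u - \tx^k\rangle$ is evaluated at the maximizing $u$, which depends on the whole trajectory; the summands are therefore \emph{not} conditionally centered and Bernstein's inequality does not apply to them as written. The paper resolves this by splitting $x^k - u = (x^k - x^*) + (x^* - u)$, so that the $u$-dependent part collapses to $\max_{u}\langle x^* - u, \sum_{l}\theta_l\rangle = R\,\bigl\|\sum_{l}\theta_l\bigr\|$, while the adapted part $\sum_l\langle x^l - x^* - \gamma F(\tx^l), \theta_l\rangle$ is a legitimate martingale-difference sum. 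Controlling $\bigl\|\gamma\sum_l\theta_l\bigr\| \leq R$ then requires its own induction hypothesis (inequality \eqref{eq:induction_inequality_2_SEG}) and two additional Bernstein applications obtained by expanding the squared norm into prefix-sum cross terms $\langle \gamma\sum_{r<l}\theta_r, \theta_l\rangle$. Without this device your Case~1 argument would stall exactly at the concentration step; Case~2 does not suffer from this issue because there is no maximization over a non-adapted reference point.
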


The proofs are deferred to Appendix~\ref{appendix:SEG}. When $\alpha =2$, the above bounds recover SOTA high-probability bounds for monotone and quasi-strongly monotone Lipschitz VIP \citep{gorbunov2022clipped}. For the case of $\alpha < 2$ \eqref{eq:SEG_main_result_monotone} and \eqref{eq:SEG_main_result_QSM} are the first high-probability results for the mentioned classes. Next, the first terms in these complexity bounds are optimal (up to logarithms) due to the lower bounds for the deterministic methods \citep{ouyang2021lower, zhang2022lower}. The second term in \eqref{eq:SEG_main_result_QSM} is also optimal (up to logarithms) due to the lower bounds for stochastic strongly convex minimization \citep{zhang2020adaptive}. Similarly to the convex case in minimization, we conjecture that the second term in \eqref{eq:SEG_main_result_monotone} cannot be improved in the monotone case as well.

\subsection{Clipped Stochastic Gradient Descent-Ascent}

In the star-cocoercive case, we focus on Clipped Stochastic Gradient Descent-Ascent (\algname{clipped-SGDA}):
\begin{equation}
    x^{k+1} = x^k - \gamma \cdot \clip(F_{\xi^k}(x^k), \lambda_k), \label{eq:clipped_SGDA_update}
\end{equation}
where $\xi^k$ is sampled from $\cD_k$ independently from previous steps. For this method we derive the following convergence guarantees.
\begin{theorem}[Convergence of \algname{clipped-SGDA}]\label{thm:clipped_SGDA_main_theorem}
{\color{white} curios}
\textbf{Case 1.} Let Assumptions~\ref{as:bounded_alpha_moment}, \ref{as:star-cocoercivity}, \ref{as:monotonicity} hold for $Q = B_{2R}(x^*)$ and $0 < \gamma = \cO\left(\min\{\nicefrac{1}{\ell A}, \nicefrac{R}{K^{\nicefrac{1}{\alpha}}\sigma A^{\nicefrac{(\alpha-1)}{\alpha}}}\}\right)$, $\lambda_k = \lambda = \Theta\left(\nicefrac{R}{\gamma A}\right)$, $\beta \in (0,1]$ are such that $A \geq 1$.\newline
\textbf{Case 2.} Let Assumptions~\ref{as:bounded_alpha_moment}, \ref{as:star-cocoercivity} hold for $Q = B_{2R}(x^*)$ and $0 < \gamma = \cO\left(\min\{\nicefrac{1}{\ell A}, \nicefrac{R}{K^{\nicefrac{1}{\alpha}}\sigma A^{\nicefrac{(\alpha-1)}{\alpha}}}\}\right)$, $\lambda_k = \lambda = \Theta\left(\nicefrac{R}{\gamma A}\right)$, where $A = \ln \frac{4(K+1)}{\beta}$, $\beta \in (0,1]$ are such that $A \geq 1$.\newline
\textbf{Case 3.} Let Assumptions~\ref{as:bounded_alpha_moment}, \ref{as:star-cocoercivity}, \ref{as:QSM} with $\mu > 0$ hold for $Q = B_{2R}(x^*)$ and $0 < \gamma =  \cO\left(\min\{\nicefrac{1}{\ell A}, \nicefrac{\ln(B_K)}{\mu(K+1)}\}\right)$, $B_K = \Theta\left(\max\{2, \nicefrac{(K+1)^{\nicefrac{2(\alpha-1)}{\alpha}}\mu^2 R^2}{\sigma^2 A^{\nicefrac{2(\alpha-1)}{\alpha}}\ln^2(B_K)}\}\right)$, $\lambda_k = \Theta(\nicefrac{\exp(-\gamma\mu(1+\nicefrac{k}{2}))R}{\gamma A})$, where $A = \ln \frac{4(K+1)}{\beta}$, $\beta \in (0,1]$ are such that $A \geq 1$.\newline
Then to guarantee $\gap_{R}(\tx_{\text{avg}}^K) \leq \varepsilon$ in \textbf{Case 1} with $\tx_{\text{avg}}^K = \frac{1}{K+1}\sum_{k=0}^K \tx^k$, $\frac{1}{K+1}\sum_{k=0}^k \|F(x^k)\|^2 \leq \ell \varepsilon$ in \textbf{Case 2},  $\|x^K - x^*\|^2 \leq \varepsilon$ in \textbf{Case 3} with probability $\geq 1 - \beta$ \algname{clipped-SGDA} requires
\begin{align}
    \text{{\bf \emph{Case 1}} and {\bf \emph{2}}:}&\quad \widetilde\cO\left(\max\left\{\frac{\ell R^2}{\varepsilon}, \left(\frac{\sigma R}{\varepsilon}\right)^{\frac{\alpha}{\alpha-1}}\right\}\right) \label{eq:SGDA_main_result_monotone_and_star_cocoercive} \\
    \text{{\bf \emph{Case 2}}:}&\quad \widetilde\cO\left(\max\left\{\frac{\ell}{\mu}, \left(\frac{\sigma^2}{\mu^2\varepsilon}\right)^{\frac{\alpha}{2(\alpha-1)}}\right\}\right) \label{eq:SGDA_main_result_QSM}
\end{align}
oracle calls.
\end{theorem}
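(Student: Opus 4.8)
The plan is to follow the by-now standard high-probability template for clipped methods from \citep{gorbunov2020stochastic,gorbunov2022clipped}, adapted to Assumption~\ref{as:bounded_alpha_moment} with $\alpha\in(1,2]$. Write $\widehat{F}_k \eqdef \clip(F_{\xi^k}(x^k),\lambda_k)$ and decompose the clipping error $\omega_k \eqdef \widehat{F}_k - F(x^k) = b_k + \theta_k$, where $b_k \eqdef \EE_{\xi^k}[\omega_k\mid x^k]$ is the bias and $\theta_k \eqdef \omega_k - b_k$ is a martingale-difference term. On the event $x^k\in B_{2R}(x^*)$, Assumption~\ref{as:star-cocoercivity} gives $\|F(x^k)\|^2\le\ell\langle F(x^k),x^k-x^*\rangle\le 2\ell R\|F(x^k)\|$, hence $\|F(x^k)\|\le 2\ell R$, so the choice $\lambda_k\gtrsim\ell R$ (consistent with $\lambda_k=\Theta(R/(\gamma A))$ and $\gamma=\cO(1/(\ell A))$) ensures $\|F(x^k)\|\le\lambda_k/2$, and the standard clipping estimates apply: $\|\omega_k\|\le 2\lambda_k$, $\|b_k\|\le 2^{\alpha}\sigma^{\alpha}/\lambda_k^{\alpha-1}$, and $\EE_{\xi^k}[\|\theta_k\|^2\mid x^k]\lesssim \lambda_k^{2-\alpha}\sigma^{\alpha}$.

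Next, for the core one-step relation, expand $\|x^{k+1}-u\|^2 = \|x^k-u\|^2 - 2\gamma\langle \widehat F_k, x^k - u\rangle + \gamma^2\|\widehat F_k\|^2$ for arbitrary $u$, bound $\|\widehat F_k\|^2\le 2\|F(x^k)\|^2+2\|\omega_k\|^2$, and use star-cocoercivity $\|F(x^k)\|^2\le\ell\langle F(x^k),x^k-x^*\rangle$. Telescoping over $k=0,\dots,K$ with $u=x^*$, using $\langle F(x^k),x^k-x^*\rangle\ge 0$ (valid under Assumption~\ref{as:monotonicity} since $F(x^*)=0$, and under Assumption~\ref{as:QSM}) and $\gamma\ell\le\tfrac12$, yields on the good event
\[
\gamma\sum_{k=0}^K\langle F(x^k),x^k-x^*\rangle \;\le\; \|x^0-x^*\|^2 + 2\gamma^2\sum_{k=0}^K\|\omega_k\|^2 - 2\gamma\sum_{k=0}^K\langle\omega_k,x^k-x^*\rangle .
\]
Star-cocoercivity converts the left side into $\tfrac{\gamma}{\ell}\sum_k\|F(x^k)\|^2$, which (after dividing by $\gamma(K+1)$) gives the metric for the star-cocoercive-only case. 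For the monotone case, running the same telescoping with a generic $u=y\in B_R(x^*)$ and using $\langle F(x^k),x^k-y\rangle\ge\langle F(y),x^k-y\rangle$ bounds $\tfrac1{K+1}\sum_k\langle F(y),x^k-y\rangle=\langle F(y),x^K_{\avg}-y\rangle$; maximizing over $y\in B_R(x^*)$ yields $\gap_R(x^K_{\avg})$. In all cases the ``error'' terms reduce to $\gamma^2\sum_k\|\omega_k\|^2$ (split via $\|\omega_k\|^2\le2\|b_k\|^2+2\|\theta_k\|^2$), $\gamma\sum_k\langle b_k,x^k-x^*\rangle$, and $\gamma\sum_k\langle\theta_k,x^k-x^*\rangle$.

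For the quasi-strongly monotone case one instead uses $\langle F(x^k),x^k-x^*\rangle\ge\mu\|x^k-x^*\|^2$ to obtain, for $\gamma$ small enough that the $2\gamma^2\ell\langle F(x^k),x^k-x^*\rangle$ term is absorbed, a contraction $\|x^{k+1}-x^*\|^2\le(1-\gamma\mu)\|x^k-x^*\|^2 + (\text{$\|\omega_k\|^2$-terms}) - 2\gamma\langle\omega_k,x^k-x^*\rangle$. Unrolling produces geometric weights $(1-\gamma\mu)^{K-k}$; the decaying choice $\lambda_k=\Theta(\exp(-\gamma\mu(1+k/2))R/(\gamma A))$ makes the weighted bias and variance sums telescope, and $\gamma=\cO(\ln(B_K)/(\mu K))$ with the stated $B_K$ balances the accelerated term $\ell/\mu$ against the stochastic one, yielding $\widetilde\cO(\max\{\ell/\mu,(\sigma^2/\mu^2\varepsilon)^{\alpha/(2(\alpha-1))}\})$.

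The main obstacle is the coupled treatment of concentration and iterate boundedness: (a) the martingale sums $\gamma\sum_k\langle\theta_k,x^k-x^*\rangle$ (and their weighted analogues) are controlled by Bernstein's inequality for martingale differences, where one uses $\|\theta_k\|\le 4\lambda_k$ as the bounded-increment bound and $\sum_k\EE_{\xi^k}[\gamma^2\|\theta_k\|^2\|x^k-x^*\|^2\mid x^k]\lesssim\gamma^2R^2\sigma^\alpha\sum_k\lambda_k^{2-\alpha}$ as the predictable variance; and (b) these estimates must be embedded in an induction on $k$ showing that with probability $\ge1-\beta$ one has $\|x^k-x^*\|\le 2R$ for \emph{all} $k\le K+1$ simultaneously — the inductive hypothesis is precisely what legitimizes the clipping estimates and the increment bound at each step, and the union bound over the $K+1$ steps is what produces the logarithmic factor $A=\ln\frac{4(K+1)}{\beta}$. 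Once the induction closes, substituting the chosen $\gamma$ and $\lambda_k$, requiring each error contribution to be $\le\cO(\varepsilon)$ (resp.\ $\le\cO(R^2)$ for the distance metric), and solving for $K$ gives the advertised rates; this last step is routine but the bookkeeping of the powers of $A$ inside $\gamma$ and $\lambda_k$ is the part most prone to error.
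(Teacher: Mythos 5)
Your proposal follows essentially the same route as the paper's proof: the same bias/martingale-difference decomposition of the clipping error, the same clipped-moment bounds (Lemma~\ref{lem:bias_and_variance_clip}), the same one-step expansion with star-cocoercivity feeding into telescoping (and into a $(1-\gamma\mu)$-contraction with geometric weights and decaying $\lambda_k$ in the quasi-strongly monotone case), and the same Bernstein-plus-induction argument keeping the iterates in $B_{2R}(x^*)$. The only detail you gloss over is that maximizing over $u\in B_R(x^*)$ in the gap bound produces the extra term $2\gamma R\bigl\|\sum_{k}\omega_k\bigr\|$, which the paper controls by adding $\gamma\bigl\|\sum_{l}\omega_l\bigr\|\le R$ to the inductive event and running a second Bernstein argument on the partial sums.
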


One can find the proofs in Appendix~\ref{appendix:SGDA}. The derived high-probability results generalize the existing SOTA results from the case of $\alpha = 2$ \citep{gorbunov2022clipped} to the case of $\alpha < 2$.

\section{Key Lemma and Intuition Behind the Proofs}

The proofs of all results in this paper follow a very similar pattern. To illustrate the main idea, we consider the analysis of \algname{clipped-SGD} in the non-convex case. Mimicking the proof of deterministic \algname{GD} we derive the following inequality:
\begin{align}
    \gamma\sum\limits_{k=0}^K\|\nabla f(x^k)\|^2 &\lessapprox \Delta_0 - \Delta_{K+1} \label{eq:induction_inequality_example}\\
    & - \gamma \sum\limits_{k=0}^K \langle \nabla f(x^k), \theta_k \rangle + L\gamma^2 \sum\limits_{k=0}^{K-1} \|\theta_k\|^2,\notag
\end{align}
where $\Delta_k = f(x^k) - f_*$ and $\theta_k = \tnabla f_{\xi^k}(x^{k}) - \nabla f(x^{k})$. In other words, we separate the deterministic part of the method from its stochastic part. To obtain the result of Theorem~\ref{thm:clipped_SGD_main_theorem} (Case 1) it remains to upper bound with high-probability the sums from the second line of the formula above. We do it with the help of Bernstein's inequality (Lemma~\ref{lem:Bernstein_ineq}). However, it requires several preliminary steps. In particular, Bernstein's inequality needs the random variables to be bounded. The magnitudes of summands depend on $\nabla f(x^k)$ that can be arbitrarily large due to the stochasticity in $x^k$. However, \eqref{eq:induction_inequality_example} allows to bound $\Delta_{K+1}$ inductively and, using smoothness, to bound $\|\nabla f(x^{K+1})\|$. Secondly, Bernstein's inequality requires knowing the bounds on the bias and variance of the clipped stochastic estimator. For such purposes, we derive the following result, which is a generalization of Lemma~F.5 from \citep{gorbunov2020stochastic}; see also Lemma~10 from \citep{zhang2020adaptive}.
\begin{lemma}\label{lem:bias_and_variance_clip}
    Let $X$ be a random vector in $\R^d$ and $\tX = \clip(X,\lambda)$. Then, $\|\tX - \EE[\tX]\| \leq 2\lambda$. 
    % \begin{equation}
    %     \left\|\tX - \EE[\tX]\right\| \leq 2\lambda. \label{eq:bound_X_main}
    % \end{equation} 
    Moreover, if for some $\sigma \geq 0$ and $\alpha \in (1,2]$ we have $\EE[X] = x\in\R^d$, $\EE[\|X - x\|^\alpha] \leq \sigma^\alpha$, 
    % \begin{equation}
    %     \EE[X] = x\in\R^d,\quad \EE[\|X - x\|^\alpha] \leq \sigma^\alpha \label{eq:UBV_X_main}
    % \end{equation}
    and $\|x\| \leq \nicefrac{\lambda}{2}$, then
    \begin{eqnarray}
        \left\|\EE[\tX] - x\right\| &\leq& \frac{2^\alpha\sigma^\alpha}{\lambda^{\alpha-1}}, \label{eq:bias_X_main}\\
        \EE\left[\left\|\tX - x\right\|^2\right] &\leq& 18 \lambda^{2-\alpha} \sigma^\alpha, \label{eq:distortion_X_main}\\
        \EE\left[\left\|\tX - \EE[\tX]\right\|^2\right] &\leq& 18 \lambda^{2-\alpha} \sigma^\alpha. \label{eq:variance_X_main}
    \end{eqnarray}
\end{lemma}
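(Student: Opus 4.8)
The plan is to follow the structure of Lemma~F.5 in \citep{gorbunov2020stochastic}, adapting the second-moment computations to the $\alpha$-th moment regime. The first claim is essentially free: by definition of the clipping operator $\|\tX\| = \min\{\|X\|,\lambda\} \le \lambda$ pointwise, so Jensen's inequality gives $\|\EE[\tX]\| \le \EE\|\tX\| \le \lambda$, and the triangle inequality yields $\|\tX - \EE[\tX]\| \le \|\tX\| + \|\EE[\tX]\| \le 2\lambda$. This part uses none of the moment or centering hypotheses.

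For the bias bound \eqref{eq:bias_X_main}, I would use $\EE[X] = x$ to write $\EE[\tX] - x = \EE[\tX - X]$, so that $\|\EE[\tX] - x\| \le \EE\|\tX - X\|$. The key observation is that the clipping error $\|\tX - X\| = (\|X\| - \lambda)_+$ vanishes unless $\|X\| > \lambda$; and on that event, since $\|x\| \le \lambda/2$, we have both $\|X - x\| \ge \|X\| - \|x\| > \lambda/2$ and $\|\tX - X\| = \|X\| - \lambda \le \|X - x\| + \|x\| - \lambda \le \|X - x\|$. Combining, $\|\tX - X\| \le \|X-x\|\,\mathbb{1}\{\|X-x\| > \lambda/2\} \le \|X - x\|^\alpha/(\lambda/2)^{\alpha-1}$, where the last step multiplies by $(\|X-x\|/(\lambda/2))^{\alpha-1} \ge 1$ on that event. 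Taking expectations and invoking $\EE\|X-x\|^\alpha \le \sigma^\alpha$ gives $\|\EE[\tX] - x\| \le 2^{\alpha-1}\sigma^\alpha/\lambda^{\alpha-1} \le 2^\alpha \sigma^\alpha/\lambda^{\alpha-1}$.

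For the distortion bound \eqref{eq:distortion_X_main}, I would split the expectation according to whether $\|X\| \le \lambda$ or $\|X\| > \lambda$. On $\{\|X\| \le \lambda\}$ we have $\tX = X$ and $\|X - x\| \le \|X\| + \|x\| \le 3\lambda/2$, so $\|\tX - x\|^2 \le (3\lambda/2)^{2-\alpha}\|X - x\|^\alpha$; on $\{\|X\| > \lambda\}$ we have $\|\tX - x\| \le \|\tX\| + \|x\| \le 3\lambda/2$ while (as above) $\|X - x\| > \lambda/2$, so $\|\tX - x\|^2 \le (3\lambda/2)^2 \le (3\lambda/2)^2 (2/\lambda)^\alpha \|X-x\|^\alpha$. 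Adding the two contributions, taking expectations, and using $\alpha \in (1,2]$ to bound $(3/2)^{2-\alpha} \le 3/2$ and $2^\alpha \le 4$ collapses the constant into $18$ (with room to spare), giving $\EE\|\tX - x\|^2 \le 18\lambda^{2-\alpha}\sigma^\alpha$. Finally, \eqref{eq:variance_X_main} follows at once from \eqref{eq:distortion_X_main} because the mean minimizes mean-squared deviation: $\EE\|\tX - \EE[\tX]\|^2 = \EE\|\tX\|^2 - \|\EE[\tX]\|^2 \le \EE\|\tX - x\|^2$.

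There is no deep obstacle here; the only thing that requires care is the bookkeeping in the two-case split for \eqref{eq:distortion_X_main} — in particular keeping every term proportional to $\lambda^{2-\alpha}\sigma^\alpha$, and applying the hypothesis $\|x\| \le \lambda/2$ consistently to turn the event $\{\|X\| > \lambda\}$ into the moment-controllable event $\{\|X - x\| > \lambda/2\}$. The generous constant $18$ is chosen precisely so that these crude bounds go through without any optimization.
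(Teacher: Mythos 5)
Your proof is correct, and for the boundedness claim, the distortion bound \eqref{eq:distortion_X_main}, and the variance bound \eqref{eq:variance_X_main} it coincides with the paper's argument: the paper likewise factors $\|\tX-x\|^2 = \|\tX-x\|^{2-\alpha}\|\tX-x\|^\alpha \le (\nicefrac{3\lambda}{2})^{2-\alpha}\|\tX-x\|^\alpha$, splits on the clipping event, bounds the probability of that event by Markov's inequality applied to $\|X-x\|^\alpha$, and closes \eqref{eq:variance_X_main} with the same variance decomposition. The one place you genuinely diverge is the bias bound \eqref{eq:bias_X_main}. The paper drops the factor $(1-\nicefrac{\lambda}{\|X\|})\le 1$ to reach $\EE[\chi\|X\|]$ with $\chi=\mathbb{1}\{\|X\|>\lambda\}$, then splits $\|X\|\le\|X-x\|+\|x\|$ and handles the two resulting terms separately — H\"older's inequality for $\EE[\eta\|X-x\|]$ and Markov's inequality for $\|x\|\EE[\eta]$ — summing $\sigma(\nicefrac{2\sigma}{\lambda})^{\alpha-1}+\tfrac{\lambda}{2}(\nicefrac{2\sigma}{\lambda})^{\alpha}$ to get exactly $2^\alpha\sigma^\alpha/\lambda^{\alpha-1}$. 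You instead keep the exact clipping error $\|\tX-X\|=(\|X\|-\lambda)_+$, observe that it is dominated pointwise by $\|X-x\|\,\mathbb{1}\{\|X-x\|>\nicefrac{\lambda}{2}\}$, and inflate by the factor $(\|X-x\|/(\nicefrac{\lambda}{2}))^{\alpha-1}\ge 1$ on that event. This is more elementary (a single pointwise inequality replaces H\"older plus Markov) and yields the slightly sharper constant $2^{\alpha-1}$ in place of $2^\alpha$; the paper's version, in exchange, generalizes more directly to settings where one bounds $\EE[\chi\|X\|]$ rather than the clipping error itself. Either way the stated constants in \eqref{eq:bias_X_main}--\eqref{eq:variance_X_main} are comfortably satisfied.
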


This lemma can be useful on its own for analyses involving clipping operators. Moreover, our high-probability analysis does not rely on the choice of clipping explicitly: in the proofs, we use only $\|\tX\| \leq \lambda$ and inequalities \eqref{eq:bias_X_main}-\eqref{eq:variance_X_main}. Therefore, our results hold for the methods considered in this work with any other non-linearity $\phi_\lambda(x)$ (not necessary clipping), if it satisfies the conditions from the above lemma for $\tX = \phi_\lambda(X)$.

\section{Discussion}

In this work, we contributed to the stochastic optimization literature via deriving new high-probability results under Assumption~\ref{as:bounded_alpha_moment}. Our results can be extended to the minimization of functions with H\"older continuous gradients using similar ideas to \citep{gorbunov2021near}. Another prominent direction is in obtaining new high-probability results for other types of non-linearities, e.g., like in \citep{polyak1980optimal,jakovetic2022nonlinear}.

\section*{Acknowledgements}

This work was partially supported by a grant for research centers in the field of artificial intelligence, provided by the Analytical Center for the Government of the Russian Federation in accordance with the subsidy agreement (agreement identifier 000000D730321P5Q0002) and the agreement with the Moscow Institute of Physics and Technology dated November 1, 2021 No. 70-2021-00138.

\bibliography{refs}

\begin{thebibliography}{52}
\providecommand{\natexlab}[1]{#1}
\providecommand{\url}[1]{\texttt{#1}}
\expandafter\ifx\csname urlstyle\endcsname\relax
  \providecommand{\doi}[1]{doi: #1}\else
  \providecommand{\doi}{doi: \begingroup \urlstyle{rm}\Url}\fi

\bibitem[Abadi et~al.(2016)Abadi, Chu, Goodfellow, McMahan, Mironov, Talwar,
  and Zhang]{abadi2016deep}
Abadi, M., Chu, A., Goodfellow, I., McMahan, H.~B., Mironov, I., Talwar, K.,
  and Zhang, L.
\newblock Deep learning with differential privacy.
\newblock In \emph{Proceedings of the 2016 ACM SIGSAC conference on computer
  and communications security}, pp.\  308--318, 2016.

\bibitem[Arjevani et~al.(2022)Arjevani, Carmon, Duchi, Foster, Srebro, and
  Woodworth]{arjevani2022lower}
Arjevani, Y., Carmon, Y., Duchi, J.~C., Foster, D.~J., Srebro, N., and
  Woodworth, B.
\newblock Lower bounds for non-convex stochastic optimization.
\newblock \emph{Mathematical Programming}, pp.\  1--50, 2022.

\bibitem[Bennett(1962)]{bennett1962probability}
Bennett, G.
\newblock Probability inequalities for the sum of independent random variables.
\newblock \emph{Journal of the American Statistical Association}, 57\penalty0
  (297):\penalty0 33--45, 1962.

\bibitem[Cutkosky \& Mehta(2021)Cutkosky and Mehta]{cutkosky2021high}
Cutkosky, A. and Mehta, H.
\newblock High-probability bounds for non-convex stochastic optimization with
  heavy tails.
\newblock \emph{Advances in Neural Information Processing Systems}, 34, 2021.

\bibitem[Davis et~al.(2021)Davis, Drusvyatskiy, Xiao, and Zhang]{davis2021low}
Davis, D., Drusvyatskiy, D., Xiao, L., and Zhang, J.
\newblock From low probability to high confidence in stochastic convex
  optimization.
\newblock \emph{Journal of Machine Learning Research}, 22\penalty0
  (49):\penalty0 1--38, 2021.

\bibitem[Dvurechenskii et~al.(2018)Dvurechenskii, Dvinskikh, Gasnikov, Uribe,
  and Nedich]{dvurechenskii2018decentralize}
Dvurechenskii, P., Dvinskikh, D., Gasnikov, A., Uribe, C., and Nedich, A.
\newblock Decentralize and randomize: Faster algorithm for wasserstein
  barycenters.
\newblock \emph{Advances in Neural Information Processing Systems}, 31, 2018.

\bibitem[Dzhaparidze \& Van~Zanten(2001)Dzhaparidze and
  Van~Zanten]{dzhaparidze2001bernstein}
Dzhaparidze, K. and Van~Zanten, J.
\newblock On bernstein-type inequalities for martingales.
\newblock \emph{Stochastic processes and their applications}, 93\penalty0
  (1):\penalty0 109--117, 2001.

\bibitem[Freedman et~al.(1975)]{freedman1975tail}
Freedman, D.~A. et~al.
\newblock On tail probabilities for martingales.
\newblock \emph{the Annals of Probability}, 3\penalty0 (1):\penalty0 100--118,
  1975.

\bibitem[Gasnikov \& Nesterov(2016)Gasnikov and
  Nesterov]{gasnikov2016universal}
Gasnikov, A. and Nesterov, Y.
\newblock Universal fast gradient method for stochastic composit optimization
  problems.
\newblock \emph{arXiv preprint arXiv:1604.05275}, 2016.

\bibitem[Ghadimi \& Lan(2012)Ghadimi and Lan]{ghadimi2012optimal}
Ghadimi, S. and Lan, G.
\newblock Optimal stochastic approximation algorithms for strongly convex
  stochastic composite optimization i: A generic algorithmic framework.
\newblock \emph{SIAM Journal on Optimization}, 22\penalty0 (4):\penalty0
  1469--1492, 2012.

\bibitem[Ghadimi \& Lan(2013)Ghadimi and Lan]{ghadimi2013stochastic}
Ghadimi, S. and Lan, G.
\newblock Stochastic first-and zeroth-order methods for nonconvex stochastic
  programming.
\newblock \emph{SIAM Journal on Optimization}, 23\penalty0 (4):\penalty0
  2341--2368, 2013.

\bibitem[Gidel et~al.(2019)Gidel, Berard, Vignoud, Vincent, and
  Lacoste-Julien]{gidel2019variational}
Gidel, G., Berard, H., Vignoud, G., Vincent, P., and Lacoste-Julien, S.
\newblock A variational inequality perspective on generative adversarial
  networks.
\newblock \emph{International Conference on Learning Representations}, 2019.

\bibitem[Goodfellow et~al.(2014)Goodfellow, Pouget-Abadie, Mirza, Xu,
  Warde-Farley, Ozair, Courville, and Bengio]{goodfellow2014generative}
Goodfellow, I., Pouget-Abadie, J., Mirza, M., Xu, B., Warde-Farley, D., Ozair,
  S., Courville, A., and Bengio, Y.
\newblock Generative adversarial nets.
\newblock In Ghahramani, Z., Welling, M., Cortes, C., Lawrence, N., and
  Weinberger, K.~Q. (eds.), \emph{Advances in Neural Information Processing
  Systems}, volume~27. Curran Associates, Inc., 2014.

\bibitem[Goodfellow et~al.(2016)Goodfellow, Bengio, and
  Courville]{goodfellow2016deep}
Goodfellow, I., Bengio, Y., and Courville, A.
\newblock \emph{Deep learning}.
\newblock MIT press, 2016.

\bibitem[Gorbunov et~al.(2020)Gorbunov, Danilova, and
  Gasnikov]{gorbunov2020stochastic}
Gorbunov, E., Danilova, M., and Gasnikov, A.
\newblock Stochastic optimization with heavy-tailed noise via accelerated
  gradient clipping.
\newblock \emph{Advances in Neural Information Processing Systems},
  33:\penalty0 15042--15053, 2020.

\bibitem[Gorbunov et~al.(2021)Gorbunov, Danilova, Shibaev, Dvurechensky, and
  Gasnikov]{gorbunov2021near}
Gorbunov, E., Danilova, M., Shibaev, I., Dvurechensky, P., and Gasnikov, A.
\newblock Near-optimal high probability complexity bounds for non-smooth
  stochastic optimization with heavy-tailed noise.
\newblock \emph{arXiv preprint arXiv:2106.05958}, 2021.

\bibitem[Gorbunov et~al.(2022{\natexlab{a}})Gorbunov, Danilova, Dobre,
  Dvurechensky, Gasnikov, and Gidel]{gorbunov2022clipped}
Gorbunov, E., Danilova, M., Dobre, D., Dvurechensky, P., Gasnikov, A., and
  Gidel, G.
\newblock Clipped stochastic methods for variational inequalities with
  heavy-tailed noise.
\newblock \emph{arXiv preprint arXiv:2206.01095}, 2022{\natexlab{a}}.

\bibitem[Gorbunov et~al.(2022{\natexlab{b}})Gorbunov, Loizou, and
  Gidel]{gorbunov2022extragradient}
Gorbunov, E., Loizou, N., and Gidel, G.
\newblock Extragradient method: ${\cO}(\nicefrac{1}{K})$ last-iterate
  convergence for monotone variational inequalities and connections with
  cocoercivity.
\newblock In \emph{International Conference on Artificial Intelligence and
  Statistics}, pp.\  366--402. PMLR, 2022{\natexlab{b}}.

\bibitem[Harker \& Pang(1990)Harker and Pang]{harker1990finite}
Harker, P.~T. and Pang, J.-S.
\newblock Finite-dimensional variational inequality and nonlinear
  complementarity problems: a survey of theory, algorithms and applications.
\newblock \emph{Mathematical programming}, 48\penalty0 (1):\penalty0 161--220,
  1990.

\bibitem[Jakovetic et~al.(2022)Jakovetic, Bajovic, Sahu, Kar, Milosevic, and
  Stamenkovic]{jakovetic2022nonlinear}
Jakovetic, D., Bajovic, D., Sahu, A.~K., Kar, S., Milosevic, N., and
  Stamenkovic, D.
\newblock Nonlinear gradient mappings and stochastic optimization: A general
  framework with applications to heavy-tail noise.
\newblock \emph{arXiv preprint arXiv:2204.02593}, 2022.

\bibitem[Juditsky et~al.(2011)Juditsky, Nemirovski, and
  Tauvel]{juditsky2011solving}
Juditsky, A., Nemirovski, A., and Tauvel, C.
\newblock Solving variational inequalities with stochastic mirror-prox
  algorithm.
\newblock \emph{Stochastic Systems}, 1\penalty0 (1):\penalty0 17--58, 2011.

\bibitem[Karimi et~al.(2016)Karimi, Nutini, and Schmidt]{karimi2016linear}
Karimi, H., Nutini, J., and Schmidt, M.
\newblock Linear convergence of gradient and proximal-gradient methods under
  the {P}olyak-{L}ojasiewicz condition.
\newblock In \emph{Joint European conference on machine learning and knowledge
  discovery in databases}, pp.\  795--811. Springer, 2016.

\bibitem[Karimireddy et~al.(2021)Karimireddy, He, and
  Jaggi]{karimireddy2021learning}
Karimireddy, S.~P., He, L., and Jaggi, M.
\newblock Learning from history for byzantine robust optimization.
\newblock In \emph{International Conference on Machine Learning}, pp.\
  5311--5319. PMLR, 2021.

\bibitem[Khaled \& Richt{\'a}rik(2020)Khaled and
  Richt{\'a}rik]{khaled2020better}
Khaled, A. and Richt{\'a}rik, P.
\newblock Better theory for sgd in the nonconvex world.
\newblock \emph{arXiv preprint arXiv:2002.03329}, 2020.

\bibitem[Korpelevich(1976)]{korpelevich1976extragradient}
Korpelevich, G.~M.
\newblock The extragradient method for finding saddle points and other
  problems.
\newblock \emph{Matecon}, 12:\penalty0 747--756, 1976.

\bibitem[Li \& Orabona(2020)Li and Orabona]{li2020high}
Li, X. and Orabona, F.
\newblock A high probability analysis of adaptive sgd with momentum.
\newblock \emph{arXiv preprint arXiv:2007.14294}, 2020.

\bibitem[Liu et~al.(2022)Liu, Zhu, and Belkin]{liu2022loss}
Liu, C., Zhu, L., and Belkin, M.
\newblock Loss landscapes and optimization in over-parameterized non-linear
  systems and neural networks.
\newblock \emph{Applied and Computational Harmonic Analysis}, 59:\penalty0
  85--116, 2022.

\bibitem[Loizou et~al.(2021)Loizou, Berard, Gidel, Mitliagkas, and
  Lacoste-Julien]{loizou2021stochastic}
Loizou, N., Berard, H., Gidel, G., Mitliagkas, I., and Lacoste-Julien, S.
\newblock Stochastic gradient descent-ascent and consensus optimization for
  smooth games: Convergence analysis under expected co-coercivity.
\newblock \emph{Advances in Neural Information Processing Systems}, 34, 2021.

\bibitem[Lojasiewicz(1963)]{lojasiewicz1963topological}
Lojasiewicz, S.
\newblock A topological property of real analytic subsets.
\newblock \emph{Coll. du CNRS, Les {\'e}quations aux d{\'e}riv{\'e}es
  partielles}, 117\penalty0 (87-89):\penalty0 2, 1963.

\bibitem[Mertikopoulos \& Zhou(2019)Mertikopoulos and
  Zhou]{mertikopoulos2019learning}
Mertikopoulos, P. and Zhou, Z.
\newblock Learning in games with continuous action sets and unknown payoff
  functions.
\newblock \emph{Mathematical Programming}, 173\penalty0 (1):\penalty0 465--507,
  2019.

\bibitem[Nazin et~al.(2019)Nazin, Nemirovsky, Tsybakov, and
  Juditsky]{nazin2019algorithms}
Nazin, A.~V., Nemirovsky, A., Tsybakov, A.~B., and Juditsky, A.
\newblock Algorithms of robust stochastic optimization based on mirror descent
  method.
\newblock \emph{Automation and Remote Control}, 80\penalty0 (9):\penalty0
  1607--1627, 2019.

\bibitem[Necoara et~al.(2019)Necoara, Nesterov, and Glineur]{necoara2019linear}
Necoara, I., Nesterov, Y., and Glineur, F.
\newblock Linear convergence of first order methods for non-strongly convex
  optimization.
\newblock \emph{Mathematical Programming}, 175\penalty0 (1):\penalty0 69--107,
  2019.

\bibitem[Nemirovski et~al.(2009)Nemirovski, Juditsky, Lan, and
  Shapiro]{nemirovski2009robust}
Nemirovski, A., Juditsky, A., Lan, G., and Shapiro, A.
\newblock Robust stochastic approximation approach to stochastic programming.
\newblock \emph{SIAM Journal on optimization}, 19\penalty0 (4):\penalty0
  1574--1609, 2009.

\bibitem[Nemirovskij \& Yudin(1983)Nemirovskij and
  Yudin]{nemirovskij1983problem}
Nemirovskij, A.~S. and Yudin, D.~B.
\newblock Problem complexity and method efficiency in optimization.
\newblock 1983.

\bibitem[Nesterov(2007)]{nesterov2007dual}
Nesterov, Y.
\newblock Dual extrapolation and its applications to solving variational
  inequalities and related problems.
\newblock \emph{Mathematical Programming}, 109\penalty0 (2):\penalty0 319--344,
  2007.

\bibitem[Nesterov et~al.(2018)]{nesterov2018lectures}
Nesterov, Y. et~al.
\newblock \emph{Lectures on convex optimization}, volume 137.
\newblock Springer, 2018.

\bibitem[Ouyang \& Xu(2021)Ouyang and Xu]{ouyang2021lower}
Ouyang, Y. and Xu, Y.
\newblock Lower complexity bounds of first-order methods for convex-concave
  bilinear saddle-point problems.
\newblock \emph{Mathematical Programming}, 185\penalty0 (1):\penalty0 1--35,
  2021.

\bibitem[Pascanu et~al.(2013)Pascanu, Mikolov, and
  Bengio]{pascanu2013difficulty}
Pascanu, R., Mikolov, T., and Bengio, Y.
\newblock On the difficulty of training recurrent neural networks.
\newblock In \emph{International conference on machine learning}, pp.\
  1310--1318, 2013.

\bibitem[Patel \& Berahas(2022)Patel and Berahas]{patel2022gradient}
Patel, V. and Berahas, A.~S.
\newblock Gradient descent in the absence of global lipschitz continuity of the
  gradients: Convergence, divergence and limitations of its continuous
  approximation.
\newblock \emph{arXiv preprint arXiv:2210.02418}, 2022.

\bibitem[Patel et~al.(2022)Patel, Zhang, and Tian]{patel2022global}
Patel, V., Zhang, S., and Tian, B.
\newblock Global convergence and stability of stochastic gradient descent.
\newblock \emph{Advances in Neural Information Processing Systems},
  35:\penalty0 36014--36025, 2022.

\bibitem[Polyak(1963)]{polyak1963gradient}
Polyak, B.~T.
\newblock Gradient methods for the minimisation of functionals.
\newblock \emph{USSR Computational Mathematics and Mathematical Physics},
  3\penalty0 (4):\penalty0 864--878, 1963.

\bibitem[Polyak \& Tsypkin(1980)Polyak and Tsypkin]{polyak1980optimal}
Polyak, B.~T. and Tsypkin, Y.~Z.
\newblock Optimal pseudogradient adaptation algorithms.
\newblock \emph{Avtomatika i Telemekhanika}, \penalty0 (8):\penalty0 74--84,
  1980.

\bibitem[Robbins \& Monro(1951)Robbins and Monro]{robbins1951stochastic}
Robbins, H. and Monro, S.
\newblock A stochastic approximation method.
\newblock \emph{The annals of mathematical statistics}, pp.\  400--407, 1951.

\bibitem[Ryu \& Yin(2021)Ryu and Yin]{ryu2021large}
Ryu, E.~K. and Yin, W.
\newblock Large-scale convex optimization via monotone operators, 2021.

\bibitem[Shalev-Shwartz \& Ben-David(2014)Shalev-Shwartz and
  Ben-David]{shalev2014understanding}
Shalev-Shwartz, S. and Ben-David, S.
\newblock \emph{Understanding machine learning: From theory to algorithms}.
\newblock Cambridge university press, 2014.

\bibitem[Song et~al.(2020)Song, Zhou, Zhou, Jiang, and Ma]{song2020optimistic}
Song, C., Zhou, Z., Zhou, Y., Jiang, Y., and Ma, Y.
\newblock Optimistic dual extrapolation for coherent non-monotone variational
  inequalities.
\newblock \emph{Advances in Neural Information Processing Systems},
  33:\penalty0 14303--14314, 2020.

\bibitem[Vural et~al.(2022)Vural, Yu, Balasubramanian, Volgushev, and
  Erdogdu]{vural2022mirror}
Vural, N.~M., Yu, L., Balasubramanian, K., Volgushev, S., and Erdogdu, M.~A.
\newblock Mirror descent strikes again: Optimal stochastic convex optimization
  under infinite noise variance.
\newblock In \emph{Conference on Learning Theory}, pp.\  65--102. PMLR, 2022.

\bibitem[Yue et~al.(2022)Yue, Fang, and Lin]{yue2022lower}
Yue, P., Fang, C., and Lin, Z.
\newblock On the lower bound of minimizing {P}olyak-{L}ojasiewicz functions.
\newblock \emph{arXiv preprint arXiv:2212.13551}, 2022.

\bibitem[Zhang \& Cutkosky(2022)Zhang and Cutkosky]{zhang2022parameter}
Zhang, J. and Cutkosky, A.
\newblock Parameter-free regret in high probability with heavy tails.
\newblock \emph{arXiv preprint arXiv:2210.14355}, 2022.

\bibitem[Zhang et~al.(2020{\natexlab{a}})Zhang, He, Sra, and
  Jadbabaie]{zhang2020gradient}
Zhang, J., He, T., Sra, S., and Jadbabaie, A.
\newblock Why gradient clipping accelerates training: A theoretical
  justification for adaptivity.
\newblock In \emph{International Conference on Learning Representations},
  2020{\natexlab{a}}.
\newblock URL \url{https://openreview.net/forum?id=BJgnXpVYwS}.

\bibitem[Zhang et~al.(2020{\natexlab{b}})Zhang, Karimireddy, Veit, Kim, Reddi,
  Kumar, and Sra]{zhang2020adaptive}
Zhang, J., Karimireddy, S.~P., Veit, A., Kim, S., Reddi, S.~J., Kumar, S., and
  Sra, S.
\newblock Why are adaptive methods good for attention models?
\newblock \emph{Advances in Neural Information Processing Systems}, 33,
  2020{\natexlab{b}}.

\bibitem[Zhang et~al.(2022)Zhang, Hong, and Zhang]{zhang2022lower}
Zhang, J., Hong, M., and Zhang, S.
\newblock On lower iteration complexity bounds for the convex concave saddle
  point problems.
\newblock \emph{Mathematical Programming}, 194\penalty0 (1-2):\penalty0
  901--935, 2022.

\end{thebibliography}
\bibliographystyle{icml2023}

%%%%%%%%%%%%%%%%%%%%%%%%%%%%%%%%%%%%%%%%%%%%%%%%%%%%%%%%%%%%%%%%%%%%%%%%%%%%%%%
%%%%%%%%%%%%%%%%%%%%%%%%%%%%%%%%%%%%%%%%%%%%%%%%%%%%%%%%%%%%%%%%%%%%%%%%%%%%%%%
% APPENDIX
%%%%%%%%%%%%%%%%%%%%%%%%%%%%%%%%%%%%%%%%%%%%%%%%%%%%%%%%%%%%%%%%%%%%%%%%%%%%%%%
%%%%%%%%%%%%%%%%%%%%%%%%%%%%%%%%%%%%%%%%%%%%%%%%%%%%%%%%%%%%%%%%%%%%%%%%%%%%%%%
\newpage
\appendix
\onecolumn
{\small\tableofcontents}

\clearpage

\section{Additional Related Work}\label{appendix:additional_related}

In this section, we provide an overview of the existing in-expectation convergence results under Assumption~\ref{as:bounded_alpha_moment}.

\paragraph{Convex minimization.} The first in-expectation result under Assumption~\ref{as:bounded_alpha_moment} is given by \citet{nemirovskij1983problem}, who derive\footnote{In this section, we hide in $\cO(\cdot)$ all dependencies except the dependency on $\varepsilon$.} $\cO(\varepsilon^{-\nicefrac{\alpha}{(\alpha-1)}})$ complexity for Mirror Descent applied to the minimization of convex functions with bounded gradients.  This result was recently extended by \citet{vural2022mirror} to the uniformly convex functions, and matching lower bounds were derived. 
In the strongly convex case, \citet{zhang2020adaptive} prove $\cO(\varepsilon^{-\nicefrac{\alpha}{2(\alpha-1)}})$ complexity for \algname{clipped-SGD}. However, all these results rely on the boundedness of the gradient. To the best of our knowledge, there are no results for smooth convex problems under Assumption~\ref{as:bounded_alpha_moment} without assuming that the gradient is bounded even in terms of expectation.

\paragraph{Non-convex minimization.} In the non-convex smooth case, \citet{zhang2020adaptive} prove $\cO(\varepsilon^{-\nicefrac{(3\alpha-2)}{(\alpha-1)}})$ complexity for \algname{clipped-SGD} to produce a point $x$ such that $\EE\|\nabla f(x)\| \leq \varepsilon$. In the same work, the authors derive the matching lower bound. However, both upper and lower bounds are derived for $\EE\|\nabla f(x)\|$ which is smaller than $\sqrt{\EE\|\nabla f(x)\|^2}$. The later one is stronger and is more standard performance metric for stochastic non-convex optimization. Therefore, the question of deriving lower and matching upper bounds for the standard metric remains open.

\clearpage

\section{Useful Facts}\label{appendix:useful_facts}

\paragraph{Smoothness.} If $f$ is $L$-smooth on convex set $Q \subseteq \R^d$, then for all $x, y \in Q$ \citep{nesterov2018lectures}
\begin{eqnarray}
    f(y) \leq f(x) + \langle \nabla f(x), y-x \rangle + \frac{L}{2}\|y - x\|^2. \label{eq:L_smoothness_cor_1}
\end{eqnarray}
In particular, if $x$ and $y = x - \frac{1}{L}\nabla f(x)$ lie in $Q$, then the above inequality gives
\begin{eqnarray*}
    f(y) \leq f(x) - \frac{1}{L}\|\nabla f(x)\|^2 + \frac{1}{2L}\|\nabla f(x)\|^2 = f(x) - \frac{1}{2L}\|\nabla f(x)\|^2
\end{eqnarray*}
and
\begin{eqnarray*}
    \|\nabla f(x)\|^2 \leq 2L\left(f(x) - f(y)\right) \leq 2L\left(f(x) - f_*\right)
\end{eqnarray*}
under the assumption that $f_* = \inf_{x \in Q}f(x) > -\infty$. In other words, \eqref{eq:L_smoothness_cor_2} holds for any $x \in Q$ such that $(x - \frac{1}{L}\nabla f(x)) \in Q$. For example, if $x^*$ is an optimum of $f$, then $L$-smoothness on $B_{2R}(x^*)$ implies that \eqref{eq:L_smoothness_cor_2} holds on $B_R(x^*)$: indeed, for any $x \in B_R(x^*)$ we have
\begin{equation}
    \left\|x - \frac{1}{L}\nabla f(x) - x^* \right\| \leq \|x - x^*\| + \frac{1}{L}\|\nabla f(x)\| \overset{\eqref{eq:L_smoothness}}{\leq} 2\|x - x^*\| \leq 2R. \notag
\end{equation}
This derivation means that, in the worst case, to have \eqref{eq:L_smoothness_cor_2} on a set $Q$ we need to assume smoothness on a slightly larger set.

\paragraph{Parameters in \algname{clipped-SSTM}.} To analyze \algname{clipped-SSTM} we use the following lemma about its parameters $\alpha_k$ and $A_k$.

\begin{lemma}[Lemma~E.1 from \citep{gorbunov2020stochastic}]\label{lem:alpha_k_A_K_lemma}
    Let sequences $\{\alpha_k\}_{k\ge0}$ and $\{A_k\}_{k\ge 0}$ satisfy
    \begin{equation}
        \alpha_{0} = A_0 = 0,\quad A_{k+1} = A_k + \alpha_{k+1},\quad \alpha_{k+1} = \frac{k+2}{2aL}\quad \forall k \geq 0, \label{eq:alpha_k_A_k_def}
    \end{equation}
    where $a > 0$, $L > 0$. Then for all $k\ge 0$
    \begin{eqnarray}
        A_{k+1} &=& \frac{(k+1)(k+4)}{4aL}, \label{eq:A_k+1_explicit}\\
        A_{k+1} &\geq& a L \alpha_{k+1}^2. \label{eq:A_k+1_lower_bound}
    \end{eqnarray}
\end{lemma}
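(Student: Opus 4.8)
The plan is to establish the closed form \eqref{eq:A_k+1_explicit} first, by induction on $k$, and then read off \eqref{eq:A_k+1_lower_bound} as a one-line algebraic consequence. For the base case $k = 0$, the recursion \eqref{eq:alpha_k_A_k_def} gives $A_1 = A_0 + \alpha_1 = 0 + \tfrac{2}{2aL} = \tfrac{1}{aL} = \tfrac{(0+1)(0+4)}{4aL}$, as required. For the inductive step I would assume the formula at level $k$, i.e.\ $A_k = \tfrac{k(k+3)}{4aL}$ (which is \eqref{eq:A_k+1_explicit} with $k+1$ replaced by $k$), and then compute
\begin{equation*}
A_{k+1} = A_k + \alpha_{k+1} = \frac{k(k+3)}{4aL} + \frac{k+2}{2aL} = \frac{k(k+3) + 2(k+2)}{4aL} = \frac{k^2 + 5k + 4}{4aL} = \frac{(k+1)(k+4)}{4aL}.
\end{equation*}
Alternatively one can avoid induction altogether and sum directly: $A_{k+1} = \sum_{j=0}^{k} \alpha_{j+1} = \tfrac{1}{2aL}\sum_{j=0}^{k}(j+2) = \tfrac{1}{2aL}\cdot\tfrac{(k+1)(k+4)}{2}$, using the arithmetic-series identity $\sum_{j=0}^{k}(j+2) = \tfrac{(k+2)(k+3)}{2} - 1 = \tfrac{(k+1)(k+4)}{2}$.

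For \eqref{eq:A_k+1_lower_bound}, I would substitute $\alpha_{k+1} = \tfrac{k+2}{2aL}$ from \eqref{eq:alpha_k_A_k_def} to obtain $aL\,\alpha_{k+1}^2 = \tfrac{(k+2)^2}{4aL}$ and compare with the closed form just proved: the inequality $A_{k+1} \geq aL\,\alpha_{k+1}^2$ reduces to $(k+1)(k+4) \geq (k+2)^2$, i.e.\ $k^2 + 5k + 4 \geq k^2 + 4k + 4$, i.e.\ $k \geq 0$, which holds throughout the stated range. This finishes the proof.

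There is no real obstacle here; the computation is elementary. The only points requiring a little care are the index shift in the inductive hypothesis (the formula at level $k$ reads $A_k = \tfrac{k(k+3)}{4aL}$, not $\tfrac{(k+1)(k+4)}{4aL}$) and the off-by-one when evaluating the arithmetic series in the direct-summation variant. I would present the induction as the main argument and keep the summation identity as a quick cross-check.
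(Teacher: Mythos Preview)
Your proof is correct. The paper does not actually supply its own proof of this lemma---it is stated as Lemma~E.1 from \citep{gorbunov2020stochastic} and used without argument---so there is nothing to compare against beyond noting that your induction (or direct summation) and the one-line algebraic check $(k+1)(k+4)\ge (k+2)^2 \Leftrightarrow k\ge 0$ are exactly the expected elementary verification.
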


\paragraph{Bernstein inequality.} One of the final steps in our proofs is in the proper application of the following lemma known as {\it Bernstein inequality for martingale differences} \citep{bennett1962probability,dzhaparidze2001bernstein,freedman1975tail}.
\begin{lemma}\label{lem:Bernstein_ineq}
    Let the sequence of random variables $\{X_i\}_{i\ge 1}$ form a martingale difference sequence, i.e.\ $\EE\left[X_i\mid X_{i-1},\ldots, X_1\right] = 0$ for all $i \ge 1$. Assume that conditional variances $\sigma_i^2\eqdef\EE\left[X_i^2\mid X_{i-1},\ldots, X_1\right]$ exist and are bounded and assume also that there exists deterministic constant $c>0$ such that $|X_i| \le c$ almost surely for all $i\ge 1$. Then for all $b > 0$, $G > 0$ and $n\ge 1$
    \begin{equation}
        \PP\left\{\Big|\sum\limits_{i=1}^nX_i\Big| > b \text{ and } \sum\limits_{i=1}^n\sigma_i^2 \le G\right\} \le 2\exp\left(-\frac{b^2}{2G + \nicefrac{2cb}{3}}\right).
    \end{equation}
\end{lemma}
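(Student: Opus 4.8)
The plan is to prove this via the classical exponential-supermartingale (Chernoff) method: establish a one-step conditional moment-generating-function bound from the boundedness of the increments, assemble a nonnegative supermartingale whose compensator is the predictable quadratic variation, convert this into a deviation bound by Markov's inequality, then optimize the free parameter and symmetrize. Concretely, fix $\lambda>0$, put $\mathcal F_i=\sigma(X_1,\dots,X_i)$, $\mathcal F_0=\{\emptyset,\Omega\}$, and $\psi(\lambda)=(e^{\lambda c}-1-\lambda c)/c^2\ge 0$. Since $x\mapsto(e^{\lambda x}-1-\lambda x)/x^2$ (extended by $\lambda^2/2$ at $x=0$) is nondecreasing on $\R$, every $x\le c$ satisfies $e^{\lambda x}\le 1+\lambda x+\psi(\lambda)x^2$. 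Substituting $x=X_i$ (legitimate since $X_i\le c$ a.s.), taking $\EE[\cdot\mid\mathcal F_{i-1}]$, and using $\EE[X_i\mid\mathcal F_{i-1}]=0$, $\EE[X_i^2\mid\mathcal F_{i-1}]=\sigma_i^2$, $1+t\le e^t$ gives
\[
    \EE\!\left[e^{\lambda X_i}\mid\mathcal F_{i-1}\right]\;\le\;1+\psi(\lambda)\sigma_i^2\;\le\;\exp\!\big(\psi(\lambda)\sigma_i^2\big).
\]

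Because $\sigma_i^2$ is $\mathcal F_{i-1}$-measurable, $M_n:=\exp\!\big(\lambda\sum_{i=1}^nX_i-\psi(\lambda)\sum_{i=1}^n\sigma_i^2\big)$ with $M_0=1$ is a nonnegative supermartingale: $\EE[M_n\mid\mathcal F_{n-1}]=M_{n-1}e^{-\psi(\lambda)\sigma_n^2}\EE[e^{\lambda X_n}\mid\mathcal F_{n-1}]\le M_{n-1}$. Moreover $M_n\le e^{\lambda n c}$, so all quantities are integrable (this is where boundedness of the $\sigma_i^2$ is used — only for well-definedness), and hence $\EE M_n\le 1$. On the event $\mathcal A:=\{\sum_{i=1}^n\sigma_i^2\le G\}$ the nonnegativity of $\psi(\lambda)$ lets us replace the random compensator by the deterministic $\psi(\lambda)G$, i.e.\ $\lambda\sum_{i=1}^nX_i\le\ln M_n+\psi(\lambda)G$ on $\mathcal A$. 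Markov's inequality then yields
\[
    \PP\Big\{\textstyle\sum_{i=1}^nX_i>b,\ \mathcal A\Big\}\;\le\;\PP\big\{M_n>e^{\lambda b-\psi(\lambda)G}\big\}\;\le\;\exp\!\big(\psi(\lambda)G-\lambda b\big).
\]

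It remains to optimize and symmetrize. Minimizing $\psi(\lambda)G-\lambda b$ over $\lambda>0$ gives the stationary point $\lambda=c^{-1}\ln(1+cb/G)$ and the Bennett exponent $-\tfrac{G}{c^2}h\!\big(\tfrac{cb}{G}\big)$ with $h(u)=(1+u)\ln(1+u)-u$. Invoking the elementary bound $h(u)\ge u^2/\big(2(1+u/3)\big)$ for $u\ge 0$ turns this into $-\tfrac{b^2}{2G+2cb/3}$, so $\PP\{\sum X_i>b,\ \mathcal A\}\le\exp(-\tfrac{b^2}{2G+2cb/3})$. Applying the identical argument to the sequence $\{-X_i\}_{i\ge1}$ — which is again a martingale difference sequence with the same conditional variances $\sigma_i^2$ and with $|-X_i|\le c$ — bounds $\PP\{\sum X_i<-b,\ \mathcal A\}$ by the same quantity; a union bound over the two tails produces the factor $2$ in the statement.

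The scalar computations (the monotonicity of $(e^{\lambda x}-1-\lambda x)/x^2$, the Bennett optimization, and the inequality $h(u)\ge u^2/(2+2u/3)$) are routine. The genuine point — and the reason the statement couples $\{|\sum X_i|>b\}$ with $\{\sum\sigma_i^2\le G\}$ rather than giving a plain tail bound — is the second step: the event $\mathcal A$ controls the predictable quadratic variation \emph{from above}, which is precisely what makes the deterministic substitution $\psi(\lambda)\sum_{i=1}^n\sigma_i^2\le\psi(\lambda)G$ inside the supermartingale estimate legitimate without any a priori deterministic bound on $\sum_{i=1}^n\sigma_i^2$.
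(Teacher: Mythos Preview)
Your proof is correct. The paper does not supply its own proof of this lemma: it is stated as a known fact with citations to Bennett (1962), Dzhaparidze--van Zanten (2001), and Freedman (1975). What you wrote is precisely the classical Freedman-style argument behind those references --- the exponential supermartingale built from the one-step bound $\EE[e^{\lambda X_i}\mid\mathcal F_{i-1}]\le\exp(\psi(\lambda)\sigma_i^2)$, followed by Markov's inequality on the event $\{\sum\sigma_i^2\le G\}$, the Bennett optimization, and the reduction $h(u)\ge u^2/(2+2u/3)$ --- so it matches the literature the paper defers to.
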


\clearpage

\section{Proof of Lemma~\ref{lem:bias_and_variance_clip}}

\begin{lemma}[Lemma~\ref{lem:bias_and_variance_clip}]\label{lem:bias_variance}
    Let $X$ be a random vector in $\R^d$ and $\tX = \clip(X,\lambda)$. Then,
    \begin{equation}
        \left\|\tX - \EE[\tX]\right\| \leq 2\lambda. \label{eq:bound_X}
    \end{equation} 
    Moreover, if for some $\sigma \geq 0$ and $\alpha \in [1,2)$
    \begin{equation}
        \EE[X] = x\in\R^d,\quad \EE[\|X - x\|^{\alpha}] \leq \sigma^{\alpha} \label{eq:UBV_X}
    \end{equation}
    and $\|x\| \leq \nicefrac{\lambda}{2}$, then
    \begin{eqnarray}
        \left\|\EE[\tX] - x\right\| &\leq& \frac{2^{\alpha}\sigma^{\alpha}}{\lambda^{\alpha -1}}, \label{eq:bias_X}\\
        \EE\left[\left\|\tX - x\right\|^2\right] &\leq& 18\lambda^{2-\alpha}\sigma^{\alpha}, \label{eq:distortion_X}\\
        \EE\left[\left\|\tX - \EE[\tX]\right\|^2\right] &\leq& 18\lambda^{2-\alpha}\sigma^{\alpha}. \label{eq:variance_X}
    \end{eqnarray}
\end{lemma}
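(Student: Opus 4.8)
The plan is to prove the four inequalities in the order they are stated, since each builds on the previous ones. For the uniform bound \eqref{eq:bound_X}, the key observation is that $\|\tX\| \leq \lambda$ always holds by definition of the clipping operator (either $\tX = X$ with $\|X\| \leq \lambda$, or $\tX = \lambda X/\|X\|$ which has norm exactly $\lambda$). Hence $\|\EE[\tX]\| \leq \lambda$ by Jensen, and $\|\tX - \EE[\tX]\| \leq \|\tX\| + \|\EE[\tX]\| \leq 2\lambda$ by the triangle inequality. This part is immediate and requires no assumptions beyond the definition.

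For the bias bound \eqref{eq:bias_X}, I would write $\EE[\tX] - x = \EE[\tX - X]$ (using $\EE[X] = x$), so that $\|\EE[\tX] - x\| \leq \EE\|\tX - X\|$. Now $\tX - X$ is nonzero only on the event $\{\|X\| > \lambda\}$, and on that event $\|\tX - X\| = \|X\| - \lambda \leq \|X\|$. Since $\|x\| \leq \lambda/2$, the event $\{\|X\| > \lambda\}$ is contained in $\{\|X - x\| > \lambda/2\}$, and on this event $\|X\| \leq \|X - x\| + \|x\| \leq \|X-x\| + \lambda/2 \leq 2\|X - x\|$ (the last step because $\|X-x\| > \lambda/2$). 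Therefore $\|\tX - X\| \leq 2\|X-x\|\,\mathbbm{1}\{\|X-x\| > \lambda/2\}$, and taking expectations, $\EE\|\tX - X\| \leq 2\EE[\|X - x\|\,\mathbbm{1}\{\|X-x\|>\lambda/2\}]$. The standard trick to convert the $\alpha$-th moment into this truncated first moment is to bound $\|X-x\|\,\mathbbm{1}\{\|X-x\|>\lambda/2\} \leq \|X-x\|^\alpha (\lambda/2)^{1-\alpha}$ pointwise (valid since $\alpha \geq 1$ and on the event the ratio $\|X-x\|/(\lambda/2) \geq 1$), which gives $\EE\|\tX - X\| \leq 2 (\lambda/2)^{1-\alpha}\sigma^\alpha = 2^\alpha \sigma^\alpha/\lambda^{\alpha-1}$.

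For the second-moment distortion bound \eqref{eq:distortion_X}, I would split $\EE\|\tX - x\|^2$ according to whether $\|X\| \leq \lambda$ or not. On $\{\|X\| \leq \lambda\}$, $\tX = X$ so the contribution is $\EE[\|X-x\|^2\,\mathbbm{1}\{\|X\|\leq\lambda\}]$; here I would bound $\|X - x\| \leq \|X\| + \|x\| \leq 3\lambda/2$, so $\|X-x\|^2 \leq (3\lambda/2)^{2-\alpha}\|X-x\|^\alpha$, giving at most $(3\lambda/2)^{2-\alpha}\sigma^\alpha$. On $\{\|X\| > \lambda\}$, $\|\tX - x\| \leq \|\tX\| + \|x\| \leq 3\lambda/2$, and as in the bias argument this event is contained in $\{\|X-x\| > \lambda/2\}$, so $\|\tX-x\|^2\,\mathbbm{1}\{\|X\|>\lambda\} \leq (3\lambda/2)^2 \mathbbm{1}\{\|X-x\|>\lambda/2\} \leq (3\lambda/2)^2 (2/\lambda)^\alpha \|X-x\|^\alpha$, giving at most $(3\lambda/2)^2 (2/\lambda)^\alpha \sigma^\alpha$. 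Adding these and bounding the resulting numerical constants (using $\lambda^{2-\alpha} \geq \lambda^{2-\alpha}$ and $1 < \alpha \leq 2$, so $(3/2)^{2-\alpha} \leq 3/2$ and $(3/2)^2 2^\alpha \leq 9$) yields the constant $18$ in front of $\lambda^{2-\alpha}\sigma^\alpha$. Finally, \eqref{eq:variance_X} follows from \eqref{eq:distortion_X} since for any random vector $\EE\|\tX - \EE[\tX]\|^2 \leq \EE\|\tX - x\|^2$ (the mean minimizes expected squared distance). The main obstacle here is bookkeeping the numerical constants so that everything fits under $18$; the conceptual content is entirely in the truncation-to-moment interpolation inequality $t\,\mathbbm{1}\{t > s\} \leq t^\alpha s^{1-\alpha}$ and its squared analogue, and in using $\|x\| \leq \lambda/2$ to pass between the events $\{\|X\| > \lambda\}$ and $\{\|X - x\| > \lambda/2\}$.
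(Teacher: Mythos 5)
Your proof is correct and follows essentially the same route as the paper's: bound the clipped vector by $\lambda$ and use the triangle inequality for \eqref{eq:bound_X}, use $\|x\|\le\nicefrac{\lambda}{2}$ to pass from the clipping event $\{\|X\|>\lambda\}$ to $\{\|X-x\|>\nicefrac{\lambda}{2}\}$, interpolate the truncated moments against the $\alpha$-th moment for \eqref{eq:bias_X} and \eqref{eq:distortion_X}, and deduce \eqref{eq:variance_X} from \eqref{eq:distortion_X} by the variance decomposition. The only cosmetic difference is in \eqref{eq:bias_X}, where you apply the pointwise bound $t\,\mathbbm{1}\{t>s\}\le t^{\alpha}s^{1-\alpha}$ directly to $\EE\|\tX-X\|$, whereas the paper splits off the term $\|x\|\,\PP\{\|X-x\|>\nicefrac{\lambda}{2}\}$ and uses H\"older's plus Markov's inequalities; both yield the same constant $\nicefrac{2^{\alpha}\sigma^{\alpha}}{\lambda^{\alpha-1}}$.
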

\begin{proof}
    \textbf{Proof of \eqref{eq:bound_X}:} by definition of a clipping operator, we have 
    \begin{eqnarray*}
        \left\|\tX -\EE\left[\tX\right]\right\| &\leq& \left\|\tX\right\| + \left\|\EE\left[\tX\right]\right\|\\
        &=& \left\|\texttt{clip}(X, \lambda)\right\| + \left\|\EE\left[\texttt{clip}(X,\lambda)\right]\right\|\\
        &\leq& \left\|\min\left\{1,\frac{\lambda}{\left\|X\right\|}\right\}X\right\| + \EE\left[\left\|\min\left\{1,\frac{\lambda}{\left\|X\right\|}\right\}X\right\|\right]\\
        &=& \min\left\{\left\|X\right\|,\lambda\right\} + \EE\left[\min\left\{\left\|X\right\|,\lambda\right\}\right]\\
        &\leq& \lambda + \lambda = 2\lambda.
    \end{eqnarray*}

    \textbf{Proof of \eqref{eq:bias_X}}: To start the proof, we introduce two indicator random variables. Let 
    \begin{equation}
        \chi = \mathbb{I}_{\left\{X: \left\|X\right\| > \lambda\right\}} = \begin{cases} 1, & \text{if } \|X\| > \lambda,\\ 0, & \text{otherwise}\end{cases},~~\eta = \mathbb{I}_{\left\{X: \left\|X-x\right\| > \frac{\lambda}{2}\right\}} = \begin{cases} 1, & \text{if } \|X - x\| > \frac{\lambda}{2},\\ 0, & \text{otherwise}\end{cases}.
    \end{equation}
    Moreover, since $\|X\| \leq \|x\| + \|X-x\| \overset{\|x\| \leq \nicefrac{\lambda}{2}}{\leq} \frac{\lambda}{2} + \|X-x\|$, we have $\chi \leq \eta$. We are now in a position to show \eqref{eq:bias_X}. Using that
    \begin{equation*}
        \tX = \min\left\{1,\frac{\lambda}{\left\|X\right\|}\right\}X = \chi \frac{\lambda}{\left\|X\right\|} X + (1-\chi)X,
    \end{equation*}
    we obtain 
    \begin{eqnarray*}
        \left\|\EE\left[\tX\right] -x\right\| &=& \left\|\EE\left[X + \chi \left(\frac{\lambda}{\left\|X\right\|}-1\right) X\right] -x\right\|\\
        &=&\left\|\EE\left[\chi \left(\frac{\lambda}{\left\|X\right\|}-1\right) X\right] \right\|\\
        &\leq&\EE\left[\Big|\chi \left(\frac{\lambda}{\left\|X\right\|}-1\right)\Big|\left\|X\right\|\right]\\
        &=&\EE\left[\chi \left(1-\frac{\lambda}{\left\|X\right\|}\right)\left\|X\right\|\right].
    \end{eqnarray*}
    Since $1-\nicefrac{\lambda}{\left\|X\right\|}\in (0,1)$ when $\chi \ne 0$, we derive
    \begin{eqnarray*}
        \left\|\EE\left[\tX\right] -x\right\| &\leq& \EE\left[\chi \|X\|\right]\\
        &\overset{\chi\leq \eta}{\leq}& \EE\left[\eta \|X\|\right]\\
        &\leq& \EE\left[\eta \|X-x\| + \eta \|x\|\right]\\
        &\overset{(\ast)}{\leq}& \left(\EE\left[\left\|X-x\right\|^{\alpha}\right]\right)^{\nicefrac{1}{\alpha}}\left(\EE\left[\eta^{\frac{\alpha}{\alpha - 1}}\right]\right)^{\frac{\alpha - 1}{\alpha}} + \|x\|\EE\left[\eta\right]\\
        &\overset{\eqref{eq:UBV_X}}{\leq}&\sigma\left(\EE\left[\eta^{\frac{\alpha}{1-\alpha}}\right]\right)^{\frac{1-\alpha}{\alpha}} + \frac{\lambda}{2} \EE\left[\eta\right],
    \end{eqnarray*}
    where in $(\ast)$ we applied Hölder's inequality. By Markov's inequality,
    \begin{eqnarray}
    \label{eq:bound_eta}
        \EE\left[\eta^{\frac{\alpha}{1-\alpha}}\right] &=& \EE\left[\eta\right] = \mathbb{P}\left\{\|X-x\|>\frac{\lambda}{2}\right\} = \mathbb{P}\left\{\|X-x\|^{\alpha}>\frac{\lambda^{\alpha}}{2^{\alpha}}\right\}\notag\\
        &\leq& \frac{2^{\alpha}}{\lambda^{\alpha}}\EE\left[\|X-x\|^{\alpha}\right]\notag\\
        &\leq& \left(\frac{2\sigma}{\lambda}\right)^{\alpha}.
    \end{eqnarray}
    Thus, in combination with the previous chain of inequalities, we finally have
    \begin{eqnarray*}
        \left\|\EE\left[\tX\right] -x\right\| &\leq& \sigma \left(\frac{2\sigma}{\lambda}\right)^{\alpha-1} + \frac{\lambda}{2}\left(\frac{2\sigma}{\lambda}\right)^{\alpha} = \frac{2^{\alpha}\sigma^{\alpha}}{\lambda^{\alpha-1}}.
    \end{eqnarray*}

    \textbf{Proof of \eqref{eq:distortion_X}}: Using $\|\tX - x\| \leq \|\tX\| +\|x\| \leq \lambda + \frac{\lambda}{2} = \frac{3\lambda}{2}$, we have 
    \begin{eqnarray*}
        \EE\left[\|\tX - x\|^2\right] &=& \EE\left[\|\tX - x\|^{\alpha} \|\tX - x\|^{2-\alpha}\right]\\
        &\leq& \left(\frac{3\lambda}{2}\right)^{2-\alpha}\EE\left[\|\tX - x\|^{\alpha}\chi + \|\tX - x\|^{\alpha}(1-\chi)\right]\\
        &=& \left(\frac{3\lambda}{2}\right)^{2-\alpha}\EE\left[\chi\left\|\frac{\lambda}{\|X\|}X - x\right\|^{\alpha} + \|X - x\|^{\alpha}(1-\chi)\right]\\
        &\leq& \left(\frac{3\lambda}{2}\right)^{2-\alpha}\EE\left[\chi\left(\left\|\frac{\lambda}{\|X\|}X\right\| + \left\|x\right\|\right)^{\alpha} + \|X - x\|^{\alpha}(1-\chi)\right]\\
        &\overset{\|x\|\leq \frac{\lambda}{2}}{\leq}& \left(\frac{3\lambda}{2}\right)^{2-\alpha}\left(\EE\left[\chi\left(\frac{3\lambda}{2}\right)^{\alpha} + \sigma^{\alpha}\right]\right),
    \end{eqnarray*}
    where in the last inequality we applied \eqref{eq:UBV_X} and $1 - \chi \leq 1$. By \eqref{eq:bound_eta} and $\chi \leq \eta$, we obtain
    \begin{eqnarray*}
        \EE\left[\|\tX - x\|^2\right] &\leq&  \frac{9\lambda^2}{4}\left(\frac{2\sigma}{\lambda}\right)^{\alpha} + \left(\frac{3\lambda}{2}\right)^{2-\alpha}\sigma^{\alpha}\\
        &\leq& \frac{9\lambda^2}{4}\left(2^{\alpha} +\frac{2^{\alpha}}{3^{\alpha}}\right)\frac{\sigma^{\alpha}}{\lambda^{\alpha}}\\
        &\leq& 18 \lambda^{2-\alpha}\sigma^{\alpha}.
    \end{eqnarray*}

    \textbf{Proof of \eqref{eq:variance_X}}: Using variance decomposition and \eqref{eq:distortion_X}, we have 
    \begin{equation*}
        \EE\left[\left\|\tX - \EE[\tX]\right\|^2\right] \leq \EE\left[\|\tX - x\|^2\right] \leq 18 \lambda^{2-\alpha}\sigma^{\alpha}.
    \end{equation*}
\end{proof}

\clearpage

\section{Proof of Theorem~\ref{thm:SGD_high-prob_conv}}\label{appendix:failure_of_SGD}

In this section, we give an example of the problem for which \algname{SGD} without clipping leads to a weak high-probability convergence guarantee even under the strong assumption of bounded variance. Theorem below formally states our result, showing that, in the worst-case, the bound for \algname{SGD} scales worse than that of \algname{clipped-SGD} in terms of the probability $\beta$.

\begin{theorem}\label{thm:SGD_high-prob_conv_app}
    For any $\varepsilon > 0$, $\beta \in (0,1)$, and \algname{SGD} parameterized by the number of steps $K$ and stepsize $\gamma$, there exists problem \eqref{eq:min_problem} such that Assumptions~\ref{as:bounded_alpha_moment}, \ref{as:L_smoothness}, and \ref{as:str_cvx} hold with $\alpha = 2$, $0 < \mu \leq L$ and for the iterates produced by \algname{SGD} with any stepsize $0 < \gamma \leq \nicefrac{1}{\mu}$
    \begin{equation}
        \PP\left\{\|x^K - x^*\|^2 \geq \varepsilon\right\} \leq \beta \;\; \Longrightarrow\;\; K = \Omega\left(\frac{\sigma}{\mu\sqrt{\beta \varepsilon}}\right). \label{eq:SGD_high-prob_conv}
    \end{equation}
\end{theorem}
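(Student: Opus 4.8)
For given $\varepsilon,\beta,K,\gamma$ I would build a one‑dimensional quadratic $f(x)=\tfrac{\mu}{2}x^2$, so that $x^*=0$, $L=\mu$, and (once we fix any mean–zero noise of variance $\le\sigma^2$) Assumptions~\ref{as:bounded_alpha_moment}, \ref{as:L_smoothness}, \ref{as:str_cvx} hold on all of $\R^d$ with $\alpha=2$. With oracle $\nabla f_\xi(x)=\mu x+\xi$, \algname{SGD} becomes $x^{k+1}=qx^k-\gamma\xi^k$ with $q:=1-\gamma\mu\in[0,1)$, hence $x^K=q^Kx^0-\gamma\sum_{k=0}^{K-1}q^{K-1-k}\xi^k$. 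After disposing of the trivial situations ($K=0$, handled by starting with $\|x^0-x^*\|^2\ge\varepsilon$, and the case where $\nicefrac{\sigma}{(\mu\sqrt{\beta\varepsilon})}=\cO(1)$, where $K\ge1$ already gives the claimed $\Omega(\cdot)$), I would use the three–point noise $\xi^k\in\{-c,0,c\}$ with $\PP(\xi^k=\pm c)=2\beta$ and $c=\nicefrac{\sigma}{(2\sqrt\beta)}$ (so $\Var(\xi^k)=\sigma^2$; here $4\beta\le1$, the remaining $\beta$ being covered by the triviality reduction), and start the method at $x^0=\nicefrac{2\sqrt\varepsilon}{q^K}$ when $q>0$ (the degenerate $\gamma=\nicefrac1\mu$, i.e.\ $q=0$, is handled the same way with $x^0=0$).

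\textbf{The symmetry step.} Condition on the first $K-1$ steps and write $x^K=Y-\gamma\xi^{K-1}$, where $Y=q^Kx^0-\gamma\sum_{k=0}^{K-2}q^{K-1-k}\xi^k$ is independent of $\xi^{K-1}$. Since $\max\{|Y-\gamma c|,|Y+\gamma c|\}\ge\max\{|Y|,\gamma c\}$, on $\{\xi^{K-1}\neq0\}$ at least one of the two equiprobable signs produces $|x^K|\ge\max\{|Y|,\gamma c\}$, so
\begin{equation}
\PP\bigl\{\|x^K-x^*\|\ge\max\{|Y|,\gamma c\}\bigr\}\ \ge\ 2\beta. \notag
\end{equation}
If $\gamma c\ge\sqrt\varepsilon$, i.e.\ $\gamma\ge\nicefrac{2\sqrt{\beta\varepsilon}}{\sigma}$ (large stepsize), this immediately gives $\PP\{\|x^K-x^*\|^2\ge\varepsilon\}\ge2\beta>\beta$, so the hypothesis of \eqref{eq:SGD_high-prob_conv} fails and there is nothing to prove. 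Otherwise $\gamma<\nicefrac{2\sqrt{\beta\varepsilon}}{\sigma}$ (small stepsize), and the displayed bound yields $\PP\{\|x^K-x^*\|^2\ge\varepsilon\}\ge2\beta\,\PP\{|Y|\ge\sqrt\varepsilon\}$.

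\textbf{Small stepsize.} Here $\EE[x^K]=q^Kx^0=2\sqrt\varepsilon$, so $Y=2\sqrt\varepsilon+N'$ with $\EE[N']=0$ and $\Var(N')=\gamma^2\sigma^2\sum_{i=1}^{K-1}q^{2i}\le\gamma^2\sigma^2K$. Chebyshev's inequality gives $\PP\{|Y|\ge\sqrt\varepsilon\}\ge\PP\{|N'|\le\sqrt\varepsilon\}\ge1-\nicefrac{\gamma^2\sigma^2K}{\varepsilon}$. Consequently, if $K<\nicefrac{\varepsilon}{(2\gamma^2\sigma^2)}$ then $\PP\{|Y|\ge\sqrt\varepsilon\}>\tfrac12$ and $\PP\{\|x^K-x^*\|^2\ge\varepsilon\}>\beta$, contradicting the hypothesis; hence the hypothesis forces $K\ge\nicefrac{\varepsilon}{(2\gamma^2\sigma^2)}$, and using $\gamma<\nicefrac{2\sqrt{\beta\varepsilon}}{\sigma}$ this is $K=\Omega(\nicefrac1\beta)$. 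In the parameter range in which the bound is informative — namely $\beta\lesssim\nicefrac{\mu^2\varepsilon}{\sigma^2}$, which, as the comparison with \algname{clipped-SGD} after Theorem~\ref{thm:clipped_SGD_main_theorem} shows, is exactly the range where the theorem improves on clipped methods — one has $\nicefrac1\beta\ge\nicefrac{\sigma}{(\mu\sqrt{\beta\varepsilon})}$, so $K=\Omega(\nicefrac{\sigma}{(\mu\sqrt{\beta\varepsilon})})$ as claimed. (Alternatively, when $K\ge\nicefrac1{(\gamma\mu)}$ the bound $K\ge\nicefrac1{(\gamma\mu)}>\nicefrac{\sigma}{(2\mu\sqrt{\beta\varepsilon})}$ holds outright from the stepsize restriction, which covers the ``$K$ large'' part directly.)

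\textbf{Main obstacle.} The delicate point is the calibration in the large‑stepsize case: to make a \emph{single} stochastic step eject the iterate from the $\varepsilon$‑ball, the atom $c$ must be of order $\nicefrac{\sqrt\varepsilon}{\gamma}$, and the variance budget $c^2\,\PP(\xi\neq0)\le\sigma^2$ then forces $\PP(\xi\neq0)\lesssim\nicefrac{\gamma^2\sigma^2}{\varepsilon}$; requiring this to exceed $\beta$ is precisely the inequality $\gamma\gtrsim\nicefrac{\sqrt{\beta\varepsilon}}{\sigma}$ that separates the two cases — so the construction is genuinely ``tight'' and the two regimes must be glued along this threshold. One must additionally (i) check the boundary stepsize $\gamma=\nicefrac1\mu$ ($q=0$) separately, (ii) verify the Assumptions hold globally for the quadratic (immediate), and (iii) deal with ``moderate'' $\beta$: there the Chebyshev argument only delivers $K=\Omega(\nicefrac1\beta)$, and matching $\Omega(\nicefrac{\sigma}{(\mu\sqrt{\beta\varepsilon})})$ requires either restricting to sufficiently small $\beta$ (as in the statement of Theorem~\ref{thm:SGD_high-prob_conv}) or invoking the standard in‑expectation lower bound for strongly convex stochastic optimization, since in that range $\nicefrac{\sigma}{(\mu\sqrt{\beta\varepsilon})}=\cO(\nicefrac{\sigma^2}{(\mu^2\varepsilon)})$. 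Because the instance is allowed to depend on $(\varepsilon,\beta,K,\gamma)$, no single instance needs to work for all stepsizes simultaneously, which is what legitimizes the case split.
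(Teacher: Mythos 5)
Your construction (one-dimensional quadratic, adversarial three-point noise calibrated to the algorithm's parameters, case split along the threshold $\gamma\asymp\nicefrac{\sqrt{\beta\varepsilon}}{\sigma}$) is in the same family as the paper's, and your large-stepsize branch is essentially the paper's last-step-ejection argument: a single $\pm c$ kick of magnitude $\gamma c\geq\sqrt{\varepsilon}$ occurring with probability $>\beta$ contradicts the hypothesis, which forces $\gamma\lesssim\nicefrac{\sqrt{\beta\varepsilon}}{\sigma}$. Where you genuinely diverge is in how you convert the small-stepsize constraint into a lower bound on $K$. You place $x^0=\nicefrac{2\sqrt{\varepsilon}}{(1-\gamma\mu)^K}$ so that the deterministic mean of $x^K$ sits at $2\sqrt{\varepsilon}$, and then argue by Chebyshev anti-concentration that the accumulated noise must have variance $\gtrsim\varepsilon$ to pull the iterate inside the ball, giving $K\geq\nicefrac{\varepsilon}{(2\gamma^2\sigma^2)}=\Omega(\nicefrac{1}{\beta})$. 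The paper instead injects noise \emph{only} at step $K-1$, fixes $x^0$ far from $x^*$, and extracts the constraint $(1-\gamma\mu)^K|x^0|\leq\sqrt{\varepsilon}$, i.e.\ $K\geq\tfrac{1}{\gamma\mu}\ln\tfrac{|x^0|}{\sqrt{\varepsilon}}$, from the requirement that the \emph{deterministic} trajectory contract into the ball; multiplying by $\gamma\leq\nicefrac{2\sqrt{\beta\varepsilon}}{\sigma}$ yields $K=\Omega(\nicefrac{\sigma}{\mu\sqrt{\beta\varepsilon}})$ directly, for every $\beta$.

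This difference is not cosmetic: it is exactly the source of the gap you flag in item (iii). Your bound $\Omega(\nicefrac1\beta)$ dominates the target only when $\beta\lesssim\nicefrac{\mu^2\varepsilon}{\sigma^2}$; for moderate $\beta$ it is strictly weaker, and your two proposed patches do not close it. The parenthetical ``when $K\geq\nicefrac{1}{\gamma\mu}$ the bound holds outright'' assumes the very inequality that needs proving — your instance, by design, never requires the deterministic part to contract (it is pinned at $2\sqrt{\varepsilon}$ for all $K$), so nothing in your argument forces $K\geq\nicefrac{1}{\gamma\mu}$. Invoking the in-expectation minimax lower bound also does not work off the shelf: $\PP\{\|x^K-x^*\|^2\geq\varepsilon\}\leq\beta$ does not imply a bound on $\EE\|x^K-x^*\|^2$ without controlling the tail, and that lower bound is not specific to \algname{SGD} with a prescribed $(K,\gamma)$. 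The clean fix is precisely the paper's missing ingredient: add the contraction constraint by starting at $|x^0|\geq e\sqrt{\varepsilon}$ (say) with noise silent until the last step, so that the hypothesis forces both $K\geq\nicefrac{1}{\gamma\mu}$ and $\gamma\leq\nicefrac{2\sqrt{\beta\varepsilon}}{\sigma}$ simultaneously. A minor additional point: your atom probabilities $\PP(\xi^k=\pm c)=2\beta$ require $\beta\leq\nicefrac14$, and the ``triviality reduction'' you invoke does not cover $\beta\in(\nicefrac14,1)$ unless $\sigma\lesssim\mu\sqrt{\varepsilon}$; the paper avoids this by parameterizing the atoms as $\pm A$ with probability $\nicefrac{1}{2A^2}$ each, which is admissible for all $\beta$.
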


\begin{proof}
    To prove the above theorem, we consider the simple one-dimensional problem $f(x) = \nicefrac{\mu x^2}{2}$. It is easy to see that the considered problem is $\mu$-strongly convex, $\mu$-smooth, and has optimum at $x^* = 0$. We construct the noise in an adversarial way with respect to the parameters of the \algname{SGD}. Concretely, the noise depends on the number of iterates $N$, stepsize $\gamma$, target precision $\varepsilon$, the starting point $x^0$, and bound on the variance $\sigma^2$ such that 
    \begin{align*}
        \nabla f_{\xi_k} (x^k) = \mu x^k - \sigma z_k, 
    \end{align*}
    where 
    \begin{align}
    \label{eq:sgd_noise}
    z_k = 
        \begin{cases}
            0, & \text{if } k < K-1 \text{ or } (1 - \gamma \mu)^K |x^0| > \sqrt{\varepsilon} ,\\ 
            \begin{cases}
                - A,  & \text{with probability } \frac{1}{2A^2}, \\
                 0 & \text{with probability } 1 - \frac{1}{A^2},\\
                  A,  & \text{with probability } \frac{1}{2A^2}, \\
            \end{cases}
            &\text{otherwise}
        \end{cases}
        \quad \forall k \in \{0, 1, \ldots, K-1\},
    \end{align}
    where $A = \max\left\{\frac{2\sqrt{\varepsilon}}{\gamma\sigma} , 1\right\}$. We note that $\EE\left[z^k\right] = 0$. Therefore, $\EE\left[\nabla f_{\xi_k} (x^k)\right] = \mu x^k = \nabla f (x^k).$ Furthermore, 
    \begin{align*}
        \Var\left[z^k\right] = \EE\left[(z^k)^2\right] \leq \frac{1}{2A^2}  A^2 + \frac{1}{2A^2}  A^2 = 1, 
    \end{align*}
    which implies that Assumption~\ref{as:bounded_alpha_moment} holds for  $\alpha = 2$. We note that our construction depends on the parameters of the algorithm and the target value $\varepsilon$. However, our analysis of the methods with clipping works in such generality. 

    Let us now analyze the properties of the introduced problem. We are interested in the situation when  
    \begin{align*}
        \PP\left\{\|x^K - x^*\|^2 \geq \varepsilon\right\} \leq \beta
    \end{align*}
    for $\beta \in (0,1)$. We first prove that this implies that $(1 - \gamma\mu)^K |x^0| \leq \sqrt{\varepsilon}$. To do that we proceed by contradiction and assume that 
    \begin{equation}
        \label{eq:counterexample_proof_1}
        (1 - \gamma\mu)^K |x^0| > \sqrt{\varepsilon}.
    \end{equation}
    By construction, this implies that  $z_k = 0, \; \forall k \in \{0, 1, \ldots, K\}$. This, in turn, implies that $x^K = (1-\gamma\mu)^K x^0$, and, further, by \eqref{eq:counterexample_proof_1} and since $x^*=0$, that
    \begin{align*}
        \PP\left\{\|x^K - x^*\|^2 \geq \varepsilon\right\} = \PP\left\{\|x^K\|^2 \geq \varepsilon\right\} = 1.
    \end{align*}
    Thus, the contradiction shows that $(1 - \gamma\mu)^K |x^0| \leq \sqrt{\varepsilon}$, which yields $K \geq \frac{\ln\frac{\sqrt{\varepsilon}}{|x^0|}}{\ln(1-\gamma\mu)} \geq  \frac{\ln\frac{\sqrt{\varepsilon}}{|x^0|}}{-\gamma\mu} \geq \frac{1}{\gamma \mu} = \ln\frac{|x^0|}{\sqrt{\varepsilon}}$. Using \eqref{eq:sgd_noise} with $K \geq \frac{1}{\gamma\mu} \log\frac{|x^0|}{\sqrt{\varepsilon}}$, we obtain
    \begin{align*}
        \|x^K - x^*\|^2 = ((1 - \gamma\mu)^K x^0 + \gamma \sigma z_K)^2.
    \end{align*}
    % Let us now analyze the properties of the introduced problem. We are interested in the situation when  
    % \begin{align*}
    %     \PP\left\{\|x^K - x^*\|^2 \geq \varepsilon\right\} \leq \beta
    % \end{align*}
    % for $\beta \in (0,1)$, which implies that $(1 - \gamma\mu)^K |x^0| \leq \sqrt{\varepsilon}$, because otherwise we have $z_k = 0, \; \forall k \in \{0, 1, \ldots, K\}$, which would imply 
    % \begin{align*}
    %     \PP\left\{\|x^K - x^*\|^2 \geq \varepsilon\right\} = 1,
    % \end{align*}
    % since $x^K = (1-\gamma\mu)^K x^0$ in this case ($x^* = 0$). Therefore, it has to be the case $(1 - \gamma\mu)^K |x^0| \leq \sqrt{\varepsilon}$, which yields $K \geq \frac{1}{\gamma \mu} \ln\frac{|x^0|}{\sqrt{\varepsilon}}$. Using \eqref{eq:sgd_noise} with $K \geq \frac{1}{\gamma\mu} \log\frac{|x^0|}{\sqrt{\varepsilon}}$, we obtain
    % \begin{align*}
    %     \|x^K - x^*\|^2 = ((1 - \gamma\mu)^K x^0 + \gamma \sigma z_K)^2.
    % \end{align*}
    Furthermore,
    \begin{align*}
        \PP\left\{\|x^K - x^*\|^2 \geq \varepsilon\right\} &= \PP\left\{\left|(1 - \gamma\mu)^K  x^0 + \gamma \sigma z_K \right| \geq \sqrt{\varepsilon}\right\} \\
        &= \PP\left\{\gamma \sigma z_K \geq \sqrt{\varepsilon} - (1 - \gamma\mu)^K  x^0 \right\} + \PP\left\{\gamma \sigma z_K  \leq -\sqrt{\varepsilon} - (1 - \gamma\mu)^K  x^0\right\} \\
        &\geq \PP\left\{\gamma \sigma z_K \geq \sqrt{\varepsilon} + (1 - \gamma\mu)^K  x^0 \right\} + \PP\left\{\gamma \sigma z_K  \leq -\sqrt{\varepsilon} - (1 - \gamma\mu)^K  x^0\right\} \\
        &=  \PP\left\{\left|\gamma \sigma z_K\right| \geq \sqrt{\varepsilon} + (1 - \gamma\mu)^K  x^0 \right\} \\
        &\geq \PP\left\{\left|\gamma \sigma z_K\right| \geq 2\sqrt{\varepsilon} \right\} = \PP\left\{\left|z_K\right| \geq \frac{2\sqrt{\varepsilon}}{\gamma \sigma} \right\}.
    \end{align*}
    Now if $\frac{2\sqrt{\varepsilon}}{\gamma \sigma} < 1$ then $A = 1$. Therefore, 
    \begin{align*}
        1 = \PP\left\{\left|z_K\right| \geq \frac{2\sqrt{\varepsilon}}{\gamma \sigma} \right\} \leq \PP\left\{\|x^K - x^*\|^2 > \varepsilon\right\} < \beta, 
    \end{align*}
    yielding contradiction, which implies that if $\PP\left\{\|x^K - x^*\|^2 > \varepsilon\right\} < \beta$ for our constructed problem, then $\frac{2\sqrt{\varepsilon}}{\gamma \sigma} \geq 1$, i.e., $\gamma \leq \frac{2\sqrt{\varepsilon}}{ \sigma}$. For $\gamma \leq \frac{2\sqrt{\varepsilon}}{ \sigma}$, we have 
    \begin{align*}
         \beta \geq  \PP\left\{\|x^K - x^*\|^2 \geq \varepsilon\right\} \geq \PP\left\{\left|z_K\right| \geq \frac{2\sqrt{\varepsilon}}{\gamma \sigma} \right\} = \frac{1}{A^2} =  \frac{\gamma^2 \sigma^2}{4\varepsilon}.
    \end{align*}
    This implies that $\gamma \leq \frac{2\sqrt{\beta \varepsilon}}{\sigma}$. Combining this inequality with $K \geq \frac{1}{\gamma\mu} \log\frac{|x^0|}{\sqrt{\varepsilon}}$ yields
    \begin{align*}
        K \geq \frac{\sigma}{2\mu\sqrt{\beta \varepsilon}}\log\frac{|x^0|}{\sqrt{\varepsilon}}
    \end{align*}
    and concludes the proof.
    \end{proof}

\clearpage

\section{Missing Proofs for \algname{clipped-SGD}}\label{appendix:clipped_SGD}
In this section, we provide all the missing details and proofs of the results for \algname{clipped-SGD}. For brevity, we will use the following notation: $\tnabla f_{\xi^k}(x^k) = \clip(\nabla f_{\xi^k}(x^k), \lambda_k)$.

\begin{algorithm}[h]
\caption{Clipped Stochastic Gradient Descent (\algname{clipped-SGD}) \citep{pascanu2013difficulty}}
\label{alg:clipped-SGD}   
\begin{algorithmic}[1]
\REQUIRE starting point $x^0$, number of iterations $K$, stepsize $\gamma > 0$, clipping levels $\{\lambda_k\}_{k=0}^{K-1}$.
\FOR{$k=0,\ldots, K-1$}
\STATE Compute $\tnabla f_{\xi^k}(x^{k}) = \clip\left(\nabla f_{\xi^k}(x^{k}), \lambda_k\right)$ using a fresh sample $\xi^k \sim \cD_k$
\STATE $x^{k+1} = x^k - \gamma \tnabla f_{\xi^k}(x^{k})$
\ENDFOR
\ENSURE $x^K$ 
\end{algorithmic}
\end{algorithm}

\subsection{Non-Convex Functions}

We start the analysis of \algname{clipped-SGD} in the non-convex case with the following lemma that follows the proof of deterministic \algname{GD} and separates the stochasticity from the determinisitc part of the method.

\begin{lemma}\label{lem:main_opt_lemma_clipped_SGD_non_convex}
    Let Assumptions~\ref{as:lower_boundedness} and \ref{as:L_smoothness} hold on $Q = \{x \in \R^d\mid \exists y\in \R^d:\; f(y) \leq f_* + 2\Delta \text{ and } \|x-y\|\leq \nicefrac{\sqrt{\Delta}}{20\sqrt{L}}\}$, where $\Delta \geq \Delta_0 = f(x^0) - f_*$, and let stepsize  $\gamma$ satisfy $\gamma < \frac{2}{L}$. If $x^{k} \in Q$ for all $k = 0,1,\ldots,K$, $K \ge 0$, then after $K$ iterations of \algname{clipped-SGD} we have
    \begin{eqnarray}
        \gamma\left(1-\frac{L\gamma}{2}\right)\sum\limits_{k=0}^{K-1}\|\nabla f(x^k)\|^2 &\leq& (f(x^0) - f_*) - (f(x^K) - f_*) - \gamma(1-L\gamma)\sum\limits_{k=0}^{K-1}\langle \nabla f(x^k), \theta_k \rangle \notag\\
        && + \frac{L\gamma^2}{2} \sum\limits_{k=0}^{K-1} \|\theta_k\|^2,\label{eq:main_opt_lemma_clipped_SGD_non_convex}\\
        \theta_{k} &\eqdef& \tnabla f_{\xi^k}(x^{k}) - \nabla f(x^{k}).\label{eq:theta_k_def_clipped_SGD_non_convex}
    \end{eqnarray}
\end{lemma}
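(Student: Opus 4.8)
\textbf{Proof proposal for Lemma~\ref{lem:main_opt_lemma_clipped_SGD_non_convex}.}

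The plan is to mimic the standard descent-lemma argument for deterministic \algname{GD}, but carefully tracking the clipping-induced perturbation $\theta_k$. First I would apply the smoothness inequality \eqref{eq:L_smoothness_cor_1} at the points $x^k$ and $x^{k+1} = x^k - \gamma\tnabla f_{\xi^k}(x^k) = x^k - \gamma(\nabla f(x^k) + \theta_k)$; this is legitimate because by hypothesis $x^k, x^{k+1} \in Q$ and $Q$ is (checked to be) convex along the segments we use. This gives
\begin{equation*}
    f(x^{k+1}) \leq f(x^k) - \gamma\langle \nabla f(x^k), \nabla f(x^k) + \theta_k\rangle + \frac{L\gamma^2}{2}\|\nabla f(x^k) + \theta_k\|^2.
\end{equation*}
Expanding the inner product and the squared norm, the cross terms from the two pieces combine: the $-\gamma\|\nabla f(x^k)\|^2$ term and the $\frac{L\gamma^2}{2}\|\nabla f(x^k)\|^2$ term give $-\gamma(1 - \tfrac{L\gamma}{2})\|\nabla f(x^k)\|^2$; the $-\gamma\langle\nabla f(x^k),\theta_k\rangle$ and $L\gamma^2\langle\nabla f(x^k),\theta_k\rangle$ terms combine into $-\gamma(1 - L\gamma)\langle\nabla f(x^k),\theta_k\rangle$; and the last term is $\frac{L\gamma^2}{2}\|\theta_k\|^2$.

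Next I would rearrange to isolate the gradient-norm term,
\begin{equation*}
    \gamma\left(1 - \frac{L\gamma}{2}\right)\|\nabla f(x^k)\|^2 \leq (f(x^k) - f_*) - (f(x^{k+1}) - f_*) - \gamma(1 - L\gamma)\langle\nabla f(x^k),\theta_k\rangle + \frac{L\gamma^2}{2}\|\theta_k\|^2,
\end{equation*}
and then sum this over $k = 0, 1, \ldots, K-1$. The function-value differences telescope to $(f(x^0) - f_*) - (f(x^K) - f_*)$, and the remaining two sums are exactly the $\theta_k$-dependent terms in \eqref{eq:main_opt_lemma_clipped_SGD_non_convex}. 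The condition $\gamma < \tfrac{2}{L}$ ensures the coefficient $1 - \tfrac{L\gamma}{2}$ on the left is positive, so the inequality is meaningful.

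There is essentially no deep obstacle here — this lemma is deliberately the ``easy,'' deterministic-looking half of the argument (all the probabilistic work, i.e., bounding $\sum\langle\nabla f(x^k),\theta_k\rangle$ and $\sum\|\theta_k\|^2$ via Lemma~\ref{lem:bias_and_variance_clip} and Bernstein's inequality, and the inductive verification that the iterates stay in $Q$, comes afterward). The one point requiring a little care is the applicability of the smoothness inequality \eqref{eq:L_smoothness_cor_1}: it needs both $x^k$ and $x^{k+1}$ in $Q$ and the segment between them in $Q$; the first is assumed in the lemma's hypothesis ($x^k \in Q$ for all $k \le K$), and $Q$ as defined is a union of balls that one checks is ``thick enough'' and convex along the relevant directions, so the descent step never exits it. I would state this convexity/segment remark explicitly but not belabor it.
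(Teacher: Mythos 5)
Your proposal is correct and is essentially identical to the paper's proof: apply the descent inequality \eqref{eq:L_smoothness_cor_1} to the step $x^{k+1}=x^k-\gamma(\nabla f(x^k)+\theta_k)$, expand, collect the coefficients into $-\gamma(1-\nicefrac{L\gamma}{2})\|\nabla f(x^k)\|^2$ and $-\gamma(1-L\gamma)\langle\nabla f(x^k),\theta_k\rangle$, rearrange, and telescope the sum over $k$. Your side remark about the segment between $x^k$ and $x^{k+1}$ staying in $Q$ (because the clipped step has length at most $\gamma\lambda\leq\nicefrac{\sqrt{\Delta}}{20\sqrt{L}}$, which is exactly the thickness built into $Q$) is the right justification, which the paper leaves implicit.
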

\begin{proof}
    Using $x^{k+1} = x^k - \gamma \tnabla f_{\xi^k}(x^{k})$ and smoothness of $f$ (\ref{as:L_smoothness}) we get that for all $k = 0,1,\ldots, K-1$
\begin{eqnarray*}
        f(x^{k+1}) &\leq& f(x^k) + \langle \nabla f(x^k), x^{k+1}-x^k\rangle + \frac{L}{2}\|x^{k+1} - x^k\|^2\notag\\
        &=& f(x^k) - \gamma\langle \nabla f(x^k), \tnabla f_{\xi^k}(x^{k}) \rangle + \frac{L\gamma^2}{2}\|\tnabla f_{\xi^k}(x^{k})\|^2\notag\\ 
        &\overset{\eqref{eq:theta_k_def_clipped_SGD_non_convex}}{=}& f(x^k) - \gamma\|\nabla f(x^k)\|^2 - \gamma \langle\nabla f(x^k), \theta_k\rangle + \frac{L\gamma^2}{2}\|\theta_k\|^2\notag\\ 
        && + \frac{L\gamma^2}{2}\|\nabla f(x^k)\|^2 + L\gamma^2 \langle \nabla f(x^k), \theta_k\rangle\notag\\ 
        &=& f(x^k) - \gamma\left(1-\frac{L\gamma}{2}\right)\|\nabla f(x^k)\|^2 - \gamma(1-L\gamma)\langle \nabla f(x^k), \theta_k\rangle + \frac{L\gamma^2}{2}\|\theta_k\|^2.\label{eq:main_clipped_SGD_non_convex_technical_1}
\end{eqnarray*}
We rearrange the terms and get
\begin{eqnarray*}
        \gamma\left(1-\frac{L\gamma}{2}\right)\|\nabla f(x^k)\|^2 &\leq& f(x^{k}) - f(x^{k+1}) -  \gamma(1-L\gamma)\langle \nabla f(x^k), \theta_k\rangle + \frac{L\gamma^2}{2}\|\theta_k\|^2.\label{eq:main_clipped_SGD_non_convex_technical_2}
\end{eqnarray*}
Finally, summing up these inequalities for $k=0,\ldots,K-1$, we get
\begin{eqnarray*}
        \gamma\left(1-\frac{L\gamma}{2}\right)\sum\limits_{k=0}^{K-1}\|\nabla f(x^k)\|^2 &\leq& \sum\limits_{k=0}^{K-1}\left(f(x^{k}) -  f(x^{k+1})\right) -  \gamma(1-L\gamma)\sum\limits_{k=0}^{K-1}\langle \nabla f(x^k), \theta_k\rangle\notag\\ && + \frac{L\gamma^2}{2}\sum\limits_{k=0}^{K-1}\|\theta_k\|^2\\
        &=& (f(x^0) - f_*) - (f(x^K) - f_*) - \gamma(1-L\gamma)\sum\limits_{k=0}^{K-1}\langle \nabla f(x^k), \theta_k \rangle \notag\\
        && + \frac{L\gamma^2}{2} \sum\limits_{k=0}^{K-1} \|\theta_k\|^2,
\end{eqnarray*}
which concludes the proof.
\end{proof}

Using this lemma, we prove the main convergence result for \algname{clipped-SGD} in the non-convex case.
\begin{theorem}\label{thm:clipped_SGD_non_convex_main}
Let Assumptions~\ref{as:bounded_alpha_moment}, \ref{as:lower_boundedness}, \ref{as:L_smoothness} hold on $Q = \{x \in \R^d\mid \exists y\in \R^d:\; f(y) \leq f_* + 2\Delta \text{ and } \|x-y\|\leq \nicefrac{\sqrt{\Delta}}{20\sqrt{L}}\}$, where $\Delta \geq \Delta_0 = f(x^0) - f_*$, stepsize
\begin{gather}
        \gamma \leq \min\left\{\frac{1}{80L \ln \frac{4(K+1)}{\beta}}, \; \frac{\sqrt{\Delta}}{27^{\frac{1}{\alpha}}20 \sigma \sqrt{L} K^{\frac{1}{\alpha}}\left(\ln \frac{4(K+1)}{\beta}\right)^{\frac{\alpha-1}{\alpha}}}\right\},\label{eq:clipped_SGD_non_convex_step_size}
\end{gather}
and clipping level
 \begin{gather}
        \lambda_k = \lambda = \frac{\sqrt{\Delta}}{20\sqrt{L}\gamma\ln\frac{4(K+1)}{\beta}}, \label{eq:clipped_SGD_non_convex_clipping_level}
\end{gather}
for some $K \geq 0$ and $\beta \in (0,1]$ such that $\ln \frac{4(K+1)}{\beta} \geq 1$. Then, after $K$ iterations of \algname{clipped-SGD} the iterates with probability at least $1 - \beta$ satisfy
\begin{equation}
       \frac{1}{K+1}\sum\limits_{k=0}^{K}\|\nabla f(x^k)\|^2 \leq \frac{2\Delta}{\gamma\left(1-\frac{L\gamma}{2}\right)(K+1)}. \label{eq:clipped_SGD_non_convex_case_appendix}
\end{equation}
In particular, when $\gamma$ equals the minimum from \eqref{eq:clipped_SGD_non_convex_step_size}, then the iterates produced by \algname{clipped-SGD} after $K$ iterations with probability at least $1-\beta$ satisfy
    \begin{equation}
        \frac{1}{K+1}\sum\limits_{k=0}^{K}\|\nabla f(x^k)\|^2 = \cO\left(\max\left\{\frac{L\Delta \ln \frac{K}{\beta}}{K}, \frac{\sqrt{L\Delta} \sigma\ln^{\frac{\alpha-1}{\alpha}}\frac{K}{\beta}}{K^{\frac{\alpha-1}{\alpha}}}\right\}\right), \label{eq:clipped_SSTM_non_convex_case_2_appendix}
    \end{equation}
    meaning that to achieve $\frac{1}{K+1}\sum\limits_{k=0}^{K}\|\nabla f(x^k)\|^2 \leq \varepsilon$ with probability at least $1 - \beta$ \algname{clipped-SGD} requires
    \begin{equation}
        K = \cO\left(\max\left\{\frac{L\Delta}{\varepsilon} \ln \frac{L\Delta}{\beta\varepsilon}, \left(\frac{\sqrt{L\Delta}\sigma}{\varepsilon}\right)^{\frac{\alpha}{\alpha-1}} \ln \left(\frac{1}{\beta} \left(\frac{\sqrt{L\Delta}\sigma}{\varepsilon}\right)^{\frac{\alpha}{\alpha-1}}\right) \right\}\right)\quad \text{iterations/oracle calls.} \label{eq:clipped_SSTM_non_convex_case_complexity_appendix}
    \end{equation}
\end{theorem}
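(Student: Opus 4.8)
The plan is to run an induction over the iteration counter, combining the deterministic descent estimate of Lemma~\ref{lem:main_opt_lemma_clipped_SGD_non_convex} with the bias/variance control of the clipped estimator from Lemma~\ref{lem:bias_and_variance_clip} and a martingale concentration argument via Bernstein's inequality (Lemma~\ref{lem:Bernstein_ineq}). Write $A = \ln\frac{4(K+1)}{\beta} \geq 1$, $\Delta_k = f(x^k) - f_*$, and decompose $\theta_k = \theta_k^u + \theta_k^b$, where $\theta_k^b = \EE_{\xi^k}[\tnabla f_{\xi^k}(x^k)] - \nabla f(x^k)$ is the bias of the clipped gradient and $\theta_k^u = \tnabla f_{\xi^k}(x^k) - \EE_{\xi^k}[\tnabla f_{\xi^k}(x^k)]$ its zero-mean part. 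For $t = 0,1,\dots,K+1$ I would let the ``good event'' $E_t$ be the event that for every $l \leq t$ one has $\Delta_l \leq 2\Delta$ \emph{and} the running bound $\gamma(1-\tfrac{L\gamma}{2})\sum_{j=0}^{l-1}\|\nabla f(x^j)\|^2 \leq 2\Delta$; the events are nested, $E_{t+1}\subseteq E_t$, and $\PP(E_0)=1$ since $\Delta_0 \leq \Delta$. The goal is to prove $\PP(E_{K+1}) \geq 1 - \beta$: once this is shown, restricting to $E_{K+1}$ and reading off the running bound at $t=K+1$ gives \eqref{eq:clipped_SGD_non_convex_case_appendix}, and then \eqref{eq:clipped_SSTM_non_convex_case_2_appendix} and \eqref{eq:clipped_SSTM_non_convex_case_complexity_appendix} follow by substituting the explicit value of $\gamma$ from \eqref{eq:clipped_SGD_non_convex_step_size}, simplifying the logarithms, and solving the two branches for $K$.

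For the inductive step I would show $\PP(E_{t+1}) \geq \PP(E_t) - \tfrac{\beta}{K+1}$, so that $\PP(E_t) \geq 1 - \tfrac{t\beta}{K+1}$ by induction. On $E_t$ all of $x^0,\dots,x^t$ lie in $Q$ (take $y=x^l$ in the definition of $Q$, using $\Delta_l \le 2\Delta$); moreover $x^{t+1}$ also lies in $Q$, because $\|x^{t+1}-x^t\| \leq \gamma\lambda = \tfrac{\sqrt\Delta}{20\sqrt L A} \leq \tfrac{\sqrt\Delta}{20\sqrt L}$, so $x^{t+1}$ is within the fattening radius of the sublevel set containing $x^t$ --- this is exactly why $Q$ is taken to be a neighborhood of $\{f\le f_*+2\Delta\}$ and why the clipping level \eqref{eq:clipped_SGD_non_convex_clipping_level} carries that constant. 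Hence Assumption~\ref{as:L_smoothness} holds along $x^0,\dots,x^{t+1}$ and Lemma~\ref{lem:main_opt_lemma_clipped_SGD_non_convex} applies with $t+1$ in place of $K$. Also, on $E_t$ we get $\|\nabla f(x^l)\|^2 \leq 2L\Delta_l \leq 4L\Delta$ from \eqref{eq:L_smoothness_cor_2}, and $\gamma \leq \tfrac{1}{80LA}$ makes $\lambda \geq 4\sqrt{L\Delta} \geq 2\|\nabla f(x^l)\|$, so Lemma~\ref{lem:bias_and_variance_clip} yields $\|\theta_l^b\| \leq \tfrac{2^\alpha\sigma^\alpha}{\lambda^{\alpha-1}}$, $\EE_{\xi^l}[\|\theta_l^u\|^2] \leq 18\lambda^{2-\alpha}\sigma^\alpha$ and $\|\theta_l^u\| \leq 2\lambda$ for all $l\le t$. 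Plugging $\theta_k=\theta_k^u+\theta_k^b$ into \eqref{eq:main_opt_lemma_clipped_SGD_non_convex}, dropping $\Delta_{t+1}\ge0$ (resp.\ the nonnegative gradient sum) and using $\Delta_0\le\Delta$, it reduces to showing that, on $E_t$ and with probability at least $1-\tfrac{\beta}{K+1}$, the total of the four sums $\gamma\sum_l\langle\nabla f(x^l),\theta_l^b\rangle$, $L\gamma^2\sum_l\|\theta_l^b\|^2$, $L\gamma^2\sum_l\|\theta_l^u\|^2$, $\gamma\sum_l\langle\nabla f(x^l),\theta_l^u\rangle$ is at most $\Delta$; this delivers both $\Delta_{t+1}\le 2\Delta$ and the running gradient bound, i.e.\ $E_{t+1}$.

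The first two sums and the deterministic part $L\gamma^2\sum_l\EE_{\xi^l}\|\theta_l^u\|^2$ of the third are bounded termwise using $\|\nabla f(x^l)\| \leq 2\sqrt{L\Delta}$ and the bias/variance estimates; each is at most a constant times $(K+1)\gamma\sqrt{L\Delta}\,\tfrac{\sigma^\alpha}{\lambda^{\alpha-1}}$ (or the analogous $\sigma^{2\alpha}$/$\sigma^\alpha$ expressions), which after substituting $\lambda$ and the second branch $\gamma \leq \tfrac{\sqrt\Delta}{27^{1/\alpha}20\sigma\sqrt L K^{1/\alpha}A^{(\alpha-1)/\alpha}}$ of \eqref{eq:clipped_SGD_non_convex_step_size} collapses to a small numerical multiple of $\Delta$ --- the second branch and the constant $27^{1/\alpha}\cdot 20$ are tuned precisely for this. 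The two genuine martingale sums $\sum_l\langle\nabla f(x^l),\theta_l^u\rangle$ and $\sum_l(\|\theta_l^u\|^2-\EE_{\xi^l}\|\theta_l^u\|^2)$ are handled by Bernstein's inequality: on $E_t$ each increment is bounded a.s.\ ($|\langle\nabla f(x^l),\theta_l^u\rangle| \leq 4\sqrt{L\Delta}\,\lambda$ and $|\|\theta_l^u\|^2-\EE_{\xi^l}\|\theta_l^u\|^2| \leq 4\lambda^2$), and the sums of conditional variances are controlled by $\EE_{\xi^l}\|\theta_l^u\|^2 \leq 18\lambda^{2-\alpha}\sigma^\alpha$ together with $\|\nabla f(x^l)\|^2 \leq 4L\Delta$; substituting $\lambda$ and $\gamma$ one checks the variance proxies are of order $\Delta^2/(\gamma^2 A)$, so with threshold $b$ of order $\Delta/\gamma$ Bernstein gives failure probability at most $2\exp(-A) \leq \tfrac{\beta}{2(K+1)}$ for each sum, the logarithm $A$ being absorbed exactly because $\gamma$ and $\lambda$ carry the factor $1/A$. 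A union bound over the two sums gives the per-step failure probability $\tfrac{\beta}{K+1}$.

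The one genuinely delicate point --- and the step I expect to be the main obstacle --- is the circularity: verifying Bernstein's hypotheses (boundedness of increments and of conditional variances) relies on $\|\nabla f(x^l)\| \leq 2\sqrt{L\Delta}$, which holds only on $E_t$, the very event being established. The standard fix is to apply Bernstein not to the raw sums but to the ``frozen'' sequences $\mathbb 1_{E_l}\langle\nabla f(x^l),\theta_l^u\rangle$ and $\mathbb 1_{E_l}(\|\theta_l^u\|^2-\EE_{\xi^l}\|\theta_l^u\|^2)$: since $\mathbb 1_{E_l}$ is measurable with respect to the history prior to step $l$ and $E_0\supseteq E_1\supseteq\cdots$, these are bona fide bounded martingale difference sequences, Bernstein applies unconditionally, and on $E_t$ they agree with the original sums. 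Combining the deterministic bounds with the two concentration bounds yields $\PP(E_{t+1}) \geq \PP(E_t) - \tfrac{\beta}{K+1}$; hence $\PP(E_{K+1}) \geq 1-\beta$, which gives \eqref{eq:clipped_SGD_non_convex_case_appendix}. Finally, inserting the two branches of $\gamma$ into $\tfrac{2\Delta}{\gamma(1-L\gamma/2)(K+1)}$ (using $1-\tfrac{L\gamma}{2}\ge\tfrac12$) and solving for $K$ produces \eqref{eq:clipped_SSTM_non_convex_case_2_appendix} and \eqref{eq:clipped_SSTM_non_convex_case_complexity_appendix}; everything remaining is constant- and logarithm-chasing.
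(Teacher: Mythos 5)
Your proposal is correct and follows essentially the same route as the paper's proof of Theorem~\ref{thm:clipped_SGD_non_convex_main}: induction on a nested good event combining $\Delta_t\le 2\Delta$ with the running stochastic-sum bound, the decomposition $\theta_k=\theta_k^u+\theta_k^b$ controlled by Lemma~\ref{lem:bias_and_variance_clip}, Bernstein for the two martingale sums and termwise bounds for the bias terms. The only cosmetic difference is how the circularity is defused: the paper replaces $\nabla f(x^l)$ by an a.s.-bounded truncation $\eta_l$ (which equals $\nabla f(x^l)$ on the good event) and uses the ``either the variance proxy exceeds $G$ or the sum is small'' form of Bernstein, whereas you multiply the increments by $\mathbb{1}_{E_l}$; these devices are interchangeable here.
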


\begin{proof}
    Let $\Delta_k = f(x^k) - f_*$ for all $k\geq 0$. %We first show by induction that for all $k\geq 0$ the iterates $x^{k+1}$ lie in $Q$. The induction base is trivial $x^0 \in Q$ due to the definition $Q$. Next, assume that $x^{l} \in Q$ for some $l\ge 1$. 
    Next, our goal is to show by induction that $\Delta_{l} \leq 2\Delta$ with high probability, which allows to apply the result of Lemma~\ref{lem:main_opt_lemma_clipped_SGD_non_convex} and then use Bernstein's inequality to estimate the stochastic part of the upper-bound. More precisely, for each $k = 0,\ldots, K+1$ we consider probability event $E_k$ defined as follows: inequalities
    \begin{eqnarray}
        \frac{L\gamma^2}{2}\sum\limits_{l=0}^{t-1}\|\theta_l\|^2 - \gamma(1-L\gamma)\sum\limits_{l=1}^{t-1}\langle \nabla f(x^l), \theta_l\rangle &\leq& \Delta, \label{eq:clipped_SGD_non_convex_induction_inequality_1}\\
        \Delta_t &\leq& 2\Delta \label{eq:clipped_SGD_non_convex_induction_inequality_2}
    \end{eqnarray}
    hold for all $t = 0,1,\ldots, k$ simultaneously. We want to prove via induction that $\PP\{E_k\} \geq 1 - \nicefrac{k\beta}{(K+1)}$ for all $k = 0,1,\ldots, K+1$. For $k = 0$ the statement is trivial. Assume that the statement is true for some $k = T - 1 \leq K$: $\PP\{E_{T-1}\} \geq 1 - \nicefrac{(T-1)\beta}{(K+1)}$. One needs to prove that $\PP\{E_{T}\} \geq 1 - \nicefrac{T\beta}{(K+1)}$. First, we notice that probability event $E_{T-1}$ implies that $\Delta_t \leq 2\Delta$ for all $t = 0,1,\ldots, T-1$, i.e., $x^t \in \{y \in \R^d\mid f(y) \leq f_* + 2\Delta\}$ for $t = 0,1,\ldots, T-1$. Moreover, due to the choice of clipping level $\lambda$ we have
    \begin{equation*}
        \|x^{T} - x^{T-1}\| = \gamma \|\tnabla f_{\xi^{T-1}}(x^{T-1})\| \leq \gamma \lambda \overset{\eqref{eq:clipped_SGD_non_convex_clipping_level}}{=} \frac{\sqrt{\Delta}}{20\sqrt{L}\ln\frac{4K}{\beta}} \leq \frac{\sqrt{\Delta}}{20\sqrt{L}}.
    \end{equation*}

    Therefore, $E_{T-1}$ implies $\{x^k\}_{k=0}^{T} \subseteq Q$, meaning that the assumptions of Lemma~\ref{lem:main_opt_lemma_clipped_SGD_non_convex} are satisfied and we have
    \begin{eqnarray}
        A\sum\limits_{l=0}^{t-1}\|\nabla f(x^l)\|^2 &\leq& \Delta_0 - \Delta_t - \gamma(1-L\gamma)\sum\limits_{k=0}^{t-1}\langle \nabla f(x^l), \theta_l \rangle + \frac{L\gamma^2}{2} \sum\limits_{l=0}^{t-1} \|\theta_l\|^2,\label{eq:clipped_SGD_non_convex_technical_1}\\
        A &\eqdef& \gamma\left(1-\frac{L \gamma}{2}\right) \overset{\eqref{eq:clipped_SGD_non_convex_step_size}}{\geq} 0
    \end{eqnarray}
    for all $t = 0,1,\ldots, T$ simultaneously and for all $t = 1,\ldots, T-1$ this probability event also implies that
    \begin{eqnarray}
        \sum\limits_{l=0}^{t-1}\|\nabla f(x^l)\|^2 \overset{ \eqref{eq:clipped_SGD_non_convex_technical_1}}{\leq} \frac{1}{A} \left(\Delta - \gamma(1-L\gamma)\sum\limits_{k=0}^{t-1}\langle \nabla f(x^l), \theta_l \rangle + \frac{L\gamma^2}{2} \sum\limits_{l=0}^{t-1} \|\theta_l\|^2 \right) \overset{\eqref{eq:clipped_SGD_non_convex_induction_inequality_1}, }{\leq} \frac{2\Delta}{A} .\label{eq:clipped_SGD_non_convex_technical_1_1}
    \end{eqnarray}
    Taking into account that $A\sum\limits_{t=0}^{T-1}\|\nabla f(x^t)\|^2 \geq 0$, we also derive that $E_{T-1}$ implies
    \begin{eqnarray}
        \Delta_T \leq \Delta + \frac{L\gamma^2}{2}\sum\limits_{t=0}^{T-1}\|\theta_t\|^2 - \gamma(1-L\gamma)\sum\limits_{t=0}^{T-1}\langle \nabla f(x^t), \theta_t\rangle. \label{eq:clipped_SGD_non_convex_technical_2}
    \end{eqnarray}
    Next, we define random vectors
    \begin{equation}
        \eta_t = \begin{cases} \nabla f(x^t),& \text{if } \|\nabla f(x^t)\| \leq 2\sqrt{L\Delta},\\ 0,&\text{otherwise}, \end{cases} \notag
    \end{equation}
    for all $t = 0,1,\ldots, T-1$. By definition these random vectors are bounded with probability 1
    \begin{equation}
        \|\eta_t\| \leq 2\sqrt{L\Delta}. \label{eq:clipped_SGD_non_convex_technical_6}
    \end{equation}
    Moreover, for $t = 1,\ldots, T-1$ event $E_{T-1}$ implies
    \begin{eqnarray}
        \|\nabla f(x^{l})\| \overset{\eqref{eq:L_smoothness_cor_2}}{\leq} \sqrt{2L(f(x^l) - f_*)} = \sqrt{2L\Delta_l} \leq  2\sqrt{L\Delta} \overset{\eqref{eq:clipped_SGD_non_convex_step_size},\eqref{eq:clipped_SGD_non_convex_clipping_level}}{\leq} \frac{\lambda}{2},\label{eq:clipped_SGD_non_convex_technical_4} 
    \end{eqnarray}
    meaning that $E_{T-1}$ implies that $\eta_t = \nabla f(x^t)$ for all $t = 0,1,\ldots, T-1$. Next, we define the unbiased part and the bias of $\theta_{t}$ as $\theta_{t}^u$ and $\theta_{t}^b$, respectively:
    \begin{equation}
        \theta_{t}^u = \tnabla f_{\xi^t}(x^{t}) - \EE_{\xi^t}\left[\tnabla f_{\xi^t}(x^{t})\right],\quad \theta_{t}^b = \EE_{\xi^t}\left[\tnabla f_{\xi^t}(x^{t})\right] - \nabla f(x^{t}). \label{eq:clipped_SGD_non_convex_theta_u_b}
    \end{equation}
    We notice that $\theta_{t} = \theta_{t}^u + \theta_{t}^b$. Using new notation, we get that $E_{T-1}$ implies
    \begin{eqnarray}
        \Delta_T 
        &\leq& \Delta \underbrace{-\gamma(1-L\gamma)\sum\limits_{t=0}^{T-1}\langle \theta_t^u, \eta_t\rangle}_{\circledOne}  \underbrace{-\gamma(1-L\gamma)\sum\limits_{t=0}^{T-1}\langle \theta_t^b, \eta_t\rangle}_{\circledTwo} + \underbrace{L\gamma^2\sum\limits_{t=0}^{T-1}\left(\left\|\theta_{t}^u\right\|^2 - \EE_{\xi^t}\left[\left\|\theta_{t}^u\right\|^2\right]\right)}_{\circledThree}\notag\\
        &&\quad + \underbrace{L\gamma^2\sum\limits_{t=0}^{T-1}\EE_{\xi^t}\left[\left\|\theta_{t}^u\right\|^2\right]}_{\circledFour} + \underbrace{L\gamma^2\sum\limits_{t=0}^{T-1}\left\|\theta_{t}^b\right\|^2}_{\circledFive}. \label{eq:clipped_SGD_non_convex_technical_7}
    \end{eqnarray}
    It remains to derive good enough high-probability upper-bounds for the terms $\circledOne, \circledTwo, \circledThree, \circledFour, \circledFive$, i.e., to finish our inductive proof we need to show that $\circledOne + \circledTwo + \circledThree + \circledFour + \circledFive \leq \Delta$ with high probability. In the subsequent parts of the proof, we will need to use many times the bounds for the norm and second moments of $\theta_{t}^u$ and $\theta_{t}^b$. First, by definition of clipping operator, we have with probability $1$ that
     \begin{equation}
        \|\theta_{t}^u\| \leq 2\lambda. \label{eq:clipped_SGD_non_convex_norm_theta_u_bound}
    \end{equation}
    Moreover, since $E_{T-1}$ implies that $\|\nabla f(x^{t})\| \leq \nicefrac{\lambda}{2}$ for $t = 0,1,\ldots,T-1$ (see \eqref{eq:clipped_SGD_non_convex_technical_4}), then, in view of Lemma~\ref{lem:bias_and_variance_clip}, we have that $E_{T-1}$ implies
    \begin{eqnarray}
        \|\theta_{t}^b\| &\leq& \frac{2^\alpha\sigma^\alpha}{\lambda^{\alpha-1}}, \label{eq:clipped_SGD_non_convex_norm_theta_b_bound} \\
        \EE_{\xi^t}\left[\|\theta_{t}^u\|^2\right] &\leq& 18 \lambda^{2-\alpha}\sigma^\alpha. \label{eq:clipped_SGD_non_convex_second_moment_theta_u_bound}
    \end{eqnarray}

\textbf{Upper bound for $\circledOne$.} By definition of $\theta_{t}^u$, we have $\EE_{\xi^t}[\theta_{t}^u] = 0$ and
    \begin{equation}
        \EE_{\xi^t}\left[-\gamma(1-L\gamma)\langle\theta_t^u, \eta_t\rangle\right] = 0. \notag
    \end{equation}
    Next, sum $\circledOne$ has bounded with probability $1$ terms:
    \begin{equation}
        |\gamma(1-L\gamma)\left\la \theta_{t}^u, \eta_t\right\ra| \overset{\eqref{eq:clipped_SGD_non_convex_step_size}}{\leq} \gamma \|\theta_{t}^u\| \cdot \|\eta_t\| \overset{\eqref{eq:clipped_SGD_non_convex_technical_6},\eqref{eq:clipped_SGD_non_convex_norm_theta_u_bound}}{\leq} 4\gamma \lambda \sqrt{L \Delta}\overset{\eqref{eq:clipped_SGD_non_convex_clipping_level}}{=} \frac{\Delta}{5\ln\frac{4(K+1)}{\beta}} \eqdef c. \label{eq:clipped_SGD_non_convex_technical_8} 
    \end{equation}
    The summands also have bounded conditional variances $\sigma_t^2 \eqdef \EE_{\xi^t}[\gamma^2(1-L\gamma)^2\langle\theta_t^u, \eta_t\rangle^2]$:
    \begin{equation}
        \sigma_t^2 \leq \EE_{\xi^t}\left[\gamma^2(1-L\gamma)^2\|\theta_{t}^u\|^2\cdot \|\eta_t\|^2\right] \overset{\eqref{eq:clipped_SGD_non_convex_technical_6}}{\leq} 4\gamma^2(1-L\gamma)^2L\Delta \EE_{\xi^t}\left[\|\theta_{t}^u\|^2\right]\overset{\eqref{eq:clipped_SGD_non_convex_step_size}}{\leq} 4\gamma^2L\Delta \EE_{\xi^t}\left[\|\theta_{t}^u\|^2\right]. \label{eq:clipped_SGD_non_convex_technical_9}
    \end{equation}
    In other words, we showed that $\{-\gamma(1-L\gamma)\left\la \theta_{t}^u, \eta_t\right\ra\}_{t=0}^{T-1}$ is a bounded martingale difference sequence with bounded conditional variances $\{\sigma_t^2\}_{t=0}^{T-1}$. Next, we apply Bernstein's inequality (Lemma~\ref{lem:Bernstein_ineq}) with $X_t = -\gamma(1-L\gamma)\left\la \theta_{t}^u, \eta_t\right\ra$, parameter $c$ as in \eqref{eq:clipped_SGD_non_convex_technical_8}, $b = \frac{\Delta}{5}$, $G = \frac{\Delta^2}{150\ln\frac{4(K+1)}{\beta}}$:
    \begin{equation*}
        \PP\left\{|\circledOne| > \frac{\Delta}{5}\quad \text{and}\quad \sum\limits_{t=0}^{T-1} \sigma_{t}^2 \leq \frac{\Delta^2}{150\ln\frac{4(K+1)}{\beta}}\right\} \leq 2\exp\left(- \frac{b^2}{2G + \nicefrac{2cb}{3}}\right) = \frac{\beta}{2(K+1)}.
    \end{equation*}
    Equivalently, we have
    \begin{equation}
        \PP\left\{ E_{\circledOne} \right\} \geq 1 - \frac{\beta}{2(K+1)},\quad \text{for}\quad E_{\circledOne} = \left\{ \text{either} \quad  \sum\limits_{t=0}^{T-1} \sigma_{t}^2 > \frac{\Delta^2}{150\ln\frac{4(K+1)}{\beta}} \quad \text{or}\quad |\circledOne| \leq \frac{\Delta}{5}\right\}. \label{eq:clipped_SGD_non_convex_sum_1_upper_bound}
    \end{equation}
    In addition, $E_{T-1}$ implies that
    \begin{eqnarray}
        \sum\limits_{t=0}^{T-1} \sigma_{t}^2 &\overset{\eqref{eq:clipped_SGD_non_convex_technical_9}}{\leq}& 4\gamma^2L\Delta \sum\limits_{t=0}^{T-1}  \EE_{\xi^t}\left[\|\theta_{t}^u\|^2\right] \overset{\eqref{eq:clipped_SGD_non_convex_second_moment_theta_u_bound}}{\leq} 72\gamma^2 L \Delta \sigma^{\alpha}T \lambda^{2-\alpha}\notag\\
        &\overset{\eqref{eq:clipped_SGD_non_convex_clipping_level}}{=}& \frac{9 \cdot 20^{\alpha}\sqrt{\Delta}^{4-\alpha}\sigma^\alpha T\sqrt{L}^\alpha\gamma^{\alpha}}{50 \ln^{2-\alpha}\frac{4(K+1)}{\beta}} \overset{\eqref{eq:clipped_SGD_non_convex_step_size}}{\leq} \frac{\Delta^2}{150 \ln\frac{4(K+1)}{\beta}}. \label{eq:clipped_SGD_non_convex_sum_1_variance_bound}
    \end{eqnarray}

\textbf{Upper bound for $\circledTwo$.} From $E_{T-1}$ it follows that
\begin{eqnarray}
        \circledTwo &=& -\gamma(1-L\gamma)\sum\limits_{t=0}^{T-1}\langle \theta_t^b, \eta_t \rangle \overset{\eqref{eq:clipped_SGD_non_convex_step_size}}{\leq} \gamma\sum\limits_{t=0}^{T-1}\|\theta_{t}^b\|\cdot \|\eta_t\| \overset{\eqref{eq:clipped_SGD_non_convex_technical_6},\eqref{eq:clipped_SGD_non_convex_norm_theta_b_bound}}{\leq}  \frac{2\cdot 2^\alpha \gamma \sigma^\alpha T \sqrt{L \Delta}}{\lambda^{\alpha-1}} \notag\\ &\overset{\eqref{eq:clipped_SGD_non_convex_clipping_level}}{=}& \frac{40^\alpha}{10} \cdot \frac{\sigma^\alpha T \sqrt{\Delta}^{2-\alpha}\sqrt{L}^\alpha\gamma^\alpha}{\ln^{1-\alpha}\frac{4(K+1)}{\beta}}\overset{\eqref{eq:clipped_SGD_non_convex_step_size}}{\leq} \frac{\Delta}{5}. \label{eq:clipped_SGD_non_convex_sum_2_upper_bound}
\end{eqnarray}

\textbf{Upper bound for $\circledThree$.} First, we have
    \begin{equation}
        \EE_{\xi^t}\left[L\gamma^2\left(\left\|\theta_{t}^u\right\|^2 - \EE_{\xi^t}\left[\left\|\theta_{t}^u\right\|^2\right]\right)\right] = 0. \notag
    \end{equation}
    Next, sum $\circledThree$ has bounded with probability $1$ terms:
    \begin{eqnarray}
        \left|L\gamma^2\left(\left\|\theta_{t}^u\right\|^2 - \EE_{\xi^t}\left[\left\|\theta_{t}^u\right\|^2\right]\right)\right| &\leq& L\gamma^2\left( \|\theta_{t}^u\|^2 +   \EE_{\xi^t}\left[\left\|\theta_{t}^u\right\|^2\right]\right)\notag\\
        &\overset{\eqref{eq:clipped_SGD_non_convex_norm_theta_u_bound}}{\leq}& 8L\gamma^2\lambda^2\overset{\eqref{eq:clipped_SGD_non_convex_clipping_level}}{=} \frac{\Delta}{50\ln^2\frac{4(K+1)}{\beta}} \le \frac{\Delta}{5 \ln\frac{4(K+1)}{\beta}}\eqdef c. \label{eq:clipped_SGD_non_convex_technical_10}
    \end{eqnarray}
    The summands also have bounded conditional variances $\widetilde\sigma_t^2 \eqdef \EE_{\xi^t}\left[L^2\gamma^4\left(\left\|\theta_{t}^u\right\|^2 - \EE_{\xi^t}\left[\left\|\theta_{t}^u\right\|^2\right]\right)^2\right]$:
    \begin{eqnarray}
        \widetilde\sigma_t^2 &\overset{\eqref{eq:clipped_SGD_non_convex_technical_10}}{\leq}& \frac{\Delta}{5 \ln\frac{4(K+1)}{\beta}} \EE_{\xi^t}\left[L\gamma^2\left|\left\|\theta_{t}^u\right\|^2 - \EE_{\xi^t}\left[\left\|\theta_{t}^u\right\|^2\right]\right|\right] \leq \frac{2L\gamma^2 \Delta}{5\ln\frac{4(K+1)}{\beta}} \EE_{\xi^t}\left[\|\theta_{t}^u\|^2\right], \label{eq:clipped_SGD_non_convex_technical_11}
    \end{eqnarray}
    since $\ln\frac{4K}{\beta} \geq 1$. In other words, we showed that $\left\{L\gamma^2\left(\left\|\theta_{t}^u\right\|^2 - \EE_{\xi^t}\left[\left\|\theta_{t}^u\right\|^2\right]\right)\right\}_{t=0}^{T-1}$ is a bounded martingale difference sequence with bounded conditional variances $\{\widetilde\sigma_t^2\}_{t=0}^{T-1}$. Next, we apply Bernstein's inequality (Lemma~\ref{lem:Bernstein_ineq}) with $X_t = L\gamma^2\left(\left\|\theta_{t}^u\right\|^2 - \EE_{\xi^t}\left[\left\|\theta_{t}^u\right\|^2\right]\right)$, parameter $c$ as in \eqref{eq:clipped_SGD_non_convex_technical_10}, $b = \frac{\Delta}{5}$, $G = \frac{\Delta^2}{150\ln\frac{4(K+1)}{\beta}}$:
    \begin{equation*}
        \PP\left\{|\circledThree| > \frac{\Delta}{5}\quad \text{and}\quad \sum\limits_{t=0}^{T-1} \widetilde\sigma_{t}^2 \leq \frac{\Delta^2}{150\ln\frac{4(K+1)}{\beta}}\right\} \leq 2\exp\left(- \frac{b^2}{2G + \nicefrac{2cb}{3}}\right) = \frac{\beta}{2(K+1)}.
    \end{equation*}
    Equivalently, we have
    \begin{equation}
        \PP\left\{ E_{\circledThree} \right\} \geq 1 - \frac{\beta}{2(K+1)},\quad \text{for}\quad E_{\circledThree} = \left\{ \text{either} \quad  \sum\limits_{t=0}^{T-1} \widetilde\sigma_{t}^2 > \frac{\Delta^2}{150\ln\frac{4(K+1)}{\beta}} \quad \text{or}\quad |\circledThree| \leq \frac{\Delta}{5}\right\}. \label{eq:clipped_SGD_non_convex_sum_3_upper_bound}
    \end{equation}
    In addition, $E_{T-1}$ implies that
    \begin{eqnarray}
        \sum\limits_{t=0}^{T-1} \widetilde\sigma_{t}^2 &\overset{\eqref{eq:clipped_SGD_non_convex_technical_11}}{\leq}& \frac{2L\gamma^2\Delta}{5\ln\frac{4(K+1)}{\beta}} \sum\limits_{t=0}^{T-1}  \EE_{\xi^t}\left[\|\theta_{t}^u\|^2\right] \overset{\eqref{eq:clipped_SGD_non_convex_second_moment_theta_u_bound}}{\leq}  
        \frac{36L\gamma^2\Delta\lambda^{2-\alpha}\sigma^{\alpha}T}{5\ln\frac{4(K+1)}{\beta}}\notag\\
        &\overset{\eqref{eq:clipped_SGD_non_convex_clipping_level}}{=}&
        \frac{9\cdot 20^\alpha}{500}\cdot\frac{\sigma^\alpha T \sqrt{\Delta}^{4-\alpha}\sqrt{L}^{\alpha}\gamma^{\alpha}}{\ln^{3-\alpha}\frac{4(K+1)}{\beta}} \overset{\eqref{eq:clipped_SGD_non_convex_step_size}}{\leq} \frac{\Delta^2}{150 \ln\frac{4(K+1)}{\beta}}. \label{eq:clipped_SGD_non_convex_sum_3_variance_bound}
    \end{eqnarray}

\textbf{Upper bound for $\circledFour$.} From $E_{T-1}$ it follows that
\begin{eqnarray}
        \circledFour &=& L\gamma^2\sum\limits_{t=0}^{T-1}\EE_{\xi^t}\left[\left\|\theta_{t}^u\right\|^2\right] \overset{\eqref{eq:clipped_SGD_non_convex_second_moment_theta_u_bound}}{\leq} 18L\gamma^2\lambda^{2-\alpha}\sigma^{\alpha}T \overset{\eqref{eq:clipped_SGD_non_convex_clipping_level}}{=}
        \frac{9\cdot 20^{\alpha}}{200}\cdot\frac{\sqrt{L}^{\alpha}\gamma^{\alpha}\sigma^{\alpha}T\sqrt{\Delta}^{2-\alpha}}{\ln^{2-\alpha}\frac{4(K+1)}{\beta}}\overset{\eqref{eq:clipped_SGD_non_convex_step_size}}{\leq} \frac{\Delta}{5}.\label{eq:clipped_SGD_non_convex_sum_4_upper_bound}
\end{eqnarray}

\textbf{Upper bound for $\circledFive$.} From $E_{T-1}$ it follows that
\begin{eqnarray}
        \circledFive &=& L\gamma^2\sum\limits_{t=0}^{T-1}\left\|\theta_{t}^b\right\|^2 \overset{\eqref{eq:clipped_SGD_non_convex_norm_theta_b_bound}}{\leq} \frac{4^\alpha \sigma^{2\alpha}TL\gamma^2}{\lambda^{2(\alpha-1)}} \overset{\eqref{eq:clipped_SGD_non_convex_clipping_level}}{=}   \frac{1600^\alpha}{400}\cdot \frac{\sigma^{2\alpha}TL^\alpha\gamma^{2\alpha}\Delta^{1-\alpha}}{\ln^{2(1-\alpha)}\frac{4(K+1)}{\beta}}\overset{\eqref{eq:clipped_SGD_non_convex_step_size}}{\leq} \frac{\Delta}{5}.\label{eq:clipped_SGD_non_convex_sum_5_upper_bound}
\end{eqnarray}

Now, we have the upper bounds for  $\circledOne, \circledTwo, \circledThree, \circledFour, \circledFive$. In particular, probability event $E_{T-1}$ implies
\begin{gather*}
        \Delta_T \overset{\eqref{eq:clipped_SGD_non_convex_technical_7}}{\leq} \Delta + \circledOne + \circledTwo + \circledThree + \circledFour + \circledFive,\\
        \circledTwo \overset{\eqref{eq:clipped_SGD_non_convex_sum_2_upper_bound}}{\leq} \frac{\Delta}{5},\quad \circledFour \overset{\eqref{eq:clipped_SGD_non_convex_sum_4_upper_bound}}{\leq} \frac{\Delta}{5}, \quad \circledFive \overset{\eqref{eq:clipped_SGD_non_convex_sum_5_upper_bound}}{\leq} \frac{\Delta}{5},\\
        \sum\limits_{t=0}^{T-1} \sigma_t^2 \overset{\eqref{eq:clipped_SGD_non_convex_sum_1_variance_bound}}{\leq} \frac{\Delta^2}{150 \ln\frac{4(K+1)}{\beta}},\quad \sum\limits_{t=0}^{T-1} \widetilde\sigma_t^2 \overset{\eqref{eq:clipped_SGD_non_convex_sum_3_variance_bound}}{\leq} \frac{\Delta^2}{150 \ln\frac{4(K+1)}{\beta}}.
\end{gather*}
    Moreover, we also have (see \eqref{eq:clipped_SGD_non_convex_sum_1_upper_bound}, \eqref{eq:clipped_SGD_non_convex_sum_3_upper_bound} and our induction assumption)
\begin{equation*}
        \PP\{E_{T-1}\} \geq 1 - \frac{(T-1)\beta}{K+1},\quad \PP\{E_{\circledOne}\} \geq 1 - \frac{\beta}{2(K+1)},\quad \PP\{E_{\circledThree}\} \geq 1 - \frac{\beta}{2(K+1)},
\end{equation*}
    where 
\begin{eqnarray*}
        E_{\circledOne} &=& \left\{ \text{either} \quad  \sum\limits_{t=0}^{T-1} \sigma_{t}^2 > \frac{\Delta^2}{150\ln\frac{4(K+1)}{\beta}} \quad \text{or}\quad |\circledOne| \leq \frac{\Delta}{5}\right\},\\
        E_{\circledThree} &=& \left\{ \text{either} \quad  \sum\limits_{t=0}^{T-1} \widetilde\sigma_{t}^2 > \frac{\Delta^2}{150\ln\frac{4(K+1)}{\beta}} \quad \text{or}\quad |\circledThree| \leq \frac{\Delta}{5}\right\}.
\end{eqnarray*}
    Thus, probability event $E_{T-1} \cap E_{\circledOne} \cap E_{\circledThree}$ implies
\begin{eqnarray}
        \Delta_T &\leq& \Delta + \frac{\Delta}{5} + \frac{\Delta}{5} + \frac{\Delta}{5} + \frac{\Delta}{5} + \frac{\Delta}{5} = 2\Delta, \notag
\end{eqnarray}
    which is equivalent to \eqref{eq:clipped_SGD_non_convex_induction_inequality_1} and \eqref{eq:clipped_SGD_non_convex_induction_inequality_2} for $t = T$, and 
\begin{equation*}
        \PP\{E_T\} \geq \PP\left\{E_{T-1} \cap E_{\circledOne} \cap E_{\circledThree}\right\} = 1 - \PP\left\{\overline{E}_{T-1} \cup \overline{E}_{\circledOne} \cup \overline{E}_{\circledThree}\right\} \geq 1 - \PP\{\overline{E}_{T-1}\} - \PP\{\overline{E}_{\circledOne}\} - \PP\{\overline{E}_{\circledThree}\} \geq 1 - \frac{T\beta}{K+1}.
\end{equation*}
    This finishes the inductive part of our proof, i.e., for all $k = 0,1,\ldots,K+1$ we have $\PP\{E_k\} \geq 1 - \nicefrac{k\beta}{(K+1)}$. In particular, for $k = K+1$ we have that with probability at least $1 - \beta$
\begin{equation*}
        \frac{1}{K+1}\sum\limits_{k=0}^{K}\|\nabla f(x^k)\|^2 \overset{\eqref{eq:clipped_SGD_non_convex_technical_1_1}}{\leq}\frac{2\Delta}{A(K+1)} \overset{\eqref{eq:clipped_SGD_non_convex_step_size}}{=} \frac{2\Delta}{\gamma\left(1-\frac{L\gamma}{2}\right)(K+1)}
\end{equation*}
    and $\{x^k\}_{k=0}^{K} \subseteq Q$, which follows from \eqref{eq:clipped_SGD_non_convex_induction_inequality_2}.
    
Finally, if
\begin{equation*}
        \gamma \leq \min\left\{\frac{1}{80L \ln \frac{4(K+1)}{\beta}}, \; \frac{\sqrt{\Delta}}{27^{\frac{1}{\alpha}}20 \sigma \sqrt{L} K^{\frac{1}{\alpha}}\left(\ln \frac{4(K+1)}{\beta}\right)^{\frac{\alpha-1}{\alpha}}}\right\},
\end{equation*}
then with probability at least $1-\beta$
\begin{eqnarray*}
        \frac{1}{K+1}\sum\limits_{k=0}^{K}\|\nabla f(x^k)\|^2 &\leq& \frac{2\Delta}{\gamma\left(1-\frac{L\gamma}{2}\right)(K+1)} \leq \frac{4\Delta}{\gamma(K+1)}\\
        &=& \max\left\{\frac{320\Delta L \ln \frac{4(K+1)}{\beta}}{K+1}, \frac{80\sqrt{\Delta}27^{\frac{1}{\alpha}}\sigma\sqrt{L}K^{\frac{1}{\alpha}}\left(\ln\frac{4(K+1)}{\beta}\right)^{\frac{
        \alpha-1
        }{\alpha}}}{K+1}\right\}\notag\\
        &=& \cO\left(\max\left\{\frac{L\Delta \ln \frac{K}{\beta}}{K}, \frac{\sqrt{L\Delta} \sigma\ln^{\frac{\alpha-1}{\alpha}}\frac{K}{\beta}}{K^{\frac{\alpha-1}{\alpha}}}\right\}\right).
\end{eqnarray*}
To get $\frac{1}{K+1}\sum\limits_{k=0}^{K}\|\nabla f(x^k)\|^2 \leq \varepsilon$ with probability at least $1-\beta$ it is sufficient to choose $K$ such that both terms in the maximum above are $\cO(\varepsilon)$. This leads to
\begin{equation*}
         K = \cO\left(\max\left\{\frac{L\Delta}{\varepsilon} \ln\frac{L\Delta}{\varepsilon\beta}, \left(\frac{\sqrt{L\Delta}\sigma}{\varepsilon}\right)^{\frac{\alpha}{\alpha-1}} \ln \left(\frac{1}{\beta} \left(\frac{\sqrt{L\Delta}\sigma}{\varepsilon}\right)^{\frac{\alpha}{\alpha-1}}\right) \right\}\right),
\end{equation*}
which concludes the proof.
\end{proof}

\subsection{Polyak-{\L}ojasiewicz Functions}

In this subsection, we provide a high-probability analysis of \algname{clipped-SGD} in the case of Polyak-{\L}ojasiewicz functions. As in the non-convex case, we start with the lemma that handles optimization part of the algorithm and separates it from the stochastic one.

\begin{lemma}\label{lem:main_opt_lemma_clipped_SGD_PL}
    Let Assumptions~\ref{as:L_smoothness} and \ref{as:PL} hold on $Q = \{x \in \R^d\mid \exists y\in \R^d:\; f(y) \leq f_* + 2\Delta \text{ and } \|x-y\|\leq \nicefrac{\sqrt{\Delta}}{20\sqrt{L}}\}$, where $\Delta = f(x^0) - f_*$, and let stepsize  $\gamma$ satisfy $\gamma \leq \frac{1}{L}$. If $x^{k} \in Q$ for all $k = 0,1,\ldots,K+1$, $K \ge 0$, then after $K$ iterations of \algname{clipped-SGD} for all $x \in Q$ we have
    \begin{eqnarray}
       f(x^{K+1}) -f_* &\leq& (1-\gamma\mu)^{K+1}(f(x^0) - f_*) - \gamma(1-L\gamma)\sum\limits_{k=0}^K(1-\gamma\mu)^{K-k} \langle \nabla f(x^k), \theta_k \rangle \notag\\
        && + \frac{L\gamma^2}{2} \sum\limits_{k=0}^{K} (1-\gamma\mu)^{K-k} \|\theta_k\|^2,\label{eq:main_opt_lemma_clipped_SGD_PL}
    \end{eqnarray}
    where $\theta_k$ is defined in \eqref{eq:theta_k_def_clipped_SGD_non_convex}.
\end{lemma}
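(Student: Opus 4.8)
The plan is to reuse verbatim the one-step computation from the proof of Lemma~\ref{lem:main_opt_lemma_clipped_SGD_non_convex}. Since $x^k, x^{k+1} \in Q$ and $f$ is $L$-smooth on $Q$, applying \eqref{eq:L_smoothness_cor_1} to the update $x^{k+1} = x^k - \gamma\tnabla f_{\xi^k}(x^k)$ and splitting $\tnabla f_{\xi^k}(x^k) = \nabla f(x^k) + \theta_k$ gives, for every $k = 0,1,\dots,K$,
\[
    f(x^{k+1}) \leq f(x^k) - \gamma\Bigl(1 - \tfrac{L\gamma}{2}\Bigr)\|\nabla f(x^k)\|^2 - \gamma(1 - L\gamma)\langle \nabla f(x^k), \theta_k\rangle + \tfrac{L\gamma^2}{2}\|\theta_k\|^2,
\]
exactly as in the non-convex lemma. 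Subtracting $f_*$ from both sides and writing $\Delta_k = f(x^k) - f_*$, this reads $\Delta_{k+1} \leq \Delta_k - \gamma(1-\tfrac{L\gamma}{2})\|\nabla f(x^k)\|^2 + r_k$ with $r_k \eqdef -\gamma(1-L\gamma)\langle \nabla f(x^k),\theta_k\rangle + \tfrac{L\gamma^2}{2}\|\theta_k\|^2$.

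Next I would bring in the P{\L} inequality \eqref{eq:PL}, $\|\nabla f(x^k)\|^2 \geq 2\mu\Delta_k$, together with the stepsize restriction $\gamma \leq \nicefrac1L$, which yields $1 - \tfrac{L\gamma}{2} \geq \tfrac12$. Hence $\gamma(1-\tfrac{L\gamma}{2})\|\nabla f(x^k)\|^2 \geq 2\gamma\mu(1-\tfrac{L\gamma}{2})\Delta_k \geq \gamma\mu\Delta_k$, and substituting this into the displayed recursion gives the contraction $\Delta_{k+1} \leq (1-\gamma\mu)\Delta_k + r_k$ for all $k=0,\dots,K$. Note that $\gamma\mu \leq \gamma L \leq 1$ (using $\mu \leq L$), so the factor $1-\gamma\mu$ is nonnegative, which is what makes the next step legitimate.

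Finally I would unroll this one-step recursion over $k = 0,1,\dots,K$: multiplying the inequality for index $k$ by $(1-\gamma\mu)^{K-k}$ and telescoping (a trivial induction on $K$, using $(1-\gamma\mu)\geq 0$ to preserve the inequality direction) produces
\[
    \Delta_{K+1} \leq (1-\gamma\mu)^{K+1}\Delta_0 + \sum_{k=0}^{K}(1-\gamma\mu)^{K-k} r_k,
\]
which is precisely \eqref{eq:main_opt_lemma_clipped_SGD_PL} after expanding $r_k$. The argument is entirely routine; the only points requiring a little care are verifying the sign conditions ($1-\tfrac{L\gamma}{2}\geq\tfrac12$ and $0 \leq 1-\gamma\mu$) so that the P{\L} bound can be absorbed and the geometric unrolling keeps the correct direction — and recording that the whole derivation is valid only on the event $\{x^k\in Q \text{ for all } k\}$, which is the hypothesis of the lemma and will later be established inductively (with high probability) in the theorem that uses it. No probabilistic input is needed here; that is deferred to the Bernstein-inequality step in the theorem.
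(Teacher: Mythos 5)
Your proposal is correct and follows essentially the same route as the paper: the one-step smoothness bound from the non-convex lemma, absorption of $\|\nabla f(x^k)\|^2$ via the P{\L} inequality and $\gamma \le \nicefrac{1}{L}$ to obtain the contraction $\Delta_{k+1} \le (1-\gamma\mu)\Delta_k + r_k$, and unrolling the recursion. The only (cosmetic) difference is that you merge the bound $1-\nicefrac{L\gamma}{2}\ge \nicefrac12$ with the P{\L} step in one inequality, and you make explicit the sign condition $1-\gamma\mu\ge 0$ that the paper leaves implicit.
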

\begin{proof}
    Using $x^{k+1} = x^k - \gamma \tnabla f_{\xi^k}(x^{k})$ and smoothness of $f$ \eqref{as:L_smoothness} we get that for all $k = 0,1,\ldots, K$
\begin{eqnarray*}
        f(x^{k+1}) &\leq& f(x^k) + \langle \nabla f(x^k), x^{k+1}-x^k\rangle + \frac{L}{2}\|x^{k+1} - x^k\|^2\notag\\
        &\leq& f(x^k) - \gamma\langle \nabla f(x^k), \tnabla f_{\xi^k}(x^{k}) \rangle + \frac{L\gamma^2}{2}\|\tnabla f_{\xi^k}(x^{k})\|^2\notag\\ &\overset{\eqref{eq:theta_k_def_clipped_SGD_non_convex}}{=}& f(x^k) - \gamma\left(1-\frac{L\gamma}{2}\right)\|\nabla f(x^k)\|^2 - \gamma(1-L\gamma)\langle \nabla f(x^k), \theta_k\rangle + \frac{L\gamma^2}{2}\|\theta_k\|^2\notag\\ 
        &\overset{\gamma \leq \frac{1}{L}}{\leq}&
        f(x^k) - \frac{\gamma}{2}\|\nabla f(x^k)\|^2 - \gamma(1-L\gamma)\langle \nabla f(x^k), \theta_k\rangle + \frac{L\gamma^2}{2}\|\theta_k\|^2\notag\\ 
        &\overset{\eqref{eq:PL}}{\leq}&
        f(x^k) - \gamma \mu (f(x^k) - f_*) - \gamma(1-L\gamma)\langle \nabla f(x^k), \theta_k\rangle + \frac{L\gamma^2}{2}\|\theta_k\|^2.\label{eq:main_clipped_SGD_PL_technical_1}
\end{eqnarray*}
By rearranging the terms and subtracting $f_*$, we obtain 
\begin{eqnarray*}
        f(x^{k+1}) - f_* &\leq& (1-\gamma\mu)(f(x^k)-f_*)- \gamma(1-L\gamma)\langle \nabla f(x^k), \theta_k\rangle + \frac{L\gamma^2}{2}\|\theta_k\|^2.\label{eq:main_clipped_SGD_PL_technical_3}
\end{eqnarray*}
Unrolling the recurrence, we obtain \eqref{eq:main_opt_lemma_clipped_SGD_PL}.
\end{proof}
\begin{theorem}\label{thm:clipped_SGD_PL_main}
Let Assumptions~\ref{as:bounded_alpha_moment}, \ref{as:L_smoothness}, \ref{as:PL} hold on $Q = \{x \in \R^d\mid \exists y\in \R^d:\; f(y) \leq f_* + 2\Delta \text{ and } \|x-y\|\leq \nicefrac{\sqrt{\Delta}}{20\sqrt{L}}\}$, where $\Delta \geq \Delta_0 = f(x^0) - f_*$, stepsize
\begin{eqnarray}
        0 < \gamma &\leq& \min\left\{\frac{1}{250 L \ln\frac{4(K+1)}{\beta}}, \; \frac{\ln(B_K)}{\mu(K+1)}\right\},\label{eq:clipped_SGD_PL_step_size}\\
        B_K &=& \max\left\{2, \frac{(K+1)^{\frac{2(\alpha-1)}{\alpha}}\mu^2\Delta}{264600^{\frac{2}{\alpha}}L\sigma^2\ln^{\frac{2(\alpha-1)}{\alpha}}\left(\frac{4(K+1)}{\beta}\right)\ln^2(B_K)} \right\} \label{eq:B_K_SGD_PL_1} \\
        &=& \Theta\left(\max\left\{1, \frac{K^{\frac{2(\alpha-1)}{\alpha}}\mu^2 \Delta}{L\sigma^2\ln^{\frac{2(\alpha-1)}{\alpha}}\left(\frac{K}{\beta}\right)\ln^2\left(\max\left\{2, \frac{K^{\frac{2(\alpha-1)}{\alpha}}\mu^2 \Delta}{L\sigma^2\ln^{\frac{2(\alpha-1)}{\alpha}}\left(\frac{K}{\beta}\right)} \right\}\right)} \right\}\right), \label{eq:B_K_SGD_PL_2}
\end{eqnarray}
and clipping level
 \begin{gather}
        \lambda_k = \frac{\exp(-\gamma\mu(1 + \nicefrac{k}{2}))\sqrt{\Delta}}{120\sqrt{L}\gamma \ln \tfrac{4(K+1)}{\beta}}, \label{eq:clipped_SGD_PL_clipping_level}
\end{gather}
for some $K > 0$ and $\beta \in (0,1]$ such that $\ln \frac{4(K+1)}{\beta} \geq 1$. Then, after $K$ iterations of \algname{clipped-SGD} the iterates with probability at least $1 - \beta$ satisfy
\begin{equation}
       f\left(x^{K+1}\right) - f_* \leq 2 \exp(-\gamma \mu (K+1))\Delta. \label{eq:clipped_SGD_PL_case_appendix}
\end{equation}
In particular, when $\gamma$ equals the minimum from \eqref{eq:clipped_SGD_PL_step_size}, then the iterates produced by \algname{clipped-SGD} after $K$ iterations with probability at least $1-\beta$ satisfy
    \begin{equation}
        f\left( x^{K}\right) - f_* = \cO\left(\max\left\{\Delta\exp\left(- \frac{\mu K}{L \ln \tfrac{K}{\beta}}\right), \frac{L\sigma^2\ln^{\frac{2(\alpha-1)}{\alpha}}\left(\frac{K}{\beta}\right)\ln^2\left(\max\left\{2, \frac{K^{\frac{2(\alpha-1)}{\alpha}}\mu^2\Delta}{L\sigma^2\ln^{\frac{2(\alpha-1)}{\alpha}}\left(\frac{K}{\beta}\right)} \right\}\right)}{K^{\frac{2(\alpha-1)}{\alpha}}\mu^2}\right\}\right), \label{eq:clipped_SGD_PL_case_2_appendix}
    \end{equation}
    meaning that to achieve $f\left( x^{K}\right) - f_* \leq \varepsilon$ with probability at least $1 - \beta$ \algname{clipped-SGD} requires
    \begin{equation}
        K = \cO\left(\frac{L}{\mu}\ln\left(\frac{\Delta}{\varepsilon}\right)\ln\left(\frac{L}{\mu \beta}\ln\frac{\Delta}{\varepsilon}\right), \left(\frac{L\sigma^2}{\mu^2\varepsilon}\right)^{\frac{\alpha}{2(\alpha-1)}}\ln \left(\frac{1}{\beta} \left(\frac{L\sigma^2}{\mu^2\varepsilon}\right)^{\frac{\alpha}{2(\alpha-1)}}\right)\ln^{\frac{\alpha}{\alpha-1}}\left(B_\varepsilon\right)\right),\label{eq:clipped_SGD_PL_case_complexity_appendix}
    \end{equation}
    iterations/oracle calls, where
    \begin{equation*}
        B_\varepsilon = \max\left\{2, \frac{\Delta}{\varepsilon \ln \left(\frac{1}{\beta} \left(\frac{L\sigma^2}{\mu^2\varepsilon}\right)^{\frac{\alpha}{2(\alpha-1)}}\right)}\right\}.
    \end{equation*}
\end{theorem}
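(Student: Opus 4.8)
The plan is to reproduce the template of the non-convex proof (Theorem~\ref{thm:clipped_SGD_non_convex_main}), with the additive potential decrease replaced by the geometric recursion of Lemma~\ref{lem:main_opt_lemma_clipped_SGD_PL}. Write $\Delta_k = f(x^k) - f_*$, $\theta_k = \tnabla f_{\xi^k}(x^k) - \nabla f(x^k)$, and split $\theta_k = \theta_k^u + \theta_k^b$ into the zero-mean part $\theta_k^u = \tnabla f_{\xi^k}(x^k) - \EE_{\xi^k}[\tnabla f_{\xi^k}(x^k)]$ and the bias $\theta_k^b = \EE_{\xi^k}[\tnabla f_{\xi^k}(x^k)] - \nabla f(x^k)$. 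The target high-probability statement is $\Delta_t \leq 2\exp(-\gamma\mu t)\Delta$ for every $t \leq K+1$, proved by induction on $t$ along an event chain $E_0 \subseteq E_1 \subseteq \cdots$: $E_k$ asserts that for all $t \leq k$ both $\Delta_t \leq 2\exp(-\gamma\mu t)\Delta$ and the geometrically-weighted partial stochastic sum $\frac{L\gamma^2}{2}\sum_{l=0}^{t-1}(1-\gamma\mu)^{t-1-l}\|\theta_l\|^2 - \gamma(1-L\gamma)\sum_{l=0}^{t-1}(1-\gamma\mu)^{t-1-l}\langle\nabla f(x^l),\theta_l\rangle \leq \exp(-\gamma\mu t)\Delta$ hold, and one shows $\PP\{E_k\} \geq 1 - k\beta/(K+1)$.

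For the inductive step $E_{T-1}\Rightarrow E_T$: first, $E_{T-1}$ keeps the iterates inside $Q$ through step $T$, since the step length is $\gamma\|\tnabla f_{\xi^{T-1}}(x^{T-1})\| \leq \gamma\lambda_{T-1} \leq \sqrt{\Delta}/(120\sqrt{L}) \leq \sqrt{\Delta}/(20\sqrt{L})$ by \eqref{eq:clipped_SGD_PL_clipping_level}, so $x^T$ lies in the enlarged level set $Q$ and the hypotheses of Lemma~\ref{lem:main_opt_lemma_clipped_SGD_PL} are met. Second --- and this is the reason $\lambda_k$ carries the factor $\exp(-\gamma\mu(1+k/2))$ --- on $E_{T-1}$ the smoothness corollary \eqref{eq:L_smoothness_cor_2} gives $\|\nabla f(x^t)\| \leq \sqrt{2L\Delta_t} \leq 2\sqrt{L\Delta}\exp(-\gamma\mu t/2) \leq \lambda_t/2$, so Lemma~\ref{lem:bias_and_variance_clip} applies pointwise, yielding $\|\theta_t^u\| \leq 2\lambda_t$, $\|\theta_t^b\| \leq 2^\alpha\sigma^\alpha\lambda_t^{-(\alpha-1)}$ and $\EE_{\xi^t}\|\theta_t^u\|^2 \leq 18\lambda_t^{2-\alpha}\sigma^\alpha$. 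Third, substitute Lemma~\ref{lem:main_opt_lemma_clipped_SGD_PL} (with $K$ replaced by $T-1$) into $\Delta_T$, use $(1-\gamma\mu)^T\Delta_0 \leq \exp(-\gamma\mu T)\Delta$, and split the stochastic remainder into five geometrically-weighted sums $\circledOne$--$\circledFive$ exactly as in \eqref{eq:clipped_SGD_non_convex_technical_7}: the two martingale-difference sums $\circledOne = -\gamma(1-L\gamma)\sum(1-\gamma\mu)^{T-1-t}\langle\theta_t^u,\eta_t\rangle$ (with $\eta_t$ a suitably truncated version of $\nabla f(x^t)$) and $\circledThree = L\gamma^2\sum(1-\gamma\mu)^{T-1-t}(\|\theta_t^u\|^2 - \EE_{\xi^t}\|\theta_t^u\|^2)$ are controlled by Bernstein's inequality (Lemma~\ref{lem:Bernstein_ineq}), the weights $(1-\gamma\mu)^{T-1-t}\leq 1$ only helping the uniform and variance bounds; the bias sums $\circledTwo,\circledFour,\circledFive$ are bounded deterministically from the displayed estimates on $\theta_t^b$ and $\EE_{\xi^t}\|\theta_t^u\|^2$. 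The arithmetic to verify is that, inserting $\lambda_t \propto \exp(-\gamma\mu(1+t/2))$ --- whose square cancels the $\exp(\gamma\mu t)$ produced by unrolling $(1-\gamma\mu)^{T-1-t}$ --- and using the step-size bound \eqref{eq:clipped_SGD_PL_step_size}, the five sums together are $\leq \exp(-\gamma\mu T)\Delta$; with $(1-\gamma\mu)^T\Delta_0$ this gives $\Delta_T \leq 2\exp(-\gamma\mu T)\Delta$ on $E_{T-1}$ intersected with the two Bernstein events, and a union bound closes the induction. Taking $k = K+1$ gives \eqref{eq:clipped_SGD_PL_case_appendix}.

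For the complexity, set $\gamma$ equal to the minimum in \eqref{eq:clipped_SGD_PL_step_size}. If the first term $1/(250LA)$ is active, $\exp(-\gamma\mu(K+1)) = \exp(-\Theta(\mu K/(L\ln(K/\beta))))$, the exponentially-decaying first term of \eqref{eq:clipped_SGD_PL_case_2_appendix}; if the second term $\ln(B_K)/(\mu(K+1))$ is active, $\exp(-\gamma\mu(K+1)) = 1/B_K$, and $B_K$ in \eqref{eq:B_K_SGD_PL_1} is precisely the (unique, up to the displayed $\Theta$) solution of the fixed-point equation equating $2\Delta/B_K$ with the statistical term $\sim L\sigma^2\ln^{2(\alpha-1)/\alpha}(K/\beta)\ln^2(B_K)/(K^{2(\alpha-1)/\alpha}\mu^2)$, giving \eqref{eq:B_K_SGD_PL_2}. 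Solving $2\exp(-\gamma\mu(K+1))\Delta \leq \varepsilon$ for $K$ in each regime and taking the maximum yields \eqref{eq:clipped_SGD_PL_case_complexity_appendix}, with $B_\varepsilon$ the value of $B_K$ at the resulting $K$.

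I expect the main obstacle to be two intertwined pieces of bookkeeping. The first is the transcendental definition of $B_K$: one must show $B_K = \max\{2, C_K/\ln^2(B_K)\}$ with $C_K = \Theta((K+1)^{2(\alpha-1)/\alpha}\mu^2\Delta/(L\sigma^2\ln^{2(\alpha-1)/\alpha}(K/\beta)))$ has a solution that is $\Theta$ of the explicit expression in \eqref{eq:B_K_SGD_PL_2}, and that back-solving for $K$ genuinely produces the advertised $\widetilde\cO((L\sigma^2/\mu^2\varepsilon)^{\alpha/2(\alpha-1)})$ term --- the log-of-log manipulations are where constants are most easily lost. The second is matching decay rates inside $\circledOne$--$\circledFive$: because $\lambda_t^{2-\alpha}$ carries only the exponent $(2-\alpha)/2$ of the $\exp(-\gamma\mu t/2)$ decay, each weighted statistical sum decays a priori at the slower rate $\exp(-\gamma\mu(2-\alpha)T/2)$, and one must use the precise step-size constraint (i.e. the precise $B_K$) together with the worst case $T = K+1$ to absorb this gap; this is exactly what forces the constant $264600$, the $A$-power $2(\alpha-1)/\alpha$ in \eqref{eq:B_K_SGD_PL_1}, and the exponent $1+k/2$ in \eqref{eq:clipped_SGD_PL_clipping_level}. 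Everything else --- the Bernstein applications, the construction of $\eta_t$, the union bound --- is routine given Lemmas~\ref{lem:bias_and_variance_clip} and \ref{lem:Bernstein_ineq}.
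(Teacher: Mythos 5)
Your proposal is correct and follows essentially the same route as the paper's proof: the geometric recursion of Lemma~\ref{lem:main_opt_lemma_clipped_SGD_PL}, the induction on the event $\{\Delta_t \leq 2\exp(-\gamma\mu t)\Delta\}$, the truncated gradients $\eta_t$, the five-way split with two Bernstein applications, the absorption of the $\exp(\gamma\mu\alpha l/2)$ growth via the constraint $\gamma\mu(K+1)\leq\ln B_K$, and the fixed-point reading of $B_K$ for the complexity. The only (harmless) deviation is that your induction event additionally tracks the geometrically weighted partial stochastic sums, which the paper omits since the weights $(1-\gamma\mu)^{t-1-l}$ depend on $t$ and only the bound on $\Delta_t$ itself is needed to close the induction.
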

\begin{proof}
    As in the previous results, the proof is based on the induction argument and shows that the iterates do not leave some set with high probability. More precisely, for each $k = 0,1,\ldots,K+1$ we consider probability event $E_k$ as follows: inequalities
    \begin{equation}
        \Delta_t \leq 2 \exp(-\gamma\mu t) \Delta \label{eq:induction_inequality_SGD_PL}
    \end{equation}
    hold for $t = 0,1,\ldots,k$ simultaneously, where $\Delta_t = f(x^t) - f_*$. We want to prove $\PP\{E_k\} \geq  1 - \nicefrac{k\beta}{(K+1)}$ for all $k = 0,1,\ldots,K+1$ by induction. The base of the induction is trivial: for $k=0$ we have $\Delta_0 \leq \Delta < 2\Delta$ by definition. Next, assume that for $k = T-1 \leq K$ the statement holds: $\PP\{E_{T-1}\} \geq  1 - \nicefrac{(T-1)\beta}{(K+1)}$. Given this, we need to prove $\PP\{E_{T}\} \geq  1 - \nicefrac{T\beta}{(K+1)}$. Since $\Delta_t \leq 2\exp(-\gamma\mu t) \Delta \leq 2\Delta$, we have $x^t \in \{y\in \R^d\mid f(y) \leq f_* + 2\Delta\}$ for $t = 0,1,\ldots,T-1$, where function $f$ is $L$-smooth. Thus, $E_{T-1}$ implies
    \begin{eqnarray}
        \|\nabla f(x^t)\| &\overset{\eqref{eq:L_smoothness_cor_2}}{\leq}& \sqrt{2L(f(x^t) - f_*)} \overset{\eqref{eq:induction_inequality_SGD_PL}}{\leq} 2\sqrt{L\exp(- \gamma\mu t)\Delta} \overset{\eqref{eq:clipped_SGD_PL_step_size},\eqref{eq:clipped_SGD_PL_clipping_level}}{\leq} \frac{\lambda_t}{2} \label{eq:operator_bound_x_t_SGD_PL}
    \end{eqnarray}
    for all $t = 0, 1, \ldots, T-1$. Moreover
    \begin{equation*}
        \|x^{T} - x^{T-1}\| = \gamma \|\tnabla f_{\xi^{T-1}}(x^{T-1})\| \leq \gamma\lambda_{T-1} \overset{\eqref{eq:clipped_SGD_PL_clipping_level}}{\leq} \frac{\sqrt{\Delta}}{20\sqrt{L}},
    \end{equation*}
    meaning that $E_{T-1}$ implies $x^T \in \{x \in \R^d\mid \exists y\in \R^d:\; f(y) \leq f_* + 2\Delta \text{ and } \|x-y\|\leq \nicefrac{\sqrt{\Delta}}{20\sqrt{L}}\}$. Using Lemma~\ref{lem:main_opt_lemma_clipped_SGD_PL} and $(1 - \gamma\mu)^T \leq \exp(-\gamma\mu T)$, we obtain that $E_{T-1}$ implies
    \begin{eqnarray}
        \Delta_T &\leq& \exp(-\gamma\mu T)\Delta - \gamma (1-L\gamma) \sum\limits_{l=0}^{T-1} (1-\gamma\mu)^{T-1-l} \langle \nabla f(x^l), \theta_l \rangle\notag\\
        &&\quad + \frac{L\gamma^2}{2} \sum\limits_{l=0}^{T-1} (1-\gamma\mu)^{T-1-l} \|\theta_l\|^2. \notag
    \end{eqnarray}
    To handle the sums above, we introduce a new notation:
    \begin{gather}
        \eta_t = \begin{cases} \nabla f(x^t),& \text{if } \|\nabla f(x^t)\| \leq 2\sqrt{L}\exp(-\nicefrac{\gamma\mu t}{2})\sqrt{\Delta},\\ 0,& \text{otherwise}, \end{cases} \label{eq:eta_t_SGD_PL}
    \end{gather}
    for $t = 0, 1, \ldots, T-1$. These vectors are bounded almost surely:
    \begin{equation}
        \|\eta_t\| \leq 2\sqrt{L}\exp(-\nicefrac{\gamma\mu t}{2})\sqrt{\Delta} \label{eq:zeta_t_eta_t_bound_SGD_PL} 
    \end{equation}
    for all $t = 0, 1, \ldots, T-1$. In other words, $E_{T-1}$ implies $\eta_t = \nabla f(x^t)$ for all $t = 0,1,\ldots,T-1$, meaning that from $E_{T-1}$ it follows that 
    \begin{eqnarray}
        \Delta_T &\leq& \exp(-\gamma\mu T)\Delta - \gamma(1-L\gamma) \sum\limits_{l=0}^{T-1} (1-\gamma\mu)^{T-1-l} \langle \eta_l, \theta_l \rangle \notag\\
        && + \frac{L\gamma^2}{2} \sum\limits_{l=0}^{T-1} (1-\gamma\mu)^{T-1-l} \|\theta_l\|^2. \notag
    \end{eqnarray}
    To handle the sums appeared on the right-hand side of the previous inequality we consider unbiased and biased parts of $\theta_l$:
    \begin{equation}
        \theta_{t}^u = \tnabla f_{\xi^t}(x^{t}) - \EE_{\xi^t}\left[\tnabla f_{\xi^t}(x^{t})\right],\quad \theta_{t}^b = \EE_{\xi^t}\left[\tnabla f_{\xi^t}(x^{t})\right] - \nabla f(x^{t}). \label{eq:theta_unbias_bias_SGD_PL}
    \end{equation}
    for all $l = 0,\ldots, T-1$. By definition we have $\theta_l = \theta_l^u + \theta_l^b$ for all $l = 0,\ldots, T-1$. Therefore, $E_{T-1}$ implies
    \begin{eqnarray}
        \Delta_T &\leq& \exp(-\gamma\mu T) \Delta \underbrace{- \gamma (1-L\gamma) \sum\limits_{l=0}^{T-1} (1-\gamma\mu)^{T-1-l} \langle \eta_l, \theta_l^u \rangle}_{\circledOne} \underbrace{-\gamma(1-L\gamma) \sum\limits_{l=0}^{T-1} (1-\gamma\mu)^{T-1-l} \langle \eta_l, \theta_l^b \rangle}_{\circledTwo} \notag\\
        &&\quad + \underbrace{L\gamma^2 \sum\limits_{l=0}^{T-1} (1-\gamma\mu)^{T-1-l} \EE_{\xi^l}\left[\|\theta_l^u\|^2\right]}_{\circledThree} + \underbrace{L\gamma^2 \sum\limits_{l=0}^{T-1} (1-\gamma\mu)^{T-1-l} \left(\|\theta_l^u\|^2 -\EE_{\xi^l}\left[\|\theta_l^u\|^2\right] \right)}_{\circledFour}\notag\\
        &&\quad + \underbrace{L\gamma^2 \sum\limits_{l=0}^{T-1} (1-\gamma\mu)^{T-1-l} \|\theta_l^b\|^2}_{\circledFive}. \label{eq:SGD_PL_12345_bound}
    \end{eqnarray}
    where we also apply inequality $\|a+b\|^2 \leq 2\|a\|^2 + 2\|b\|^2$ holding for all $a,b \in \R^d$ to upper bound $\|\theta_l\|^2$. It remains to derive good enough high-probability upper-bounds for the terms $\circledOne, \circledTwo, \circledThree, \circledFour, \circledFive$, i.e., to finish our inductive proof we need to show that $\circledOne + \circledTwo + \circledThree + \circledFour + \circledFive \leq \exp(-\gamma\mu T) \Delta$ with high probability. In the subsequent parts of the proof, we will need to use many times the bounds for the norm and second moments of $\theta_{t}^u$ and $\theta_{t}^b$. First, by definition of clipping operator, we have with probability $1$ that
    \begin{equation}
        \|\theta_l^u\| \leq 2\lambda_l.\label{eq:theta_omega_magnitude_SGD_PL}
    \end{equation}
    Moreover, since $E_{T-1}$ implies that $\|\nabla f(x^l)\|^2 \leq \nicefrac{\lambda_l}{2}$ for all $l = 0,1, \ldots, T-1$ (see \eqref{eq:operator_bound_x_t_SGD_PL}), from Lemma~\ref{lem:bias_and_variance_clip} we also have that $E_{T-1}$ implies
    \begin{gather}
        \left\|\theta_l^b\right\| \leq \frac{2^{\alpha}\sigma^\alpha}{\lambda_l^{\alpha-1}}, \label{eq:bias_theta_omega_SGD_PL}\\
        \EE_{\xi^l}\left[\left\|\theta_l^u\right\|^2\right] \leq 18 \lambda_l^{2-\alpha}\sigma^\alpha, \label{eq:distortion_theta_omega_SGD_PL}
    \end{gather}
    for all $l = 0,1, \ldots, T-1$.
    
\paragraph{Upper bound for $\circledOne$.} By definition of $\theta_l^u$, we have $\EE_{\xi^l}[\theta_{l}^u] = 0$ and
    \begin{equation*}
        \EE_{\xi^l}\left[-\gamma (1-L\gamma) (1-\gamma\mu)^{T-1-l} \langle \eta_l, \theta_l^u \rangle\right] = 0.
    \end{equation*}
    Next, sum $\circledOne$ has bounded with probability $1$ terms:
    \begin{eqnarray}
        |-\gamma (1-L\gamma)(1-\gamma\mu)^{T-1-l} \langle \eta_l, \theta_l^u \rangle | &\overset{\eqref{eq:clipped_SGD_PL_step_size}}{\leq}& \gamma\exp(-\gamma\mu (T - 1 - l)) \|\eta_l\|\cdot \|\theta_l^u\|\notag\\
        &\overset{\eqref{eq:zeta_t_eta_t_bound_SGD_PL},\eqref{eq:theta_omega_magnitude_SGD_PL}}{\leq}& 4\sqrt{L\Delta}\gamma \exp(-\gamma\mu (T - 1 - \nicefrac{l}{2}))  \lambda_l\notag\\
        &\overset{\eqref{eq:clipped_SGD_PL_step_size},\eqref{eq:clipped_SGD_PL_clipping_level}}{\leq}& \frac{\exp(-\gamma\mu T)\Delta}{5\ln\tfrac{4(K+1)}{\beta}} \eqdef c. \label{eq:SGD_PL_technical_3_1}
    \end{eqnarray}
    The summands also have bounded conditional variances $\sigma_l^2 \eqdef \EE_{\xi^l}\left[\gamma^2 (1-L\gamma)^2(1-\gamma\mu)^{2T-2-2l} \langle \eta_l, \theta_l^u \rangle^2\right]$:
    \begin{eqnarray}
        \sigma_l^2 &\leq& \EE_{\xi^l}\left[\gamma^2(1-L\gamma)^2\exp(-\gamma\mu (2T - 2 - 2l)) \|\eta_l\|^2\cdot \|\theta_l^u\|^2\right]\notag\\
        &\overset{\eqref{eq:zeta_t_eta_t_bound_SGD_PL}, \eqref{eq:clipped_SGD_PL_step_size}}{\leq}& 4\gamma^2 L\Delta \exp(-\gamma\mu (2T - 2 - l))  \EE_{\xi^l}\left[\|\theta_l^u\|^2\right]\notag\\
        &\overset{\eqref{eq:clipped_SGD_PL_step_size}}{\leq}& 10\gamma^2L\Delta\exp(-\gamma\mu (2T - l))R^2 \EE_{\xi^l}\left[\|\theta_l^u\|^2\right]. \label{eq:SGD_PL_technical_3_2}
    \end{eqnarray}
    In other words, we showed that $\{-\gamma(1-L\gamma) (1-\gamma\mu)^{T-1-l} \langle \eta_l, \theta_l^u \rangle\}_{l = 0}^{T-1}$ is a bounded martingale difference sequence with bounded conditional variances $\{\sigma_l^2\}_{l = 0}^{T-1}$. Next, we apply Bernstein's inequality (Lemma~\ref{lem:Bernstein_ineq}) with $X_l = -\gamma(1-L\gamma) (1-\gamma\mu)^{T-1-l} \langle \eta_l, \theta_l^u \rangle$, parameter $c$ as in \eqref{eq:SGD_PL_technical_3_1}, $b = \tfrac{1}{5}\exp(-\gamma\mu T) \Delta$, $G = \tfrac{\exp(-2 \gamma\mu T) \Delta^2}{150\ln\frac{4(K+1)}{\beta}}$:
    \begin{equation*}
        \PP\left\{|\circledOne| > \frac{1}{5}\exp(-\gamma\mu T) \Delta \text{ and } \sum\limits_{l=0}^{T-1}\sigma_l^2 \leq \frac{\exp(- 2\gamma\mu T) \Delta^2}{150\ln\tfrac{4(K+1)}{\beta}}\right\} \leq 2\exp\left(- \frac{b^2}{2G + \nicefrac{2cb}{3}}\right) = \frac{\beta}{2(K+1)}.
    \end{equation*}
    Equivalently, we have
    \begin{equation}
        \PP\{E_{\circledOne}\} \geq 1 - \frac{\beta}{2(K+1)},\quad \text{for}\quad E_{\circledOne} = \left\{\text{either} \quad \sum\limits_{l=0}^{T-1}\sigma_l^2 > \frac{\exp(- 2\gamma\mu T) \Delta^2}{150\ln\tfrac{4(K+1)}{\beta}}\quad \text{or}\quad |\circledOne| \leq \frac{1}{5}\exp(-\gamma\mu T) \Delta\right\}. \label{eq:bound_1_SGD_PL}
    \end{equation}
    In addition, $E_{T-1}$ implies that
    \begin{eqnarray}
        \sum\limits_{l=0}^{T-1}\sigma_l^2 &\overset{\eqref{eq:SGD_PL_technical_3_2}}{\leq}& 10\gamma^2L\Delta\exp(- 2\gamma\mu T)\sum\limits_{l=0}^{T-1} \frac{\EE_{\xi^l}\left[\|\theta_l^u\|^2\right]}{\exp(-\gamma\mu l)}\notag\\ 
        &\overset{\eqref{eq:distortion_theta_omega_SGD_PL}, T \leq K+1}{\leq}& 180\gamma^2L\Delta\exp(-2\gamma\mu T)  \sigma^\alpha \sum\limits_{l=0}^{K} \frac{\lambda_l^{2-\alpha}}{\exp(-\gamma\mu l)}\notag\\
        &\overset{\eqref{eq:clipped_SGD_PL_clipping_level}}{=}& \frac{180\gamma^{\alpha}\sqrt{L}^{\alpha}\sqrt{\Delta}^{4-\alpha}\exp(-2\gamma\mu T)  \sigma^\alpha}{120^{2-\alpha} \ln^{2-\alpha}\tfrac{4(K+1)}{\beta}} \sum\limits_{l=0}^{K} \frac{1}{\exp(-\gamma\mu l)} \cdot \left(\exp(-\gamma\mu(1 + \nicefrac{l}{2}))\right)^{2-\alpha} \notag\\
        &=& \frac{180\gamma^{\alpha}\sqrt{L}^{\alpha}\sqrt{\Delta}^{4-\alpha}\exp(-2\gamma\mu T)  \sigma^\alpha}{120^{2-\alpha} \ln^{2-\alpha}\tfrac{4(K+1)}{\beta}} \sum\limits_{l=0}^{K} \exp(\gamma\mu(\alpha-2)) \cdot \exp\left(\frac{\gamma\mu\alpha l}{2}\right) \notag\\
        &\leq& \frac{180\gamma^{\alpha}\sqrt{L}^{\alpha}\sqrt{\Delta}^{4-\alpha}\exp(-2\gamma\mu T)  \sigma^\alpha(K+1)\exp(\frac{\gamma\mu\alpha K}{2})}{120^{2-\alpha} \ln^{2-\alpha}\tfrac{4(K+1)}{\beta}} \notag\\
        &\overset{\eqref{eq:clipped_SGD_PL_step_size}}{\leq}& \frac{\exp(-2\gamma\mu T)\Delta^2}{150\ln\tfrac{4(K+1)}{\beta}}, \label{eq:bound_3_variances_SGD_PL}
    \end{eqnarray} 
where we also show that $E_{T-1}$ implies
\begin{equation}
        \gamma^2 L \Delta\sum\limits_{l=0}^{K} \frac{\lambda_{l}^{2-\alpha}}{\exp(-\gamma\mu l)} \leq \frac{\gamma^\alpha \sqrt{L}^\alpha \sqrt{\Delta}^{4-\alpha}(K+1)\exp(\frac{\gamma\mu\alpha K}{2})}{120^{2-\alpha}\ln^{2-\alpha} \tfrac{4(K+1)}{\beta}}. \label{eq:useful_inequality_on_lambda_SGD_PL}
\end{equation}

\paragraph{Upper bound for $\circledTwo$.} From $E_{T-1}$ it follows that
    \begin{eqnarray}
        \circledTwo &\overset{\eqref{eq:clipped_SGD_PL_step_size}}{\leq}& \gamma \exp(-\gamma\mu (T-1)) \sum\limits_{l=0}^{T-1} \frac{\|\eta_l\|\cdot \|\theta_l^b\|}{\exp(-\gamma\mu l)}\notag\\
        &\overset{\eqref{eq:zeta_t_eta_t_bound_SGD_PL}, \eqref{eq:bias_theta_omega_SGD_PL}}{\leq}& 2^{1+\alpha} \gamma \exp(-\gamma\mu (T-1)) \sqrt{\Delta} \sigma^\alpha \sum\limits_{l=0}^{T-1} \frac{1}{\lambda_l^{\alpha-1} \exp(-\nicefrac{\gamma\mu l}{2})}\notag\\
        &\overset{\eqref{eq:clipped_SGD_PL_clipping_level}}{=}& 2^{1+\alpha}\cdot 120^{\alpha-1}\sqrt{L}^{1-\alpha} \sqrt{\Delta}^{2-\alpha} \exp(-\gamma\mu (T-1)) \gamma^{\alpha} \sigma^\alpha \ln^{\alpha-1}\tfrac{4(K+1)}{\beta} \sum\limits_{l=0}^{T-1} \frac{\exp(\nicefrac{\gamma\mu l}{2})}{\exp\left(-\gamma\mu(1 + \nicefrac{l}{2})\right)^{\alpha-1}} \notag\\
        &\overset{T \leq K+1}{\leq}& 2^{1+\alpha}\cdot 120^{\alpha-1}\sqrt{L}^{1-\alpha} \sqrt{\Delta}^{2-\alpha} \exp(-\gamma\mu (T-1)) \gamma^{\alpha} \sigma^\alpha \ln^{\alpha-1}\tfrac{4(K+1)}{\beta} \sum\limits_{l=0}^{K} \exp\left(\frac{\gamma\mu \alpha l}{2}\right) \notag\\
        &\leq& 2^{1+\alpha}\cdot 120^{\alpha-1}\sqrt{L}^{1-\alpha} \sqrt{\Delta}^{2-\alpha} \exp(-\gamma\mu (T-1)) \gamma^{\alpha} \sigma^\alpha \ln^{\alpha-1}\tfrac{4(K+1)}{\beta}(K+1)\exp\left(\frac{\gamma\mu \alpha K}{2}\right) \notag\\
        &\overset{\eqref{eq:clipped_SGD_PL_step_size}}{\leq}& \frac{1}{5}\exp(-\gamma\mu T) \Delta. \label{eq:bound_4_SGD_PL}
\end{eqnarray}

\paragraph{Upper bound for $\circledThree$.} From $E_{T-1}$ it follows that
    \begin{eqnarray}
        \circledThree &=& L\gamma^2 \exp(-\gamma\mu (T-1)) \sum\limits_{l=0}^{T-1} \frac{\EE_{\xi^l}\left[\|\theta_l^u\|^2\right]}{\exp(-\gamma\mu l)} \notag\\
        &\overset{\eqref{eq:distortion_theta_omega_SGD_PL}}{\leq}& 18 L \gamma^2\exp(-\gamma\mu (T-1)) \sigma^\alpha\sum\limits_{l=0}^{T-1} \frac{\lambda_l^{2-\alpha}}{\exp(-\gamma\mu l)} \notag\\
        &\overset{\eqref{eq:useful_inequality_on_lambda_SGD_PL}}{\leq}&  \frac{18\gamma^\alpha \sqrt{L}^{\alpha}\sqrt{\Delta}^{2-\alpha}\exp(-\gamma\mu (T-1)) \sigma^\alpha (K+1)\exp(\frac{\gamma\mu\alpha K}{2})}{120^{2-\alpha} \ln^{2-\alpha}\tfrac{4(K+1)}{\beta}} \notag\\
        &\overset{\eqref{eq:clipped_SGD_PL_step_size}}{\leq}& \frac{1}{5} \exp(-\gamma\mu T) \Delta. \label{eq:bound_5_SGD_PL}
    \end{eqnarray}

\paragraph{Upper bound for $\circledFour$.} First, we have
    \begin{equation*}
        L\gamma^2 (1-\gamma\mu)^{T-1-l}\EE_{\xi^l}\left[\|\theta_l^u\|^2 -\EE_{\xi_2^l}\left[\|\theta_l^u\|^2\right] \right] = 0.
    \end{equation*}
    Next, sum $\circledFour$ has bounded with probability $1$ terms:
    \begin{eqnarray}
        L\gamma^2 (1-\gamma\mu)^{T-1-l}\left| \|\theta_l^u\|^2 -\EE_{\xi^l}\left[\|\theta_l^u\|^2\right] \right| 
        &\overset{\eqref{eq:theta_omega_magnitude_SGD_PL}}{\leq}& \frac{8 L \gamma^2 \exp(-\gamma\mu T) \lambda_l^2}{\exp(-\gamma\mu (1+l))}\notag\\
        &\overset{\eqref{eq:clipped_SGD_PL_clipping_level}}{=}& \frac{\exp(-\gamma\mu (T+1))\Delta}{1800\ln^2\tfrac{4(K+1)}{\beta}}\notag\\
        &\leq& \frac{\exp(-\gamma\mu T)\Delta}{5\ln\tfrac{4(K+1)}{\beta}}\notag\\
        &\eqdef& c. \label{eq:SGD_PL_technical_6_1}
    \end{eqnarray}
    The summands also have conditional variances
    \begin{equation*}
        \widehat\sigma_l^2 \eqdef \EE_{\xi^l}\left[L^2\gamma^4 (1-\gamma\mu)^{2T-2-2l} \left| \|\theta_l^u\|^2 -\EE_{\xi^l}\left[\|\theta_l^u\|^2\right]  \right|^2\right]
    \end{equation*}
    that are bounded
    \begin{eqnarray}
        \widehat\sigma_l^2 &\overset{\eqref{eq:SGD_PL_technical_6_1}}{\leq}& \frac{L\gamma^2\exp(-2\gamma\mu T)\Delta}{5\exp(-\gamma\mu (1+l))\ln\tfrac{4(K+1)}{\beta}} \EE_{\xi^l}\left[\left| \|\theta_l^u\|^2 -\EE_{\xi^l}\left[\|\theta_l^u\|^2\right] \right|\right]\notag\\
        &\leq& \frac{2L\gamma^2\exp(-2\gamma\mu T)\Delta}{5\exp(-\gamma\mu (1+l))\ln\tfrac{4(K+1)}{\beta}} \EE_{\xi^l}\left[\|\theta_l^u\|^2\right]. \label{eq:SGD_PL_technical_6_2}
    \end{eqnarray}
    In other words, we showed that $\left\{L\gamma^2 (1-\gamma\mu)^{T-1-l}\left( \|\theta_l^u\|^2 -\EE_{\xi^l}\left[\|\theta_l^u\|^2\right] \right)\right\}_{l = 0}^{T-1}$ is a bounded martingale difference sequence with bounded conditional variances $\{\widehat\sigma_l^2\}_{l = 0}^{T-1}$. Next, we apply Bernstein's inequality (Lemma~\ref{lem:Bernstein_ineq}) with $X_l = L\gamma^2 (1-\gamma\mu)^{T-1-l}\left( \|\theta_l^u\|^2 -\EE_{\xi^l}\left[\|\theta_l^u\|^2\right] \right)$, parameter $c$ as in \eqref{eq:SGD_PL_technical_6_1}, $b = \tfrac{1}{5}\exp(-\gamma\mu T) \Delta$, $G = \tfrac{\exp(-2 \gamma\mu T) \Delta^2}{150\ln\frac{4(K+1)}{\beta}}$:
    \begin{equation*}
        \PP\left\{|\circledFour| > \frac{1}{5}\exp(-\gamma\mu T) \Delta \text{ and } \sum\limits_{l=0}^{T-1}\widehat\sigma_l^2 \leq \frac{\exp(-2\gamma\mu T) \Delta^2}{150\ln\frac{4(K+1)}{\beta}}\right\} \leq 2\exp\left(- \frac{b^2}{2G + \nicefrac{2cb}{3}}\right) = \frac{\beta}{2(K+1)}.
    \end{equation*}
    Equivalently, we have
    \begin{equation}
        \PP\{E_{\circledFour}\} \geq 1 - \frac{\beta}{2(K+1)},\quad \text{for}\quad E_{\circledFour} = \left\{\text{either} \quad \sum\limits_{l=0}^{T-1}\widehat\sigma_l^2 > \frac{\exp(-2\gamma\mu T) \Delta^2}{150\ln\tfrac{4(K+1)}{\beta}}\quad \text{or}\quad |\circledFour| \leq \frac{1}{5}\exp(-\gamma\mu T) \Delta\right\}. \label{eq:bound_6_SGD_PL}
    \end{equation}
    In addition, $E_{T-1}$ implies that
    \begin{eqnarray}
        \sum\limits_{l=0}^{T-1}\widehat\sigma_l^2 &\overset{\eqref{eq:SGD_PL_technical_6_2}}{\leq}& \frac{2L\gamma^2\exp(-\gamma\mu (2T-1))\Delta}{5\ln\tfrac{4(K+1)}{\beta}} \sum\limits_{l=0}^{T-1} \frac{\EE_{\xi^l}\left[\|\theta_l^u\|^2 \right]}{\exp(-\gamma\mu l)}\notag\\ &\overset{\eqref{eq:distortion_theta_omega_SGD_PL}, T \leq K+1}{\leq}& \frac{36L\gamma^2\exp(-\gamma\mu (2T-1)) \Delta \sigma^\alpha}{5\ln\tfrac{4(K+1)}{\beta}} \sum\limits_{l=0}^{K} \frac{\lambda_l^{2-\alpha}}{\exp(-\gamma\mu l)}\notag\\
        &\overset{\eqref{eq:useful_inequality_on_lambda_SGD_PL}}{\leq}& \frac{36\sqrt{L}^\alpha\gamma^\alpha\exp(-\gamma\mu (2T-1)) \sqrt{\Delta}^{4-\alpha} \sigma^\alpha (K+1)\exp(\frac{\gamma\mu\alpha K}{2})}{5\cdot 120^{2-\alpha}\ln^{3-\alpha}\tfrac{4(K+1)}{\beta}} \notag\\
        &\overset{\eqref{eq:clipped_SGD_non_convex_step_size}}{\leq}& \frac{\exp(-2\gamma\mu T)\Delta^2}{150\ln\tfrac{4(K+1)}{\beta}}. \label{eq:bound_6_variances_SGD_PL}
\end{eqnarray}

\paragraph{Upper bound for $\circledFive$.} From $E_{T-1}$ it follows that
\begin{eqnarray}
        \circledFive &=&  L\gamma^2 \sum\limits_{l=0}^{T-1} \exp(-\gamma\mu (T-1-l)) \|\theta_l^b\|^2\notag\\
        &\overset{\eqref{eq:bias_theta_omega_SGD_PL}}{\leq}& 2^{2\alpha}L\gamma^2 \exp(-\gamma\mu (T-1)) \sigma^{2\alpha} \sum\limits_{l=0}^{T-1} \frac{1}{\lambda_l^{2\alpha-2} \exp(-\gamma\mu l)} \notag\\
        &\overset{\eqref{eq:clipped_SGD_PL_clipping_level}, T \leq K+1}{\leq}& \frac{2\cdot 2^{2\alpha}\cdot 120^{2\alpha-2}\gamma^{2\alpha}\sqrt{L}^{2\alpha} \exp(-\gamma\mu T) \sigma^{2\alpha} \ln^{2\alpha-2}\tfrac{4(K+1)}{\beta}}{\sqrt{\Delta}^{2\alpha-2}} \sum\limits_{l=0}^{K} \exp\left(\gamma\mu(2\alpha-2)\left(1 + \frac{l}{2}\right)\right)\exp(\gamma\mu l)\notag\\
        &\leq& \frac{4\cdot 2^{2\alpha}\cdot 120^{2\alpha-2}\gamma^{2\alpha}\sqrt{L}^{2\alpha} \exp(-\gamma\mu T) \sigma^{2\alpha} \ln^{2\alpha-2}\tfrac{4(K+1)}{\beta}}{\sqrt{\Delta}^{2\alpha-2}} \sum\limits_{l=0}^{K} \exp(\gamma\mu \alpha l)\notag\\
        &\leq& \frac{4\cdot 2^{2\alpha}\cdot 120^{2\alpha-2}\gamma^{2\alpha} \sqrt{L}^{2\alpha} \exp(-\gamma\mu T) \sigma^{2\alpha} \ln^{2\alpha-2}\tfrac{4(K+1)}{\beta} (K+1) \exp(\gamma\mu \alpha K)}{\sqrt{\Delta}^{2\alpha-2}}\notag\\
        &\overset{\eqref{eq:clipped_SGD_non_convex_step_size}}{\leq}& \frac{1}{5}\exp(-\gamma\mu T) \Delta. \label{eq:bound_7_SGD_PL}
\end{eqnarray}

 Now, we have the upper bounds for  $\circledOne, \circledTwo, \circledThree, \circledFour, \circledFive$. In particular, probability event $E_{T-1}$ implies
    \begin{gather*}
        \Delta_T \overset{\eqref{eq:SGD_PL_12345_bound}}{\leq} \exp(-\gamma\mu T) \Delta + \circledOne + \circledTwo + \circledThree + \circledFour + \circledFive,\\
        \circledTwo \overset{\eqref{eq:bound_4_SGD_PL}}{\leq} \frac{1}{5}\exp(-\gamma\mu T)\Delta,\quad \circledThree \overset{\eqref{eq:bound_5_SGD_PL}}{\leq} \frac{1}{5}\exp(-\gamma\mu T)\Delta,\quad \circledFive \overset{\eqref{eq:bound_7_SGD_PL}}{\leq} \frac{1}{5}\exp(-\gamma\mu T)\Delta,\\
        \sum\limits_{l=0}^{T-1}\sigma_l^2 \overset{\eqref{eq:bound_3_variances_SGD_PL}}{\leq} \frac{\exp(-2\gamma\mu T)\Delta^2}{150\ln\tfrac{4(K+1)}{\beta}},\quad \sum\limits_{l=0}^{T-1}\widehat\sigma_l^2 \overset{\eqref{eq:bound_6_variances_SGD_PL}}{\leq}  \frac{\exp(-2\gamma\mu T)\Delta^2}{150\ln\tfrac{4(K+1)}{\beta}}.
    \end{gather*}
    Moreover, we also have (see \eqref{eq:bound_1_SGD_PL}, \eqref{eq:bound_6_SGD_PL} and our induction assumption)
     \begin{gather*}
        \PP\{E_{T-1}\} \geq 1 - \frac{(T-1)\beta}{K+1},\\
        \PP\{E_{\circledOne}\} \geq 1 - \frac{\beta}{2(K+1)}, \quad \PP\{E_{\circledFour}\} \geq 1 - \frac{\beta}{2(K+1)},
    \end{gather*}
    where
    \begin{eqnarray}
        E_{\circledOne}&=& \left\{\text{either} \quad \sum\limits_{l=0}^{T-1}\sigma_l^2 > \frac{\exp(-2\gamma\mu T) \Delta^2}{150\ln\tfrac{4(K+1)}{\beta}}\quad \text{or}\quad |\circledOne| \leq \frac{1}{5}\exp(-\gamma\mu T) \Delta\right\},\notag\\
        E_{\circledFour}&=& \left\{\text{either} \quad \sum\limits_{l=0}^{T-1}\widehat\sigma_l^2 > \frac{\exp(-2\gamma\mu T) \Delta^2}{150\ln\tfrac{4(K+1)}{\beta}}\quad \text{or}\quad |\circledFour| \leq \frac{1}{5}\exp(-\gamma\mu T) \Delta\right\}.\notag
    \end{eqnarray}
    Thus, probability event $E_{T-1} \cap E_{\circledOne} \cap E_{\circledFour}$ implies
    \begin{eqnarray*}
        \Delta_T &\overset{\eqref{eq:SGD_PL_12345_bound}}{\leq}& \exp(-\gamma\mu T) \Delta + \circledOne + \circledTwo + \circledThree + \circledFour + \circledFive\\
        &\leq& 2\exp(-\gamma\mu T) \Delta,
    \end{eqnarray*}
    which is equivalent to \eqref{eq:induction_inequality_SGD_PL} for $t = T$, and
    \begin{equation}
        \PP\{E_T\} \geq \PP\{E_{T-1} \cap E_{\circledOne} \cap E_{\circledFour}\} = 1 - \PP\{\overline{E}_{T-1} \cup \overline{E}_{\circledOne} \cup \overline{E}_{\circledFour}\} \geq 1 - \frac{T\beta}{K+1}. \notag
    \end{equation}
    This finishes the inductive part of our proof, i.e., for all $k = 0,1,\ldots,K+1$ we have $\PP\{E_k\} \geq 1 - \nicefrac{k\beta}{(K+1)}$. In particular, for $k = K+1$ we have that with probability at least $1 - \beta$
    \begin{equation}
        f(x^{K+1}) - f_* \leq 2\exp(-\gamma\mu (K+1))\Delta. \notag
    \end{equation}
    Finally, if 
    \begin{eqnarray*}
        \gamma &=& \min\left\{\frac{1}{250 L \ln \tfrac{4(K+1)}{\beta}}, \frac{\ln(B_K)}{\mu(K+1)}\right\}, \notag\\
        B_K &=& \max\left\{2, \frac{(K+1)^{\frac{2(\alpha-1)}{\alpha}}\mu^2\Delta}{264600^{\frac{2}{\alpha}}L\sigma^2\ln^{\frac{2(\alpha-1)}{\alpha}}\left(\frac{6(K+1)}{\beta}\right)\ln^2(B_K)} \right\} \\
        &=& \cO\left(\max\left\{2, \frac{K^{\frac{2(\alpha-1)}{\alpha}}\mu^2 \Delta}{L\sigma^2\ln^{\frac{2(\alpha-1)}{\alpha}}\left(\frac{K}{\beta}\right)\ln^2\left(\max\left\{2, \frac{K^{\frac{2(\alpha-1)}{\alpha}}\mu^2 \Delta}{L\sigma^2\ln^{\frac{2(\alpha-1)}{\alpha}}\left(\frac{K}{\beta}\right)} \right\}\right)} \right\}\right)
    \end{eqnarray*}
    then with probability at least $1-\beta$
    \begin{eqnarray*}
        f(x^{K+1}) - f_* &\leq& 2\exp(-\gamma\mu (K+1))\Delta\\
        &=& 2\Delta\max\left\{\exp\left(-\frac{\mu(K+1)}{250 L \ln \tfrac{4(K+1)}{\beta}}\right), \frac{1}{B_K} \right\}\\
        &=& \cO\left(\max\left\{\Delta\exp\left(- \frac{\mu K}{L \ln \tfrac{K}{\beta}}\right), \frac{L\sigma^2\ln^{\frac{2(\alpha-1)}{\alpha}}\left(\frac{K}{\beta}\right)\ln^2\left(\max\left\{2, \frac{K^{\frac{2(\alpha-1)}{\alpha}}\mu^2\Delta}{L\sigma^2\ln^{\frac{2(\alpha-1)}{\alpha}}\left(\frac{K}{\beta}\right)} \right\}\right)}{K^{\frac{2(\alpha-1)}{\alpha}}\mu^2}\right\}\right).
    \end{eqnarray*}
    To get $\|x^{K+1} - x^*\|^2 \leq \varepsilon$ with probability at least $1-\beta$ it is sufficient to choose $K$ such that both terms in the maximum above are $\cO(\varepsilon)$. This leads to
    \begin{equation*}
         K = \cO\left(\frac{L}{\mu}\ln\left(\frac{\Delta}{\varepsilon}\right)\ln\left(\frac{L}{\mu \beta}\ln\frac{\Delta}{\varepsilon}\right), \left(\frac{L\sigma^2}{\mu^2\varepsilon}\right)^{\frac{\alpha}{2(\alpha-1)}}\ln \left(\frac{1}{\beta} \left(\frac{L\sigma^2}{\mu^2\varepsilon}\right)^{\frac{\alpha}{2(\alpha-1)}}\right)\ln^{\frac{\alpha}{\alpha-1}}\left(B_\varepsilon\right)\right),
    \end{equation*}
    where
    \begin{equation*}
        B_\varepsilon = \max\left\{2, \frac{\Delta}{\varepsilon \ln \left(\frac{1}{\beta} \left(\frac{L\sigma^2}{\mu^2\varepsilon}\right)^{\frac{\alpha}{2(\alpha-1)}}\right)}\right\}.
    \end{equation*}
    This concludes the proof.
\end{proof}

\subsection{Convex Functions}

Now, we focus on the case of convex functions. We start with the following lemma.
\begin{lemma}\label{lem:main_opt_lemma_clipped_SGD_convex}
    Let Assumptions~\ref{as:L_smoothness} and \ref{as:str_cvx} with $\mu = 0$ hold on $Q = B_{2R}(x^*)$, where $R \geq \|x^0 - x^*\|$, and let stepsize  $\gamma$ satisfy $\gamma \leq \frac{1}{L}$. If $x^{k} \in Q$ for all $k = 0,1,\ldots,K+1$, $K \ge 0$, then after $K$ iterations of \algname{clipped-SGD} we have
    \begin{eqnarray}
       \gamma\left(f(\overline{x}^K) -f(x^*)\right) &\leq& \frac{\|x^0 - x^*\|^2 - \|x^{K+1} - x^*\|^2}{K+1}\notag\\
       &&\quad - \frac{2\gamma}{K+1}\sum\limits_{k=0}^K \langle x^k - x^*  - \gamma \nabla f(x^k), \theta_k \rangle  + \frac{\gamma^2}{K+1}\sum\limits_{k=0}^K \|\theta_k\|^2 ,\label{eq:main_opt_lemma_clipped_SGD_star_convex}\\
       \overline{x}^K &=& \frac{1}{K+1}\sum\limits_{k=0}^K x^k, \label{eq:x_avg_clipped_SGD}
    \end{eqnarray}
    where $\theta_k$ is defined in \eqref{eq:theta_k_def_clipped_SGD_non_convex}.
\end{lemma}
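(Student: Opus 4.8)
The plan is to mimic the textbook one-step analysis of gradient descent for smooth convex functions, isolating the noise vector $\theta_k = \tnabla f_{\xi^k}(x^k) - \nabla f(x^k)$ exactly in the form required by \eqref{eq:main_opt_lemma_clipped_SGD_star_convex}. First I would expand the squared distance: using $x^{k+1} = x^k - \gamma\tnabla f_{\xi^k}(x^k) = x^k - \gamma\nabla f(x^k) - \gamma\theta_k$,
\begin{equation*}
\|x^{k+1}-x^*\|^2 = \|x^k - x^* - \gamma\nabla f(x^k)\|^2 - 2\gamma\langle x^k - x^* - \gamma\nabla f(x^k), \theta_k\rangle + \gamma^2\|\theta_k\|^2 ,
\end{equation*}
and then further expand $\|x^k - x^* - \gamma\nabla f(x^k)\|^2 = \|x^k-x^*\|^2 - 2\gamma\langle\nabla f(x^k), x^k-x^*\rangle + \gamma^2\|\nabla f(x^k)\|^2$. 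The $\theta_k$-dependent pieces are already in the target shape, so the whole task reduces to controlling the deterministic scalar $-2\gamma\langle\nabla f(x^k), x^k-x^*\rangle + \gamma^2\|\nabla f(x^k)\|^2$.

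The key step is to show $-2\gamma\langle\nabla f(x^k), x^k-x^*\rangle + \gamma^2\|\nabla f(x^k)\|^2 \le -2\gamma(f(x^k)-f(x^*))$ for $\gamma \le \tfrac1L$ and $x^k \in Q = B_{2R}(x^*)$. For this I would invoke the standard smoothness-plus-convexity consequence
\begin{equation*}
\langle \nabla f(x^k), x^k - x^*\rangle \ge f(x^k) - f(x^*) + \frac{1}{2L}\|\nabla f(x^k)\|^2 ,
\end{equation*}
which gives $-2\gamma\langle\nabla f(x^k),x^k-x^*\rangle+\gamma^2\|\nabla f(x^k)\|^2 \le -2\gamma(f(x^k)-f(x^*)) - \gamma\big(\tfrac1L - \gamma\big)\|\nabla f(x^k)\|^2$, and the last term is nonpositive since $\gamma \le \tfrac1L$. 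To justify this inequality on the ball rather than on all of $\R^d$ I would reuse the reasoning of Appendix~\ref{appendix:useful_facts}: set $\phi(y) = f(y) - \langle\nabla f(x^k), y\rangle$, which is convex and $L$-smooth on $Q$ with $\nabla\phi(x^k)=0$ (so $x^k$ minimizes $\phi$ over $Q$); the point $z = x^* + \tfrac1L\nabla f(x^k)$ satisfies $\|z - x^*\| = \tfrac1L\|\nabla f(x^k) - \nabla f(x^*)\| \le \|x^k - x^*\| \le 2R$ by \eqref{eq:L_smoothness} (using $\nabla f(x^*)=0$, valid since $x^*$ minimizes $f$ over $\R^d$ and $f_*=f(x^*)$), hence $z \in Q$; combining $\phi(x^k)\le\phi(z)$ with the smoothness descent bound $\phi(z)\le\phi(x^*) - \tfrac1{2L}\|\nabla f(x^k)\|^2$ along the segment $[x^*,z]\subseteq Q$ yields exactly the displayed inequality.

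With this in hand, each step gives $2\gamma(f(x^k)-f(x^*)) \le \|x^k-x^*\|^2 - \|x^{k+1}-x^*\|^2 - 2\gamma\langle x^k - x^* - \gamma\nabla f(x^k),\theta_k\rangle + \gamma^2\|\theta_k\|^2$. I would then sum over $k=0,\dots,K$, telescoping the distance terms, divide by $2(K+1)$, and finish with Jensen's inequality $f(\overline{x}^K)\le\tfrac{1}{K+1}\sum_{k=0}^K f(x^k)$; since the left-hand side is nonnegative, the bracketed right-hand side is nonnegative too, so replacing the prefactor $\tfrac{1}{2(K+1)}$ by $\tfrac{1}{K+1}$ only weakens the estimate and produces precisely \eqref{eq:main_opt_lemma_clipped_SGD_star_convex}. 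The one genuinely nontrivial point is the localization of the smoothness-plus-convexity inequality to $B_{2R}(x^*)$ described above; everything else is bookkeeping. Note also that the entire argument is deterministic and uses no property of $\theta_k$, which is why the conclusion holds pathwise on the event that all iterates stay in $Q$.
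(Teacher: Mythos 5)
Your proposal is correct and follows essentially the same route as the paper: expand $\|x^{k+1}-x^*\|^2$, isolate $\theta_k$ in exactly the form of \eqref{eq:main_opt_lemma_clipped_SGD_star_convex}, bound the deterministic part by convexity and smoothness, telescope, and apply Jensen. The only divergence is in absorbing $\gamma^2\|\nabla f(x^k)\|^2$: the paper simply invokes \eqref{eq:L_smoothness_cor_2}, which is part of Assumption~\ref{as:L_smoothness} on $Q$, to get the per-step coefficient $-2\gamma(1-\gamma L)\le-\gamma$, whereas you re-derive a co-coercivity inequality via the auxiliary function $\phi$ with a careful localization to $B_{2R}(x^*)$ — valid (and it yields the slightly stronger coefficient $-2\gamma$, which you then correctly relax by the nonnegativity argument), but unnecessary given that \eqref{eq:L_smoothness_cor_2} is assumed directly.
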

\begin{proof}
    Using $x^{k+1} = x^k - \gamma \tnabla f_{\xi^k}(x^{k})$, we derive for all $k = 0,1, \ldots, K$ that
    \begin{eqnarray*}
        \|x^{k+1} - x^*\|^2 &=& \|x^k - x^*\|^2 -2\gamma \langle x^k - x^*,  \tnabla f_{\xi^k}(x^{k})\rangle + \gamma^2\|\tnabla f_{\xi^k}(x^k)\|^2\\
        &=& \|x^k - x^*\|^2 -2\gamma \langle x^k - x^*,  \nabla f(x^{k})\rangle - 2\gamma \langle x^k - x^*,  \theta_k\rangle  + \gamma^2\|\nabla f(x^k) + \theta_k\|^2\\
        &\overset{\eqref{eq:str_cvx}, \mu=0}{\leq}&  \|x^k - x^*\|^2 -2\gamma\left(f(x^k) - f(x^*)\right) - 2\gamma\langle x^k - x^* - \gamma \nabla f(x^k), \theta_k \rangle\\
        &&\quad + \gamma^2\|\nabla f(x^k)\|^2 + \gamma^2 \|\theta_k\|^2\\
        &\overset{\eqref{eq:L_smoothness_cor_2}}{\leq}& \|x^k - x^*\|^2 -2\gamma\left(1 - \gamma L\right)\left(f(x^k) - f(x^*)\right) - 2\gamma\langle x^k - x^* - \gamma \nabla f(x^k), \theta_k \rangle\\
        &&\quad + \gamma^2 \|\theta_k\|^2\\
        &\overset{\gamma\leq\nicefrac{1}{L}}{\leq}& \|x^k - x^*\|^2 -\gamma \left(f(x^k) - f(x^*)\right) - 2\gamma\langle x^k - x^* - \gamma \nabla f(x^k), \theta_k \rangle + \gamma^2 \|\theta_k\|^2.
    \end{eqnarray*}
    Summing up the above inequalities for $k = 0,1,\ldots, K$ and rearranging the terms, we get
    \begin{eqnarray*}
        \frac{\gamma}{K+1}\sum\limits_{k=0}^K\left(f(x^k) - f(x^*)\right) &\leq& \frac{1}{K+1}\sum\limits_{k=0}^K\left(\|x^k - x^*\|^2 - \|x^{k+1} - x^*\|^2\right) - \frac{2\gamma}{K+1}\sum\limits_{k=0}^K \langle x^k - x^* - \gamma \nabla f(x^k), \theta_k \rangle\\
        &&\quad+ \frac{\gamma^2}{K+1}\sum\limits_{k=0}^K \|\theta_k\|^2\\
        &=& \frac{\|x^0 - x^*\|^2 - \|x^{K+1} - x^*\|^2}{K+1} - \frac{2\gamma}{K+1}\sum\limits_{k=0}^K \langle x^k - x^* - \gamma \nabla f(x^k), \theta_k \rangle \\
        &&\quad + \frac{\gamma^2}{K+1}\sum\limits_{k=0}^K \|\theta_k\|^2.  
    \end{eqnarray*}
    Finally, we use the definition of $\overline{x}^K$ and Jensen's inequality and get the result.
\end{proof}

Using this lemma we prove the main convergence result for \algname{clipped-SGD}.
\begin{theorem}[Case 3 from Theorem~\ref{thm:clipped_SGD_main_theorem}]\label{thm:clipped_SGD_convex_case_appendix}
    Let Assumptions~\ref{as:bounded_alpha_moment}, \ref{as:L_smoothness} and \ref{as:str_cvx} with $\mu = 0$ hold on $Q = B_{2R}(x^*)$, where $R \geq \|x^0 - x^*\|$, and
    \begin{gather}
        \gamma \leq \min\left\{\frac{1}{80L \ln \frac{4(K+1)}{\beta}}, \; \frac{R}{108^{\frac{1}{\alpha}}\cdot 20 \sigma  K^{\frac{1}{\alpha}}\left(\ln \frac{4(K+1)}{\beta}\right)^{\frac{\alpha-1}{\alpha}}}\right\},\label{eq:clipped_SGD_step_size_cvx}\\
        \lambda_{k} \equiv \lambda = \frac{R}{40 \gamma\ln\frac{4(K+1)}{\beta}}, \label{eq:clipped_SGD_clipping_level_cvx}
    \end{gather}
    for some $K > 0$ and $\beta \in (0,1]$ such that $\ln\frac{4K}{\beta} \geq 1$. Then, after $K$ iterations of \algname{clipped-SGD} the iterates with probability at least $1 - \beta$ satisfy
    \begin{equation}
        f(\overline{x}^K) - f(x^*) \leq \frac{2R^2}{\gamma(K+1)} \quad \text{and} \quad  \{x^k\}_{k=0}^{K} \subseteq B_{\sqrt{2}R}(x^*). \label{eq:clipped_SGD_convex_case_appendix}
    \end{equation}
    In particular, when $\gamma$ equals the minimum from \eqref{eq:clipped_SGD_non_convex_step_size}, then the iterates produced by \algname{clipped-SGD} after $K$ iterations with probability at least $1-\beta$ satisfy
    \begin{equation}
        f(\overline{x}^K) - f(x^*) = \cO\left(\max\left\{\frac{LR^2 \ln \frac{K}{\beta}}{K}, \frac{\sigma R\ln^{\frac{\alpha-1}{\alpha}}\frac{K}{\beta}}{K^{\frac{\alpha-1}{\alpha}}}\right\}\right), \label{eq:clipped_SGD_convex_case_2_appendix}
    \end{equation}
    meaning that to achieve $f(\overline{x}^K) - f(x^*) \leq \varepsilon$ with probability at least $1 - \beta$ \algname{clipped-SGD} requires
    \begin{equation}
        K = \cO\left(\max\left\{\frac{LR^2}{\varepsilon}, \left(\frac{\sigma R}{\varepsilon}\right)^{\frac{\alpha}{\alpha-1}} \ln \left(\frac{1}{\beta} \left(\frac{\sigma R}{\varepsilon}\right)^{\frac{\alpha}{\alpha-1}}\right) \right\}\right)\quad \text{iterations/oracle calls.} \label{eq:clipped_SGD_convex_case_complexity_appendix}
    \end{equation}
\end{theorem}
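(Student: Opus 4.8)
The plan is to mirror the inductive ``stay-in-the-ball'' argument of the non-convex case (Theorem~\ref{thm:clipped_SGD_non_convex_main}), but tracking the squared distance to the solution instead of the function value. Starting from Lemma~\ref{lem:main_opt_lemma_clipped_SGD_convex} applied with horizon $t$ in place of $K+1$, and using $f(x^k)\ge f(x^*)$, one gets
\begin{equation}
    \|x^t - x^*\|^2 \le \|x^0 - x^*\|^2 - 2\gamma\sum_{k=0}^{t-1}\langle x^k - x^* - \gamma\nabla f(x^k),\theta_k\rangle + \gamma^2\sum_{k=0}^{t-1}\|\theta_k\|^2, \notag
\end{equation}
so the whole task is to show that the two stochastic sums stay below $R^2$ with high probability: this keeps the iterates in $B_{\sqrt2 R}(x^*)\subseteq Q$ and, at $t = K+1$, upgrades Lemma~\ref{lem:main_opt_lemma_clipped_SGD_convex} to the claimed bound $f(\bar x^K) - f(x^*) \le \tfrac{2R^2}{\gamma(K+1)}$.

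For $k = 0,1,\dots,K+1$ I would define $E_k$ as the event that $\|x^t - x^*\|^2 \le 2R^2$ for all $t\le k$ (together with the two ``conditional variance is small'' clauses that Bernstein needs), and prove $\PP\{E_k\}\ge 1 - \tfrac{k\beta}{K+1}$ by induction. In the inductive step, $E_{T-1}$ puts $x^0,\dots,x^{T-1}$ in $B_{\sqrt2 R}(x^*)$, hence $\|\nabla f(x^l)\| = \|\nabla f(x^l) - \nabla f(x^*)\| \le \sqrt2 LR \le \lambda/2$ by \eqref{eq:clipped_SGD_step_size_cvx}--\eqref{eq:clipped_SGD_clipping_level_cvx}, while the single-step bound $\|x^T - x^{T-1}\| = \gamma\|\tnabla f_{\xi^{T-1}}(x^{T-1})\| \le \gamma\lambda \le R/40$ keeps $x^T\in B_{2R}(x^*) = Q$, so Lemma~\ref{lem:main_opt_lemma_clipped_SGD_convex} is applicable. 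I then replace the (a.s.\ unbounded) vector $x^l - x^* - \gamma\nabla f(x^l)$ by the surrogate $\eta_l$ equal to it when $\|x^l - x^* - \gamma\nabla f(x^l)\| \le \sqrt2 R$ and $0$ otherwise; convexity together with \eqref{eq:L_smoothness_cor_2} and $\gamma\le 1/L$ gives $\|x^l - x^* - \gamma\nabla f(x^l)\|\le\|x^l-x^*\|$, so $E_{T-1}$ forces $\eta_l = x^l - x^* - \gamma\nabla f(x^l)$. Splitting $\theta_l = \theta_l^u + \theta_l^b$ into its conditionally zero-mean part and its bias, Lemma~\ref{lem:bias_and_variance_clip} supplies $\|\theta_l^u\|\le 2\lambda$, $\|\theta_l^b\|\le 2^\alpha\sigma^\alpha/\lambda^{\alpha-1}$ and $\EE_{\xi^l}[\|\theta_l^u\|^2]\le 18\lambda^{2-\alpha}\sigma^\alpha$, and the bound on $\|x^T-x^*\|^2$ becomes $R^2 + \circledOne + \circledTwo + \circledThree + \circledFour + \circledFive$ with $\circledOne = -2\gamma\sum_l\langle\eta_l,\theta_l^u\rangle$, $\circledTwo = -2\gamma\sum_l\langle\eta_l,\theta_l^b\rangle$, $\circledThree = \gamma^2\sum_l(\|\theta_l^u\|^2 - \EE_{\xi^l}[\|\theta_l^u\|^2])$, $\circledFour = \gamma^2\sum_l\EE_{\xi^l}[\|\theta_l^u\|^2]$, $\circledFive = \gamma^2\sum_l\|\theta_l^b\|^2$. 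The aim is each term $\le R^2/5$: $\circledTwo, \circledFour, \circledFive$ are deterministic on $E_{T-1}$ and, since $\lambda = \Theta(R/(\gamma\ln\tfrac{4(K+1)}{\beta}))$, reduce to the stepsize restriction $\gamma\lesssim R/(\sigma K^{1/\alpha}(\ln\tfrac{4(K+1)}{\beta})^{(\alpha-1)/\alpha})$; $\circledOne$ and $\circledThree$ are bounded-difference martingale sums with conditional variances again controlled by $\EE_{\xi^l}[\|\theta_l^u\|^2]$, so Bernstein's inequality (Lemma~\ref{lem:Bernstein_ineq}) with $b = R^2/5$, $c = \Theta(R^2/\ln\tfrac{4(K+1)}{\beta})$ and $G = \Theta(R^4/\ln\tfrac{4(K+1)}{\beta})$ gives each failure probability $\le \beta/(2(K+1))$. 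A union bound over $E_{T-1}$ and the two Bernstein events closes the induction.

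From $E_{K+1}$ (probability $\ge 1-\beta$) the iterates lie in $B_{\sqrt2 R}(x^*)$ and Lemma~\ref{lem:main_opt_lemma_clipped_SGD_convex}, with the same five-term control of the stochastic part (now divided by $K+1$ and summing to at most $R^2$), yields $\gamma(f(\bar x^K) - f(x^*)) \le \tfrac{R^2}{K+1} + \tfrac{R^2}{K+1} = \tfrac{2R^2}{K+1}$, i.e.\ \eqref{eq:clipped_SGD_convex_case_appendix}; substituting $\gamma$ equal to the minimum in \eqref{eq:clipped_SGD_step_size_cvx} and balancing the two terms gives \eqref{eq:clipped_SGD_convex_case_2_appendix} and the complexity \eqref{eq:clipped_SGD_convex_case_complexity_appendix}. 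As in the non-convex case, the main obstacle is the constant bookkeeping in the inductive step: the localizing surrogate $\eta_l$ and the constants in $\lambda$, $b$, $c$, $G$ must be chosen so that $E_{T-1}$ really implies $\eta_l = x^l - x^* - \gamma\nabla f(x^l)$ and $\|\nabla f(x^l)\| \le \lambda/2$, so that the three deterministic/variance terms each come out $\le R^2/5$ (resp.\ $\le G$) under the \emph{single} stepsize condition \eqref{eq:clipped_SGD_step_size_cvx}, and so that the Bernstein exponents collapse exactly to $\ln\tfrac{4(K+1)}{\beta}$; the rest is routine.
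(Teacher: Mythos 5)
Your proposal is correct and follows essentially the same route as the paper's proof: the same induction on the event that the iterates stay in $B_{\sqrt{2}R}(x^*)$, the same five-term decomposition of the stochastic error into unbiased/bias parts handled by Lemma~\ref{lem:bias_and_variance_clip} and Bernstein's inequality, and the same final application of Lemma~\ref{lem:main_opt_lemma_clipped_SGD_convex}. The only (harmless) deviation is your truncation level $\sqrt{2}R$ for the surrogate $\eta_l$ justified via convexity and \eqref{eq:L_smoothness_cor_2}, where the paper simply uses the triangle inequality to get the level $2R$.
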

\begin{proof}
    Let $R_k = \|x^k - x^*\|$ for all $k\geq 0$. Next, our goal is to show by induction that $R_{l} \leq 2R$ with high probability, which allows to apply the result of Lemma~\ref{lem:main_opt_lemma_clipped_SGD_convex} and then use Bernstein's inequality to estimate the stochastic part of the upper-bound. More precisely, for each $k = 0,\ldots, K+1$ we consider probability event $E_k$ defined as follows: inequalities
    \begin{eqnarray}
        - 2\gamma\sum\limits_{l=0}^{t-1} \langle x^l - x^* - \gamma \nabla f(x^l), \theta_l \rangle  + \gamma^2\sum\limits_{l=0}^{t-1} \|\theta_l\|^2 &\leq&  R^2, \label{eq:clipped_SGD_convex_induction_inequality_1}\\
        R_t &\leq& \sqrt{2}R \label{eq:clipped_SGD_convex_induction_inequality_2}
    \end{eqnarray}
    hold for all $t = 0,1,\ldots, k$ simultaneously. We want to prove via induction that $\PP\{E_k\} \geq 1 - \nicefrac{k\beta}{(K+1)}$ for all $k = 0,1,\ldots, K+1$. For $k = 0$ the statement is trivial. Assume that the statement is true for some $k = T - 1 \leq K$: $\PP\{E_{T-1}\} \geq 1 - \nicefrac{(T-1)\beta}{(K+1)}$. One needs to prove that $\PP\{E_{T}\} \geq 1 - \nicefrac{T\beta}{(K+1)}$. First, we notice that probability event $E_{T-1}$ implies that $x_t \in B_{\sqrt{2}R}(x^*)$ for all $t = 0,1,\ldots, T-1$. Moreover, $E_{T-1}$ implies
    \begin{eqnarray*}
        \|x^T - x^*\| = \|x^{T-1} - x^* - \gamma \tnabla f_{\xi^{T-1}}(x^{T-1})\| \leq \|x^{T-1} - x^*\| + \gamma\|\tnabla f_{\xi^{T-1}}(x^{T-1})\| \leq \sqrt{2}R + \gamma \lambda \overset{\eqref{eq:clipped_SGD_clipping_level_cvx}}{\leq} 2R,
    \end{eqnarray*}
    i.e., $x^0, x^1, \ldots, x^T \in B_{2R}(x^*)$. Therefore, $E_{T-1}$ implies $\{x^k\}_{k=0}^{T} \subseteq Q$, meaning that the assumptions of Lemma~\ref{lem:main_opt_lemma_clipped_SGD_convex} are satisfied and we have 
    \begin{eqnarray}
        \gamma \left(f(\overline{x}^{t-1}) -f(x^*)\right) &\leq& \frac{\|x^0 - x^*\|^2 - \|x^{t} - x^*\|^2}{t}\notag\\
       &&\quad - \frac{2\gamma}{t}\sum\limits_{l=0}^{t-1} \langle x^l - x^*  - \gamma \nabla f(x^l), \theta_l \rangle  + \frac{\gamma^2}{t}\sum\limits_{l=0}^{t-1} \|\theta_l\|^2 \label{eq:clipped_SGD_convex_technical_1}
    \end{eqnarray}
    for all $t=1,\ldots, T$ simultaneously and for all $t = 1, \ldots, T-1$ this probability event also implies that
    \begin{eqnarray}
        f(\overline{x}^{t-1}) -f(x^*) &\leq& \frac{1}{\gamma t}\left(R^2 - 2\gamma\sum\limits_{l=0}^{t-1} \langle x^l - x^* - \gamma \nabla f(x^l), \theta_l \rangle  + \gamma^2\sum\limits_{l=0}^{t-1} \|\theta_l\|^2\right)  \overset{\eqref{eq:clipped_SGD_convex_induction_inequality_1}}{\leq} \frac{2R^2}{\gamma t}. \label{eq:clipped_SGD_convex_technical_1_1}
    \end{eqnarray}
    Taking into account that $f(\overline{x}^{T-1}) -f(x^*) \geq 0$, we also derive from \eqref{eq:clipped_SGD_convex_technical_1} that $E_{T-1}$ implies
    \begin{eqnarray}
        R_T^2 \leq R^2 - 2\gamma\sum\limits_{l=0}^{t-1} \langle x^l - x^* - \gamma \nabla f(x^l), \theta_l \rangle  + \gamma^2\sum\limits_{l=0}^{t-1} \|\theta_l\|^2. \label{eq:clipped_SGD_convex_technical_2}
    \end{eqnarray}
    Next, we define random vectors
    \begin{equation}
        \eta_t = \begin{cases} x^t - x^* - \gamma \nabla f(x^t),& \text{if } \|x^t - x^* - \gamma \nabla f(x^t)\|^2 \leq 2R,\\ 0,&\text{otherwise}, \end{cases} \notag
    \end{equation}
    for all $t = 0,1,\ldots, T-1$. By definition, these random vectors are bounded with probability 1
    \begin{equation}
        \|\eta_t\| \leq 2R. \label{eq:clipped_SGD_convex_technical_6}
    \end{equation}
    Moreover, for $t = 0,\ldots, T-1$ event $E_{T-1}$ implies
    \begin{eqnarray}
        \|\nabla f(x^{t})\| &\overset{\eqref{eq:L_smoothness}}{\leq}& L\|x^t - x^*\|  \overset{\eqref{eq:clipped_SGD_convex_induction_inequality_2}}{\leq}  \sqrt{2}LR \overset{\eqref{eq:clipped_SGD_step_size_cvx},\eqref{eq:clipped_SGD_clipping_level_cvx}}{\leq} \frac{\lambda}{2},\label{eq:clipped_SGD_convex_technical_4}\\
        \|x^t - x^* - \gamma \nabla f(x^t)\| &\leq& \|x^t - x^*\| + \gamma \|\nabla f(x^t)\| \overset{\eqref{eq:clipped_SGD_convex_technical_4}}{\leq} \sqrt{2}R (1+L\gamma) \overset{\eqref{eq:clipped_SGD_step_size_cvx}}{\leq} 2R.\notag
    \end{eqnarray}
    Next, we define the unbiased part and the bias of $\theta_{t}$ as $\theta_{t}^u$ and $\theta_{t}^b$, respectively:
    \begin{equation}
        \theta_{t}^u = \tnabla f_{\xi^t}(x^{t}) - \EE_{\xi^t}\left[\tnabla f_{\xi^t}(x^{t})\right],\quad \theta_{t}^b = \EE_{\xi^t}\left[\tnabla f_{\xi^t}(x^{t})\right] - \nabla f(x^{t}). \label{eq:clipped_SGD_convex_theta_u_b}
    \end{equation}
    We notice that $\theta_{t} = \theta_{t}^u + \theta_{t}^b$. Using new notation, we get that $E_{T-1}$ implies
    \begin{eqnarray}
        R_T^2 
        &\leq& R^2 \underbrace{-2\gamma\sum\limits_{t=0}^{T-1}\langle \theta_t^u, \eta_t\rangle}_{\circledOne}  \underbrace{-2\gamma\sum\limits_{t=0}^{T-1}\langle \theta_t^b, \eta_t\rangle}_{\circledTwo} + \underbrace{2\gamma^2\sum\limits_{t=0}^{T-1}\left(\left\|\theta_{t}^u\right\|^2 - \EE_{\xi^t}\left[\left\|\theta_{t}^u\right\|^2\right]\right)}_{\circledThree}\notag\\
        &&\quad + \underbrace{2\gamma^2\sum\limits_{t=0}^{T-1}\EE_{\xi^t}\left[\left\|\theta_{t}^u\right\|^2\right]}_{\circledFour} + \underbrace{2\gamma^2\sum\limits_{t=0}^{T-1}\left\|\theta_{t}^b\right\|^2}_{\circledFive}. \label{eq:clipped_SGD_convex_technical_7}
    \end{eqnarray}

    It remains to derive good enough high-probability upper-bounds for the terms $\circledOne, \circledTwo, \circledThree, \circledFour, \circledFive$, i.e., to finish our inductive proof we need to show that $\circledOne + \circledTwo + \circledThree + \circledFour + \circledFive \leq R^2$ with high probability. In the subsequent parts of the proof, we will need to use many times the bounds for the norm and second moments of $\theta_{t}^u$ and $\theta_{t}^b$. First, by definition of clipping operator, we have with probability $1$ that
     \begin{equation}
        \|\theta_{t}^u\| \leq 2\lambda. \label{eq:clipped_SGD_convex_norm_theta_u_bound}
    \end{equation}
    Moreover, since $E_{T-1}$ implies that $\|\nabla f(x^{t})\| \leq \nicefrac{\lambda}{2}$ for $t = 0,1,\ldots,T-1$ (see \eqref{eq:clipped_SGD_convex_technical_4}), then, in view of Lemma~\ref{lem:bias_and_variance_clip}, we have that $E_{T-1}$ implies
    \begin{eqnarray}
        \|\theta_{t}^b\| &\leq& \frac{2^\alpha\sigma^\alpha}{\lambda^{\alpha-1}}, \label{eq:clipped_SGD_convex_norm_theta_b_bound} \\
        \EE_{\xi^t}\left[\|\theta_{t}^u\|^2\right] &\leq& 18 \lambda^{2-\alpha}\sigma^\alpha. \label{eq:clipped_SGD_convex_second_moment_theta_u_bound}
    \end{eqnarray}

    \textbf{Upper bound for $\circledOne$.} By definition of $\theta_{t}^u$, we have $\EE_{\xi^t}[\theta_{t}^u] = 0$ and
    \begin{equation}
        \EE_{\xi^t}\left[-2\gamma\langle\theta_t^u, \eta_t\rangle\right] = 0. \notag
    \end{equation}
    Next, sum $\circledOne$ has bounded with probability $1$ terms:
    \begin{equation}
        |2\gamma\left\la \theta_{t}^u, \eta_t\right\ra| \leq 2\gamma \|\theta_{t}^u\| \cdot \|\eta_t\| \overset{\eqref{eq:clipped_SGD_convex_technical_6},\eqref{eq:clipped_SGD_convex_norm_theta_u_bound}}{\leq} 8\gamma \lambda R\overset{\eqref{eq:clipped_SGD_clipping_level_cvx}}{=} \frac{R^2}{5\ln\frac{4(K+1)}{\beta}} \eqdef c. \label{eq:clipped_SGD_convex_technical_8} 
    \end{equation}
    The summands also have bounded conditional variances $\sigma_t^2 \eqdef \EE_{\xi^t}[4\gamma^2\langle\theta_t^u, \eta_t\rangle^2]$:
    \begin{equation}
        \sigma_t^2 \leq \EE_{\xi^t}\left[4\gamma^2\|\theta_{t}^u\|^2\cdot \|\eta_t\|^2\right] \overset{\eqref{eq:clipped_SGD_convex_technical_6}}{\leq} 16\gamma^2 R^2 \EE_{\xi^t}\left[\|\theta_{t}^u\|^2\right]. \label{eq:clipped_SGD_convex_technical_9}
    \end{equation}
    In other words, we showed that $\{-2\gamma\left\la \theta_{t}^u, \eta_t\right\ra\}_{t=0}^{T-1}$ is a bounded martingale difference sequence with bounded conditional variances $\{\sigma_t^2\}_{t=0}^{T-1}$. Next, we apply Bernstein's inequality (Lemma~\ref{lem:Bernstein_ineq}) with $X_t = -2\gamma \left\la \theta_{t}^u, \eta_t\right\ra$, parameter $c$ as in \eqref{eq:clipped_SGD_convex_technical_8}, $b = \frac{R^2}{5}$, $G = \frac{R^4}{150\ln\frac{4(K+1)}{\beta}}$:
    \begin{equation*}
        \PP\left\{|\circledOne| > \frac{R^2}{5}\quad \text{and}\quad \sum\limits_{t=0}^{T-1} \sigma_{t}^2 \leq \frac{R^4}{150\ln\frac{4(K+1)}{\beta}}\right\} \leq 2\exp\left(- \frac{b^2}{2G + \nicefrac{2cb}{3}}\right) = \frac{\beta}{2(K+1)}.
    \end{equation*}
    Equivalently, we have
    \begin{equation}
        \PP\left\{ E_{\circledOne} \right\} \geq 1 - \frac{\beta}{2(K+1)},\quad \text{for}\quad E_{\circledOne} = \left\{ \text{either} \quad  \sum\limits_{t=0}^{T-1} \sigma_{t}^2 > \frac{R^4}{150\ln\frac{4(K+1)}{\beta}} \quad \text{or}\quad |\circledOne| \leq \frac{R^2}{5}\right\}. \label{eq:clipped_SGD_convex_sum_1_upper_bound}
    \end{equation}
    In addition, $E_{T-1}$ implies that
    \begin{eqnarray}
        \sum\limits_{t=0}^{T-1} \sigma_{t}^2 &\overset{\eqref{eq:clipped_SGD_convex_technical_9}}{\leq}& 16\gamma^2 R^2 \sum\limits_{t=0}^{T-1}  \EE_{\xi^t}\left[\|\theta_{t}^u\|^2\right] \overset{\eqref{eq:clipped_SGD_convex_second_moment_theta_u_bound}}{\leq} 288\gamma^2 R^2 \sigma^{\alpha}T \lambda^{2-\alpha}\notag\\
        &\overset{\eqref{eq:clipped_SGD_clipping_level_cvx}}{=}& \frac{9 \cdot 40^{\alpha}R^{4-\alpha}\sigma^\alpha T\gamma^{\alpha}}{50 \ln^{2-\alpha}\frac{4(K+1)}{\beta}} \overset{\eqref{eq:clipped_SGD_step_size_cvx}}{\leq} \frac{R^4}{150 \ln\frac{4(K+1)}{\beta}}. \label{eq:clipped_SGD_convex_sum_1_variance_bound}
    \end{eqnarray}

    \textbf{Upper bound for $\circledTwo$.} From $E_{T-1}$ it follows that
\begin{eqnarray}
        \circledTwo &=& -2\gamma\sum\limits_{t=0}^{T-1}\langle \theta_t^b, \eta_t \rangle \leq 2\gamma\sum\limits_{t=0}^{T-1}\|\theta_{t}^b\|\cdot \|\eta_t\| \overset{\eqref{eq:clipped_SGD_convex_technical_6},\eqref{eq:clipped_SGD_convex_norm_theta_b_bound}}{\leq}  \frac{4\cdot 2^\alpha \gamma \sigma^\alpha T R}{\lambda^{\alpha-1}} \notag\\ &\overset{\eqref{eq:clipped_SGD_clipping_level_cvx}}{=}& \frac{80^\alpha}{10} \cdot \frac{\sigma^\alpha T R^{2-\alpha}\gamma^\alpha}{\ln^{1-\alpha}\frac{4(K+1)}{\beta}}\overset{\eqref{eq:clipped_SGD_step_size_cvx}}{\leq} \frac{R^2}{5}. \label{eq:clipped_SGD_convex_sum_2_upper_bound}
\end{eqnarray}

\textbf{Upper bound for $\circledThree$.} First, we have
    \begin{equation}
        \EE_{\xi^t}\left[2\gamma^2\left(\left\|\theta_{t}^u\right\|^2 - \EE_{\xi^t}\left[\left\|\theta_{t}^u\right\|^2\right]\right)\right] = 0. \notag
    \end{equation}
    Next, sum $\circledThree$ has bounded with probability $1$ terms:
    \begin{eqnarray}
        \left|2\gamma^2\left(\left\|\theta_{t}^u\right\|^2 - \EE_{\xi^t}\left[\left\|\theta_{t}^u\right\|^2\right]\right)\right| &\leq& 2\gamma^2\left( \|\theta_{t}^u\|^2 +   \EE_{\xi^t}\left[\left\|\theta_{t}^u\right\|^2\right]\right)\notag\\
        &\overset{\eqref{eq:clipped_SGD_convex_norm_theta_u_bound}}{\leq}& 16\gamma^2\lambda^2\overset{\eqref{eq:clipped_SGD_clipping_level_cvx}}{=} \frac{R^2}{100\ln^2\frac{4(K+1)}{\beta}} \le \frac{R^2}{5 \ln\frac{4(K+1)}{\beta}}\eqdef c. \label{eq:clipped_SGD_convex_technical_10}
    \end{eqnarray}
    The summands also have bounded conditional variances $\widetilde\sigma_t^2 \eqdef \EE_{\xi^t}\left[4\gamma^4\left(\left\|\theta_{t}^u\right\|^2 - \EE_{\xi^t}\left[\left\|\theta_{t}^u\right\|^2\right]\right)^2\right]$:
    \begin{eqnarray}
        \widetilde\sigma_t^2 &\overset{\eqref{eq:clipped_SGD_convex_technical_10}}{\leq}& \frac{R^2}{5 \ln\frac{4(K+1)}{\beta}} \EE_{\xi^t}\left[2\gamma^2\left|\left\|\theta_{t}^u\right\|^2 - \EE_{\xi^t}\left[\left\|\theta_{t}^u\right\|^2\right]\right|\right] \leq \frac{4\gamma^2 R^2}{5\ln\frac{4(K+1)}{\beta}} \EE_{\xi^t}\left[\|\theta_{t}^u\|^2\right], \label{eq:clipped_SGD_convex_technical_11}
    \end{eqnarray}
    since $\ln\frac{4(K+1)}{\beta} \geq 1$. In other words, we showed that $\left\{2\gamma^2\left(\left\|\theta_{t}^u\right\|^2 - \EE_{\xi^t}\left[\left\|\theta_{t}^u\right\|^2\right]\right)\right\}_{t=0}^{T-1}$ is a bounded martingale difference sequence with bounded conditional variances $\{\widetilde\sigma_t^2\}_{t=0}^{T-1}$. Next, we apply Bernstein's inequality (Lemma~\ref{lem:Bernstein_ineq}) with $X_t = 2\gamma^2\left(\left\|\theta_{t}^u\right\|^2 - \EE_{\xi^t}\left[\left\|\theta_{t}^u\right\|^2\right]\right)$, parameter $c$ as in \eqref{eq:clipped_SGD_convex_technical_10}, $b = \frac{R^2}{5}$, $G = \frac{R^4}{150\ln\frac{4(K+1)}{\beta}}$:
    \begin{equation*}
        \PP\left\{|\circledThree| > \frac{R^2}{5}\quad \text{and}\quad \sum\limits_{t=0}^{T-1} \widetilde\sigma_{t}^2 \leq \frac{R^4}{150\ln\frac{4(K+1)}{\beta}}\right\} \leq 2\exp\left(- \frac{b^2}{2G + \nicefrac{2cb}{3}}\right) = \frac{\beta}{2(K+1)}.
    \end{equation*}
    Equivalently, we have
    \begin{equation}
        \PP\left\{ E_{\circledThree} \right\} \geq 1 - \frac{\beta}{2(K+1)},\quad \text{for}\quad E_{\circledThree} = \left\{ \text{either} \quad  \sum\limits_{t=0}^{T-1} \widetilde\sigma_{t}^2 > \frac{R^4}{150\ln\frac{4(K+1)}{\beta}} \quad \text{or}\quad |\circledThree| \leq \frac{R^2}{5}\right\}. \label{eq:clipped_SGD_convex_sum_3_upper_bound}
    \end{equation}
    In addition, $E_{T-1}$ implies that
    \begin{eqnarray}
        \sum\limits_{t=0}^{T-1} \widetilde\sigma_{t}^2 &\overset{\eqref{eq:clipped_SGD_convex_technical_11}}{\leq}& \frac{4\gamma^2R^2}{5\ln\frac{4(K+1)}{\beta}} \sum\limits_{t=0}^{T-1}  \EE_{\xi^t}\left[\|\theta_{t}^u\|^2\right] \overset{\eqref{eq:clipped_SGD_convex_second_moment_theta_u_bound}}{\leq}  
        \frac{72\gamma^2R^2\lambda^{2-\alpha}\sigma^{\alpha}T}{5\ln\frac{4(K+1)}{\beta}}\notag\\
        &\overset{\eqref{eq:clipped_SGD_clipping_level_cvx}}{=}&
        \frac{9\cdot 40^\alpha}{1000}\cdot\frac{\sigma^\alpha T R^{4-\alpha}\gamma^{\alpha}}{\ln^{3-\alpha}\frac{4(K+1)}{\beta}} \overset{\eqref{eq:clipped_SGD_step_size_cvx}}{\leq} \frac{R^4}{150 \ln\frac{4(K+1)}{\beta}}. \label{eq:clipped_SGD_convex_sum_3_variance_bound}
    \end{eqnarray}

    \textbf{Upper bound for $\circledFour$.} From $E_{T-1}$ it follows that
\begin{eqnarray}
        \circledFour &=& 2\gamma^2\sum\limits_{t=0}^{T-1}\EE_{\xi^t}\left[\left\|\theta_{t}^u\right\|^2\right] \overset{\eqref{eq:clipped_SGD_convex_second_moment_theta_u_bound}}{\leq} 36\gamma^2\lambda^{2-\alpha}\sigma^{\alpha}T \overset{\eqref{eq:clipped_SGD_clipping_level_cvx}}{=}
        \frac{9\cdot 40^{\alpha}}{400}\cdot\frac{\gamma^{\alpha}\sigma^{\alpha}TR^{2-\alpha}}{\ln^{2-\alpha}\frac{4(K+1)}{\beta}}\overset{\eqref{eq:clipped_SGD_step_size_cvx}}{\leq} \frac{R^2}{5}.\label{eq:clipped_SGD_convex_sum_4_upper_bound}
\end{eqnarray}

\textbf{Upper bound for $\circledFive$.} From $E_{T-1}$ it follows that
\begin{eqnarray}
        \circledFive &=& 2\gamma^2\sum\limits_{t=0}^{T-1}\left\|\theta_{t}^b\right\|^2 \overset{\eqref{eq:clipped_SGD_convex_norm_theta_b_bound}}{\leq} \frac{2 \cdot 4^\alpha \sigma^{2\alpha}T\gamma^2}{\lambda^{2(\alpha-1)}} \overset{\eqref{eq:clipped_SGD_clipping_level_cvx}}{=}   \frac{6400^\alpha}{800}\cdot \frac{\sigma^{2\alpha}T\gamma^{2\alpha}R^{2-2\alpha}}{\ln^{2(1-\alpha)}\frac{4(K+1)}{\beta}}\overset{\eqref{eq:clipped_SGD_step_size_cvx}}{\leq} \frac{R^2}{5}.\label{eq:clipped_SGD_convex_sum_5_upper_bound}
\end{eqnarray}

Now, we have the upper bounds for  $\circledOne, \circledTwo, \circledThree, \circledFour, \circledFive$. In particular, probability event $E_{T-1}$ implies
\begin{gather*}
        R_T^2 \overset{\eqref{eq:clipped_SGD_convex_technical_7}}{\leq} R^2 + \circledOne + \circledTwo + \circledThree + \circledFour + \circledFive,\\
        \circledTwo \overset{\eqref{eq:clipped_SGD_convex_sum_2_upper_bound}}{\leq} \frac{R^2}{5},\quad \circledFour \overset{\eqref{eq:clipped_SGD_convex_sum_4_upper_bound}}{\leq} \frac{R^2}{5}, \quad \circledFive \overset{\eqref{eq:clipped_SGD_convex_sum_5_upper_bound}}{\leq} \frac{R^2}{5},\\
        \sum\limits_{t=0}^{T-1} \sigma_t^2 \overset{\eqref{eq:clipped_SGD_convex_sum_1_variance_bound}}{\leq} \frac{R^4}{150 \ln\frac{4(K+1)}{\beta}},\quad \sum\limits_{t=0}^{T-1} \widetilde\sigma_t^2 \overset{\eqref{eq:clipped_SGD_convex_sum_3_variance_bound}}{\leq} \frac{R^4}{150 \ln\frac{4(K+1)}{\beta}}.
\end{gather*}
    Moreover, we also have (see \eqref{eq:clipped_SGD_convex_sum_1_upper_bound}, \eqref{eq:clipped_SGD_convex_sum_3_upper_bound} and our induction assumption)
\begin{equation*}
        \PP\{E_{T-1}\} \geq 1 - \frac{(T-1)\beta}{K+1},\quad \PP\{E_{\circledOne}\} \geq 1 - \frac{\beta}{2(K+1)},\quad \PP\{E_{\circledThree}\} \geq 1 - \frac{\beta}{2(K+1)},
\end{equation*}
    where 
\begin{eqnarray*}
        E_{\circledOne} &=& \left\{ \text{either} \quad  \sum\limits_{t=0}^{T-1} \sigma_{t}^2 > \frac{R^4}{150\ln\frac{4(K+1)}{\beta}} \quad \text{or}\quad |\circledOne| \leq \frac{R^2}{5}\right\},\\
        E_{\circledThree} &=& \left\{ \text{either} \quad  \sum\limits_{t=0}^{T-1} \widetilde\sigma_{t}^2 > \frac{R^4}{150\ln\frac{4(K+1)}{\beta}} \quad \text{or}\quad |\circledThree| \leq \frac{R^2}{5}\right\}.
\end{eqnarray*}
    Thus, probability event $E_{T-1} \cap E_{\circledOne} \cap E_{\circledThree}$ implies
\begin{eqnarray}
        R_T^2 &\leq& R^2 + \frac{R^2}{5} + \frac{R^2}{5} + \frac{R^2}{5} + \frac{R^2}{5} + \frac{R^2}{5} = 2R^2, \notag
\end{eqnarray}
    which is equivalent to \eqref{eq:clipped_SGD_convex_induction_inequality_1} and \eqref{eq:clipped_SGD_convex_induction_inequality_2} for $t = T$, and 
\begin{equation*}
        \PP\{E_T\} \geq \PP\left\{E_{T-1} \cap E_{\circledOne} \cap E_{\circledThree}\right\} = 1 - \PP\left\{\overline{E}_{T-1} \cup \overline{E}_{\circledOne} \cup \overline{E}_{\circledThree}\right\} \geq 1 - \PP\{\overline{E}_{T-1}\} - \PP\{\overline{E}_{\circledOne}\} - \PP\{\overline{E}_{\circledThree}\} \geq 1 - \frac{T\beta}{K+1}.
\end{equation*}
    This finishes the inductive part of our proof, i.e., for all $k = 0,1,\ldots,K+1$ we have $\PP\{E_k\} \geq 1 - \nicefrac{k\beta}{(K+1)}$. In particular, for $k = K+1$ we have that with probability at least $1 - \beta$
\begin{equation*}
        f(\overline{x}^K) - f(x^*) \overset{\eqref{eq:clipped_SGD_convex_technical_1_1}}{\leq}\frac{2R^2}{\gamma(K+1)}
\end{equation*}
    and $\{x^k\}_{k=0}^{K} \subseteq Q$, which follows from \eqref{eq:clipped_SGD_convex_induction_inequality_2}.
    
Finally, if
\begin{equation*}
        \gamma \leq \min\left\{\frac{1}{80L \ln \frac{4(K+1)}{\beta}}, \; \frac{R}{108^{\frac{1}{\alpha}}\cdot 20 \sigma  K^{\frac{1}{\alpha}}\left(\ln \frac{4(K+1)}{\beta}\right)^{\frac{\alpha-1}{\alpha}}}\right\},
\end{equation*}
then with probability at least $1-\beta$
\begin{eqnarray*}
        f(\overline{x}^K) - f(x^*) &\leq& \frac{2R^2}{\gamma(K+1)}\\
        &=& \max\left\{\frac{160 LR^2 \ln \frac{4(K+1)}{\beta}}{K+1}, \frac{40\cdot 108^{\frac{1}{\alpha}}\sigma RK^{\frac{1}{\alpha}}\left(\ln\frac{4(K+1)}{\beta}\right)^{\frac{
        \alpha-1
        }{\alpha}}}{K+1}\right\}\notag\\
        &=& \cO\left(\max\left\{\frac{L R^2 \ln \frac{K}{\beta}}{K}, \frac{ \sigma R\ln^{\frac{\alpha-1}{\alpha}}\frac{K}{\beta}}{K^{\frac{\alpha-1}{\alpha}}}\right\}\right).
\end{eqnarray*}
To get $f(\overline{x}^K) - f(x^*) \leq \varepsilon$ with probability at least $1-\beta$ it is sufficient to choose $K$ such that both terms in the maximum above are $\cO(\varepsilon)$. This leads to
\begin{equation*}
         K = \cO\left(\max\left\{\frac{LR^2}{\varepsilon} \ln\frac{L R^2}{\varepsilon\beta}, \left(\frac{\sigma R}{\varepsilon}\right)^{\frac{\alpha}{\alpha-1}} \ln \left(\frac{1}{\beta} \left(\frac{\sigma R}{\varepsilon}\right)^{\frac{\alpha}{\alpha-1}}\right) \right\}\right),
\end{equation*}
which concludes the proof.

\end{proof}

\subsection{Quasi-Strongly Convex Functions}

Finally, we consider \algname{clipped-SGD} under smoothness and quasi-strong convexity assumptions. As the next lemma shows, the gradient of such function is quasi-strongly monotone and star-cocoercive operator.

\begin{lemma}
    Consider differentiable function $f: \R^d \to \R$. If $f$ satisfies Assumption~\ref{as:QSC} on some set $Q$ with parameter $\mu$, then operator $F(x) = \nabla f(x)$ satisfies Assumption~\ref{as:QSM} on $Q$ with parameter $\nicefrac{\mu}{2}$. If $f$ satisfies Assumptions~\ref{as:L_smoothness} and \ref{as:QSC} with $\mu = 0$ on some set $Q$, then operator $F(x) = \nabla f(x)$ satisfies Assumption~\ref{as:star-cocoercivity} on $Q$ with $\ell = 2L$.
\end{lemma}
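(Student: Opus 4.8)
The plan is to derive both implications directly by unpacking definitions: in these settings, quasi-strong monotonicity and star-cocoercivity of $F=\nabla f$ are nothing more than rewritings of the optimization inequalities that define the function classes. Throughout I will use that $x^*=\argmin_{x\in\R^d} f(x)$ satisfies $\nabla f(x^*)=0$, so that $x^*$ is a legitimate choice of the point ``$x^*$'' appearing in Assumptions~\ref{as:QSM} and \ref{as:star-cocoercivity}, and that $x^*\in Q$, so that $f_*=\inf_{x\in Q}f(x)=f(x^*)$ — which is the case in every application of the lemma (e.g. $Q=B_{3R}(x^*)$).

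For the first claim, I would start from Assumption~\ref{as:QSC}: for every $x\in Q$,
\[
f(x^*)\ \geq\ f(x)+\langle \nabla f(x),\, x^*-x\rangle+\frac{\mu}{2}\|x-x^*\|^2.
\]
Rearranging gives $\langle \nabla f(x),\, x-x^*\rangle\geq \bigl(f(x)-f(x^*)\bigr)+\frac{\mu}{2}\|x-x^*\|^2$, and since $x^*$ is a global minimizer, $f(x)-f(x^*)\geq 0$; dropping this nonnegative term yields $\langle \nabla f(x),\, x-x^*\rangle\geq \frac{\mu}{2}\|x-x^*\|^2$, which is exactly Assumption~\ref{as:QSM} for $F=\nabla f$ with parameter $\nicefrac{\mu}{2}$. (If one wanted the inequality for every zero of $\nabla f$ in $Q$, not just the minimizer, note that when $\mu>0$ the displayed inequality forces any critical point of $f$ in $Q$ to coincide with $x^*$.)

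For the second claim, with $\mu=0$ Assumption~\ref{as:QSC} reads $f(x^*)\geq f(x)+\langle \nabla f(x),\, x^*-x\rangle$, i.e. $f(x)-f(x^*)\leq \langle \nabla f(x),\, x-x^*\rangle$ for all $x\in Q$. On the other hand, Assumption~\ref{as:L_smoothness} supplies \eqref{eq:L_smoothness_cor_2}: $\|\nabla f(x)\|^2\leq 2L\bigl(f(x)-f_*\bigr)=2L\bigl(f(x)-f(x^*)\bigr)$. Chaining the two inequalities gives $\|\nabla f(x)\|^2\leq 2L\langle \nabla f(x),\, x-x^*\rangle$, which is Assumption~\ref{as:star-cocoercivity} for $F=\nabla f$ with $\ell=2L$.

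There is no genuine analytical obstacle here; the only care needed is bookkeeping: (i) ensuring the point $x^*$ used in the monotonicity/cocoercivity assumptions is the global minimizer, so that $\nabla f(x^*)=0$ and $f(x)\geq f(x^*)$; and (ii) identifying $f_*=f(x^*)$, which needs $x^*\in Q$ (harmless in every use of the lemma) and the already-packaged consequence \eqref{eq:L_smoothness_cor_2} of smoothness rather than a re-derivation, which — as noted in Appendix~\ref{appendix:useful_facts} — would in general require smoothness on a slightly enlarged set.
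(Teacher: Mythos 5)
Your proof is correct and uses the same ingredients as the paper's: rearrange the quasi-strong convexity inequality and drop the nonnegative term $f(x)-f(x^*)$ for the first claim, and combine the $\mu=0$ case with \eqref{eq:L_smoothness_cor_2} for the second. The only cosmetic difference is that for star-cocoercivity the paper routes the identical computation through the non-expansiveness of $\Id-\tfrac{1}{L}F$ around $x^*$ (citing an equivalence lemma from \citet{gorbunov2022extragradient}), whereas you chain the two inequalities directly; your version is, if anything, slightly more self-contained, and your remarks on $x^*\in Q$ and on other zeros of $\nabla f$ address points the paper leaves implicit.
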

\begin{proof}
    We start with the first part. Assumption~\ref{as:QSC} on set $Q$ means that for any $x \in Q$
    \begin{equation*}
        f(x^*) \geq f(x) + \langle \nabla f(x), x^* - x \rangle + \frac{\mu}{2}\|x- x^*\|^2.
    \end{equation*}
    For $F(x) = \nabla f(x)$ it implies that for all $x \in Q$
    \begin{equation*}
        \langle F(x), x - x^* \rangle \geq f(x) - f(x^*) + \frac{\mu}{2}\|x - x^*\|^2 \geq \frac{\mu}{2}\|x - x^*\|^2,
    \end{equation*}
    i.e., Assumption~\ref{as:QSM} holds on $Q$ with parameter $\nicefrac{\mu}{2}$ for operator $F(x)$.

    Next, we prove the second part. Assume that $f$ satisfies Assumptions~\ref{as:L_smoothness} and \ref{as:QSC} with $\mu = 0$ on some set $Q$. Our goal is to show that $F(x) = \nabla f(x)$ satisfies Assumption~\ref{as:star-cocoercivity} on $Q$. In view of \citep[Lemma C.6]{gorbunov2022extragradient}, this is equivalent to showing that operator $\Id - \frac{1}{L}F$ is non-expansive around $x^*$, i.e., we need to show that $\|(\Id - \frac{1}{L}F)(x) - (\Id - \frac{1}{L}F)(x^*)\| \leq \|x-x^*\|$ for any $x \in Q$. We have
    \begin{eqnarray*}
        \left\|\left(\Id - \frac{1}{L}F\right)(x) - \left(\Id - \frac{1}{L}F\right)(x^*)\right\|^2 &=& \left\|x-x^* - \frac{1}{L}F(x)\right\|^2\\
        &=&\|x - x^*\|^2 - \frac{2}{L}\langle x - x^*, F(x) \rangle + \frac{1}{L^2}\|F(x)\|^2\\
        &=& \|x - x^*\|^2 - \frac{2}{L}\langle x - x^*, \nabla f(x) \rangle + \frac{1}{L^2}\|\nabla f(x)\|^2\\
        &\overset{\eqref{eq:QSC}, \eqref{eq:L_smoothness_cor_2}}{\leq}& \|x - x^*\|^2 - \frac{2}{L}\left(f(x) - f(x^*)\right) + \frac{2}{L}\left(f(x) - f(x^*)\right)\\
        &=& \|x-x^*\|^2.
    \end{eqnarray*}
    This finishes the proof.
\end{proof}

Therefore, using the result of Theorem~\ref{thm:main_result_str_mon_SGDA} with $\ell := 2L$ and $\mu := \nicefrac{\mu}{2}$, we get the convergence result for \algname{clipped-SGD} under smoothness and quasi-strong convexity assumptions.

\begin{theorem}[Case 4 in Theorem~\ref{thm:clipped_SGD_main_theorem}]\label{thm:main_result_QSC_SGD}
    Let Assumptions~\ref{as:bounded_alpha_moment}, \ref{as:L_smoothness}, \ref{as:QSC}, hold for $Q = B_{2R}(x^*) = \{x\in\R^d\mid \|x - x^*\| \leq 2R\}$, where $R \geq \|x^0 - x^*\|$, and
    \begin{eqnarray}
        0< \gamma &\leq& \min\left\{\frac{1}{800 L\ln \tfrac{4(K+1)}{\beta}}, \frac{2\ln(B_K)}{\mu(K+1)}\right\}, \label{eq:gamma_SGD_QSC}\\
        B_K &=& \max\left\{2, \frac{(K+1)^{\frac{2(\alpha-1)}{\alpha}}\mu^2R^2}{4\cdot5400^{\frac{2}{\alpha}}\sigma^2\ln^{\frac{2(\alpha-1)}{\alpha}}\left(\frac{4(K+1)}{\beta}\right)\ln^2(B_K)} \right\} \label{eq:B_K_SGD_QSC_1} \\
        &=& \cO\left(\max\left\{2, \frac{K^{\frac{2(\alpha-1)}{\alpha}}\mu^2R^2}{\sigma^2\ln^{\frac{2(\alpha-1)}{\alpha}}\left(\frac{K}{\beta}\right)\ln^2\left(\max\left\{2, \frac{K^{\frac{2(\alpha-1)}{\alpha}}\mu^2R^2}{\sigma^2\ln^{\frac{2(\alpha-1)}{\alpha}}\left(\frac{K}{\beta}\right)} \right\}\right)} \right\}\right\}, \label{eq:B_K_SGD_QSC_2} \\
        \lambda_k &=& \frac{\exp(-\gamma(\nicefrac{\mu}{2})(1 + \nicefrac{k}{2}))R}{120\gamma \ln \tfrac{4(K+1)}{\beta}}, \label{eq:lambda_SGD_QSC}
    \end{eqnarray}
    for some $K \geq 0$ and $\beta \in (0,1]$ such that $\ln \tfrac{4(K+1)}{\beta} \geq 1$. Then, after $K$ iterations the iterates produced by \algname{clipped-SGD} with probability at least $1 - \beta$ satisfy 
    \begin{equation}
        \|x^{K+1} - x^*\|^2 \leq 2\exp(-\gamma(\nicefrac{\mu}{2})(K+1))R^2. \label{eq:main_result_str_cvx_SGD}
    \end{equation}
    In particular, when $\gamma$ equals the minimum from \eqref{eq:gamma_SGD_QSC}, then the iterates produced by \algname{clipped-SGD} after $K$ iterations with probability at least $1-\beta$ satisfy
    \begin{equation}
       \|x^{K} - x^*\|^2 = \cO\left(\max\left\{R^2\exp\left(- \frac{\mu K}{\ell \ln \tfrac{K}{\beta}}\right), \frac{\sigma^2\ln^{\frac{2(\alpha-1)}{\alpha}}\left(\frac{K}{\beta}\right)\ln^2\left(\max\left\{2, \frac{K^{\frac{2(\alpha-1)}{\alpha}}\mu^2R^2}{\sigma^2\ln^{\frac{2(\alpha-1)}{\alpha}}\left(\frac{K}{\beta}\right)} \right\}\right)}{K^{\frac{2(\alpha-1)}{\alpha}}\mu^2}\right\}\right), \label{eq:clipped_SGD_QSC_case_2_appendix}
    \end{equation}
    meaning that to achieve $\|x^{K} - x^*\|^2 \leq \varepsilon$ with probability at least $1 - \beta$ \algname{clipped-SGD} requires
    \begin{equation}
        K = \cO\left(\frac{L}{\mu}\ln\left(\frac{R^2}{\varepsilon}\right)\ln\left(\frac{L}{\mu \beta}\ln\frac{R^2}{\varepsilon}\right), \left(\frac{\sigma^2}{\mu^2\varepsilon}\right)^{\frac{\alpha}{2(\alpha-1)}}\ln \left(\frac{1}{\beta} \left(\frac{\sigma^2}{\mu^2\varepsilon}\right)^{\frac{\alpha}{2(\alpha-1)}}\right)\ln^{\frac{\alpha}{\alpha-1}}\left(B_\varepsilon\right)\right) \label{eq:clipped_SGD_QSC_case_complexity_appendix}
    \end{equation}
    iterations/oracle calls, where
    \begin{equation*}
        B_\varepsilon = \max\left\{2, \frac{R^2}{\varepsilon \ln \left(\frac{1}{\beta} \left(\frac{4\sigma^2}{\mu^2\varepsilon}\right)^{\frac{\alpha}{2(\alpha-1)}}\right)}\right\}.
    \end{equation*}
\end{theorem}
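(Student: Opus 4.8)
The plan is to obtain Theorem~\ref{thm:main_result_QSC_SGD} as a direct corollary of the \algname{clipped-SGDA} analysis for quasi-strongly monotone star-cocoercive VIPs (Theorem~\ref{thm:main_result_str_mon_SGDA}, i.e.\ the appendix version of Case~3 of Theorem~\ref{thm:clipped_SGDA_main_theorem}). The first, purely structural, observation is that the \algname{clipped-SGD} iteration \eqref{eq:clipped_SGD_update} applied to $f$ with oracle $\nabla f_{\xi^k}(x^k)$ is literally the \algname{clipped-SGDA} iteration \eqref{eq:clipped_SGDA_update} applied to the operator $F(x) = \nabla f(x)$ with stochastic oracle $F_{\xi}(x) = \nabla f_{\xi}(x)$, using the very same stepsize sequence and clipping levels. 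So it suffices to check that, under Assumptions~\ref{as:bounded_alpha_moment}, \ref{as:L_smoothness}, \ref{as:QSC} for $f$ on $Q = B_{2R}(x^*)$, the operator $F$ meets the hypotheses of Theorem~\ref{thm:main_result_str_mon_SGDA} on the same ball.

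Next I would verify that the three assumptions transfer. Assumption~\ref{as:bounded_alpha_moment}(ii) for $F_\xi$ is identical to Assumption~\ref{as:bounded_alpha_moment}(i) for $\nabla f_\xi$, so it holds with the same $\sigma$, $\alpha$ on $Q$. By the lemma stated just before the theorem, $\mu$-quasi-strong convexity of $f$ on $Q$ makes $F = \nabla f$ a $\nicefrac{\mu}{2}$-quasi-strongly monotone operator on $Q$ (Assumption~\ref{as:QSM}); and since $\mu$-quasi-strong convexity trivially implies the $\mu=0$ case, $L$-smoothness together with Assumption~\ref{as:QSC} yields, again by that lemma, star-cocoercivity of $F$ on $Q$ with $\ell = 2L$ (Assumption~\ref{as:star-cocoercivity}). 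Only the inequality $\|\nabla f(x)\|^2 \le 2L(f(x)-f(x^*))$ from \eqref{eq:L_smoothness_cor_2} — which is part of Assumption~\ref{as:L_smoothness} — enters the derivation, so no enlargement of $Q$ is required.

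It then remains to match parameters. Substituting $\ell := 2L$ and $\mu := \nicefrac{\mu}{2}$ into the parameter choices of Theorem~\ref{thm:main_result_str_mon_SGDA} reproduces, up to absolute constants, the stepsize bound \eqref{eq:gamma_SGD_QSC} (indeed $\nicefrac{1}{\ell A} = \nicefrac{1}{2LA}$ and $\nicefrac{\ln(B_K)}{\mu(K+1)}$ becomes $\nicefrac{2\ln(B_K)}{\mu(K+1)}$), the definition of $B_K$ in \eqref{eq:B_K_SGD_QSC_1}, and the clipping levels $\lambda_k$ in \eqref{eq:lambda_SGD_QSC} carrying the factor $\exp(-\gamma(\nicefrac{\mu}{2})(1+\nicefrac{k}{2}))$; the domain $B_{2R}(x^*)$ is the same. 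Therefore Theorem~\ref{thm:main_result_str_mon_SGDA} applies and gives $\|x^{K+1}-x^*\|^2 \le 2\exp(-\gamma(\nicefrac{\mu}{2})(K+1))R^2$ with probability at least $1-\beta$, which is \eqref{eq:main_result_str_cvx_SGD}. Finally, plugging the concrete choice $\gamma = \min\{\nicefrac{1}{800LA}, \nicefrac{2\ln(B_K)}{\mu(K+1)}\}$ into this bound and solving $\|x^K-x^*\|^2 \le \varepsilon$ for $K$ — exactly as at the end of the proof of Theorem~\ref{thm:clipped_SGD_PL_main} — yields \eqref{eq:clipped_SGD_QSC_case_2_appendix} and the complexity \eqref{eq:clipped_SGD_QSC_case_complexity_appendix}, using $\nicefrac{\ell}{\mu} = \nicefrac{4L}{\mu} = \cO(\nicefrac{L}{\mu})$.

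\textbf{Main obstacle.} There is essentially no analytical difficulty: the inductive "stay in the ball'' argument and the Bernstein-inequality bookkeeping are entirely inside Theorem~\ref{thm:main_result_str_mon_SGDA}. The only points requiring care are (i) tracking absolute constants through the substitution $\ell\mapsto 2L$, $\mu\mapsto\nicefrac{\mu}{2}$, so that the constants fixed in \eqref{eq:gamma_SGD_QSC}--\eqref{eq:lambda_SGD_QSC} are at least as conservative as those demanded by the VIP theorem, and (ii) confirming that Theorem~\ref{thm:main_result_str_mon_SGDA} only needs star-cocoercivity and quasi-strong monotonicity on $Q$ (not Lipschitzness of $F$), so that smoothness of $f$ is used solely to obtain star-cocoercivity and does not force a larger domain.
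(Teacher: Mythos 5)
Your proposal is correct and is exactly the paper's argument: the paper proves the lemma showing that under Assumptions~\ref{as:L_smoothness} and \ref{as:QSC} the operator $F=\nabla f$ is $\nicefrac{\mu}{2}$-quasi-strongly monotone and $2L$-star-cocoercive on $Q$, and then obtains Theorem~\ref{thm:main_result_QSC_SGD} by invoking Theorem~\ref{thm:main_result_str_mon_SGDA} with $\ell := 2L$ and $\mu := \nicefrac{\mu}{2}$, which is precisely your reduction and parameter substitution.
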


\clearpage

\section{Missing Proofs for \algname{clipped-SSTM} and \algname{R-clipped-SSTM}}

In this section, we provide the complete formulation of the main results for \algname{clipped-SSTM} and \algname{R-clipped-SSTM} and the missing proofs. For brevity, we will use the following notation: $\tnabla f_{\xi^k}(x^{k+1}) = \clip\left(\nabla f_{\xi^k}(x^{k+1}), \lambda_k\right)$.

\begin{algorithm}[h]
\caption{Clipped Stochastic Similar Triangles Method (\algname{clipped-SSTM}) \citep{gorbunov2020stochastic}}
\label{alg:clipped-SSTM}   
\begin{algorithmic}[1]
\REQUIRE starting point $x^0$, number of iterations $K$, stepsize parameter $a>0$, clipping levels $\{\lambda_k\}_{k=0}^{K-1}$, smoothness constant $L$.
\STATE Set $A_0 = \alpha_0 = 0$, $y^0 = z^0 = x^0$
\FOR{$k=0,\ldots, K-1$}
\STATE Set $\alpha_{k+1} = \frac{k+2}{2aL}$, $A_{k+1} = A_k + \alpha_{k+1}$
\STATE $x^{k+1} = \frac{A_k y^k + \alpha_{k+1} z^k}{A_{k+1}}$
\STATE Compute $\tnabla f_{\xi^k}(x^{k+1}) = \clip\left(\nabla f_{\xi^k}(x^{k+1}), \lambda_k\right)$ using a fresh sample $\xi^k \sim \cD_k$
\STATE $z^{k+1} = z^k - \alpha_{k+1}\tnabla f_{\xi^k}(x^{k+1})$
\STATE $y^{k+1} = \frac{A_k y^k + \alpha_{k+1} z^{k+1}}{A_{k+1}}$
\ENDFOR
\ENSURE $y^K$ 
\end{algorithmic}
\end{algorithm}

\subsection{Convex Functions}\label{appendix:clipped_SSTM}

We start with the following lemma, which is a special case of Lemma~6 from \citep{gorbunov2021near}. This result can be seen the ``optimization'' part of the analysis of \algname{clipped-SSTM}: the proof follows the same steps as the analysis of deterministic Similar Triangles Method \citep{gasnikov2016universal, dvurechenskii2018decentralize} and separates stochasticity from the deterministic part of the method. 

\begin{lemma}[Special case of Lemma~4.1 from \citep{gorbunov2021near}]\label{lem:main_opt_lemma_clipped_SSTM}
    Let Assumptions~\ref{as:L_smoothness} and \ref{as:str_cvx} with $\mu = 0$ hold on $Q = B_{3R}(x^*)$, where $R \ge \|x^0 - x^*\|$, and let stepsize parameter $a$ satisfy $a\ge 1$. If $x^{k}, y^k, z^k \in B_{3R}(x^*)$ for all $k = 0,1,\ldots,N$, $N \ge 0$, then after $N$ iterations of \algname{clipped-SSTM} for all $z\in B_{3R}(x^*)$ we have
    \begin{eqnarray}
        A_N\left(f(y^N) - f(z)\right) &\le& \frac{1}{2}\|z^0 - z\|^2 - \frac{1}{2}\|z^{N} - z\|^2 + \sum\limits_{k=0}^{N-1}\alpha_{k+1}\left\la \theta_{k+1}, z - z^{k} + \alpha_{k+1} \nabla f(x^{k+1})\right\ra\notag\\
        &&\quad + \sum\limits_{k=0}^{N-1}\alpha_{k+1}^2\left\|\theta_{k+1}\right\|^2,\label{eq:main_opt_lemma_clipped_SSTM}\\
        \theta_{k+1} &\eqdef& \tnabla f_{\xi^k}(x^{k+1}) - \nabla f(x^{k+1}).\label{eq:theta_k+1_def_clipped_SSTM}
    \end{eqnarray}
\end{lemma}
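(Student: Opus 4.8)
\textbf{Proof plan for Lemma~\ref{lem:main_opt_lemma_clipped_SSTM}.} The plan is to mimic the classical analysis of the (deterministic) Similar Triangles Method, substituting the clipped stochastic gradient $\tnabla f_{\xi^k}(x^{k+1}) = \nabla f(x^{k+1}) + \theta_{k+1}$ and carefully tracking the error terms $\theta_{k+1}$. First I would write $\tnabla f_{\xi^k}(x^{k+1}) = \nabla f(x^{k+1}) + \theta_{k+1}$ and use the update $z^{k+1} = z^k - \alpha_{k+1}\tnabla f_{\xi^k}(x^{k+1})$ together with the standard mirror-descent-type identity for the Euclidean prox: for any $z$,
\begin{equation}
    \alpha_{k+1}\langle \tnabla f_{\xi^k}(x^{k+1}), z^k - z\rangle = \frac{1}{2}\|z^k - z\|^2 - \frac{1}{2}\|z^{k+1} - z\|^2 + \frac{1}{2}\|z^{k+1} - z^k\|^2, \notag
\end{equation}
so that $\alpha_{k+1}\langle\nabla f(x^{k+1}), z^k - z\rangle$ gets an extra term $-\alpha_{k+1}\langle\theta_{k+1}, z^k - z\rangle$ on the right. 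Next, using $L$-smoothness (Assumption~\ref{as:L_smoothness}, in the form \eqref{eq:L_smoothness_cor_1}) applied to the pair $(x^{k+1}, y^{k+1})$, together with the STM relations $x^{k+1} = \frac{A_k y^k + \alpha_{k+1}z^k}{A_{k+1}}$, $y^{k+1} = \frac{A_k y^k + \alpha_{k+1} z^{k+1}}{A_{k+1}}$ (which give $y^{k+1} - x^{k+1} = \frac{\alpha_{k+1}}{A_{k+1}}(z^{k+1} - z^k)$) and the key relation $A_{k+1} \geq aL\alpha_{k+1}^2$ from Lemma~\ref{lem:alpha_k_A_K_lemma} with $a \geq 1$, I would bound $A_{k+1}f(y^{k+1})$ by $A_k f(y^k) + \alpha_{k+1}\langle\nabla f(x^{k+1}), z^k - z\rangle + \alpha_{k+1}(f(z) - f(x^{k+1}))\cdot(\text{stuff}) $ and cancel the troublesome $\tfrac{1}{2}\|z^{k+1}-z^k\|^2$ term against the smoothness/$A_{k+1}\geq aL\alpha_{k+1}^2$ slack — this is exactly the deterministic STM telescoping argument.

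After establishing the one-step inequality, I would telescope it over $k = 0,\dots,N-1$ using $A_0 = 0$, obtaining
\begin{equation}
    A_N\left(f(y^N) - f(z)\right) \le \frac{1}{2}\|z^0 - z\|^2 - \frac{1}{2}\|z^{N} - z\|^2 - \sum_{k=0}^{N-1}\alpha_{k+1}\langle\theta_{k+1}, z^k - z\rangle + (\text{quadratic error terms}). \notag
\end{equation}
The remaining task is to rewrite the linear error term in the claimed form $\sum_{k}\alpha_{k+1}\langle\theta_{k+1}, z - z^k + \alpha_{k+1}\nabla f(x^{k+1})\rangle$: the $z - z^k$ piece is immediate, and the extra $\alpha_{k+1}^2\langle\theta_{k+1},\nabla f(x^{k+1})\rangle$ piece together with the genuinely quadratic term $\alpha_{k+1}^2\|\theta_{k+1}\|^2$ should be exactly what comes out of bounding $\alpha_{k+1}^2\|\tnabla f_{\xi^k}(x^{k+1})\|^2 = \alpha_{k+1}^2\|\nabla f(x^{k+1}) + \theta_{k+1}\|^2$ when the $\|z^{k+1}-z^k\|^2$ term from the prox identity is expanded (note $z^{k+1} - z^k = -\alpha_{k+1}\tnabla f_{\xi^k}(x^{k+1})$). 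So I would expand $\tfrac{1}{2}\|z^{k+1}-z^k\|^2 = \tfrac{1}{2}\alpha_{k+1}^2\|\nabla f(x^{k+1})\|^2 + \alpha_{k+1}^2\langle\nabla f(x^{k+1}),\theta_{k+1}\rangle + \tfrac12\alpha_{k+1}^2\|\theta_{k+1}\|^2$, absorb the pure-deterministic $\tfrac12\alpha_{k+1}^2\|\nabla f(x^{k+1})\|^2$ into the smoothness slack as in the deterministic proof, and collect the mixed and quadratic $\theta$-terms into the stated right-hand side (possibly with a harmless constant adjustment, since the statement already writes $\alpha_{k+1}^2\|\theta_{k+1}\|^2$ rather than $\tfrac12$).

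The main obstacle I anticipate is purely bookkeeping: getting the telescoping to work one has to be careful that all iterates $x^{k+1}, y^{k+1}, z^{k+1}$ stay inside $B_{3R}(x^*)$ (which is given as a hypothesis, so smoothness and convexity are legitimately applicable at every step), and that the convexity inequality $f(z) \geq f(x^{k+1}) + \langle\nabla f(x^{k+1}), z - x^{k+1}\rangle$ is combined with the point $x^{k+1} = \frac{A_k y^k + \alpha_{k+1}z^k}{A_{k+1}}$ in the right proportions so that the $f(y^k)$, $f(x^{k+1})$ and $\langle\nabla f(x^{k+1}),\cdot\rangle$ terms assemble into a clean telescope with coefficient $A_N$ in front of $f(y^N) - f(z)$. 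Since this lemma is stated as a special case of Lemma~4.1 from \citep{gorbunov2021near}, I would largely follow that reference's derivation line by line, only simplifying to the Euclidean setting and to $\mu = 0$, and checking that the error-term aggregation matches the form \eqref{eq:main_opt_lemma_clipped_SSTM}; no new idea beyond the deterministic STM analysis plus the elementary prox identity is required.
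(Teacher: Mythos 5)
Your plan is correct and follows essentially the same route as the paper's proof: the Euclidean prox identity for the $z$-update, smoothness applied to $(x^{k+1},y^{k+1})$ via $y^{k+1}-x^{k+1}=\frac{\alpha_{k+1}}{A_{k+1}}(z^{k+1}-z^k)$, convexity at $x^{k+1}$, the bound $A_{k+1}\ge aL\alpha_{k+1}^2$ to absorb the quadratic slack, and the final substitution $z^{k+1}=z^k-\alpha_{k+1}\tnabla f_{\xi^k}(x^{k+1})$ to produce both the $\alpha_{k+1}\nabla f(x^{k+1})$ shift in the linear error term and the $\alpha_{k+1}^2\|\theta_{k+1}\|^2$ term. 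The only cosmetic difference is where you collect the cross terms (you expand $\tfrac12\|z^{k+1}-z^k\|^2$ directly, the paper rewrites $\langle\theta_{k+1},z-z^{k+1}\rangle$ at the end), and your accounting does reproduce the stated coefficients.
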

\begin{proof}
    For completeness, we provide the full proof. Using $z^{k+1} = z^k - \alpha_{k+1}\tnabla f_{\xi^k}(x^{k+1})$ we get that for all $z\in B_{3R}(x^*)$ and $k = 0,1,\ldots, N-1$
    \begin{eqnarray}
        \alpha_{k+1}\left\la\tnabla f_{\xi^k}(x^{k+1}), z^k - z \right\ra &=& \alpha_{k+1}\left\la \tnabla f_{\xi^k}(x^{k+1}), z^k - z^{k+1}\right\ra + \alpha_{k+1}\left\la \tnabla f_{\xi^k}(x^{k+1}), z^{k+1} - z\right\ra\notag\\
        &=& \alpha_{k+1}\left\la \tnabla f_{\xi^k}(x^{k+1}), z^k - z^{k+1}\right\ra + \left\la z^{k+1}-z^k, z - z^{k+1}\right\ra \notag\\
        &=& \alpha_{k+1}\left\la \tnabla f_{\xi^k}(x^{k+1}), z^k - z^{k+1}\right\ra - \frac{1}{2}\|z^k - z^{k+1}\|^2 \notag\\
        &&\quad+ \frac{1}{2}\|z^k - z\|^2 - \frac{1}{2}\|z^{k+1}-z\|^2,\label{eq:main_olc_clipped_SSTM_technical_2}
    \end{eqnarray}
    where in the last step we apply $2\langle a, b \rangle = \|a + b\|^2 - \|a\|^2 - \|b\|^2$ with $a = z^{k+1} - z^k$ and $b = z - z^{k+1}$. The update rules \eqref{eq:clipped_SSTM_y_update} and \eqref{eq:clipped_SSTM_x_update} give the following formula:
    \begin{eqnarray}
         y^{k+1} &=& \frac{A_k y^k + \alpha_{k+1}z^{k+1}}{A_{k+1}} = \frac{A_k y^k + \alpha_{k+1}z^{k}}{A_{k+1}} + \frac{\alpha_{k+1}}{A_{k+1}}\left(z^{k+1}-z^k\right) = x^{k+1} + \frac{\alpha_{k+1}}{A_{k+1}}\left(z^{k+1}-z^k\right). \label{eq:main_olc_clipped_SSTM_technical_3}
    \end{eqnarray}
    It implies
    \begin{eqnarray}
        \alpha_{k+1}\left\la\tnabla f_{\xi^k}(x^{k+1}), z^k - z \right\ra &\overset{\eqref{eq:theta_k+1_def_clipped_SSTM},\eqref{eq:main_olc_clipped_SSTM_technical_2}}{\le}& \alpha_{k+1}\left\la \nabla f(x^{k+1}), z^{k} - z^{k+1}\right\ra - \frac{1}{2}\|z^k - z^{k+1}\|^2 \notag\\
        &&\quad + \alpha_{k+1}\left\la \theta_{k+1}, z^{k} - z^{k+1}\right\ra  + \frac{1}{2}\|z^k - z\|^2 - \frac{1}{2}\|z^{k+1}-z\|^2\notag\\
        &\overset{\eqref{eq:main_olc_clipped_SSTM_technical_3}}{=}& A_{k+1}\left\la \nabla f(x^{k+1}), x^{k+1} - y^{k+1}\right\ra - \frac{1}{2}\|z^k - z^{k+1}\|^2 \notag\\
        &&\quad + \alpha_{k+1}\left\la \theta_{k+1}, z^{k} - z^{k+1}\right\ra   + \frac{1}{2}\|z^k - z\|^2 - \frac{1}{2}\|z^{k+1}-z\|^2\notag\\
        &\overset{\eqref{eq:L_smoothness_cor_1}}{\le}& A_{k+1}\left(f(x^{k+1}) - f(y^{k+1})\right) + \frac{A_{k+1}L}{2}\|x^{k+1}-y^{k+1}\|^2 - \frac{1}{2}\|z^k - z^{k+1}\|^2\notag\\
        &&\quad  + \alpha_{k+1}\left\la \theta_{k+1}, z^{k} - z^{k+1}\right\ra + \frac{1}{2}\|z^k - z\|^2 - \frac{1}{2}\|z^{k+1}-z\|^2\notag\\
        &\overset{\eqref{eq:main_olc_clipped_SSTM_technical_3}}{=}& A_{k+1}\left(f(x^{k+1}) - f(y^{k+1})\right)  + \frac{1}{2}\left(\frac{\alpha_{k+1}^2L}{A_{k+1}} - 1\right)\|z^k - z^{k+1}\|^2\notag\\
        &&\quad + \alpha_{k+1}\left\la \theta_{k+1}, z^{k} - z^{k+1}\right\ra  + \frac{1}{2}\|z^k - z\|^2 - \frac{1}{2}\|z^{k+1}-z\|^2,\notag
    \end{eqnarray}
    where in the third inequality we use $x^{k+1}, y^{k+1} \in B_{3R}(x^*)$. Since $A_{k+1} \ge aL_{k+1}\alpha_{k+1}^2$ (Lemma~\ref{lem:alpha_k_A_K_lemma}) and $a\ge 1$ we can continue our derivation as follows:
    \begin{eqnarray}
        \alpha_{k+1}\left\la\tnabla f_{\xi^k}(x^{k+1}), z^k - z \right\ra &\le& A_{k+1}\left(f(x^{k+1}) - f(y^{k+1})\right) + \alpha_{k+1}\left\la \theta_{k+1}, z^{k} - z^{k+1}\right\ra\notag\\
        &&\quad + \frac{1}{2}\|z^k - z\|^2 - \frac{1}{2}\|z^{k+1}-z\|^2. \label{eq:main_olc_clipped_SSTM_technical_4}
    \end{eqnarray}
    Convexity of $f$ gives
    \begin{eqnarray}
        \left\la\tnabla f_{\xi^k}(x^{k+1}), y^k - x^{k+1}\right\ra &\overset{\eqref{eq:theta_k+1_def_clipped_SSTM}}{=}& \left\la\nabla f(x^{k+1}), y^k - x^{k+1}\right\ra + \left\la \theta_{k+1}, y^k - x^{k+1}\right\ra\notag\\
        &\le& f(y^k) - f(x^{k+1}) + \left\la \theta_{k+1}, y^k - x^{k+1}\right\ra.\label{eq:main_olc_clipped_SSTM_technical_5}
    \end{eqnarray}
    The definition of $x^{k+1}$ \eqref{eq:clipped_SSTM_x_update} implies
    \begin{equation}
        \alpha_{k+1}\left(x^{k+1} - z^k\right) = A_k\left(y^k - x^{k+1}\right)\label{eq:main_olc_clipped_SSTM_technical_6}
    \end{equation}
    since $A_{k+1} = A_k + \alpha_{k+1}$. Putting all inequalities together, we derive that
    \begin{eqnarray*}
        \alpha_{k+1}\left\la\tnabla f_{\xi^k}(x^{k+1}), x^{k+1} - z \right\ra &=& \alpha_{k+1}\left\la\tnabla f_{\xi^k}(x^{k+1}), x^{k+1} - z^k \right\ra + \alpha_{k+1}\left\la\tnabla f_{\xi^k}(x^{k+1}), z^k - z \right\ra\\
        &\overset{\eqref{eq:main_olc_clipped_SSTM_technical_6}}{=}& A_{k}\left\la\tnabla f_{\xi^k}(x^{k+1}), y^{k} - x^{k+1} \right\ra + \alpha_{k+1}\left\la\tnabla f_{\xi^k}(x^{k+1}), z^k - z \right\ra\\
        &\overset{\eqref{eq:main_olc_clipped_SSTM_technical_5},\eqref{eq:main_olc_clipped_SSTM_technical_4}}{\le}& A_k\left(f(y^k)-f(x^{k+1})\right) + A_k\left\la \theta_{k+1}, y^k - x^{k+1}\right\ra\\
        &&\quad + A_{k+1}\left(f(x^{k+1}) - f(y^{k+1})\right)  + \alpha_{k+1}\left\la \theta_{k+1}, z^{k} - z^{k+1}\right\ra \\
        &&\quad + \frac{1}{2}\|z^k - z\|^2 - \frac{1}{2}\|z^{k+1}-z\|^2\\
        &\overset{\eqref{eq:main_olc_clipped_SSTM_technical_6}}{=}& A_kf(y^k) - A_{k+1}f(y^{k+1}) + \alpha_{k+1}\left\la \theta_{k+1}, x^{k+1}-z^k\right\ra\\
        &&\quad + \alpha_{k+1}f(x^{k+1}) + \alpha_{k+1}\left\la \theta_{k+1}, z^{k} - z^{k+1}\right\ra\\
        &&\quad + \frac{1}{2}\|z^k - z\|^2 - \frac{1}{2}\|z^{k+1}-z\|^2\\
        &=& A_kf(y^k) - A_{k+1}f(y^{k+1}) + \alpha_{k+1}f(x^{k+1})\\
        &&\quad + \alpha_{k+1}\left\la \theta_{k+1}, x^{k+1} - z^{k+1}\right\ra + \frac{1}{2}\|z^k - z\|^2 - \frac{1}{2}\|z^{k+1}-z\|^2.
    \end{eqnarray*}
    Rearranging the terms, we get
    \begin{eqnarray*}
        A_{k+1}f(y^{k+1}) - A_kf(y^k) &\le& \alpha_{k+1}\left(f(x^{k+1}) + \left\la\tnabla f_{\xi^k}(x^{k+1}), z-x^{k+1}\right\ra\right) + \frac{1}{2}\|z^k - z\|^2\\
        &&\quad - \frac{1}{2}\|z^{k+1} - z\|^2 + \alpha_{k+1}\left\la \theta_{k+1}, x^{k+1} - z^{k+1}\right\ra\\
        &\overset{\eqref{eq:theta_k+1_def_clipped_SSTM}}{=}& \alpha_{k+1}\left(f(x^{k+1}) + \left\la\nabla f(x^{k+1}), z-x^{k+1}\right\ra\right)\\
        &&\quad + \alpha_{k+1}\left\la\theta_{k+1}, z-x^{k+1}\right\ra + \frac{1}{2}\|z^k - z\|^2 - \frac{1}{2}\|z^{k+1} - z\|^2\\
        &&\quad + \alpha_{k+1}\left\la \theta_{k+1}, x^{k+1} - z^{k+1}\right\ra\\
        &\le& \alpha_{k+1}f(z) + \frac{1}{2}\|z^k - z\|^2 - \frac{1}{2}\|z^{k+1} - z\|^2  + \alpha_{k+1}\left\la \theta_{k+1}, z - z^{k+1}\right\ra,
    \end{eqnarray*}
    where in the last inequality we use the convexity of $f$. Taking into account $A_0 = \alpha_0 = 0$ and $A_{N} = \sum_{k=0}^{N-1}\alpha_{k+1}$ we sum up these inequalities for $k=0,\ldots,N-1$ and get
    \begin{eqnarray*}
        A_Nf(y^N) &\le& A_N f(z) + \frac{1}{2}\|z^0 - z\|^2 - \frac{1}{2}\|z^{N} - z\|^2 + \sum\limits_{k=0}^{N-1}\alpha_{k+1}\left\la \theta_{k+1}, z - z^{k+1}\right\ra\\
        &=& A_N f(z) + \frac{1}{2}\|z^0 - z\|^2 - \frac{1}{2}\|z^{N} - z\|^2 + \sum\limits_{k=0}^{N-1}\alpha_{k+1}\left\la \theta_{k+1}, z - z^{k} + \alpha_{k+1} \tnabla f_{\xi^k}(x^{k+1})\right\ra \\
        &\overset{\eqref{eq:theta_k+1_def_clipped_SSTM}}{=}& A_N f(z) + \frac{1}{2}\|z^0 - z\|^2 - \frac{1}{2}\|z^{N} - z\|^2 + \sum\limits_{k=0}^{N-1}\alpha_{k+1}\left\la \theta_{k+1}, z - z^{k} + \alpha_{k+1}\nabla f_{\xi^k}(x^{k+1})\right\ra\\
        &&\quad + \sum\limits_{k=0}^{N-1}\alpha_{k+1}^2\left\|\theta_{k+1}\right\|^2,
    \end{eqnarray*}
    which concludes the proof.
\end{proof}

Using this lemma we prove the main convergence result for \algname{clipped-SSTM}.
\begin{theorem}[Full version of Theorem~\ref{thm:clipped_SSTM_main_theorem}]\label{thm:clipped_SSTM_convex_case_appendix}
    Let Assumptions~\ref{as:bounded_alpha_moment}, \ref{as:L_smoothness} and \ref{as:str_cvx} with $\mu = 0$ hold on $Q = B_{3R}(x^*)$, where $R \geq \|x^0 - x^*\|$, and
    \begin{gather}
        a \geq \max\left\{48600\ln^2\frac{4K}{\beta}, \frac{900\sigma(K+1)K^{\frac{1}{\alpha}}\ln^{\frac{\alpha-1}{\alpha}}\frac{4K}{\beta}}{LR}\right\},\label{eq:clipped_SSTM_parameter_a}\\
        \lambda_{k} = \frac{R}{30\alpha_{k+1}\ln\frac{4K}{\beta}}, \label{eq:clipped_SSTM_clipping_level}
    \end{gather}
    for some $K > 0$ and $\beta \in (0,1]$ such that $\ln\frac{4K}{\beta} \geq 1$. Then, after $K$ iterations of \algname{clipped-SSTM} the iterates with probability at least $1 - \beta$ satisfy
    \begin{equation}
        f(y^K) - f(x^*) \leq \frac{6aL R^2}{K(K+3)} \quad \text{and} \quad  \{x^k\}_{k=0}^{K+1}, \{z^k\}_{k=0}^{K}, \{y^k\}_{k=0}^K \subseteq B_{2R}(x^*). \label{eq:clipped_SSTM_convex_case_appendix}
    \end{equation}
    In particular, when parameter $a$ equals the maximum from \eqref{eq:clipped_SSTM_parameter_a}, then the iterates produced by \algname{clipped-SSTM} after $K$ iterations with probability at least $1-\beta$ satisfy
    \begin{equation}
        f(y^K) - f(x^*) = \cO\left(\max\left\{\frac{LR^2\ln^2\frac{K}{\beta}}{K^2}, \frac{\sigma R \ln^{\frac{\alpha-1}{\alpha}}\frac{K}{\beta}}{K^{\frac{\alpha-1}{\alpha}}}\right\}\right), \label{eq:clipped_SSTM_convex_case_2_appendix}
    \end{equation}
    meaning that to achieve $f(y^K) - f(x^*) \leq \varepsilon$ with probability at least $1 - \beta$ \algname{clipped-SSTM} requires
    \begin{equation}
        K = \cO\left(\sqrt{\frac{LR^2}{\varepsilon}}\ln\frac{LR^2}{\varepsilon\beta}, \left(\frac{\sigma R}{\varepsilon}\right)^{\frac{\alpha}{\alpha-1}}\ln\left( \frac{1}{\beta} \left(\frac{\sigma R}{\varepsilon}\right)^{\frac{\alpha}{\alpha-1}}\right)\right)\quad \text{iterations/oracle calls.} \label{eq:clipped_SSTM_convex_case_complexity_appendix}
    \end{equation}
\end{theorem}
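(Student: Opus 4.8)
The plan is to adapt the induction‑plus‑Bernstein template that the paper uses for \algname{clipped-SGD} (e.g.\ Theorem~\ref{thm:clipped_SGD_convex_case_appendix}) to the accelerated potential governed by Lemma~\ref{lem:main_opt_lemma_clipped_SSTM}. Write $\theta_{k+1}=\tnabla f_{\xi^k}(x^{k+1})-\nabla f(x^{k+1})$ as in \eqref{eq:theta_k+1_def_clipped_SSTM} and split it into its unbiased and biased parts, $\theta_{k+1}^u=\tnabla f_{\xi^k}(x^{k+1})-\EE_{\xi^k}[\tnabla f_{\xi^k}(x^{k+1})]$ and $\theta_{k+1}^b=\EE_{\xi^k}[\tnabla f_{\xi^k}(x^{k+1})]-\nabla f(x^{k+1})$. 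For each $k=0,\dots,K$ define the event $E_k$ that $z^t,y^t\in B_{2R}(x^*)$ (hence $x^{t+1}\in B_{2R}(x^*)$ by convexity of the ball), together with suitable bounds on the partial stochastic sums, hold for all $t\le k$; I will prove $\PP\{E_k\}\ge 1-k\beta/K$ by induction on $k$, the base case being immediate since $\|x^0-x^*\|\le R$.

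For the inductive step I condition on $E_{T-1}$. The one‑step displacement is $\|z^{t+1}-z^t\|=\alpha_{t+1}\|\tnabla f_{\xi^t}(x^{t+1})\|\le\alpha_{t+1}\lambda_t=R/(30\ln\tfrac{4K}{\beta})\le R$, so all iterates through index $T$ lie in $B_{3R}(x^*)$ and Lemma~\ref{lem:main_opt_lemma_clipped_SSTM} applies with $z=x^*$. Using $f(y^T)\ge f(x^*)$, $\|z^0-x^*\|\le R$ and $A_T\ge0$, this gives
\[
\tfrac12\|z^T-x^*\|^2\le\tfrac12 R^2+\sum_{k=0}^{T-1}\alpha_{k+1}\langle\theta_{k+1},\,x^*-z^k+\alpha_{k+1}\nabla f(x^{k+1})\rangle+\sum_{k=0}^{T-1}\alpha_{k+1}^2\|\theta_{k+1}\|^2,
\]
and the same stochastic sums also dominate $A_T\big(f(y^T)-f(x^*)\big)$. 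It remains to show these sums are at most $\tfrac32 R^2$ with probability $\ge1-\beta/K$, which yields $\|z^T-x^*\|^2\le 2R^2$ and closes the induction. To control them one first verifies the precondition $\|\nabla f(x^{k+1})\|\le\lambda_k/2$ of Lemma~\ref{lem:bias_and_variance_clip}: on $E_{T-1}$ smoothness gives $\|\nabla f(x^{k+1})\|=\|\nabla f(x^{k+1})-\nabla f(x^*)\|\le 2LR$, and the lower bound on $a$ — the $\ln^2\tfrac{4K}{\beta}$ term together with the noise‑scaled term when $\sigma>0$ — makes $\lambda_k=R/(30\alpha_{k+1}\ln\tfrac{4K}{\beta})$ large enough that $2LR\le\lambda_k/2$. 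Then Lemma~\ref{lem:bias_and_variance_clip} supplies $\|\theta^u_{k+1}\|\le 2\lambda_k$, $\|\theta^b_{k+1}\|\le 2^\alpha\sigma^\alpha\lambda_k^{1-\alpha}$ and $\EE_{\xi^k}[\|\theta^u_{k+1}\|^2]\le 18\lambda_k^{2-\alpha}\sigma^\alpha$, while by construction $\alpha_{k+1}\lambda_k$ is constant in $k$.

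Next I decompose the stochastic error, exactly as in the \algname{clipped-SGD} proof, after replacing $x^*-z^k+\alpha_{k+1}\nabla f(x^{k+1})$ by a truncated vector $\eta_k$ that is a.s.\ bounded by $\Theta(R)$ and coincides with the original on $E_{T-1}$: (i) $\sum\alpha_{k+1}\langle\theta^u_{k+1},\eta_k\rangle$ is a bounded martingale difference sequence with conditional variances controlled via $\EE_{\xi^k}\|\theta^u_{k+1}\|^2$, so Bernstein's inequality (Lemma~\ref{lem:Bernstein_ineq}) bounds it by $\Theta(R^2)$ off an event of probability $\le\beta/(2K)$; (ii) $\sum\alpha_{k+1}\langle\theta^b_{k+1},\eta_k\rangle$ is bounded deterministically by Cauchy--Schwarz and the bias bound, using that $\alpha_{k+1}\lambda_k$ is constant and the choice of $a$; (iii) $\sum\alpha_{k+1}^2\big(\|\theta^u_{k+1}\|^2-\EE_{\xi^k}\|\theta^u_{k+1}\|^2\big)$ is again handled by Bernstein; (iv)~$\sum\alpha_{k+1}^2\EE_{\xi^k}\|\theta^u_{k+1}\|^2$ and (v) $\sum\alpha_{k+1}^2\|\theta^b_{k+1}\|^2$ are deterministic, bounded using $\alpha_{k+1}^2\lambda_k^{2-\alpha}=(\alpha_{k+1}\lambda_k)^2\lambda_k^{-\alpha}$ and the lower bound on $a$. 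A union bound over these failure events plus the induction hypothesis gives $\PP\{E_T\}\ge1-T\beta/K$, and propagating the ball containment to $y^T,x^{T+1}$ by convexity completes the step. Finally, at $k=K$ the bound $A_K\big(f(y^K)-f(x^*)\big)\lesssim R^2$ with $A_K=K(K+3)/(4aL)$ (Lemma~\ref{lem:alpha_k_A_K_lemma}) gives $f(y^K)-f(x^*)\le 6aLR^2/(K(K+3))$, and substituting $a=\Theta(\max\{\ln^2\tfrac{4K}{\beta},\,\sigma(K+1)K^{1/\alpha}\ln^{(\alpha-1)/\alpha}(\tfrac{4K}{\beta})/(LR)\})$ produces \eqref{eq:clipped_SSTM_convex_case_2_appendix}; solving both terms of the maximum for $\varepsilon$ yields the complexity \eqref{eq:clipped_SSTM_convex_case_complexity_appendix}. \textbf{The main obstacle} is the parameter bookkeeping around the precondition $\|\nabla f(x^{k+1})\|\le\lambda_k/2$: the clipping level must decay like $1/\alpha_{k+1}$ so the accelerated weights do not amplify the noise terms, yet must remain large enough (this is where the $\ln^2$ and noise‑dependent lower bounds on $a$ enter) that the clipping bias stays negligible — reconciling these two requirements, term by term, is exactly what lets the accelerated rate survive under the unbounded‑variance Assumption~\ref{as:bounded_alpha_moment}.
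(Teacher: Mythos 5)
Your overall architecture — induction on a high-probability event, Lemma~\ref{lem:main_opt_lemma_clipped_SSTM} with $z=x^*$, the five-term decomposition into unbiased/biased parts with two Bernstein applications and three deterministic bounds, and the final substitution of $a$ — is exactly the paper's proof. However, there is one genuine gap, and it sits precisely at the point you flag as "the main obstacle."

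You verify the precondition of Lemma~\ref{lem:bias_and_variance_clip} by arguing that on $E_{T-1}$ smoothness gives $\|\nabla f(x^{k+1})\|\le 2LR$ and that the lower bound \eqref{eq:clipped_SSTM_parameter_a} on $a$ then forces $2LR\le\lambda_k/2$. This does not work. Since $\lambda_k=\frac{R}{30\alpha_{k+1}\ln\frac{4K}{\beta}}$ and $\alpha_{k+1}=\frac{k+2}{2aL}$, the condition $2LR\le\lambda_k/2$ is equivalent to $a\ge 60(k+2)\ln\frac{4K}{\beta}$, which for $k$ near $K$ requires $a\gtrsim K\ln\frac{K}{\beta}$. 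The assumed lower bound on $a$ contains only $48600\ln^2\frac{4K}{\beta}$ plus a term proportional to $\sigma$; when $\sigma$ is small (e.g.\ the near-deterministic regime) neither term grows linearly in $K$, so your inequality fails for late iterations. Worse, patching it by forcing $a\gtrsim K$ would turn the rate $\frac{6aLR^2}{K(K+3)}$ into $O(LR^2/K)$ and destroy the acceleration. The paper's proof avoids this by bounding the gradient along the trajectory much more tightly: for $t\ge 1$ it writes
\begin{equation*}
\|\nabla f(x^{t+1})\|\le L\|x^{t+1}-y^t\|+\|\nabla f(y^t)\|
\le \frac{L\alpha_{t+1}}{A_t}\|x^{t+1}-z^t\|+\sqrt{2L\bigl(f(y^t)-f(x^*)\bigr)},
\end{equation*}
using the identity $\alpha_{t+1}(x^{t+1}-z^t)=A_t(y^t-x^{t+1})$ and the inductively established bound $f(y^t)-f(x^*)\le\frac{6aLR^2}{t(t+3)}$ (which is part of what event $E_{T-1}$ gives you, via the partial-sum inequality in the event's definition). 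This yields $\|\nabla f(x^{t+1})\|\le\frac{\lambda_t}{2}\bigl(\frac{540\ln\frac{4K}{\beta}}{a}+\frac{90\sqrt{3}\ln\frac{4K}{\beta}}{\sqrt{a}}\bigr)$, and now the condition $a\ge 48600\ln^2\frac{4K}{\beta}$ suffices \emph{uniformly in $t$} because the gradient itself decays like $1/t$ at the same rate as $\lambda_t$. So your induction event needs to carry the function-value decrease at intermediate times (not just ball containment), and the precondition check must use that decrease together with \eqref{eq:L_smoothness_cor_2}; with the crude bound $\|\nabla f(x^{k+1})\|\le 2LR$ the induction cannot be closed under the stated choice of $a$. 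The remainder of your argument (Bernstein for the linear and quadratic martingale terms, deterministic bias bounds using $\alpha_{k+1}\lambda_k=\mathrm{const}$, union bound, and the final complexity computation) is correct and matches the paper.
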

\begin{proof}
    The proof starts similarly to the proof of Theorem~4.1 from \citep{gorbunov2021near}. Let $R_k = \|z^k - x^*\|$, $\widetilde{R}_0 = R_0$, $\widetilde{R}_{k+1} = \max\{\widetilde{R}_k, R_{k+1}\}$ for all $k\geq 0$. We first show by induction that for all $k\geq 0$ the iterates $x^{k+1}, z^k, y^k$ lie in $B_{\widetilde{R}_k}(x^*)$. The induction base is trivial since $y^0 = z^0$, $\widetilde{R}_0 = R_0$, and $x^1 = \tfrac{A_0 y^0 + \alpha_1 z^0}{A_1} = z^0$. Next, assume that $x^{l}, z^{l-1}, y^{l-1} \in B_{\widetilde{R}_{l-1}}(x^*)$ for some $l\ge 1$. By definitions of $R_l$ and $\widetilde{R}_l$ we have that $z^l \in B_{R_{l}}(x^*)\subseteq B_{\widetilde{R}_{l}}(x^*)$. Since $y^l$ is a convex combination of $y^{l-1}\in B_{\widetilde{R}_{l-1}}(x^*)\subseteq B_{\widetilde{R}_{l}}(x^*)$, $z^l\in B_{\widetilde{R}_{l}}(x^*)$ and $B_{\widetilde{R}_{l}}(x^*)$ is a convex set we conclude that $y^l \in B_{\widetilde{R}_{l}}(x^*)$. Finally, since $x^{l+1}$ is a convex combination of $y^l$ and $z^l$ we have that $x^{l+1}$ lies in $B_{\widetilde{R}_{l}}(x^*)$ as well.
    
    Next, our goal is to show by induction that $\widetilde{R}_{l} \leq 3R$ with high probability, which allows us to apply the result of Lemma~\ref{lem:main_opt_lemma_clipped_SSTM} and then use Bernstein's inequality to estimate the stochastic part of the upper-bound. More precisely, for each $k = 0,\ldots, K$ we consider probability event $E_k$ defined as follows: inequalities
    \begin{gather}
        \sum\limits_{l=0}^{t-1}\alpha_{l+1}\left\la \theta_{l+1}, x^* - z^{l} + \alpha_{l+1}\nabla f_{\xi^l}(x^{l+1})\right\ra + \sum\limits_{l=0}^{t-1}\alpha_{l+1}^2\left\|\theta_{l+1}\right\|^2 \leq R^2, \label{eq:clipped_SSTM_induction_inequality_1}\\
        R_t \leq 2R \label{eq:clipped_SSTM_induction_inequality_2}
    \end{gather}
    hold for all $t = 0,1,\ldots, k$ simultaneously. We want to prove via induction that $\PP\{E_k\} \geq 1 - \nicefrac{k\beta}{K}$ for all $k = 0,1,\ldots, K$. For $k = 0$ the statement is trivial: the left-hand side of \eqref{eq:clipped_SSTM_induction_inequality_1} equals zero and $R \geq R_0$ by definition. Assume that the statement is true for some $k = T - 1 \leq K-1$: $\PP\{E_{T-1}\} \geq 1 - \nicefrac{(T-1)\beta}{K}$. One needs to prove that $\PP\{E_{T}\} \geq 1 - \nicefrac{T\beta}{K}$. First, we notice that probability event $E_{T-1}$ implies that $\widetilde{R}_t \leq 2R$ for all $t = 0,1,\ldots, T-1$. Moreover, it implies that
    \begin{eqnarray}
        \|z^{T} - x^*\| \overset{\eqref{eq:clipped_SSTM_z_update}}{\leq} \|z^{T} - x^*\| + \alpha_{T}\|\tnabla f_{\xi^{T-1}}(x^{T})\| \leq 2R + \alpha_{T}\lambda_{T-1} \overset{\eqref{eq:clipped_SSTM_clipping_level}}{\leq} 3R.\notag
    \end{eqnarray}
    Therefore, $E_{T-1}$ implies $\{x^k\}_{k=0}^{T}, \{z^k\}_{k=0}^{T}, \{y^k\}_{k=0}^T \subseteq B_{3R}(x^*)$, meaning that the assumptions of Lemma~\ref{lem:main_opt_lemma_clipped_SSTM} are satisfied and we have
    \begin{eqnarray}
        A_t\left(f(y^t) - f(x^*)\right) &\le& \frac{1}{2}R_0^2 - \frac{1}{2}R_t^2 + \sum\limits_{l=0}^{t-1}\alpha_{l+1}\left\la \theta_{l+1}, x^* - z^{l} + \alpha_{l+1} \nabla f(x^{l+1})\right\ra + \sum\limits_{l=0}^{t-1}\alpha_{l+1}^2\left\|\theta_{l+1}\right\|^2 \label{eq:clipped_SSTM_technical_1}
    \end{eqnarray}
    for all $t = 0,1,\ldots, T$ simultaneously and for all $t = 1,\ldots, T-1$ this probability event also implies that
    \begin{eqnarray}
        f(y^t) - f(x^*) \overset{\eqref{eq:clipped_SSTM_induction_inequality_1}, \eqref{eq:clipped_SSTM_technical_1}}{\leq} \frac{\frac{1}{2}R_0^2 - \frac{1}{2}R_t^2 + R^2}{A_t} \leq \frac{3R^2}{2A_t} = \frac{6aLR^2}{t(t+3)}. \label{eq:clipped_SSTM_technical_1_1}
    \end{eqnarray}
    Taking into account that $f(y^T) - f(x^*) \geq 0$, we also derive that $E_{T-1}$ implies
    \begin{eqnarray}
        R_T^2 &\leq& R_0^2 + \underbrace{2\sum\limits_{t=0}^{T-1}\alpha_{t+1}\left\la \theta_{t+1}, x^* - z^{t} + \alpha_{t+1} \nabla f(x^{t+1})\right\ra + 2\sum\limits_{t=0}^{T-1}\alpha_{t+1}^2\left\|\theta_{t+1}\right\|^2}_{2B_T}\notag\\
        &\leq& R^2 + 2B_T. \label{eq:clipped_SSTM_technical_2}
    \end{eqnarray}
    Before we estimate $B_T$, we need to derive a few useful inequalities. We start with showing that $E_{T-1}$ implies $\|\nabla f(x^{t+1})\| \leq \nicefrac{\lambda_t}{2}$ for all $t = 0,1,\ldots, T-1$. For $t = 0$ we have $x^1 = x^0$ and
    \begin{eqnarray}
        \|\nabla f(x^{1})\| = \|\nabla f(x^0)\| \overset{\eqref{eq:L_smoothness}}{\leq} L\|x^0 - x^*\| \leq \frac{R}{a\alpha_1} = \frac{\lambda_0}{2}\cdot \frac{60\ln\frac{4K}{\beta}}{a} \overset{\eqref{eq:clipped_SSTM_parameter_a}}{\leq} \frac{\lambda_0}{2}. \label{eq:clipped_SSTM_technical_3}
    \end{eqnarray}
    Next, for $t = 1,\ldots, T-1$ event $E_{T-1}$ implies
    \begin{eqnarray}
        \|\nabla f(x^{t+1})\| &\leq& \|\nabla f(x^{t+1}) - \nabla f(y^t)\| + \|\nabla f(y^t)\| \notag\\
        &\overset{\eqref{eq:L_smoothness}, \eqref{eq:L_smoothness_cor_2}}{\leq}& L\|x^{t+1} - y^t\| + \sqrt{2L\left(f(y^t) - f(x^*)\right)} \notag\\
        &\overset{\eqref{eq:main_olc_clipped_SSTM_technical_6},\eqref{eq:clipped_SSTM_technical_1_1}}{\leq}& \frac{L\alpha_{t+1}}{A_t}\|x^{t+1} - z^t\| + \sqrt{\frac{12aL^2R^2}{t(t+3)}}\notag\\
        &\leq& \frac{4LR\alpha_{t+1}}{A_t} + \sqrt{\frac{12aL^2R^2}{t(t+3)}} \notag\\
        &=&  \frac{R}{60\alpha_{t+1}\ln\frac{4K}{\beta}}\left(\frac{240L\alpha_{t+1}^2\ln\frac{4K}{\beta}}{A_t} + 60\sqrt{\frac{12aL^2\alpha_{t+1}^2\ln^2\frac{4K}{\beta}}{t(t+3)}}\right)\notag\\
        &\overset{\eqref{eq:A_k+1_explicit},\eqref{eq:clipped_SSTM_clipping_level}}{\leq}& \frac{\lambda_t}{2}\left(\frac{240L\left(\frac{t+2}{2aL}\right)^2\ln\frac{4K}{\beta}}{\frac{t(t+3)}{4aL}} + 60\sqrt{\frac{12aL^2\left(\frac{t+2}{2aL}\right)^2\ln^2\frac{4K}{\beta}}{t(t+3)}}\right) \notag\\
        &=& \frac{\lambda_t}{2} \left(\frac{240(t+2)^2\ln\frac{4K}{\beta}}{t(t+3)a} + 60\sqrt\frac{3(t+2)^2\ln\frac{4K}{\beta}}{t(t+3)a}\right) \notag\\
        &\leq& \frac{\lambda_t}{2} \left(\frac{540\ln\frac{4K}{\beta}}{a} + \frac{90\sqrt{3}\ln\frac{4K}{\beta}}{\sqrt{a}}\right) \overset{\eqref{eq:clipped_SSTM_parameter_a}}{\leq} \frac{\lambda_t}{2}, \label{eq:clipped_SSTM_technical_4}
    \end{eqnarray}
    where in the last row we use $\frac{(t+2)^2}{t(t+3)} \leq \frac{9}{4}$ for all $t \geq 1$. Therefore, probability event $E_{T-1}$ implies that
    \begin{eqnarray}
        \|x^* - z^{t} + \alpha_{t+1} \nabla f(x^{t+1})\| \leq \|x^* - z^{t}\| + \alpha_{t+1}\|\nabla f(x^{t+1})\| \overset{\eqref{eq:clipped_SSTM_induction_inequality_2}, \eqref{eq:clipped_SSTM_technical_3}, \eqref{eq:clipped_SSTM_technical_4}}{\leq} 2R + \frac{R}{60\ln\frac{4K}{\beta}} \leq 3R \label{eq:clipped_SSTM_technical_5}
    \end{eqnarray}
    for all $t = 0,1,\ldots, T-1$. Next, we define random vectors
    \begin{equation}
        \eta_t = \begin{cases} x^* - z^{t} + \alpha_{t+1} \nabla f(x^{t+1}),& \text{if } \|x^* - z^{t} + \alpha_{t+1} \nabla f(x^{t+1})\| \leq 3R,\\ 0,&\text{otherwise}, \end{cases} \notag
    \end{equation}
    for all $t = 0,1,\ldots, T-1$. By definition these random vectors are bounded with probability 1
    \begin{equation}
        \|\eta_t\| \leq 3R \label{eq:clipped_SSTM_technical_6}
    \end{equation}
    and probability event $E_{T-1}$ implies that $\eta_t = x^* - z^{t} + \alpha_{t+1} \nabla f(x^{t+1})$ for all $t = 0,1,\ldots, T-1$. Then, form $E_{T-1}$ it follows that
    \begin{eqnarray}
        B_T &=& \sum\limits_{t=0}^{T-1}\alpha_{t+1}\left\la \theta_{t+1}, \eta_t\right\ra + \sum\limits_{t=0}^{T-1}\alpha_{t+1}^2\left\|\theta_{t+1}\right\|^2. \notag
    \end{eqnarray}
    Next, we define the unbiased part and the bias of $\theta_{t+1}$ as $\theta_{t+1}^u$ and $\theta_{t+1}^b$ respectively:
    \begin{equation}
        \theta_{t+1}^u = \tnabla f_{\xi^t}(x^{t+1}) - \EE_{\xi^t}\left[\tnabla f_{\xi^t}(x^{t+1})\right],\quad \theta_{t+1}^b = \EE_{\xi^t}\left[\tnabla f_{\xi^t}(x^{t+1})\right] - \nabla f(x^{t+1}). \label{eq:clipped_SSTM_theta_u_b}
    \end{equation}
    We notice that $\theta_{t+1} = \theta_{t+1}^u + \theta_{t+1}^b$. Using new notation, we get that $E_{T-1}$ implies
    \begin{eqnarray}
        B_T &=& \sum\limits_{t=0}^{T-1}\alpha_{t+1}\left\la \theta_{t+1}^u + \theta_{t+1}^b, \eta_t\right\ra + \sum\limits_{t=0}^{T-1}\alpha_{t+1}^2\left\|\theta_{t+1}^u + \theta_{t+1}^b\right\|^2 \notag\\
        &\leq& \underbrace{\sum\limits_{t=0}^{T-1}\alpha_{t+1}\left\la \theta_{t+1}^u, \eta_t\right\ra}_{\circledOne} + \underbrace{\sum\limits_{t=0}^{T-1}\alpha_{t+1}\left\la \theta_{t+1}^b, \eta_t\right\ra}_{\circledTwo} + \underbrace{2\sum\limits_{t=0}^{T-1}\alpha_{t+1}^2\left(\left\|\theta_{t+1}^u\right\|^2 - \EE_{\xi^t}\left[\left\|\theta_{t+1}^u\right\|^2\right]\right)}_{\circledThree}\notag\\
        &&\quad + \underbrace{2\sum\limits_{t=0}^{T-1}\alpha_{t+1}^2\EE_{\xi^t}\left[\left\|\theta_{t+1}^u\right\|^2\right]}_{\circledFour} + \underbrace{2\sum\limits_{t=0}^{T-1}\alpha_{t+1}^2\left\|\theta_{t+1}^b\right\|^2}_{\circledFive}. \label{eq:clipped_SSTM_technical_7}
    \end{eqnarray}
    It remains to derive good enough high-probability upper-bounds for the terms $\circledOne, \circledTwo, \circledThree, \circledFour, \circledFive$, i.e., to finish our inductive proof we need to show that $\circledOne + \circledTwo + \circledThree + \circledFour + \circledFive \leq R^2$ with high probability. In the subsequent parts of the proof, we will need to use many times the bounds for the norm and second moments of $\theta_{t+1}^u$ and $\theta_{t+1}^b$. First, by definition of clipping operator, we have with probability $1$ that
    \begin{equation}
        \|\theta_{t+1}^u\| \leq 2\lambda_{t}. \label{eq:clipped_SSTM_norm_theta_u_bound}
    \end{equation}
    Moreover, since $E_{T-1}$ implies that $\|\nabla f(x^{t+1})\| \leq \nicefrac{\lambda_t}{2}$ for $t = 0,1,\ldots,T-1$ (see \eqref{eq:clipped_SSTM_technical_3} and \eqref{eq:clipped_SSTM_technical_4}), then, in view of Lemma~\ref{lem:bias_and_variance_clip}, we have that $E_{T-1}$ implies
    \begin{eqnarray}
        \|\theta_{t+1}^b\| &\leq& \frac{2^\alpha\sigma^\alpha}{\lambda_{t}^{\alpha-1}}, \label{eq:clipped_SSTM_norm_theta_b_bound} \\
        \EE_{\xi^t}\left[\|\theta_{t+1}^u\|^2\right] &\leq& 18 \lambda_t^{2-\alpha}\sigma^\alpha. \label{eq:clipped_SSTM_second_moment_theta_u_bound}
    \end{eqnarray}

    \textbf{Upper bound for $\circledOne$.} By definition of $\theta_{t+1}^u$, we have $\EE_{\xi^t}[\theta_{t+1}^u] = 0$ and
    \begin{equation}
        \EE_{\xi^t}\left[\alpha_{t+1}\left\la \theta_{t+1}^u, \eta_t\right\ra\right] = 0. \notag
    \end{equation}
    Next, sum $\circledOne$ has bounded with probability $1$ terms:
    \begin{equation}
        |\alpha_{t+1}\left\la \theta_{t+1}^u, \eta_t\right\ra| \leq \alpha_{t+1} \|\theta_{t+1}^u\| \cdot \|\eta_t\| \overset{\eqref{eq:clipped_SSTM_technical_6},\eqref{eq:clipped_SSTM_norm_theta_u_bound}}{\leq} 6\alpha_{t+1}\lambda_t R \overset{\eqref{eq:clipped_SSTM_clipping_level}}{=} \frac{R^2}{5\ln\frac{4K}{\beta}} \eqdef c. \label{eq:clipped_SSTM_technical_8} 
    \end{equation}
    The summands also have bounded conditional variances $\sigma_t^2 \eqdef \EE_{\xi^t}[\alpha_{t+1}^2\left\la \theta_{t+1}^u, \eta_t\right\ra^2]$:
    \begin{equation}
        \sigma_t^2 \leq \EE_{\xi^t}\left[\alpha_{t+1}^2\|\theta_{t+1}^u\|^2\cdot \|\eta_t\|^2\right] \overset{\eqref{eq:clipped_SSTM_technical_6}}{\leq} 9\alpha_{t+1}^2R^2 \EE_{\xi^t}\left[\|\theta_{t+1}^u\|^2\right]. \label{eq:clipped_SSTM_technical_9}
    \end{equation}
    In other words, we showed that $\{\alpha_{t+1}\left\la \theta_{t+1}^u, \eta_t\right\ra\}_{t=0}^{T-1}$ is a bounded martingale difference sequence with bounded conditional variances $\{\sigma_t^2\}_{t=0}^{T-1}$. Next, we apply Bernstein's inequality (Lemma~\ref{lem:Bernstein_ineq}) with $X_t = \alpha_{t+1}\left\la \theta_{t+1}^u, \eta_t\right\ra$, parameter $c$ as in \eqref{eq:clipped_SSTM_technical_8}, $b = \frac{R^2}{5}$, $G = \frac{R^4}{150\ln\frac{4K}{\beta}}$:
    \begin{equation*}
        \PP\left\{|\circledOne| > \frac{R^2}{5}\quad \text{and}\quad \sum\limits_{t=0}^{T-1} \sigma_{t}^2 \leq \frac{R^4}{150\ln\frac{4K}{\beta}}\right\} \leq 2\exp\left(- \frac{b^2}{2G + \nicefrac{2cb}{3}}\right) = \frac{\beta}{2K}.
    \end{equation*}
    Equivalently, we have
    \begin{equation}
        \PP\left\{ E_{\circledOne} \right\} \geq 1 - \frac{\beta}{2K},\quad \text{for}\quad E_{\circledOne} = \left\{ \text{either} \quad  \sum\limits_{t=0}^{T-1} \sigma_{t}^2 > \frac{R^4}{150\ln\frac{4K}{\beta}} \quad \text{or}\quad |\circledOne| \leq \frac{R^2}{5}\right\}. \label{eq:clipped_SSTM_sum_1_upper_bound}
    \end{equation}
    In addition, $E_{T-1}$ implies that
    \begin{eqnarray}
        \sum\limits_{t=0}^{T-1} \sigma_{t}^2 &\overset{\eqref{eq:clipped_SSTM_technical_9}}{\leq}& 9R^2 \sum\limits_{t=0}^{T-1} \alpha_{t+1}^2 \EE_{\xi^t}\left[\|\theta_{t+1}^u\|^2\right] \overset{\eqref{eq:clipped_SSTM_second_moment_theta_u_bound}}{\leq} 162\sigma^\alpha R^2 \sum\limits_{t=0}^{T-1} \alpha_{t+1}^2\lambda_t^{2-\alpha}\notag\\
        &\overset{\eqref{eq:clipped_SSTM_clipping_level}}{\leq}& \frac{162\sigma^\alpha R^{4-\alpha}}{30^{2-\alpha}\ln^{2-\alpha}\frac{4K}{\beta}}\sum\limits_{t=0}^{T-1} \alpha_{t+1}^\alpha = \frac{162\sigma^\alpha R^{4-\alpha}}{30^{2-\alpha}\cdot 2^\alpha a^\alpha L^\alpha\ln^{2-\alpha}\frac{4K}{\beta}}\sum\limits_{t=0}^{T-1} (t+2)^\alpha \notag\\
        &\leq& \frac{1}{a^\alpha} \cdot \frac{162\sigma^\alpha R^{4-\alpha} T(T+1)^\alpha}{60 L^\alpha\ln^{2-\alpha}\frac{4K}{\beta}} \overset{\eqref{eq:clipped_SSTM_parameter_a}}{\leq} \frac{R^4}{150 \ln\frac{4K}{\beta}}. \label{eq:clipped_SSTM_sum_1_variance_bound}
    \end{eqnarray}

    \textbf{Upper bound for $\circledTwo$.} From $E_{T-1}$ it follows that
    \begin{eqnarray}
        \circledTwo &\leq& \sum\limits_{t=0}^{T-1}\alpha_{t+1}\|\theta_{t+1}^b\|\cdot \|\eta_t\| \overset{\eqref{eq:clipped_SSTM_technical_6},\eqref{eq:clipped_SSTM_norm_theta_b_bound}}{\leq} 3R\cdot 2^\alpha\sigma^\alpha \sum\limits_{t=0}^{T-1}\frac{\alpha_{t+1}}{\lambda_{t}^{\alpha-1}} \overset{\eqref{eq:clipped_SSTM_clipping_level}}{\leq} 12R\sigma^\alpha \cdot \frac{30^{\alpha-1}\ln^{\alpha-1}\frac{4K}{\beta}}{R^{\alpha-1}} \sum\limits_{t=0}^{T-1}\alpha_{t+1}^{\alpha}\notag\\
        &\leq& \frac{360\sigma^\alpha R^{2 - \alpha}\ln^{\alpha-1}\frac{4K}{\beta}}{2^\alpha a^{\alpha} L^\alpha}\sum\limits_{t=0}^{T-1} (t+2)^\alpha \leq \frac{1}{a^\alpha} \cdot \frac{180\sigma^\alpha R^{2-\alpha} T(T+1)^\alpha\ln^{\alpha-1}\frac{4K}{\beta}}{L^\alpha} \overset{\eqref{eq:clipped_SSTM_parameter_a}}{\leq} \frac{R^2}{5}. \label{eq:clipped_SSTM_sum_2_upper_bound}
    \end{eqnarray}

    \textbf{Upper bound for $\circledThree$.} First, we have
    \begin{equation}
        \EE_{\xi^t}\left[2\alpha_{t+1}^2\left(\left\|\theta_{t+1}^u\right\|^2 - \EE_{\xi^t}\left[\left\|\theta_{t+1}^u\right\|^2\right]\right)\right] = 0. \notag
    \end{equation}
    Next, sum $\circledThree$ has bounded with probability $1$ terms:
    \begin{eqnarray}
        \left|2\alpha_{t+1}^2\left(\left\|\theta_{t+1}^u\right\|^2 - \EE_{\xi^t}\left[\left\|\theta_{t+1}^u\right\|^2\right]\right)\right| &\leq& 2\alpha_{t+1}^2\left( \|\theta_{t+1}^u\|^2 +   \EE_{\xi^t}\left[\left\|\theta_{t+1}^u\right\|^2\right]\right)\notag\\
        &\overset{\eqref{eq:clipped_SSTM_norm_theta_u_bound}}{\leq}& 16\alpha_{t+1}^2\lambda_t^2 \overset{\eqref{eq:clipped_SSTM_clipping_level}}{\leq} \frac{R^2}{5\ln\frac{4K}{\beta}} \eqdef c. \label{eq:clipped_SSTM_technical_10}
    \end{eqnarray}
    The summands also have bounded conditional variances $\widetilde\sigma_t^2 \eqdef \EE_{\xi^t}\left[4\alpha_{t+1}^4\left(\left\|\theta_{t+1}^u\right\|^2 - \EE_{\xi^t}\left[\left\|\theta_{t+1}^u\right\|^2\right]\right)^2\right]$:
    \begin{eqnarray}
        \widetilde\sigma_t^2 &\overset{\eqref{eq:clipped_SSTM_technical_10}}{\leq}& \frac{R^2}{5 \ln\frac{4K}{\beta}} \EE_{\xi^t}\left[2\alpha_{t+1}^2\left|\left\|\theta_{t+1}^u\right\|^2 - \EE_{\xi^k}\left[\left\|\theta_{t+1}^u\right\|^2\right]\right|\right] \leq \alpha_{t+1}^2R^2 \EE_{\xi^t}\left[\|\theta_{t+1}^u\|^2\right], \label{eq:clipped_SSTM_technical_11}
    \end{eqnarray}
    since $\ln\frac{4K}{\beta} \geq 1$. In other words, we showed that $\left\{2\alpha_{t+1}^2\left(\left\|\theta_{t+1}^u\right\|^2 - \EE_{\xi^t}\left[\left\|\theta_{t+1}^u\right\|^2\right]\right)\right\}_{t=0}^{T-1}$ is a bounded martingale difference sequence with bounded conditional variances $\{\widetilde\sigma_t^2\}_{t=0}^{T-1}$. Next, we apply Bernstein's inequality (Lemma~\ref{lem:Bernstein_ineq}) with $X_t = 2\alpha_{t+1}^2\left(\left\|\theta_{t+1}^u\right\|^2 - \EE_{\xi^t}\left[\left\|\theta_{t+1}^u\right\|^2\right]\right)$, parameter $c$ as in \eqref{eq:clipped_SSTM_technical_10}, $b = \frac{R^2}{5}$, $G = \frac{R^4}{150\ln\frac{4K}{\beta}}$:
    \begin{equation*}
        \PP\left\{|\circledThree| > \frac{R^2}{5}\quad \text{and}\quad \sum\limits_{t=0}^{T-1} \widetilde\sigma_{t}^2 \leq \frac{R^4}{150\ln\frac{4K}{\beta}}\right\} \leq 2\exp\left(- \frac{b^2}{2G + \nicefrac{2cb}{3}}\right) = \frac{\beta}{2K}.
    \end{equation*}
    Equivalently, we have
    \begin{equation}
        \PP\left\{ E_{\circledThree} \right\} \geq 1 - \frac{\beta}{2K},\quad \text{for}\quad E_{\circledThree} = \left\{ \text{either} \quad  \sum\limits_{t=0}^{T-1} \widetilde\sigma_{t}^2 > \frac{R^4}{150\ln\frac{4K}{\beta}} \quad \text{or}\quad |\circledThree| \leq \frac{R^2}{5}\right\}. \label{eq:clipped_SSTM_sum_3_upper_bound}
    \end{equation}
    In addition, $E_{T-1}$ implies that
    \begin{eqnarray}
        \sum\limits_{t=0}^{T-1} \widetilde\sigma_{t}^2 &\overset{\eqref{eq:clipped_SSTM_technical_11}}{\leq}& R^2 \sum\limits_{t=0}^{T-1} \alpha_{t+1}^2 \EE_{\xi^t}\left[\|\theta_{t+1}^u\|^2\right] \leq  9R^2 \sum\limits_{t=0}^{T-1} \alpha_{t+1}^2 \EE_{\xi^t}\left[\|\theta_{t+1}^u\|^2\right] \overset{\eqref{eq:clipped_SSTM_sum_1_variance_bound}}{\leq} \frac{R^4}{150 \ln\frac{4K}{\beta}}. \label{eq:clipped_SSTM_sum_3_variance_bound}
    \end{eqnarray}

    \textbf{Upper bound for $\circledFour$.} From $E_{T-1}$ it follows that
    \begin{eqnarray}
        \circledFour &=& 2\sum\limits_{t=0}^{T-1}\alpha_{t+1}^2\EE_{\xi^t}\left[\left\|\theta_{t+1}^u\right\|^2\right] \leq \frac{1}{R^2}\cdot 9R^2\sum\limits_{t=0}^{T-1}\alpha_{t+1}^2\EE_{\xi^t}\left[\left\|\theta_{t+1}^u\right\|^2\right] \overset{\eqref{eq:clipped_SSTM_sum_1_variance_bound}}{\leq} \frac{R^2}{150\ln\frac{4K}{\beta}} \leq \frac{R^2}{5}.\label{eq:clipped_SSTM_sum_4_upper_bound}
    \end{eqnarray}

    \textbf{Upper bound for $\circledFive$.} From $E_{T-1}$ it follows that
    \begin{eqnarray}
        \circledFive &=& 2\sum\limits_{t=0}^{T-1}\alpha_{t+1}^2\left\|\theta_{t+1}^b\right\|^2 \leq 2^{2\alpha + 1}\sigma^{2\alpha} \sum\limits_{t=0}^{T-1}\frac{\alpha_{t+1}^2}{\lambda_{t}^{2\alpha - 2}} \overset{\eqref{eq:clipped_SSTM_clipping_level}}{=} \frac{2^{2\alpha + 1}\cdot 30^{2\alpha-2}\sigma^{2\alpha} \ln^{2\alpha-2}\frac{4K}{\beta}}{R^{2\alpha-2}} \sum\limits_{t=0}^{T-1}\alpha_{t+1}^{2\alpha} \notag\\
        &=&   \frac{2^{2\alpha + 1}\cdot 30^{2\alpha-2}\sigma^{2\alpha} \ln^{2\alpha-2}\frac{4K}{\beta}}{2^{2\alpha}a^{2\alpha} L^{2\alpha} R^{2\alpha-2}} \sum\limits_{t=0}^{T-1}(t+2)^{2\alpha} \leq \frac{1}{a^{2\alpha}} \cdot \frac{1800 \sigma^{2\alpha}T(T+1)^{2\alpha}\ln^{2\alpha-2}\frac{4K}{\beta}}{L^{2\alpha}R^{2\alpha-2}} \overset{\eqref{eq:clipped_SSTM_parameter_a}}{\leq} \frac{R^2}{5}.\label{eq:clipped_SSTM_sum_5_upper_bound}
    \end{eqnarray}

    Now, we have the upper bounds for  $\circledOne, \circledTwo, \circledThree, \circledFour, \circledFive$. In particular, probability event $E_{T-1}$ implies
    \begin{gather*}
        B_T \overset{\eqref{eq:clipped_SSTM_technical_7}}{\leq} R^2 + \circledOne + \circledTwo + \circledThree + \circledFour + \circledFive,\\
        \circledTwo \overset{\eqref{eq:clipped_SSTM_sum_2_upper_bound}}{\leq} \frac{R^2}{5},\quad \circledFour \overset{\eqref{eq:clipped_SSTM_sum_4_upper_bound}}{\leq} \frac{R^2}{5}, \quad \circledFive \overset{\eqref{eq:clipped_SSTM_sum_5_upper_bound}}{\leq} \frac{R^2}{5},\\
        \sum\limits_{t=0}^{T-1} \sigma_t^2 \overset{\eqref{eq:clipped_SSTM_sum_1_variance_bound}}{\leq} \frac{R^4}{150 \ln\frac{4K}{\beta}},\quad \sum\limits_{t=0}^{T-1} \widetilde\sigma_t^2 \overset{\eqref{eq:clipped_SSTM_sum_3_variance_bound}}{\leq} \frac{R^4}{150 \ln\frac{4K}{\beta}}.
    \end{gather*}
    Moreover, we also have (see \eqref{eq:clipped_SSTM_sum_1_upper_bound}, \eqref{eq:clipped_SSTM_sum_3_upper_bound} and our induction assumption)
    \begin{equation*}
        \PP\{E_{T-1}\} \geq 1 - \frac{(T-1)\beta}{K},\quad \PP\{E_{\circledOne}\} \geq 1 - \frac{\beta}{2K},\quad \PP\{E_{\circledThree}\} \geq 1 - \frac{\beta}{2K},
    \end{equation*}
    where 
    \begin{eqnarray*}
        E_{\circledOne} &=& \left\{ \text{either} \quad  \sum\limits_{t=0}^{T-1} \sigma_{t}^2 > \frac{R^4}{150\ln\frac{4K}{\beta}} \quad \text{or}\quad |\circledOne| \leq \frac{R^2}{5}\right\},\\
        E_{\circledThree} &=& \left\{ \text{either} \quad  \sum\limits_{t=0}^{T-1} \widetilde\sigma_{t}^2 > \frac{R^4}{150\ln\frac{4K}{\beta}} \quad \text{or}\quad |\circledThree| \leq \frac{R^2}{5}\right\}.
    \end{eqnarray*}
    Thus, probability event $E_{T-1} \cap E_{\circledOne} \cap E_{\circledThree}$ implies
    \begin{eqnarray}
        B_T &\leq& R^2 + \frac{R^2}{5} + \frac{R^2}{5} + \frac{R^2}{5} + \frac{R^2}{5} + \frac{R^2}{5} = 2R^2, \notag\\
        R_T^2 &\overset{\eqref{eq:clipped_SSTM_technical_2}}{\leq}& R^2 + 2R^2 \leq (2R)^2, \notag
    \end{eqnarray}
    which is equivalent to \eqref{eq:clipped_SSTM_induction_inequality_1} and \eqref{eq:clipped_SSTM_induction_inequality_2} for $t = T$, and 
    \begin{equation*}
        \PP\{E_T\} \geq \PP\left\{E_{T-1} \cap E_{\circledOne} \cap E_{\circledThree}\right\} = 1 - \PP\left\{\overline{E}_{T-1} \cup \overline{E}_{\circledOne} \cup \overline{E}_{\circledThree}\right\} \geq 1 - \PP\{\overline{E}_{T-1}\} - \PP\{\overline{E}_{\circledOne}\} - \PP\{\overline{E}_{\circledThree}\} \geq 1 - \frac{T\beta}{K}.
    \end{equation*}
    This finishes the inductive part of our proof, i.e., for all $k = 0,1,\ldots,K$ we have $\PP\{E_k\} \geq 1 - \nicefrac{k\beta}{K}$. In particular, for $k = K$ we have that with probability at least $1 - \beta$
    \begin{equation*}
        f(y^K) - f(x^*) \overset{\eqref{eq:clipped_SSTM_technical_1_1}}{\leq} \frac{6aLR^2}{K(K+3)}
    \end{equation*}
    and $\{x^k\}_{k=0}^{K+1}, \{z^k\}_{k=0}^{K}, \{y^k\}_{k=0}^K \subseteq B_{2R}(x^*)$, which follows from \eqref{eq:clipped_SSTM_induction_inequality_2}.
    
    Finally, if
    \begin{equation*}
        a = \max\left\{48600\ln^2\frac{4K}{\beta}, \frac{900\sigma(K+1)K^{\frac{1}{\alpha}}\ln^{\frac{\alpha-1}{\alpha}}\frac{4K}{\beta}}{LR}\right\},
    \end{equation*}
    then with probability at least $1-\beta$
    \begin{eqnarray*}
        f(y^K) - f(x^*) &\leq& \frac{6aLR^2}{K(K+3)} = \max\left\{\frac{291600LR^2\ln^2\frac{4K}{\beta}}{K(K+3)}, \frac{5400\sigma R(K+1)K^{\frac{1}{\alpha}}\ln^{\frac{\alpha-1}{\alpha}}\frac{4K}{\beta}}{K(K+3)}\right\}\\
        &=& \cO\left(\max\left\{\frac{LR^2\ln^2\frac{K}{\beta}}{K^2}, \frac{\sigma R \ln^{\frac{\alpha-1}{\alpha}}\frac{K}{\beta}}{K^{\frac{\alpha-1}{\alpha}}}\right\}\right).
    \end{eqnarray*}
    To get $f(y^K) - f(x^*) \leq \varepsilon$ with probability at least $1-\beta$ it is sufficient to choose $K$ such that both terms in the maximum above are $\cO(\varepsilon)$. This leads to
    \begin{equation*}
         K = \cO\left(\sqrt{\frac{LR^2}{\varepsilon}}\ln\frac{LR^2}{\varepsilon\beta}, \left(\frac{\sigma R}{\varepsilon}\right)^{\frac{\alpha}{\alpha-1}}\ln\left( \frac{1}{\beta} \left(\frac{\sigma R}{\varepsilon}\right)^{\frac{\alpha}{\alpha-1}}\right)\right)
    \end{equation*}
    that concludes the proof.
\end{proof}

\subsection{Strongly Convex Functions}\label{appendix:R_clipped_SSTM}
In the strongly convex case, we consider the restarted version of \algname{clipped-SSTM} (\algname{R-clipped-SSTM}). The main result is summarized below.

\begin{algorithm}[h]
\caption{Restarted \algname{clipped-SSTM} (\algname{R-clipped-SSTM}) \citep{gorbunov2020stochastic}}
\label{alg:R-clipped-SSTM}   
\begin{algorithmic}[1]
\REQUIRE starting point $x^0$, number of restarts $\tau$, number of steps of \algname{clipped-SSTM} between restarts $\{K_t\}_{t=1}^{\tau}$, stepsize parameters $\{a_t\}_{t=1}^\tau$, clipping levels $\{\lambda_{k}^1\}_{k=0}^{K_1-1}$, $\{\lambda_{k}^2\}_{k=0}^{K_2-1}$, \ldots, $\{\lambda_{k}^\tau\}_{k=0}^{K_\tau - 1}$, smoothness constant $L$.
\STATE $\hat{x}^0 = x^0$
\FOR{$t=1,\ldots, \tau$}
\STATE Run \algname{clipped-SSTM} (Algorithm~\ref{alg:clipped-SSTM}) for $K_t$ iterations with stepsize parameter $a_{t}$, clipping levels $\{\lambda_{k}^t\}_{k=0}^{K_t-1}$, and starting point $\hat{x}^{t-1}$. Define the output of \algname{clipped-SSTM} by $\hat{x}^{t}$.
\ENDFOR
\ENSURE $\hat{x}^\tau$ 
\end{algorithmic}
\end{algorithm}

\begin{theorem}[Full version of Theorem~\ref{thm:R_clipped_SSTM_main_theorem}]\label{thm:R_clipped_SSTM_main_theorem_appendix}
    Let Assumptions~\ref{as:bounded_alpha_moment}, \ref{as:L_smoothness}, \ref{as:str_cvx} with $\mu > 0$ hold for $Q = B_{3R}(x^*)$, where $R \geq \|x^0 - x^*\|^2$ and \algname{R-clipped-SSTM} runs \algname{clipped-SSTM} $\tau$ times. Let
    \begin{gather}
        K_t = \left\lceil \max\left\{ 1080\sqrt{\frac{LR_{t-1}^2}{\varepsilon_t}}\ln\frac{2160\sqrt{LR_{t-1}^2}\tau}{\sqrt{\varepsilon_t}\beta}, 2\left(\frac{5400\sigma R_{t-1}}{\varepsilon_t}\right)^{\frac{\alpha}{\alpha-1}} \ln\left(\frac{4\tau}{\beta}\left(\frac{5400\sigma R_{t-1}}{\varepsilon_t}\right)^{\frac{\alpha}{\alpha-1}}\right) \right\} \right\rceil, \label{eq:R_clipped_SSTM_K_t} \\
        \varepsilon_t = \frac{\mu R_{t-1}^2}{4},\quad R_{t-1} = \frac{R}{2^{\nicefrac{(t-1)}{2}}}, \quad \tau = \left\lceil \log_2 \frac{\mu R^2}{2\varepsilon} \right\rceil,\quad \ln\frac{4K_t\tau}{\beta} \geq 1, \label{eq:R_clipped_SSTM_epsilon_R_t_tau} \\
        a_t = \max\left\{48600\ln^2\frac{4K_t\tau}{\beta}, \frac{900\sigma(K_t+1)K_t^{\frac{1}{\alpha}}\ln^{\frac{\alpha-1}{\alpha}}\frac{4K_t\tau}{\beta}}{L R_t}\right\}, \label{eq:R_clipped_SSTM_parameter_a}\\
        \lambda_{k}^t = \frac{R_t}{30\alpha_{k+1}^t \ln\frac{4K_t\tau}{\beta}} \label{eq:R_clipped_SSTM_clipping_level}
    \end{gather}
    for $t = 1, \ldots, \tau$. Then to guarantee $f(\hat x^\tau) - f(x^*) \leq \varepsilon$ with probability $\geq 1 - \beta$ \algname{R-clipped-SSTM} requires
    \begin{equation}
        \cO\left(\max\left\{ \sqrt{\frac{L}{\mu}}\ln\left(\frac{\mu R^2}{\varepsilon}\right)\ln\left(\frac{\sqrt{L}}{\sqrt{\mu}\beta}\ln\left(\frac{\mu R^2}{\varepsilon}\right)\right), \left(\frac{\sigma^2}{\mu \varepsilon}\right)^{\frac{\alpha}{2(\alpha-1)}} \ln\left(\frac{1}{\beta}\left(\frac{\sigma^2}{\mu \varepsilon}\right)^{\frac{\alpha}{2(\alpha-1)}}\ln\left(\frac{\mu R^2}{\varepsilon}\right)\right)\right\} \right) \label{eq:R_clipped_SSTM_main_result_appendix}
    \end{equation}
    iterations/oracle calls. Moreover, with probability $\geq 1-\beta$ the iterates of \algname{R-clipped-SSTM} at stage $t$ stay in the ball $B_{2R_{t-1}}(x^*)$.
\end{theorem}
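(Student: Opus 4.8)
The plan is a textbook restart reduction to the convex case (Theorem~\ref{thm:clipped_SSTM_convex_case_appendix}), with strong convexity (Assumption~\ref{as:str_cvx}, $\mu>0$) supplying the geometric contraction of the distance to $x^*$. Each stage $t$ is an instance of \algname{clipped-SSTM} whose parameters \eqref{eq:R_clipped_SSTM_K_t}--\eqref{eq:R_clipped_SSTM_clipping_level} are, up to the fixed numerical constants for which Theorem~\ref{thm:clipped_SSTM_convex_case_appendix} has slack, the choices \eqref{eq:clipped_SSTM_parameter_a}--\eqref{eq:clipped_SSTM_clipping_level} with radius $R_{t-1}$, target accuracy $\varepsilon_t=\mu R_{t-1}^2/4$, and confidence $\beta/\tau$.

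\textbf{Step 1 (inductive invariant).} I would prove by induction on $t=0,1,\dots,\tau$ that there is an event $\cE_t$ with $\PP\{\cE_t\}\ge 1-t\beta/\tau$ on which $\|\hat x^{j}-x^*\|\le R_{j}$ for all $0\le j\le t$, $f(\hat x^{j})-f(x^*)\le\varepsilon_{j}$ for all $1\le j\le t$, and the stage-$j$ iterates lie in $B_{2R_{j-1}}(x^*)$ for all $1\le j\le t$. The base case is $\|\hat x^0-x^*\|=\|x^0-x^*\|\le R=R_0$. For the step, condition on $\cE_{t-1}$: then $\hat x^{t-1}\in B_{R_{t-1}}(x^*)$, and since $R_{t-1}\le R$ all assumptions hold on $B_{3R_{t-1}}(x^*)\subseteq B_{3R}(x^*)$, so Theorem~\ref{thm:clipped_SSTM_convex_case_appendix} applies to stage $t$ started from $\hat x^{t-1}$. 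By the choice of $K_t$, plugging $K=K_t$ into \eqref{eq:clipped_SSTM_convex_case_2_appendix} makes both terms $\le\varepsilon_t$, so on a sub-event of conditional probability $\ge 1-\beta/\tau$ the output obeys $f(\hat x^t)-f(x^*)\le\varepsilon_t$ and the stage-$t$ iterates stay in $B_{2R_{t-1}}(x^*)$; Assumption~\ref{as:str_cvx} then yields
\[
\|\hat x^t-x^*\|^2\ \le\ \frac{2}{\mu}\bigl(f(\hat x^t)-f(x^*)\bigr)\ \le\ \frac{2\varepsilon_t}{\mu}\ =\ \frac{R_{t-1}^2}{2}\ =\ R_t^2 .
\]
A union bound with $\cE_{t-1}$ gives $\PP\{\cE_t\}\ge 1-t\beta/\tau$. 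Taking $t=\tau$ yields the ``moreover'' part, and since $\varepsilon_\tau=\mu R^2\,2^{-(\tau-1)}/4\le\varepsilon$ for $\tau=\lceil\log_2(\mu R^2/(2\varepsilon))\rceil$, also $f(\hat x^\tau)-f(x^*)\le\varepsilon$ with probability $\ge1-\beta$.

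\textbf{Step 2 (complexity).} The oracle cost is $\sum_{t=1}^\tau K_t$. In the accelerated term of \eqref{eq:R_clipped_SSTM_K_t} one has $\sqrt{LR_{t-1}^2/\varepsilon_t}=2\sqrt{L/\mu}$ independently of $t$, so its contribution over $\tau=\widetilde\cO(\ln(\mu R^2/\varepsilon))$ stages is $\widetilde\cO(\sqrt{L/\mu}\,\ln(\mu R^2/\varepsilon))$. In the stochastic term, $\sigma R_{t-1}/\varepsilon_t=4\sigma/(\mu R_{t-1})=(4\sigma/(\mu R))\,2^{(t-1)/2}$ grows geometrically, so $(\sigma R_{t-1}/\varepsilon_t)^{\alpha/(\alpha-1)}$ is a geometric sequence with ratio $2^{\alpha/(2(\alpha-1))}>1$ and its sum is, up to an $\alpha$-dependent constant, its last term; there $R_{\tau-1}=\Theta(\sqrt{\varepsilon/\mu})$, whence $(\sigma R_{\tau-1}/\varepsilon_\tau)^{\alpha/(\alpha-1)}=\Theta\bigl((\sigma^2/(\mu\varepsilon))^{\alpha/(2(\alpha-1))}\bigr)$. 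Bounding the logarithmic prefactors uniformly over $t$ by their largest values then gives the claimed $\widetilde\cO$-bound \eqref{eq:R_clipped_SSTM_main_result_appendix}.

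\textbf{Main obstacle.} The only genuine care is the nesting of the three logarithms: each $K_t$ already contains $\ln(K_t\tau/\beta)$ (so it is defined implicitly), the per-stage failure probability is $\beta/\tau$, and $\tau$ itself is logarithmic in $\varepsilon$. One must check these resolve self-consistently to $\widetilde\cO(\ln(1/\beta)+\text{poly-log of the problem data})$ without inflating the order; the strong-convexity contraction and the geometric-sum estimate are otherwise immediate.
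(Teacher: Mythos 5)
Your proposal is correct and follows essentially the same route as the paper: the same stage-wise induction invoking Theorem~\ref{thm:clipped_SSTM_convex_case_appendix} with confidence $\beta/\tau$ per stage, the same use of strong convexity to convert $f(\hat x^t)-f(x^*)\le\varepsilon_t$ into $\|\hat x^t-x^*\|^2\le R_t^2$, and the same complexity accounting (constant accelerated term per stage, geometric stochastic sum dominated by the last stage). No substantive differences.
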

\begin{proof}
    We show by induction that for any $t = 1,\ldots, \tau$ with probability at least $1 - \nicefrac{t\beta}{\tau}$ inequalities
    \begin{equation}
        f(\hat{x}^l) - f(x^*) \leq \varepsilon_l, \quad \|\hat x^l - x^*\|^2 \leq  R_l^2 = \frac{R^2}{2^l}
    \end{equation}
    hold for $l = 1,\ldots, t$ simultaneously. First, we prove the base of the induction. Theorem~\ref{thm:clipped_SSTM_convex_case_appendix} implies that with probability at least $1 - \nicefrac{\beta}{\tau}$
    \begin{eqnarray}
        f(\hat x^1) - f(x^*) &\leq& \frac{6a_1LR^2}{K_1(K_1+3)} \overset{\eqref{eq:R_clipped_SSTM_parameter_a}}{=} \max\left\{\frac{291600LR^2\ln^2\frac{4K_1\tau}{\beta}}{K_1(K_1+3)}, \frac{5400\sigma R(K_1+1)K_1^{\frac{1}{\alpha}}\ln^{\frac{\alpha-1}{\alpha}}\frac{4K_1\tau}{\beta}}{K_1(K_1+3)}\right\} \notag\\
        &\leq& \max\left\{\frac{291600LR^2\ln^2\frac{4K_1\tau}{\beta}}{K_1^2}, \frac{5400\sigma R\ln^{\frac{\alpha-1}{\alpha}}\frac{4K_1\tau}{\beta}}{K_1^{\frac{\alpha-1}{\alpha}}}\right\}\notag\\
        &\overset{\eqref{eq:R_clipped_SSTM_K_t}}{\leq}& \varepsilon_1 = \frac{\mu R^2}{4} \notag
    \end{eqnarray}
    and, due to the strong convexity,
    \begin{equation*}
        \|\hat x^1 - x^*\|^2 \leq \frac{2(f(\hat x^1) - f(x^*))}{\mu} \leq \frac{R^2}{2} = R_1^2.
    \end{equation*}
    The base of the induction is proven. Now, assume that the statement holds for some $t = T < \tau$, i.e., with probability at least $1 - \nicefrac{T\beta}{\tau}$ inequalities
    \begin{equation}
        f(\hat{x}^l) - f(x^*) \leq \varepsilon_l, \quad \|\hat x^l - x^*\|^2 \leq  R_l^2 = \frac{R^2}{2^l}
    \end{equation}
    hold for $l = 1,\ldots, T$ simultaneously. In particular, with probability at least $1 - \nicefrac{T\beta}{\tau}$ we have $\|\hat x^T - x^*\|^2 \leq R_T^2$. Applying Theorem~\ref{thm:clipped_SSTM_convex_case_appendix} and using union bound for probability events, we get that with probability at least $1 - \nicefrac{(T+1)\beta}{\tau}$
    \begin{eqnarray}
        f(\hat x^{T+1}) - f(x^*) &\leq& \frac{6a_{T+1}LR_{T}^2}{K_{T+1}(K_{T+1}+3)}\notag\\
        &\overset{\eqref{eq:R_clipped_SSTM_parameter_a}}{=}& \max\left\{\frac{291600LR_{T}^2\ln^2\frac{4K_{T+1}\tau}{\beta}}{K_{T+1}(K_{T+1}+3)}, \frac{5400\sigma R_T(K_{T+1}+1)K_{T+1}^{\frac{1}{\alpha}}\ln^{\frac{\alpha-1}{\alpha}}\frac{4K_{T+1}\tau}{\beta}}{K_{T+1}(K_{T+1}+3)}\right\} \notag\\
        &\leq& \max\left\{\frac{291600LR_T^2\ln^2\frac{4K_{T+1}\tau}{\beta}}{K_{T+1}^2}, \frac{5400\sigma R_T\ln^{\frac{\alpha-1}{\alpha}}\frac{4K_{T+1}\tau}{\beta}}{K_{T+1}^{\frac{\alpha-1}{\alpha}}}\right\}\notag\\
        &\overset{\eqref{eq:R_clipped_SSTM_K_t}}{\leq}& \varepsilon_{T+1} = \frac{\mu R_T^2}{4} \notag
    \end{eqnarray}
    and, due to the strong convexity,
    \begin{equation*}
        \|\hat x^{T+1} - x^*\|^2 \leq \frac{2(f(\hat x^{T+1}) - f(x^*))}{\mu} \leq \frac{R_T^2}{2} = R_{T+1}^2.
    \end{equation*}
    Thus, we finished the inductive part of the proof. In particular, with probability at least $1 - \beta$ inequalities
    \begin{equation}
        f(\hat{x}^l) - f(x^*) \leq \varepsilon_l, \quad \|\hat x^l - x^*\|^2 \leq  R_l^2 = \frac{R^2}{2^l}\notag
    \end{equation}
    hold for $l = 1,\ldots, \tau$ simultaneously, which gives for $l = \tau$ that with probability at least $1 - \beta$
    \begin{equation*}
        f(\hat{x}^\tau) - f(x^*) \leq \varepsilon_\tau = \frac{\mu R_{\tau-1}^2}{4} = \frac{\mu R^2}{2^{\tau+1}} \overset{\eqref{eq:R_clipped_SSTM_epsilon_R_t_tau}}{\leq} \varepsilon.
    \end{equation*}
    It remains to calculate the overall number of oracle calls during all runs of \algname{clipped-SSTM}. We have
    \begin{eqnarray*}
        \sum\limits_{t=1}^\tau K_t &=& \cO\left(\sum\limits_{t=1}^\tau \max\left\{ \sqrt{\frac{LR_{t-1}^2}{\varepsilon_t}}\ln\left(\frac{\sqrt{LR_{t-1}^2}\tau}{\sqrt{\varepsilon_t}\beta}\right), \left(\frac{\sigma R_{t-1}}{\varepsilon_t}\right)^{\frac{\alpha}{\alpha-1}} \ln\left(\frac{\tau}{\beta}\left(\frac{\sigma R_{t-1}}{\varepsilon_t}\right)^{\frac{\alpha}{\alpha-1}}\right) \right\} \right)\\
        &=& \cO\left(\sum\limits_{t=1}^\tau \max\left\{ \sqrt{\frac{L}{\mu}}\ln\left(\frac{\sqrt{L}\tau}{\sqrt{\mu}\beta}\right), \left(\frac{\sigma}{\mu R_{t-1}}\right)^{\frac{\alpha}{\alpha-1}} \ln\left(\frac{\tau}{\beta}\left(\frac{\sigma }{\mu R_{t-1}}\right)^{\frac{\alpha}{\alpha-1}}\right) \right\} \right)\\
        &=& \cO\left(\max\left\{ \tau \sqrt{\frac{L}{\mu}}\ln\left(\frac{\sqrt{L}\tau}{\sqrt{\mu}\beta}\right), \sum\limits_{t=1}^\tau\left(\frac{\sigma \cdot 2^{\nicefrac{t}{2}}}{\mu R}\right)^{\frac{\alpha}{\alpha-1}} \ln\left(\frac{\tau}{\beta}\left(\frac{\sigma \cdot 2^{\nicefrac{t}{2}}}{\mu R}\right)^{\frac{\alpha}{\alpha-1}}\right) \right\} \right)\\
        &=& \cO\left(\max\left\{ \sqrt{\frac{L}{\mu}}\ln\left(\frac{\mu R^2}{\varepsilon}\right)\ln\left(\frac{\sqrt{L}}{\sqrt{\mu}\beta}\ln\left(\frac{\mu R^2}{\varepsilon}\right)\right), \left(\frac{\sigma}{\mu R}\right)^{\frac{\alpha}{\alpha-1}} \ln\left(\frac{\tau}{\beta}\left(\frac{\sigma \cdot 2^{\nicefrac{\tau}{2}}}{\mu R}\right)^{\frac{\alpha}{\alpha-1}}\right)\sum\limits_{t=1}^\tau 2^\frac{\alpha t}{2(\alpha-1)}\right\} \right)\\
        &=& \cO\left(\max\left\{ \sqrt{\frac{L}{\mu}}\ln\left(\frac{\mu R^2}{\varepsilon}\right)\ln\left(\frac{\sqrt{L}}{\sqrt{\mu}\beta}\ln\left(\frac{\mu R^2}{\varepsilon}\right)\right), \left(\frac{\sigma}{\mu R}\right)^{\frac{\alpha}{\alpha-1}} \ln\left(\frac{\tau}{\beta}\left(\frac{\sigma}{\mu R}\right)^{\frac{\alpha}{\alpha-1}}\cdot 2^{\frac{\alpha}{2(\alpha-1)}}\right)2^\frac{\alpha \tau}{2(\alpha-1)}\right\} \right)\\
        &=& \cO\left(\max\left\{ \sqrt{\frac{L}{\mu}}\ln\left(\frac{\mu R^2}{\varepsilon}\right)\ln\left(\frac{\sqrt{L}}{\sqrt{\mu}\beta}\ln\left(\frac{\mu R^2}{\varepsilon}\right)\right), \left(\frac{\sigma^2}{\mu \varepsilon}\right)^{\frac{\alpha}{2(\alpha-1)}} \ln\left(\frac{1}{\beta}\left(\frac{\sigma^2}{\mu \varepsilon}\right)^{\frac{\alpha}{2(\alpha-1)}}\ln\left(\frac{\mu R^2}{\varepsilon}\right)\right)\right\} \right),
    \end{eqnarray*}
    which concludes the proof.
\end{proof}

\clearpage

\section{Missing Proofs for \algname{clipped-SEG}}\label{appendix:SEG}
In this section, we provide the complete formulation of the main results for \algname{clipped-SSTM} and \algname{R-clipped-SSTM} and the missing proofs. For brevity, we will use the following notation: $\tF_{\xi_1^k}(x^{k}) = \clip\left(F_{\xi_1^k}(x^{k}), \lambda_k\right)$ and $\tF_{\xi_2^k}(\tx^{k}) = \clip\left(F_{\xi_2^k}(\tx^{k}), \lambda_k\right)$.

\begin{algorithm}[h]
\caption{Clipped Stochastic Extragradient (\algname{clipped-SEG}) \citep{gorbunov2022clipped}}
\label{alg:clipped-SEG}   
\begin{algorithmic}[1]
\REQUIRE starting point $x^0$, number of iterations $K$, stepsize $\gamma > 0$, clipping levels $\{\lambda_k\}_{k=0}^{K-1}$.
\FOR{$k=0,\ldots, K$}
\STATE Compute $\tF_{\xi_1^k}(x^{k}) = \clip\left(F_{\xi_1^k}(x^{k}), \lambda_k\right)$ using a fresh sample $\xi_1^k \sim \cD_k$
\STATE $\tx^{k} = x^k - \gamma \tF_{\xi_1^k}(x^{k})$
\STATE Compute $\tF_{\xi_2^k}(\tx^{k}) = \clip\left(F_{\xi_2^k}(\tx^{k}), \lambda_k\right)$ using a fresh sample $\xi_2^k \sim \cD_k$
\STATE $x^{k+1} = x^k - \gamma \tF_{\xi_2^k}(\tx^{k})$
\ENDFOR
\ENSURE $x^{K+1}$ or $\tx_{\avg}^K = \frac{1}{K+1}\sum\limits_{k=0}^K \tx^K$
\end{algorithmic}
\end{algorithm}

\subsection{Monotone Problems}
We start with the following lemma derived by \citet{gorbunov2022extragradient}. Since this lemma handles only deterministic part of the algorithm, the proof is the same as in the original work.
\begin{lemma}[Lemma C.1 from \citep{gorbunov2022extragradient}]\label{lem:optimization_lemma_gap_SEG}
    Let Assumptions~\ref{as:L_Lip} and \ref{as:monotonicity} hold for $Q = B_{4R}(x^*)$, where $R \geq \|x^0 - x^*\|$ and $0 < \gamma \leq \nicefrac{1}{\sqrt{2}L}$. If $x^k$ and $\tx^k$ lie in $B_{4R}(x^*)$ for all $k = 0,1,\ldots, K$ for some $K\geq 0$, then for all $u \in B_{4R}(x^*)$ the iterates produced by \algname{clipped-SEG} satisfy
    \begin{eqnarray}
        \langle F(u), \tx^K_{\avg} - u\rangle &\leq& \frac{\|x^0 - u\|^2 - \|x^{K+1} - u\|^2}{2\gamma(K+1)} + \frac{\gamma}{2(K+1)}\sum\limits_{k=0}^K\left(\|\theta_k\|^2 + 2\|\omega_k\|^2\right)\notag\\
        &&\quad + \frac{1}{K+1}\sum\limits_{k=0}^K\langle x^k - u - \gamma F(\tx^k), \theta_k\rangle, \label{eq:optimization_lemma_SEG}\\
        \tx^K_{\avg} &\eqdef& \frac{1}{K+1}\sum\limits_{k=0}^{K}\tx^k, \label{eq:tx_avg_SEG}\\
        \theta_k &\eqdef& F(\tx^k) - \tF_{\xi_2^k}(\tx^k), \label{eq:theta_k_SEG}\\
        \omega_k &\eqdef& F(x^k) - \tF_{\xi_1^k}(x^k). \label{eq:omega_k_SEG}
    \end{eqnarray}
\end{lemma}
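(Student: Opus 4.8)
The statement to prove is Lemma~\ref{lem:optimization_lemma_gap_SEG}, the ``optimization part'' of the \algname{clipped-SEG} analysis in the monotone case. Since the lemma is attributed to \citet{gorbunov2022extragradient} and concerns only the deterministic recursion (with the stochastic estimators treated as arbitrary vectors), the plan is to mimic the classical extragradient / Mirror-Prox telescoping argument, carefully tracking the clipping errors $\theta_k$ and $\omega_k$ as perturbations. First I would write the two update steps as $\tx^k = x^k - \gamma\widetilde F_{\xi_1^k}(x^k) = x^k - \gamma F(x^k) + \gamma\omega_k$ and $x^{k+1} = x^k - \gamma\widetilde F_{\xi_2^k}(\tx^k) = x^k - \gamma F(\tx^k) + \gamma\theta_k$, using the definitions \eqref{eq:theta_k_SEG}--\eqref{eq:omega_k_SEG}. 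Then, for a fixed $u\in B_{4R}(x^*)$, expand $\|x^{k+1}-u\|^2 = \|x^k - u\|^2 - 2\gamma\langle F(\tx^k) - \theta_k, x^k - u\rangle + \gamma^2\|F(\tx^k) - \theta_k\|^2$ and add and subtract the ``virtual'' iterate $\tx^k$ inside the inner product to produce the term $\langle F(\tx^k), \tx^k - u\rangle$, which is what we ultimately want on the left-hand side.

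The key algebraic identity is the standard three-point / extragradient manipulation: write $\langle F(\tx^k), x^k - u\rangle = \langle F(\tx^k), \tx^k - u\rangle + \langle F(\tx^k), x^k - \tx^k\rangle$, and then control the cross term $\langle F(\tx^k), x^k - \tx^k\rangle$ using the first update $x^k - \tx^k = \gamma F(x^k) - \gamma\omega_k$ together with the identity $2\langle a,b\rangle = \|a\|^2 + \|b\|^2 - \|a-b\|^2$ applied to $a = \gamma F(\tx^k)$, $b = \gamma F(x^k)$ (or the equivalent completion of squares). The Lipschitz assumption \eqref{eq:L_Lip} and the stepsize restriction $\gamma\le 1/(\sqrt 2 L)$ are then invoked exactly to absorb the term $\gamma^2\|F(\tx^k)-F(x^k)\|^2 \le \gamma^2 L^2\|\tx^k - x^k\|^2 \le \tfrac12\|\tx^k - x^k\|^2$ into the negative square terms that arise, leaving no uncontrolled quadratic in $F$; the residual error terms are exactly $\|\theta_k\|^2$, $\|\omega_k\|^2$ (hence the $\|\theta_k\|^2 + 2\|\omega_k\|^2$ grouping) and the linear term $\langle x^k - u - \gamma F(\tx^k), \theta_k\rangle$. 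After this per-step inequality is established, I would telescope $\|x^k - u\|^2 - \|x^{k+1}-u\|^2$ over $k = 0,\dots,K$, divide by $2\gamma(K+1)$, and finally apply monotonicity \eqref{eq:monotonicity} together with convexity of the averaging to pass from $\tfrac{1}{K+1}\sum_k\langle F(\tx^k),\tx^k - u\rangle$ down to $\langle F(u), \tx^K_{\avg} - u\rangle$ — the standard gap-function lower bound for monotone operators, exactly as in \citep{nesterov2007dual, juditsky2011solving}.

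One prerequisite I must not overlook: the whole computation requires that every point appearing — $x^k$, $\tx^k$, and the argument $u$ — lies in the region $Q = B_{4R}(x^*)$ where Assumptions~\ref{as:L_Lip} and \ref{as:monotonicity} hold, so that \eqref{eq:L_smoothness}-type and monotonicity inequalities are legitimate; this is precisely the hypothesis ``$x^k$ and $\tx^k$ lie in $B_{4R}(x^*)$ for all $k$'' that the lemma assumes, so no additional work is needed here beyond citing it.

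\textbf{Main obstacle.} The delicate point is the bookkeeping of the clipping perturbations through the completion-of-squares step: one must split the inner products against $\theta_k$ and $\omega_k$ so that (i) the coefficient in front of $\gamma\sum_k\|\theta_k\|^2$ comes out as exactly $\tfrac{\gamma}{2(K+1)}$ and in front of $\gamma\sum_k\|\omega_k\|^2$ as exactly $\tfrac{\gamma}{(K+1)}$ (i.e. the $+2$ factor on $\|\omega_k\|^2$), and (ii) the remaining $\theta_k$-dependence is collected into the single linear martingale-friendly form $\langle x^k - u - \gamma F(\tx^k), \theta_k\rangle$ rather than, say, $\langle x^k - u, \theta_k\rangle$ alone — the extra $-\gamma F(\tx^k)$ shift is what later lets the stochastic part of the full proof apply Bernstein's inequality to a bounded summand. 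Getting the stepsize threshold $1/(\sqrt 2 L)$ to exactly cancel the $\gamma^2 L^2$ Lipschitz term (leaving a clean nonnegative leftover $-\tfrac12\|\tx^k - x^k\|^2 \le 0$ or similar to be discarded) is the other place where a careless constant would break the statement. Since this lemma is quoted verbatim from prior work, I expect the argument to be a faithful transcription of that derivation rather than anything genuinely new.
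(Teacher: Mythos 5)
Your plan is correct and is essentially the standard extragradient telescoping argument that the cited Lemma~C.1 of \citet{gorbunov2022extragradient} uses; the paper itself does not reprove this lemma (it only notes the proof is unchanged from the original work since it is purely deterministic). Your bookkeeping checks out: expanding $\|x^{k+1}-u\|^2$ around $x^k-\gamma F(\tx^k)$ produces exactly the linear term $2\gamma\langle x^k-u-\gamma F(\tx^k),\theta_k\rangle$ and $\gamma^2\|\theta_k\|^2$, the cross term $-2\gamma^2\langle F(\tx^k),\tF_{\xi_1^k}(x^k)\rangle$ combined with $\gamma^2\|F(\tx^k)\|^2$ yields $\gamma^2\|F(\tx^k)-\tF_{\xi_1^k}(x^k)\|^2-\gamma^2\|\tF_{\xi_1^k}(x^k)\|^2$, and the condition $2L^2\gamma^2\le 1$ lets the Lipschitz part $2\gamma^2 L^2\|\tx^k-x^k\|^2$ be absorbed by $-\gamma^2\|\tF_{\xi_1^k}(x^k)\|^2$, leaving precisely $2\gamma^2\|\omega_k\|^2$ and hence the stated $\|\theta_k\|^2+2\|\omega_k\|^2$ grouping before monotonicity and averaging finish the proof.
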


Using this lemma we prove the main convergence result for \algname{clipped-SEG} in the monotone case.
\begin{theorem}[Case 1 in Theorem~\ref{thm:clipped_SEG_main_theorem}]\label{thm:main_result_gap_SEG}
    Let Assumptions~\ref{as:bounded_alpha_moment}, \ref{as:L_Lip}, \ref{as:monotonicity} hold for $Q = B_{4R}(x^*)$, where $R \geq \|x^0 - x^*\|$, and    
    \begin{eqnarray}
        0< \gamma &\leq& \min\left\{\frac{1}{160L \ln \tfrac{6(K+1)}{\beta}}, \frac{20^{\frac{2-\alpha}{\alpha}}R}{10800^{\frac{1}{\alpha}}(K+1)^{\frac{1}{\alpha}}\sigma \ln^{\frac{\alpha-1}{\alpha}} \tfrac{6(K+1)}{\beta}}\right\}, \label{eq:gamma_SEG}\\
        \lambda_{k} \equiv \lambda &=& \frac{R}{20\gamma \ln \tfrac{6(K+1)}{\beta}}, \label{eq:lambda_SEG}
    \end{eqnarray}
    for some $K \geq 0$ and $\beta \in (0,1]$ such that $\ln \tfrac{6(K+1)}{\beta} \geq 1$. Then, after $K$ iterations the iterates produced by \algname{clipped-SEG} with probability at least $1 - \beta$ satisfy 
    \begin{equation}
        \gap_R(\tx_{\avg}^K) \leq \frac{9R^2}{2\gamma(K+1)} \quad \text{and}\quad \{x^k\}_{k=0}^{K+1} \subseteq B_{3R}(x^*), \{\tx^k\}_{k=0}^{K+1} \subseteq B_{4R}(x^*), \label{eq:main_result}
    \end{equation}
    where $\tx_{\avg}^K$ is defined in \eqref{eq:tx_avg_SEG}. In particular, when $\gamma$ equals the minimum from \eqref{eq:gamma_SEG}, then the iterates produced by \algname{clipped-SEG} after $K$ iterations with probability at least $1-\beta$ satisfy
    \begin{equation}
        \gap_R(\tx_{\avg}^K) = \cO\left(\max\left\{\frac{LR^2\ln\frac{K}{\beta}}{K}, \frac{\sigma R \ln^{\frac{\alpha-1}{\alpha}}\frac{K}{\beta}}{K^{\frac{\alpha-1}{\alpha}}}\right\}\right), \label{eq:clipped_SEG_monotone_case_2_appendix}
    \end{equation}
    meaning that to achieve $\gap_R(\tx_{\avg}^K) \leq \varepsilon$ with probability at least $1 - \beta$ \algname{clipped-SEG} requires
    \begin{equation}
        K = \cO\left(\frac{LR^2}{\varepsilon}\ln\frac{LR^2}{\varepsilon\beta}, \left(\frac{\sigma R}{\varepsilon}\right)^{\frac{\alpha}{\alpha-1}}\ln \frac{\sigma R}{\varepsilon\beta}\right)\quad \text{iterations/oracle calls.} \label{eq:clipped_SEG_monotone_case_complexity_appendix}
    \end{equation}
\end{theorem}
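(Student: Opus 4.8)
The plan is to follow the template already used for \algname{clipped-SGD} and \algname{clipped-SSTM} in the convex case, now starting from the deterministic ``optimization'' inequality of Lemma~\ref{lem:optimization_lemma_gap_SEG}. The core is an inductive argument over $k=0,1,\dots,K+1$: I would introduce a probability event $E_k$ on which (i) all iterates produced so far satisfy $x^t\in B_{3R}(x^*)$ and $\tx^t\in B_{4R}(x^*)$, and (ii) the accumulated stochastic sums appearing on the right-hand side of \eqref{eq:optimization_lemma_SEG} are bounded by a quantity of order $R^2/\gamma$, and prove $\PP\{E_k\}\ge 1-\nicefrac{k\beta}{(K+1)}$ by induction. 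For the inductive step, $E_{T-1}$ keeps $\{x^t,\tx^t\}_{t\le T-1}$ in $B_{4R}(x^*)$, and since $\|x^{T}-x^*\|\le \|x^{T-1}-x^*\|+\gamma\|\tF_{\xi_2^{T-1}}(\tx^{T-1})\|\le 3R+\gamma\lambda$ and $\|\tx^{T}-x^*\|\le 3R+2\gamma\lambda$, the choice \eqref{eq:lambda_SEG} of $\lambda$ keeps everything in $B_{4R}(x^*)$, so Lemma~\ref{lem:optimization_lemma_gap_SEG} applies up to index $T$.

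Next I would control the noise. Lipschitzness gives $\|F(x^t)\|=\|F(x^t)-F(x^*)\|\le 4LR$ and $\|F(\tx^t)\|\le 4LR$ on $E_{T-1}$, and \eqref{eq:gamma_SEG}--\eqref{eq:lambda_SEG} ensure $4LR\le\nicefrac{\lambda}{2}$, so Lemma~\ref{lem:bias_and_variance_clip} applies to both $\omega_t$ and $\theta_t$: split each into an unbiased part ($\theta_t^u,\omega_t^u$, bounded by $2\lambda$ almost surely, conditional second moment $\le 18\lambda^{2-\alpha}\sigma^\alpha$) and a bias part ($\theta_t^b,\omega_t^b$, bounded by $2^\alpha\sigma^\alpha/\lambda^{\alpha-1}$). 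As before I would truncate the multiplier vectors, replacing $x^t-x^*-\gamma F(\tx^t)$ by a vector $\eta_t$ that equals it on $E_{T-1}$ but is bounded by $\cO(R)$ almost surely, so the inner-product sums become bounded martingale-difference sequences. Bernstein's inequality (Lemma~\ref{lem:Bernstein_ineq}) is then applied to the genuinely stochastic sums $\sum\langle\eta_t,\theta_t^u\rangle$, $\sum(\|\theta_t^u\|^2-\EE_{\xi_2^t}\|\theta_t^u\|^2)$ and the analogous one for $\omega_t^u$, while the bias sums, the conditional-second-moment sums and the conditional-variance sums are bounded deterministically using $\le 18\lambda^{2-\alpha}\sigma^\alpha$, $\le 2^\alpha\sigma^\alpha/\lambda^{\alpha-1}$ and the smallness of $\gamma$ (so that $(K+1)\gamma^\alpha\sigma^\alpha\lambda^{2-\alpha}$, $(K+1)\gamma^\alpha\sigma^\alpha\lambda^{-(\alpha-1)}$, $(K+1)\gamma^2\sigma^{2\alpha}\lambda^{-2(\alpha-1)}$ are all $\cO(R^2)$). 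Collecting the $\circledOne$--$\circledFive$-type pieces yields $\|x^{T+1}-x^*\|^2\le 9R^2$ and the bound on the accumulated sums, completing the induction with the usual union bound over failure probabilities.

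The one genuinely new ingredient relative to the minimization proofs is that the metric is the gap $\gap_R(\tx_{\avg}^K)=\max_{u\in B_R(x^*)}\langle F(u),\tx_{\avg}^K-u\rangle$, so Lemma~\ref{lem:optimization_lemma_gap_SEG} must be controlled \emph{uniformly} over $u$, yet Bernstein cannot be union-bounded over an uncountable set. The standard resolution (as in \citep{gorbunov2022clipped}) is to write $\langle x^t-u-\gamma F(\tx^t),\theta_t\rangle=\langle x^t-x^*-\gamma F(\tx^t),\theta_t\rangle+\langle x^*-u,\theta_t\rangle$: the first term is $u$-independent and handled by Bernstein as above, while $\sum_{t=0}^{K}\langle x^*-u,\theta_t\rangle\le\|x^*-u\|\,\bigl\|\sum_{t=0}^{K}\theta_t\bigr\|\le R\,\bigl\|\sum_{t=0}^{K}\theta_t\bigr\|$ uniformly in $u$, and $\bigl\|\sum_t\theta_t^u\bigr\|$ is controlled by a high-probability bound on the norm of a bounded vector-valued martingale-difference sum (via Bernstein applied to the scalar process $\langle\sum_{s<t}\theta_s^u,\theta_t^u\rangle$, i.e.\ to the recursion for $\bigl\|\sum_{s\le t}\theta_s^u\bigr\|^2$) plus the deterministic bound on $\sum_t\|\theta_t^b\|$. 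This is the step I expect to be the main obstacle; the rest is a careful but routine transcription of the \algname{clipped-SGD} convex-case argument with $L\mapsto\sqrt{2}L$ and an extra $\omega_t$ term. Once the high-probability bound on the sums is in place, substituting the explicit $\gamma$ from \eqref{eq:gamma_SEG} into $\gap_R(\tx_{\avg}^K)\le\nicefrac{9R^2}{(2\gamma(K+1))}$ and solving for $K$ gives \eqref{eq:clipped_SEG_monotone_case_2_appendix} and \eqref{eq:clipped_SEG_monotone_case_complexity_appendix}.
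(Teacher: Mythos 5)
Your proposal is correct and follows essentially the same route as the paper's proof: the same induction over probability events keeping $x^t\in B_{3R}(x^*)$ and $\tx^t\in B_{4R}(x^*)$, the same unbiased/bias split of $\theta_t,\omega_t$ via Lemma~\ref{lem:bias_and_variance_clip}, Bernstein for the martingale sums and deterministic bounds for the bias terms, and in particular the same resolution of the uniformity-in-$u$ issue — splitting off $\langle x^*-u,\theta_t\rangle$, bounding it by $R\bigl\|\sum_t\theta_t\bigr\|$, and controlling that norm through the recursion for $\bigl\|\gamma\sum_{s\le t}\theta_s\bigr\|^2$ with a truncated $\zeta_t$ (this is exactly the paper's terms $\circledSix$ and $\circledSeven$ and the second induction inequality \eqref{eq:induction_inequality_2_SEG}). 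No gaps; only routine constant-tracking remains.
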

\begin{proof}
    The proof follows similar steps as the proof of Theorem~C.1 from \citep{gorbunov2022clipped}. The key difference is related to the application of Bernstein inequality and estimating biases and variances of stochastic terms.
    
    Let $R_k = \|x^k - x^*\|$ for all $k\geq 0$. As in the previous results, the proof is based on the induction argument and showing that the iterates do not leave some ball around the solution with high probability. More precisely, for each $k = 0,1,\ldots,K+1$ we consider probability event $E_k$ as follows: inequalities
    \begin{gather}
        \underbrace{\max\limits_{u \in B_{R}(x^*)}\left\{ \|x^0 - u\|^2 + 2\gamma \sum\limits_{l = 0}^{t-1} \langle x^l - u - \gamma F(\tx^l), \theta_l \rangle + \gamma^2 \sum\limits_{l=0}^{t-1}\left(\|\theta_l\|^2 + 2\|\omega_l\|^2\right)\right\}}_{A_t} \leq 9R^2, \label{eq:induction_inequality_1_SEG}\\
        \left\|\gamma\sum\limits_{l=0}^{t-1}\theta_l\right\| \leq R \label{eq:induction_inequality_2_SEG}
    \end{gather}
    hold for $t = 0,1,\ldots,k$ simultaneously. We want to prove $\PP\{E_k\} \geq  1 - \nicefrac{k\beta}{(K+1)}$ for all $k = 0,1,\ldots,K+1$ by induction. The base of the induction is trivial: for $k=0$ we have $\|x^0 - u\|^2 \leq 2\|x^0 - x^*\|^2 + 2\|x^* - u\|^2 \leq 4R^2 \leq 9R^2$ and $\|\gamma\sum_{l=0}^{k-1}\theta_l\| = 0$ for any $u \in B_R(x^*)$. Next, assume that for $k = T-1 \leq K$ the statement holds: $\PP\{E_{T-1}\} \geq  1 - \nicefrac{(T-1)\beta}{(K+1)}$. Given this, we need to prove $\PP\{E_{T}\} \geq  1 - \nicefrac{T\beta}{(K+1)}$. We start with showing that $E_{T-1}$ implies $R_t \leq 3R$ for all $t = 0,1,\ldots,T$ (also by induction). For $t = 0$ this is already shown. Now, assume that $R_t \leq 3R$ for all $t = 0,1,\ldots,t'$ for some $t' < T$. Then for $t = 0,1,\ldots, t'$
    \begin{eqnarray}
        \|\tx^t - x^*\| &=& \|x^t - x^* - \gamma \tF_{\xi_1^t}(x^t)\| \leq \|x^t - x^*\| + \gamma \|\tF_{\xi_1^t}(x^t)\|\notag\\
        &\leq& \|x^t - x^*\| + \gamma\lambda \overset{\eqref{eq:lambda_SEG}}{\leq} 3R + \frac{R}{20\ln\frac{6(K+1)}{\beta}}  \leq 4R. \label{eq:gap_thm_SEG_technical_1}
    \end{eqnarray}
    Therefore, the conditions of Lemma~\ref{lem:optimization_lemma_gap_SEG} are satisfied and we have that $E_{T-1}$ implies
    \begin{eqnarray*}
        \max\limits_{u \in B_{R}(x^*)}\left\{2\gamma(t' + 1)\langle F(u), \tx^{t'}_{\avg} - u\rangle + \|x^{t'+1} - u\|^2\right\} &\\
        &\hspace{-3cm}\leq \max\limits_{u \in B_{R}(x^*)}\!\left\{\! \|x^0 - u\|^2 + 2\gamma \sum\limits_{l = 0}^{t'} \langle x^l - u - \gamma F(\tx^l), \theta_l \rangle\!\right\}\\
        & + \gamma^2 \sum\limits_{l=0}^{t'}\left(\|\theta_l\|^2 + 2\|\omega_l\|^2\right)\\
        &\hspace{-10.2cm} \overset{\eqref{eq:induction_inequality_1_SEG}}{\leq} 9R^2,
    \end{eqnarray*}
    meaning that
    \begin{eqnarray*}
        \|x^{t' + 1} - x^*\|^2 \leq \max\limits_{u \in B_{R}(x^*)}\left\{2\gamma(t' + 1)\langle F(u), \tx^{t'}_{\avg} - u\rangle + \|x^{t'+1} - u\|^2\right\} \leq 9R^2,
    \end{eqnarray*}
    i.e., $R_{t'+1} \leq 3R$. In other words, we derived that probability event $E_{T-1}$ implies $R_{t} \leq 3R$ and
    \begin{equation}
        \max\limits_{u \in B_{R}(x^*)}\left\{2\gamma(t + 1)\langle F(u), \tx^{t}_{\avg} - u\rangle + \|x^{t+1} - u\|^2\right\} \leq 9R^2 \label{eq:gap_thm_SEG_technical_1_5}
    \end{equation}
    for all $t = 0, 1, \ldots, T$. In addition, due to \eqref{eq:gap_thm_SEG_technical_1} $E_{T-1}$ also implies that $\|\tx^t - x^*\| \leq 4R$ for all $t = 0, 1, \ldots, T$. Thus, $E_{T-1}$ implies
    \begin{eqnarray}
        \|x^t - x^* - \gamma F(\tx^t)\| &\leq& \|x^t - x^*\| + \gamma\|F(\tx^t)\| \overset{\eqref{eq:L_Lip}}{\leq} 3R + \gamma L\|\tx^t - x^*\|\notag\\
        &\overset{\eqref{eq:gap_thm_SEG_technical_1}}{\leq}& 3R + 4R\gamma L \overset{\eqref{eq:gamma_SEG}}{\leq} 5R, \label{eq:gap_thm_SEG_technical_2}
    \end{eqnarray}
    for all $t = 0, 1, \ldots, T$. Next, we introduce random vectors
    \begin{equation*}
        \eta_t = \begin{cases}x^t - x^* - \gamma F(\tx^t),& \text{if } \|x^t - x^* - \gamma F(\tx^t)\| \leq 5R,\\ 0,& \text{otherwise,} \end{cases}
    \end{equation*}
    for all $t = 0, 1, \ldots, T$. These vectors are bounded almost surely:
     \begin{equation}
        \|\eta_t\| \leq 5R  \label{eq:gap_thm_SEG_technical_3}
    \end{equation}
    for all $t = 0, 1, \ldots, T$. Moreover, due to \eqref{eq:gap_thm_SEG_technical_2}, probability event $E_{T-1}$ implies $\eta_t = x^t - x^* - \gamma F(\tx^t)$ for all $t = 0, 1, \ldots, T$ and
    \begin{eqnarray}
        A_T &=& \max\limits_{u \in B_{R}(x^*)}\left\{ \|x^0 - u\|^2 + 2\gamma \sum\limits_{l = 0}^{T-1} \langle x^* - u, \theta_l \rangle \right\}  + 2\gamma \sum\limits_{l = 0}^{T-1} \langle x^l - x^* - \gamma F(\tx^l), \theta_l \rangle + \gamma^2 \sum\limits_{l=0}^{T-1}\left(\|\theta_l\|^2 + 2\|\omega_l\|^2\right)\notag\\
        &\leq& 4R^2 + 2\gamma\max\limits_{u \in B_{R}(x^*)}\left\{\left\langle x^* - u, \sum\limits_{l = 0}^{T-1}\theta_l \right\rangle \right\} + 2\gamma \sum\limits_{l = 0}^{T-1} \langle \eta_l, \theta_l \rangle + \gamma^2 \sum\limits_{l=0}^{T-1}\left(\|\theta_l\|^2 + 2\|\omega_l\|^2\right)\notag\\
        &=& 4R^2 + 2\gamma R \left\|\sum\limits_{l = 0}^{T-1}\theta_l\right\| + 2\gamma \sum\limits_{l = 0}^{T-1} \langle \eta_l, \theta_l \rangle + \gamma^2 \sum\limits_{l=0}^{T-1}\left(\|\theta_l\|^2 + 2\|\omega_l\|^2\right), \notag
    \end{eqnarray}
    where $A_T$ is defined in \eqref{eq:induction_inequality_1_SEG}.
    
    To handle the sums appeared in the right-hand side of the previous inequality we consider unbiased and biased parts of $\theta_l, \omega_l$:
     \begin{gather}
        \theta_l^u \eqdef \EE_{\xi_2^l}\left[\tF_{\xi_2^l}(\tx^l)\right] - \tF_{\xi_2^l}(\tx^l),\quad \theta_l^b \eqdef F(\tx^l) - \EE_{\xi_2^l}\left[\tF_{\xi_2^l}(\tx^l)\right], \label{eq:gap_thm_SEG_technical_4}\\
        \omega_l^u \eqdef \EE_{\xi_1^l}\left[\tF_{\xi_1^l}(x^l)\right] - \tF_{\xi_1^l}(x^l),\quad \omega_l^b \eqdef F(x^l) - \EE_{\xi_1^l}\left[\tF_{\xi_1^l}(x^l)\right], \label{eq:gap_thm_SEG_technical_5}
    \end{gather}
    for all $l = 0,\ldots, T-1$. By definition we have $\theta_l = \theta_l^u + \theta_l^b$, $\omega_l = \omega_l^u + \omega_l^b$ for all $l = 0,\ldots, T-1$. Therefore, $E_{T-1}$ implies
    \begin{eqnarray}
        A_T &\leq& 4R^2 + 2\gamma R \left\|\sum\limits_{l = 0}^{T-1}\theta_l\right\| + \underbrace{2\gamma \sum\limits_{l = 0}^{T-1} \langle \eta_l, \theta_l^u \rangle}_{\circledOne} + \underbrace{2\gamma \sum\limits_{l = 0}^{T-1} \langle \eta_l, \theta_l^b \rangle}_{\circledTwo}\notag\\
        &&\quad + \underbrace{2\gamma^2 \sum\limits_{l=0}^{T-1}\left(\EE_{\xi_2^l}\left[\|\theta_l^u\|^2\right] + 2\EE_{\xi_1^l}\left[\|\omega_l^u\|^2\right]\right)}_{\circledThree} \notag\\
        &&\quad + \underbrace{2\gamma^2 \sum\limits_{l=0}^{T-1}\left(\|\theta_l^u\|^2 + 2\|\omega_l^u\|^2 - \EE_{\xi_2^l}\left[\|\theta_l^u\|^2\right] - 2\EE_{\xi_1^l}\left[\|\omega_l^u\|^2\right]\right)}_{\circledFour}\notag\\
        &&\quad + \underbrace{2\gamma^2 \sum\limits_{l=0}^{T-1}\left(\|\theta_l^b\|^2 + 2\|\omega_l^b\|^2\right)}_{\circledFive},\label{eq:gap_thm_SEG_technical_6}
    \end{eqnarray}
    where we also apply inequality $\|a+b\|^2 \leq 2\|a\|^2 + 2\|b\|^2$ holding for all $a,b \in \R^d$ to upper bound $\|\theta_l\|^2$ and $\|\omega_l\|^2$. It remains to derive good enough high-probability upper-bounds for the terms $2\gamma R \left\|\sum_{l = 0}^{T-1}\theta_l\right\|, \circledOne, \circledTwo, \circledThree, \circledFour, \circledFive$, i.e., to finish our inductive proof we need to show that $2\gamma R \left\|\sum_{l = 0}^{T-1}\theta_l\right\| + \circledOne + \circledTwo + \circledThree + \circledFour + \circledFive \leq 5R^2$ with high probability. In the subsequent parts of the proof, we will need use many times the bounds for the norm and second moments of $\theta_{t+1}^u$ and $\theta_{t+1}^b$. First, by definition of clipping operator we have with probability $1$ that 
    \begin{equation}
        \|\theta_l^u\| \leq 2\lambda,\quad \|\omega_l^u\| \leq 2\lambda. \label{eq:theta_omega_magnitude}
    \end{equation}
    Moreover, since $E_{T-1}$ implies that
    \begin{gather*}
        \|F(x^l)\| \overset{\eqref{eq:L_Lip}}{\leq} L\|x^l - x^*\| \leq 3LR \overset{\eqref{eq:gamma_SEG}}{\leq} \frac{R}{40\gamma \ln\tfrac{6(K+1)}{\beta}} \overset{\eqref{eq:lambda_SEG}}{=} \frac{\lambda}{2},\\
        \|F(\tx^l)\| \overset{\eqref{eq:L_Lip}}{\leq} L\|\tx^l - x^*\| \overset{\eqref{eq:gap_thm_SEG_technical_1}}{\leq} 4LR \overset{\eqref{eq:gamma_SEG}}{\leq} \frac{R}{40\gamma \ln\tfrac{6(K+1)}{\beta}} \overset{\eqref{eq:lambda_SEG}}{=} \frac{\lambda}{2}
    \end{gather*}
    for $t = 0,1,\ldots,T-1$. Then, in view of Lemma~\ref{lem:bias_and_variance_clip}, we have that $E_{T-1}$ implies
    \begin{gather}
        \left\|\theta_l^b\right\| \leq \frac{2^\alpha\sigma^\alpha}{\lambda^{\alpha-1}},\quad \left\|\omega_l^b\right\| \leq \frac{2^\alpha\sigma^\alpha}{\lambda^{\alpha-1}}, \label{eq:bias_theta_omega}\\
        \EE_{\xi_2^l}\left[\left\|\theta_l\right\|^2\right] \leq 18 \lambda^{2-\alpha}\sigma^\alpha,\quad \EE_{\xi_1^l}\left[\left\|\omega_l\right\|^2\right] \leq 18 \lambda^{2-\alpha}\sigma^\alpha, \label{eq:distortion_theta_omega}\\
        \EE_{\xi_2^l}\left[\left\|\theta_l^u\right\|^2\right] \leq 18 \lambda^{2-\alpha}\sigma^\alpha,\quad \EE_{\xi_1^l}\left[\left\|\omega_l^u\right\|^2\right] \leq 18 \lambda^{2-\alpha}\sigma^\alpha, \label{eq:variance_theta_omega}
    \end{gather}
    for all $l = 0,1, \ldots, T-1$.

    \paragraph{Upper bound for $\circledOne$.} By definition of $\theta_{l}^u$, we have $\EE_{\xi_2^l}[\theta_{l}^u] = 0$ and
    \begin{equation*}
        \EE_{\xi_2^l}\left[2\gamma \langle \eta_l, \theta_l^u \rangle\right] = 0.
    \end{equation*}
    Next, sum $\circledOne$ has bounded with probability $1$ terms:
    \begin{equation}
        |2\gamma\langle \eta_l, \theta_l^u \rangle| \leq 2\gamma \|\eta_l\|\cdot \|\theta_l^u\| \overset{\eqref{eq:gap_thm_SEG_technical_3},\eqref{eq:theta_omega_magnitude}}{\leq} 20\gamma R\lambda \overset{\eqref{eq:lambda_SEG}}{=} \frac{R^2}{\ln\tfrac{6(K+1)}{\beta}} \eqdef c. \label{eq:gap_thm_SEG_technical_6_5}
    \end{equation}
    The summands also have bounded conditional variances $\sigma_l^2 \eqdef \EE_{\xi_2^l}\left[4\gamma^2 \langle \eta_l, \theta_l^u \rangle^2\right]$:
    \begin{equation}
        \sigma_l^2 \leq \EE_{\xi_2^l}\left[4\gamma^2 \|\eta_l\|^2\cdot \|\theta_l^u\|^2\right] \overset{\eqref{eq:gap_thm_SEG_technical_3}}{\leq} 100\gamma^2 R^2 \EE_{\xi_2^l}\left[\|\theta_l^u\|^2\right]. \label{eq:gap_thm_SEG_technical_7}
    \end{equation}
    In other words, we showed that $\{2\gamma \langle \eta_l, \theta_l^u \rangle\}_{l = 0}^{T-1}$ is a bounded martingale difference sequence with bounded conditional variances $\{\sigma_l^2\}_{l = 0}^{T-1}$. Next, we apply Bernstein's inequality (Lemma~\ref{lem:Bernstein_ineq}) with $X_l = 2\gamma \langle \eta_l, \theta_l^u \rangle$, parameter $c$ as in \eqref{eq:gap_thm_SEG_technical_6_5}, $b = R^2$, $G = \tfrac{R^4}{6\ln\tfrac{6(K+1)}{\beta}}$:
    \begin{equation*}
        \PP\left\{|\circledOne| > R^2 \text{ and } \sum\limits_{l=0}^{T-1}\sigma_l^2 \leq \frac{R^4}{6\ln\tfrac{6(K+1)}{\beta}}\right\} \leq 2\exp\left(- \frac{b^2}{2G + \nicefrac{2cb}{3}}\right) = \frac{\beta}{3(K+1)}.
    \end{equation*}
    Equivalently, we have
    \begin{equation}
        \PP\{E_{\circledOne}\} \geq 1 - \frac{\beta}{3(K+1)},\quad \text{for}\quad E_{\circledOne} = \left\{\text{either} \quad \sum\limits_{l=0}^{T-1}\sigma_l^2 > \frac{R^4}{6\ln\tfrac{6(K+1)}{\beta}}\quad \text{or}\quad |\circledOne| \leq R^2\right\}. \label{eq:bound_1_gap_SEG}
    \end{equation}
    In addition, $E_{T-1}$ implies that
    \begin{eqnarray}
        \sum\limits_{l=0}^{T-1}\sigma_l^2 &\overset{\eqref{eq:gap_thm_SEG_technical_7}}{\leq}& 100\gamma^2 R^2 \sum\limits_{l=0}^{T-1} \EE_{\xi_2^l}\left[\|\theta_l^u\|^2\right] \overset{\eqref{eq:variance_theta_omega}, T \leq K+1}{\leq} 1800(K+1)\gamma^2 R^2 \lambda^{2-\alpha} \sigma^\alpha \notag \\
        &\overset{\eqref{eq:lambda_SEG}}{\leq}& \frac{1800 (K+1)\gamma^{\alpha}\sigma^\alpha R^{4-\alpha}}{20^{2-\alpha} \ln^{2-\alpha}\frac{6(K+1)}{\beta}} \overset{\eqref{eq:gamma_SEG}}{\leq} \frac{R^4}{6\ln\tfrac{6(K+1)}{\beta}}. \label{eq:bound_1_variances_gap_SEG}
    \end{eqnarray}

    \textbf{Upper bound for $\circledTwo$.} From $E_{T-1}$ it follows that
    \begin{eqnarray}
        \circledTwo &\leq& 2\gamma \sum\limits_{l=0}^{T-1}\|\eta_l\| \cdot \|\theta_l^b\| \overset{\eqref{eq:gap_thm_SEG_technical_3}, \eqref{eq:bias_theta_omega}, T \leq K+1}{\leq} \frac{10\cdot 2^\alpha(K+1)\gamma R\sigma^\alpha}{\lambda^{\alpha-1}} \notag \\
        &\overset{\eqref{eq:lambda_SEG}}{=}& \frac{10\cdot 2^\alpha\cdot 20^{\alpha-1} (K+1)\gamma^\alpha \sigma^\alpha \ln^{\alpha-1}\tfrac{6(K+1)}{\beta}}{R^{\alpha-2}} \overset{\eqref{eq:gamma_SEG}}{\leq} R^2. \label{eq:bound_2_variances_gap_SEG}
    \end{eqnarray}

    \textbf{Upper bound for $\circledThree$.} From $E_{T-1}$ it follows that
    \begin{eqnarray}
        2\gamma^2 \sum\limits_{l=0}^{T-1}\EE_{\xi_2^l}[\|\theta_l^u\|^2] &\overset{\eqref{eq:distortion_theta_omega}, T \leq K+1}{\leq}& 36\gamma^2 (K+1)\lambda^{2-\alpha}\sigma^\alpha \overset{\eqref{eq:lambda_SEG}}{=} \frac{36\gamma^\alpha (K+1)\sigma^\alpha}{20^{2-\alpha} \ln^{2-\alpha} \tfrac{6(K+1)}{\beta}}  \overset{\eqref{eq:gamma_SEG}}{\leq} \frac{1}{12} R^2, \label{eq:sum_theta_squared_bound_gap_SEG}\\
        4\gamma^2 \sum\limits_{l=0}^{T-1}\EE_{\xi_1^l}[\|\omega_l^u\|^2] &\overset{\eqref{eq:distortion_theta_omega}, T \leq K+1}{\leq}& 72\gamma^2 (K+1)\lambda^{2-\alpha}\sigma^\alpha \overset{\eqref{eq:lambda_SEG}}{=} \frac{72\gamma^\alpha (K+1)\sigma^\alpha}{20^{2-\alpha} \ln^{2-\alpha} \tfrac{6(K+1)}{\beta}}  \overset{\eqref{eq:gamma_SEG}}{\leq} \frac{1}{12} R^2, \label{eq:sum_omega_squared_bound_gap_SEG}\\
        \circledThree &\overset{\eqref{eq:sum_theta_squared_bound_gap_SEG}, \eqref{eq:sum_omega_squared_bound_gap_SEG}}{\leq}& \frac{1}{6}R^2. \label{eq:bound_3_variances_gap_SEG}
    \end{eqnarray}

    \paragraph{Upper bound for $\circledFour$.} By the construction we have
    \begin{equation*}
        2\gamma^2\EE_{\xi_1^l,\xi_2^l}\left[\|\theta_l^u\|^2 + 2\|\omega_l^u\|^2 - \EE_{\xi_2^l}\left[\|\theta_l^u\|^2\right] - 2\EE_{\xi_1^l}\left[\|\omega_l^u\|^2\right]\right] = 0.
    \end{equation*}
    Next, sum $\circledOne$ has bounded with probability $1$ terms:
    \begin{eqnarray}
        2\gamma^2\left|\|\theta_l^u\|^2 + 2\|\omega_l^u\|^2 - \EE_{\xi_2^l}\left[\|\theta_l^u\|^2\right] - 2\EE_{\xi_1^l}\left[\|\omega_l^u\|^2\right] \right| &\leq& 2\gamma^2 \|\theta_l^u\|^2 + 2\gamma^2\EE_{\xi_2^l}\left[\|\theta_l^u\|^2\right]\notag\\
        &&\quad + 4\gamma^2 \|\omega_l^u\|^2 + 4\gamma^2\EE_{\xi_1^l}\left[\|\omega_l^u\|^2\right] \notag\\
        &\overset{\eqref{eq:theta_omega_magnitude}}{\leq}& 48\gamma^2 \lambda^2 \notag\\
        &\overset{\eqref{eq:lambda_SEG}}{\leq}& \frac{R^2}{6\ln\tfrac{6(K+1)}{\beta}} \eqdef c.\label{eq:gap_thm_SEG_technical_6_5_1}
    \end{eqnarray}
    The summands also have bounded conditional variances\newline $\widetilde\sigma_l^2 \eqdef 4\gamma^4\EE_{\xi_1^l, \xi_2^l}\left[\left|\|\theta_l^u\|^2 + 2\|\omega_l^u\|^2 - \EE_{\xi_2^l}\left[\|\theta_l^u\|^2\right] - 2\EE_{\xi_1^l}\left[\|\omega_l^u\|^2\right] \right|^2\right]$:
    \begin{eqnarray}
        \widetilde\sigma_l^2 &\overset{\eqref{eq:gap_thm_SEG_technical_6_5_1}}{\leq}& \frac{\gamma^2R^2}{3\ln\tfrac{6(K+1)}{\beta}}\EE_{\xi_1^l,\xi_2^l}\left[\left|\|\theta_l^u\|^2 + 2\|\omega_l^u\|^2 - \EE_{\xi_2^l}\left[\|\theta_l^u\|^2\right] - 2\EE_{\xi_1^l}\left[\|\omega_l^u\|^2\right] \right|\right]\notag\\
        &\leq& \frac{2\gamma^2R^2}{3\ln\tfrac{6(K+1)}{\beta}} \EE_{\xi_1^l,\xi_2^l}\left[\|\theta_l^u\|^2 + 2\|\omega_l^u\|^2 \right]. \label{eq:gap_thm_SEG_technical_7_1}
    \end{eqnarray}
    In other words, we showed that $\left\{2\gamma^2\left(\|\theta_l^u\|^2 + 2\|\omega_l^u\|^2 - \EE_{\xi_2^l}\left[\|\theta_l^u\|^2\right] - 2\EE_{\xi_1^l}\left[\|\omega_l^u\|^2\right]\right)\right\}_{l = 0}^{T-1}$ is a bounded martingale difference sequence with bounded conditional variances $\{\sigma_l^2\}_{l = 0}^{T-1}$. Next, we apply Bernstein's inequality (Lemma~\ref{lem:Bernstein_ineq}) with $X_l = 2\gamma^2\left(\|\theta_l^u\|^2 + 2\|\omega_l^u\|^2 - \EE_{\xi_2^l}\left[\|\theta_l^u\|^2\right] - 2\EE_{\xi_1^l}\left[\|\omega_l^u\|^2\right]\right)$, parameter $c$ as in \eqref{eq:gap_thm_SEG_technical_6_5_1}, $b = \tfrac{1}{6}R^2$, $G = \tfrac{R^4}{216\ln\tfrac{6(K+1)}{\beta}}$:
    \begin{equation*}
        \PP\left\{|\circledFour| > \frac{1}{6}R^2 \text{ and } \sum\limits_{l=0}^{T-1}\widetilde\sigma_l^2 \leq \frac{R^4}{216\ln\tfrac{6(K+1)}{\beta}}\right\} \leq 2\exp\left(- \frac{b^2}{2G + \nicefrac{2cb}{3}}\right) = \frac{\beta}{3(K+1)}.
    \end{equation*}
    Equivalently, we have
    \begin{equation}
        \PP\{E_{\circledFour}\} \geq 1 - \frac{\beta}{3(K+1)},\quad \text{for}\quad E_{\circledFour} = \left\{\text{either} \quad \sum\limits_{l=0}^{T-1}\widetilde\sigma_l^2 > \frac{R^4}{216\ln\tfrac{6(K+1)}{\beta}}\quad \text{or}\quad |\circledFour| \leq \frac{1}{6}R^2\right\}. \label{eq:bound_1_gap_SEG_1}
    \end{equation}
    In addition, $E_{T-1}$ implies that
    \begin{eqnarray}
        \sum\limits_{l=0}^{T-1}\widetilde\sigma_l^2 &\overset{\eqref{eq:gap_thm_SEG_technical_7_1}}{\leq}& \frac{2\gamma^2R^2}{3\ln\tfrac{6(K+1)}{\beta}} \sum\limits_{l=0}^{T-1} \EE_{\xi_1^l,\xi_2^l}\left[\|\theta_l^u\|^2 + 2\|\omega_l^u\|^2 \right]\notag\\
        &\overset{\eqref{eq:variance_theta_omega}, T \leq K+1}{\leq}& \frac{36(K+1)\gamma^2 R^2 \lambda^{2-\alpha} \sigma^\alpha}{\ln\tfrac{6(K+1)}{\beta}} \notag\\
        &\overset{\eqref{eq:lambda_SEG}}{\leq}& \frac{36(K+1)\gamma^\alpha R^{4-\alpha} \sigma^\alpha}{20^{2-\alpha}\ln^{3-\alpha}\tfrac{6(K+1)}{\beta}}  \overset{\eqref{eq:gamma_SEG}}{\leq} \frac{R^4}{216\ln\tfrac{6(K+1)}{\beta}}. \label{eq:bound_1_variances_gap_SEG_1}
    \end{eqnarray}

    \textbf{Upper bound for $\circledFive$.} From $E_{T-1}$ it follows that
    \begin{eqnarray}
        \circledFive &=& 2\gamma^2 \sum\limits_{l=0}^{T-1}\left(\|\theta_l^b\|^2 + 2\|\omega_l^b\|^2\right) \overset{\eqref{eq:bias_theta_omega}, T\leq K+1}{\leq} \frac{6\cdot 2^{2\alpha}\gamma^2\sigma^{2\alpha} (K+1)}{\lambda^{2\alpha-2}}  \notag\\
        &\overset{\eqref{eq:lambda_SEG}}{=}&  \frac{6\cdot 2^{2\alpha}\cdot 20^{2\alpha - 2}\gamma^{2\alpha}\sigma^{2\alpha} (K+1) \ln^{2\alpha-2}\tfrac{6(K+1)}{\beta}}{R^{2\alpha-2}} \overset{\eqref{eq:gamma_SEG}}{\leq} \frac{1}{6}R^2. \label{eq:nsjcbdjhcbfjdhfbj}
    \end{eqnarray}

    \paragraph{Upper bound for $2\gamma R \left\|\sum_{l=0}^{T-1} \theta_l\right\|$.} To upper-bound this sum, we introduce new random vectors:
    \begin{equation*}
        \zeta_l = \begin{cases} \gamma \sum\limits_{r=0}^{l-1}\theta_r,& \text{if } \left\|\gamma \sum\limits_{r=0}^{l-1}\theta_r\right\| \leq R,\\ 0, & \text{otherwise} \end{cases}
    \end{equation*}
    for $l = 1, 2, \ldots, T-1$. These vectors are bounded with probability $1$:
    \begin{equation}
        \|\zeta_l\| \leq R.  \label{eq:gap_thm_SEG_technical_8}
    \end{equation}
    Therefore, taking into account \eqref{eq:induction_inequality_2_SEG}, we derive that $E_{T-1}$ implies
    \begin{eqnarray}
        2\gamma R\left\|\sum\limits_{l = 0}^{T-1}\theta_l\right\| &=& 2R\sqrt{\gamma^2\left\|\sum\limits_{l = 0}^{T-1}\theta_l\right\|^2}\notag\\
        &=& 2R\sqrt{\gamma^2\sum\limits_{l=0}^{T-1}\|\theta_l\|^2 + 2\gamma\sum\limits_{l=0}^{T-1}\left\langle \gamma\sum\limits_{r=0}^{l-1} \theta_r, \theta_l \right\rangle} \notag\\
        &=& 2R \sqrt{\gamma^2\sum\limits_{l=0}^{T-1}\|\theta_l\|^2 + 2\gamma \sum\limits_{l=0}^{T-1} \langle \zeta_l, \theta_l\rangle} \notag\\
        &\overset{\eqref{eq:gap_thm_SEG_technical_4}}{\leq}& 2R \sqrt{\circledThree + \circledFour + \circledFive + \underbrace{2\gamma \sum\limits_{l=0}^{T-1} \langle \zeta_l, \theta_l^u\rangle}_{\circledSix} + \underbrace{2\gamma \sum\limits_{l=0}^{T-1} \langle \zeta_l, \theta_l^b}_{\circledSeven}\rangle}. \label{eq:norm_sum_theta_bound_gap_SEG}
    \end{eqnarray}
    Similarly to the previous parts of the proof, we bound $\circledSix$ and $\circledSeven$.

    \paragraph{Upper bound for $\circledSix$.} By definition of $\theta_{l}^u$, we have $\EE_{\xi_2^l}[\theta_{l}^u] = 0$ and
    \begin{equation*}
        \EE_{\xi_2^l}\left[2\gamma \langle \zeta_l, \theta_l^u \rangle\right] = 0.
    \end{equation*}
    Next, sum $\circledSix$ has bounded with probability $1$ terms:
    \begin{equation}
        |2\gamma\langle \zeta_l, \theta_l^u \rangle| \leq 2\gamma \|\eta_l\|\cdot \|\theta_l^u\| \overset{\eqref{eq:gap_thm_SEG_technical_8},\eqref{eq:theta_omega_magnitude}}{\leq} 4\gamma R\lambda \overset{\eqref{eq:lambda_SEG}}{\leq} \frac{R^2}{4\ln\tfrac{6(K+1)}{\beta}} \eqdef c. \label{eq:gap_thm_SEG_technical_8_5}
    \end{equation}
    The summands also have bounded conditional variances $\hat\sigma_l^2 \eqdef \EE_{\xi_2^l}\left[4\gamma^2 \langle \zeta_l, \theta_l^u \rangle^2\right]$:
    \begin{equation}
        \hat\sigma_l^2 \leq \EE_{\xi_2^l}\left[4\gamma^2 \|\zeta_l\|^2\cdot \|\theta_l^u\|^2\right] \overset{\eqref{eq:gap_thm_SEG_technical_8}}{\leq} 4\gamma^2 R^2 \EE_{\xi_2^l}\left[\|\theta_l^u\|^2\right]. \label{eq:gap_thm_SEG_technical_9}
    \end{equation}
    In other words, we showed that $\{2\gamma \langle \zeta_l, \theta_l^u \rangle\}_{l = 0}^{T-1}$ is a bounded martingale difference sequence with bounded conditional variances $\{\hat \sigma_l^2\}_{l = 0}^{T-1}$. Next, we apply Bernstein's inequality (Lemma~\ref{lem:Bernstein_ineq}) with $X_l = 2\gamma \langle \zeta_l, \theta_l^u \rangle$, parameter $c$ as in \eqref{eq:gap_thm_SEG_technical_8_5}, $b = \tfrac{R^2}{4}$, $G = \tfrac{R^4}{96\ln\tfrac{6(K+1)}{\beta}}$:
    \begin{equation*}
        \PP\left\{|\circledFive| > \frac{1}{4}R^2 \text{ and } \sum\limits_{l=0}^{T-1}\hat\sigma_l^2 \leq \frac{R^4}{96\ln\tfrac{4(K+1)}{\beta}}\right\} \leq 2\exp\left(- \frac{b^2}{2G + \nicefrac{2cb}{3}}\right) = \frac{\beta}{3(K+1)}.
    \end{equation*}
    Equivalently, we have
    \begin{equation}
        E_{\circledSix} = \left\{ \text{either} \quad \sum\limits_{l=0}^{T-1}\hat\sigma_l^2 > \frac{R^4}{96\ln\tfrac{6(K+1)}{\beta}}\quad \text{or}\quad |\circledFive| \leq \frac{1}{4}R^2\right\}. \label{eq:bound_4_gap_SEG}
    \end{equation}
    In addition, $E_{T-1}$ implies that
    \begin{eqnarray}
        \sum\limits_{l=0}^{T-1}\hat\sigma_l^2 &\overset{\eqref{eq:gap_thm_SEG_technical_9}}{\leq}& 4\gamma^2 R^2 \sum\limits_{l=0}^{T-1} \EE_{\xi_2^l}\left[\|\theta_l^u\|^2\right] \overset{\eqref{eq:variance_theta_omega}, T \leq K+1}{\leq} 72(K+1)\gamma^2 R^2 \lambda^{2-\alpha} \sigma^\alpha\notag\\
        &\overset{\eqref{eq:lambda_SEG}}{=}& \frac{72(K+1)\gamma^\alpha R^{4-\alpha} \sigma^\alpha}{20^{2-\alpha} \ln^{2-\alpha}\tfrac{6(K+1)}{\beta}} \overset{\eqref{eq:gamma_SEG}}{\leq} \frac{R^4}{96\ln\tfrac{6(K+1)}{\beta}}. \label{eq:bound_4_variances_gap_SEG}
    \end{eqnarray}

    \textbf{Upper bound for $\circledSeven$.} From $E_{T-1}$ it follows that
    \begin{eqnarray}
        \circledSeven &\leq& 2\gamma \sum\limits_{l=0}^{T-1}\|\zeta_l\| \cdot \|\theta_l^b\| \overset{\eqref{eq:gap_thm_SEG_technical_8}, \eqref{eq:bias_theta_omega}, T \leq K+1}{\leq} \frac{2^{\alpha+1}(K+1)\gamma R\sigma^\alpha}{\lambda^{\alpha-1}} \notag \\
        &\overset{\eqref{eq:lambda_SEG}}{=}& \frac{2^{\alpha+1}\cdot 20^{\alpha-1}(K+1)\gamma^\alpha \sigma^\alpha \ln^{\alpha-1}\tfrac{6(K+1)}{\beta}}{R^{\alpha-2}} \overset{\eqref{eq:gamma_SEG}}{\leq} \frac{1}{4}R^2. \label{eq:bound_5_variances_gap_SEG}
    \end{eqnarray}

    Now, we have the upper bounds for  $2\gamma R \left\|\sum_{l = 0}^{T-1}\theta_l\right\|, \circledOne, \circledTwo, \circledThree, \circledFour, \circledFive$. In particular, probability event $E_{T-1}$ implies
    \begin{gather*}
        A_T \overset{\eqref{eq:gap_thm_SEG_technical_6}}{\leq} 4R^2 + 2\gamma R\left\|\sum\limits_{l=0}^{T-1} \theta_l\right\| + \circledOne + \circledTwo + \circledThree + \circledFour + \circledFive,\\
        2\gamma R\left\|\sum\limits_{l=0}^{T-1} \theta_l\right\| \overset{\eqref{eq:norm_sum_theta_bound_gap_SEG}}{\leq} 2R \sqrt{\circledThree + \circledFour + \circledFive + \circledSix + \circledSeven},\\
        \circledTwo \overset{\eqref{eq:bound_2_variances_gap_SEG}}{\leq} R^2,\quad \circledThree \overset{\eqref{eq:bound_3_variances_gap_SEG}}{\leq} \frac{1}{6}R^2,\quad \circledFive \overset{\eqref{eq:nsjcbdjhcbfjdhfbj}}{\leq} \frac{1}{6}R^2,\quad \circledSeven \overset{\eqref{eq:bound_5_variances_gap_SEG}}{\leq} \frac{1}{4}R^2,\\
        \sum\limits_{l=0}^{T-1}\sigma_l^2 \overset{\eqref{eq:bound_1_variances_gap_SEG}}{\leq}  \frac{R^4}{6\ln\tfrac{6(K+1)}{\beta}},\quad \sum\limits_{l=0}^{T-1}\widetilde\sigma_l^2 \overset{\eqref{eq:bound_1_variances_gap_SEG_1}}{\leq} \frac{R^4}{216\ln\tfrac{6(K+1)}{\beta}},\quad \sum\limits_{l=0}^{T-1}\hat\sigma_l^2 \overset{\eqref{eq:bound_4_variances_gap_SEG}}{\leq}  \frac{R^4}{96\ln\tfrac{6(K+1)}{\beta}}.
    \end{gather*}
    Moreover, we also have (see \eqref{eq:bound_1_gap_SEG}, \eqref{eq:bound_1_gap_SEG_1}, \eqref{eq:bound_4_gap_SEG} and our induction assumption)
    \begin{gather*}
        \PP\{E_{T-1}\} \geq 1 - \frac{(T-1)\beta}{K+1},\\
        \PP\{E_{\circledOne}\} \geq 1 - \frac{\beta}{3(K+1)}, \quad \PP\{E_{\circledFour}\} \geq 1 - \frac{\beta}{3(K+1)}, \quad \PP\{E_{\circledSix}\} \geq 1 - \frac{\beta}{3(K+1)} ,
    \end{gather*}
    where
    \begin{eqnarray}
        E_{\circledOne}&=& \left\{\text{either} \quad \sum\limits_{l=0}^{T-1}\sigma_l^2 > \frac{R^4}{6\ln\tfrac{6(K+1)}{\beta}}\quad \text{or}\quad |\circledOne| \leq R^2\right\},\notag\\
        E_{\circledFour}&=& \left\{\text{either} \quad \sum\limits_{l=0}^{T-1}\widetilde\sigma_l^2 > \frac{R^4}{216\ln\tfrac{6(K+1)}{\beta}}\quad \text{or}\quad |\circledFour| \leq \frac{1}{6}R^2\right\},\notag\\
        E_{\circledSix}&=& \left\{\text{either} \quad \sum\limits_{l=0}^{T-1}\hat\sigma_l^2 > \frac{R^4}{96\ln\tfrac{6(K+1)}{\beta}}\quad \text{or}\quad |\circledSix| \leq \frac{1}{4}R^2\right\}.\notag
    \end{eqnarray}
    Thus, probability event $E_{T-1} \cap E_{\circledOne} \cap E_{\circledFour} \cap E_{\circledSix}$ implies
    \begin{eqnarray}
        \left\|\gamma\sum\limits_{l=0}^{T-1} \theta_l\right\| &\leq& \sqrt{\frac{1}{6}R^2 + \frac{1}{6}R^2 + \frac{1}{6}R^2 + \frac{1}{4}R^2 + \frac{1}{4}R^2} = R, \label{eq:gap_thm_SEG_technical_10} \\
        A_T &\leq& 4R^2 + 2R\sqrt{\frac{1}{6}R^2 + \frac{1}{6}R^2 + \frac{1}{6}R^2 + \frac{1}{4}R^2 + \frac{1}{4}R^2}\notag\\
        &&\quad + R^2 + R^2 + \frac{1}{6}R^2 + \frac{1}{6}R^2 + \frac{1}{6}R^2\notag\\
        &\leq& 9R^2, \label{eq:gap_thm_SEG_technical_11}
    \end{eqnarray}
    which is equivalent to \eqref{eq:induction_inequality_1_SEG} and \eqref{eq:induction_inequality_2_SEG} for $t = T$, and 
    \begin{equation*}
        \PP\{E_T\} \geq \PP\{E_{T-1} \cap E_{\circledOne} \cap E_{\circledFour} \cap E_{\circledSix}\} = 1 - \PP\{\overline{E}_{T-1} \cup \overline{E}_{\circledOne} \cup \overline{E}_{\circledFour} \cup \overline{E}_{\circledSix}\} \geq 1 - \frac{T\beta}{K+1}.
    \end{equation*}
    This finishes the inductive part of our proof, i.e., for all $k = 0,1,\ldots,K+1$ we have $\PP\{E_k\} \geq 1 - \nicefrac{k\beta}{(K+1)}$. In particular, for $k = K+1$ we have that with probability at least $1 - \beta$
    \begin{eqnarray*}
        \gap_{R}(\tx^{K}_{\avg}) &=& \max\limits_{u \in B_{R}(x^*)}\left\{\langle F(u), \tx^{K}_{\avg} - u\rangle\right\}\\
        &\leq& \frac{1}{2\gamma(K + 1)}\max\limits_{u \in B_{R}(x^*)}\left\{2\gamma(K + 1)\langle F(u), \tx^{t}_{\avg} - u\rangle + \|x^{K+1} - u\|^2\right\}\\
        &\overset{\eqref{eq:gap_thm_SEG_technical_1_5}}{\leq}& \frac{9R^2}{2\gamma(K+1)}.
    \end{eqnarray*}
    Finally, if 
    \begin{equation*}
        \gamma = \min\left\{\frac{1}{160L \ln \tfrac{6(K+1)}{\beta}}, \frac{20^{\frac{2-\alpha}{\alpha}}R}{10800^{\frac{1}{\alpha}}(K+1)^{\frac{1}{\alpha}}\sigma \ln^{\frac{\alpha-1}{\alpha}} \tfrac{6(K+1)}{\beta}}\right\}
    \end{equation*}
    then with probability at least $1-\beta$
    \begin{eqnarray*}
        \gap_{R}(\tx^{K}_{\avg}) &\leq& \frac{9R^2}{2\gamma(K+1)} = \max\left\{\frac{720 LR^2 \ln \tfrac{6(K+1)}{\beta}}{K+1}, \frac{9 \sigma R \ln^{\frac{\alpha-1}{\alpha}} \tfrac{6(K+1)}{\beta}}{2\cdot 20^{\frac{2-\alpha}{\alpha}} (K+1)^{\frac{\alpha-1}{\alpha}}}\right\}\\
        &=& \cO\left(\max\left\{\frac{LR^2\ln\frac{K}{\beta}}{K}, \frac{\sigma R \ln^{\frac{\alpha-1}{\alpha}}\frac{K}{\beta}}{K^{\frac{\alpha-1}{\alpha}}}\right\}\right).
    \end{eqnarray*}
    To get $\gap_{R}(\tx^{K}_{\avg}) \leq \varepsilon$ with probability at least $1-\beta$ it is sufficient to choose $K$ such that both terms in the maximum above are $\cO(\varepsilon)$. This leads to
    \begin{equation*}
         K = \cO\left(\frac{LR^2}{\varepsilon}\ln\frac{LR^2}{\varepsilon\beta}, \left(\frac{\sigma R}{\varepsilon}\right)^{\frac{\alpha}{\alpha-1}}\ln \frac{\sigma R}{\varepsilon\beta}\right)
    \end{equation*}
    that concludes the proof.
\end{proof}

\subsection{Quasi-Strongly Monotone Problems}
As in the monotone case, we use another lemma from \citep{gorbunov2022clipped} that handles the deterministic part of \algname{clipped-SEG} in the quasi-strongly monotone case.
\begin{lemma}[Lemma~C.3 from \citep{gorbunov2022clipped}]\label{lem:optimization_lemma_str_mon_SEG}
    Let Assumptions~\ref{as:L_Lip}, \ref{as:QSM} hold for $Q = B_{3R}(x^*) = \{x\in\R^d\mid \|x - x^*\| \leq 3R\}$, where $R \geq \|x^0 - x^*\|$, and $0 < \gamma \leq \nicefrac{1}{2(L+2\mu)}$. If $x^k$ and $\tx^k$ lie in $B_{3R}(x^*)$ for all $k = 0,1,\ldots, K$ for some $K\geq 0$, then the iterates produced by \algname{clipped-SEG} satisfy
    \begin{eqnarray}
        \|x^{K+1} - x^*\|^2 &\leq& (1 - \gamma \mu)^{K+1}\|x^0 - x^*\|^2 - 4\gamma^3 \mu \sum\limits_{k=0}^K (1-\gamma\mu)^{K-k}\langle F(x^k), \omega_k \rangle\notag\\
        &&\quad + 2\gamma \sum\limits_{k=0}^K (1-\gamma\mu)^{K-k} \langle x^k - x^* - \gamma F(\tx^k), \theta_k \rangle\notag\\
        &&\quad + \gamma^2 \sum\limits_{k=0}^K (1-\gamma\mu)^{K-k} \left(\|\theta_k\|^2 + 4\|\omega_k\|^2\right), \label{eq:optimization_lemma_SEG_str_mon}
    \end{eqnarray}
    where $\theta_k, \omega_k$ are defined in \eqref{eq:theta_k_SEG}, \eqref{eq:omega_k_SEG}.
\end{lemma}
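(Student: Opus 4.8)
The plan is to establish the per-step recursion $\|x^{k+1}-x^*\|^2 \le (1-\gamma\mu)\|x^k-x^*\|^2 + (\textup{error}_k)$ by the standard Euclidean analysis of the extragradient method — treating $\theta_k$ and $\omega_k$ (defined in \eqref{eq:theta_k_SEG}, \eqref{eq:omega_k_SEG}) as generic error vectors — and then unroll it with the weights $(1-\gamma\mu)^{K-k}$. First I would substitute $x^{k+1} = x^k - \gamma\bigl(F(\tx^k) - \theta_k\bigr)$ and expand the square, collecting the two $\theta_k$-linear contributions (one from the inner product, one from the cross term in $\|F(\tx^k)-\theta_k\|^2$) into a single term; this yields
\begin{equation*}
\|x^{k+1}-x^*\|^2 = \|x^k-x^*\|^2 - 2\gamma\langle F(\tx^k), x^k-x^*\rangle + \gamma^2\|F(\tx^k)\|^2 + 2\gamma\langle \theta_k,\, x^k-x^* - \gamma F(\tx^k)\rangle + \gamma^2\|\theta_k\|^2,
\end{equation*}
so the $\theta_k$-terms already match exactly those in \eqref{eq:optimization_lemma_SEG_str_mon}, and it remains to bound the "exact" part $-2\gamma\langle F(\tx^k), x^k-x^*\rangle + \gamma^2\|F(\tx^k)\|^2$.

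For that part I would write $x^k - x^* = (\tx^k - x^*) + (x^k - \tx^k)$ with $x^k - \tx^k = \gamma\tF_{\xi_1^k}(x^k)$, keeping $\tF_{\xi_1^k}(x^k)$ as a single unit, so that
$-2\gamma\langle F(\tx^k), x^k-x^*\rangle + \gamma^2\|F(\tx^k)\|^2 = -2\gamma\langle F(\tx^k),\tx^k-x^*\rangle + \gamma^2\|F(\tx^k) - \tF_{\xi_1^k}(x^k)\|^2 - \gamma^2\|\tF_{\xi_1^k}(x^k)\|^2$.
Quasi-strong monotonicity \eqref{eq:QSM} at $\tx^k$ gives $-2\gamma\langle F(\tx^k),\tx^k-x^*\rangle \le -2\gamma\mu\|\tx^k-x^*\|^2$; using $F(\tx^k)-\tF_{\xi_1^k}(x^k) = \bigl(F(\tx^k)-F(x^k)\bigr)+\omega_k$, Lipschitzness \eqref{eq:L_Lip}, and $\|\tx^k-x^k\| = \gamma\|\tF_{\xi_1^k}(x^k)\|$, the middle square is absorbed into $-\gamma^2\|\tF_{\xi_1^k}(x^k)\|^2$ (thanks to $\gamma L\le\tfrac12$) up to a term bounded by $O(\gamma^2)\|\omega_k\|^2$. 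Finally I would convert $-2\gamma\mu\|\tx^k-x^*\|^2$ to a multiple of $\|x^k-x^*\|^2$ via $\|\tx^k-x^*\|^2 \ge \tfrac12\|x^k-x^*\|^2 - \|\tx^k-x^k\|^2$, then expand $\|\tx^k-x^k\|^2 = \gamma^2\|F(x^k)-\omega_k\|^2 = \gamma^2\|F(x^k)\|^2 - 2\gamma^2\langle F(x^k),\omega_k\rangle + \gamma^2\|\omega_k\|^2$: the cross term is precisely where the $-4\gamma^3\mu\langle F(x^k),\omega_k\rangle$ in \eqref{eq:optimization_lemma_SEG_str_mon} comes from. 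Summing the resulting one-step bound over $k=0,\dots,K$ with factors $(1-\gamma\mu)^{K-k}$ gives \eqref{eq:optimization_lemma_SEG_str_mon}; the hypothesis $x^k,\tx^k\in B_{3R}(x^*)$ enters only to license the pointwise use of \eqref{eq:QSM} and \eqref{eq:L_Lip}.

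The main obstacle is the constant bookkeeping: the $\|F(x^k)\|^2$- and $\|F(\tx^k)\|^2$-type quantities must not survive (they would introduce $R$-dependence through Lipschitzness and cannot be folded into the $(1-\gamma\mu)$ contraction), so each must be cancelled against a negative $\|F(x^k)\|^2$ or $\|\tF_{\xi_1^k}(x^k)\|^2$ term — in particular the stray $+2\gamma^3\mu\|F(x^k)\|^2$ produced in the last step has to be dominated by a fraction of the $-\gamma^2\|\tF_{\xi_1^k}(x^k)\|^2$ term, which forces a Young's-inequality splitting whose constant is tuned to $\gamma \le \tfrac{1}{2(L+2\mu)}$ (equivalently $\gamma L + 2\gamma\mu \le \tfrac12$), while simultaneously keeping the total $\|\omega_k\|^2$-coefficient at $4\gamma^2$ and the $\|\theta_k\|^2$-coefficient at exactly $\gamma^2$. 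Since this lemma concerns only the deterministic recursion, the argument is the same as the proof of Lemma~C.3 in \citep{gorbunov2022clipped}.
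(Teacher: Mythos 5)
Your proposal is correct and follows the standard energy argument for this lemma; the paper itself does not reproduce a proof but only cites Lemma~C.3 of \citep{gorbunov2022clipped}, whose argument is exactly the decomposition you describe (expand around $x^{k+1}=x^k-\gamma(F(\tx^k)-\theta_k)$ to isolate the $\theta_k$-terms, apply \eqref{eq:QSM} at $\tx^k$, use $\|\tx^k-x^*\|^2\ge\tfrac12\|x^k-x^*\|^2-\gamma^2\|\tF_{\xi_1^k}(x^k)\|^2$, and expand $\|F(x^k)-\omega_k\|^2$ to produce the $-4\gamma^3\mu\langle F(x^k),\omega_k\rangle$ term). The bookkeeping you flag does close with plain factor-$2$ splits and no tuned Young constants: the $\|\tF_{\xi_1^k}(x^k)\|^2$ budget requires $2(\gamma L)^2+4\gamma\mu\le 1$, which follows from $\gamma L+2\gamma\mu\le\tfrac12$, and the resulting $\|\omega_k\|^2$ coefficient is $2\gamma^2+6\gamma^3\mu\le 4\gamma^2$.
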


Using this lemma we prove the main convergence result for \algname{clipped-SEG} in the quasi-strongly monotone case.

\begin{theorem}[Case 2 in Theorem~\ref{thm:clipped_SEG_main_theorem}]\label{thm:main_result_str_mon_SEG}
    Let Assumptions~\ref{as:bounded_alpha_moment}, \ref{as:L_Lip}, \ref{as:QSM}, hold for $Q = B_{3R}(x^*) = \{x\in\R^d\mid \|x - x^*\| \leq 3R\}$, where $R \geq \|x^0 - x^*\|$, and
    \begin{eqnarray}
        0< \gamma &\leq& \min\left\{\frac{1}{650 L \ln \tfrac{6(K+1)}{\beta}}, \frac{\ln(B_K)}{\mu(K+1)}\right\}, \label{eq:gamma_SEG_str_mon}\\
        B_K &=& \max\left\{2, \frac{(K+1)^{\frac{2(\alpha-1)}{\alpha}}\mu^2R^2}{264600^{\frac{2}{\alpha}}\sigma^2\ln^{\frac{2(\alpha-1)}{\alpha}}\left(\frac{6(K+1)}{\beta}\right)\ln^2(B_K)} \right\} \label{eq:B_K_SEG_str_mon_1} \\
        &=& \cO\left(\max\left\{2, \frac{K^{\frac{2(\alpha-1)}{\alpha}}\mu^2R^2}{\sigma^2\ln^{\frac{2(\alpha-1)}{\alpha}}\left(\frac{K}{\beta}\right)\ln^2\left(\max\left\{2, \frac{K^{\frac{2(\alpha-1)}{\alpha}}\mu^2R^2}{\sigma^2\ln^{\frac{2(\alpha-1)}{\alpha}}\left(\frac{K}{\beta}\right)} \right\}\right)} \right\}\right\}, \label{eq:B_K_SEG_str_mon_2} \\
        \lambda_k &=& \frac{\exp(-\gamma\mu(1 + \nicefrac{k}{2}))R}{120\gamma \ln \tfrac{6(K+1)}{\beta}}, \label{eq:lambda_SEG_str_mon}
    \end{eqnarray}
    for some $K \geq 0$ and $\beta \in (0,1]$ such that $\ln \tfrac{6(K+1)}{\beta} \geq 1$. Then, after $K$ iterations the iterates produced by \algname{clipped-SEG} with probability at least $1 - \beta$ satisfy 
    \begin{equation}
        \|x^{K+1} - x^*\|^2 \leq 2\exp(-\gamma\mu(K+1))R^2. \label{eq:main_result_str_mon}
    \end{equation}
    In particular, when $\gamma$ equals the minimum from \eqref{eq:gamma_SEG_str_mon}, then the iterates produced by \algname{clipped-SEG} after $K$ iterations with probability at least $1-\beta$ satisfy
    \begin{equation}
        \|x^{K} - x^*\|^2 = \cO\left(\max\left\{R^2\exp\left(- \frac{\mu K}{L \ln \tfrac{K}{\beta}}\right), \frac{\sigma^2\ln^{\frac{2(\alpha-1)}{\alpha}}\left(\frac{K}{\beta}\right)\ln^2\left(\max\left\{2, \frac{K^{\frac{2(\alpha-1)}{\alpha}}\mu^2R^2}{\sigma^2\ln^{\frac{2(\alpha-1)}{\alpha}}\left(\frac{K}{\beta}\right)} \right\}\right)}{K^{\frac{2(\alpha-1)}{\alpha}}\mu^2}\right\}\right), \label{eq:clipped_SEG_str_monotone_case_2_appendix}
    \end{equation}
    meaning that to achieve $\|x^{K} - x^*\|^2 \leq \varepsilon$ with probability at least $1 - \beta$ \algname{clipped-SEG} requires
    \begin{equation}
        K = \cO\left(\frac{L}{\mu}\ln\left(\frac{R^2}{\varepsilon}\right)\ln\left(\frac{L}{\mu \beta}\ln\frac{R^2}{\varepsilon}\right), \left(\frac{\sigma^2}{\mu^2\varepsilon}\right)^{\frac{\alpha}{2(\alpha-1)}}\ln \left(\frac{1}{\beta} \left(\frac{\sigma^2}{\mu^2\varepsilon}\right)^{\frac{\alpha}{2(\alpha-1)}}\right)\ln^{\frac{\alpha}{\alpha-1}}\left(B_\varepsilon\right)\right) \label{eq:clipped_SEG_str_monotone_case_complexity_appendix}
    \end{equation}
    iterations/oracle calls, where
    \begin{equation*}
        B_\varepsilon = \max\left\{2, \frac{R^2}{\varepsilon \ln \left(\frac{1}{\beta} \left(\frac{\sigma^2}{\mu^2\varepsilon}\right)^{\frac{\alpha}{2(\alpha-1)}}\right)}\right\}.
    \end{equation*}
\end{theorem}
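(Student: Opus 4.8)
The plan is to follow the inductive high-probability template already used in the proofs of Theorems~\ref{thm:clipped_SGD_PL_main} and~\ref{thm:main_result_gap_SEG}: combine the deterministic one-step estimate of Lemma~\ref{lem:optimization_lemma_str_mon_SEG} with Bernstein's inequality (Lemma~\ref{lem:Bernstein_ineq}) and the clipped-estimator bias/variance bounds of Lemma~\ref{lem:bias_and_variance_clip}. Writing $R_k = \|x^k - x^*\|$, for each $k = 0,1,\ldots,K+1$ I would define the event $E_k$ that $R_t^2 \le 2\exp(-\gamma\mu t)R^2$ holds for all $t \le k$ simultaneously, and prove by induction that $\PP\{E_k\} \ge 1 - k\beta/(K+1)$. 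The base case is trivial since $R_0 \le R$; for the step I assume $\PP\{E_{T-1}\} \ge 1 - (T-1)\beta/(K+1)$ and must produce $\PP\{E_T\} \ge 1 - T\beta/(K+1)$.

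Inside the step, the first task is to check that $E_{T-1}$ keeps every relevant iterate in $B_{3R}(x^*)$ so that Lemma~\ref{lem:optimization_lemma_str_mon_SEG} applies: from $R_t \le \sqrt{2}R\exp(-\gamma\mu t/2) \le \sqrt 2 R$ and $\|\clip(F_{\xi_1^t}(x^t),\lambda_t)\| \le \lambda_t$ one gets $\|\tx^t - x^*\| \le \sqrt{2}R + \gamma\lambda_t \le 3R$, and a one-step bound puts $x^T$ in $B_{3R}(x^*)$ as well. Furthermore, $L$-Lipschitzness together with the choice~\eqref{eq:lambda_SEG_str_mon} gives $\|F(x^t)\|, \|F(\tx^t)\| \le \lambda_t/2$, so Lemma~\ref{lem:bias_and_variance_clip} yields $\|\theta_l^b\|,\|\omega_l^b\| \le 2^\alpha\sigma^\alpha/\lambda_l^{\alpha-1}$ and $\EE[\|\theta_l^u\|^2],\EE[\|\omega_l^u\|^2] \le 18\lambda_l^{2-\alpha}\sigma^\alpha$. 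Feeding \eqref{eq:optimization_lemma_SEG_str_mon} into the induction, using $(1-\gamma\mu)^{T}\le\exp(-\gamma\mu T)$, introducing truncated vectors $\eta_t$ (a.s.\ bounded by $\cO(R)$, equal to $x^t - x^* - \gamma F(\tx^t)$ on $E_{T-1}$), and splitting $\theta_l = \theta_l^u + \theta_l^b$, $\omega_l = \omega_l^u + \omega_l^b$, I arrive at a decomposition $R_T^2 \le \exp(-\gamma\mu T)R^2 + \circledOne + \circledTwo + \circledThree + \circledFour + \circledFive$, where $\circledOne$ (inner product with $\theta_l^u$) and $\circledFour$ (centered $\|\theta_l^u\|^2$, $\|\omega_l^u\|^2$) are bounded martingale-difference sums, while $\circledTwo$ (bias inner products), $\circledThree$ (conditional second moments), and $\circledFive$ (squared biases) are deterministically bounded on $E_{T-1}$; the extra term $-4\gamma^3\mu\sum(1-\gamma\mu)^{K-k}\langle F(x^k),\omega_k\rangle$ is split analogously and contributes only $\cO(\gamma^2\mu L)$-small pieces.

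Each of the five (plus auxiliary) terms is then forced to be at most $\tfrac{1}{5}\exp(-\gamma\mu T)R^2$ with probability $\ge 1 - \beta/(2(K+1))$: for $\circledOne$ and $\circledFour$ I apply Lemma~\ref{lem:Bernstein_ineq} with $c$ of order $\exp(-\gamma\mu T)R^2/\ln(6(K+1)/\beta)$ and $G$ of order $\exp(-2\gamma\mu T)R^4/\ln(6(K+1)/\beta)$, and the constraints \eqref{eq:gamma_SEG_str_mon}--\eqref{eq:lambda_SEG_str_mon} are exactly what is needed to verify the conditional-variance bounds $\sum_l\sigma_l^2 \le G$, $\sum_l\widehat\sigma_l^2 \le G$ on $E_{T-1}$; for $\circledTwo,\circledThree,\circledFive$ the bounds hold outright given $E_{T-1}$. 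A union bound over $E_{T-1}$ and the two Bernstein events gives $\PP\{E_T\}\ge 1 - T\beta/(K+1)$, closing the induction; taking $k = K+1$ gives \eqref{eq:main_result_str_mon}, and plugging the two candidate values of $\gamma$ from \eqref{eq:gamma_SEG_str_mon} into $2\exp(-\gamma\mu K)R^2$ and solving each to be $\cO(\varepsilon)$ produces the complexity \eqref{eq:clipped_SEG_str_monotone_case_complexity_appendix}, exactly as in Theorem~\ref{thm:clipped_SGD_PL_main}. The main obstacle is the bookkeeping forced by the exponentially decaying clipping radii: one must bound geometric-type sums such as $\sum_{l=0}^{K}\lambda_l^{2-\alpha}\exp(\gamma\mu l)$ and $\sum_{l=0}^{K}\lambda_l^{-(2\alpha-2)}\exp(\gamma\mu l)$ by $\cO((K+1)\lambda_K^{2-\alpha})$-type quantities and then absorb them into the $\exp(-\gamma\mu T)$ prefactor using the precise form of $\gamma$; aligning all the numerical constants ($120$, $650$, $264600^{2/\alpha}$, etc.) is delicate but structurally identical to the already-proven P{\L} and monotone-\algname{SEG} cases.
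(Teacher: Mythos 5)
Your proposal is correct and follows essentially the same route as the paper's proof: the same inductive event $R_t^2 \le 2\exp(-\gamma\mu t)R^2$, the same use of Lemma~\ref{lem:optimization_lemma_str_mon_SEG} plus Lemma~\ref{lem:bias_and_variance_clip}, the same truncation/splitting into unbiased and biased parts, and Bernstein's inequality for the martingale sums. The only bookkeeping difference is that the unbiased part of the extra extragradient term $-4\gamma^3\mu\sum(1-\gamma\mu)^{T-1-l}\langle F(x^l),\omega_l\rangle$ is itself a martingale-difference sum requiring its own Bernstein application, so the paper ends up with seven terms, each budgeted at $\tfrac{1}{7}\exp(-\gamma\mu T)R^2$, and three Bernstein events of probability $\beta/(3(K+1))$ rather than your two --- a trivial adjustment of constants.
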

\begin{proof}
    Again, we will closely follow the proof of Theorem~C.3 from \citep{gorbunov2022clipped} and the main difference will be reflected in the application of Bernstein inequality and estimating biases and variances of stochastic terms.
    
    Let $R_k = \|x^k - x^*\|$ for all $k\geq 0$. As in the previous results, the proof is based on the induction argument and showing that the iterates do not leave some ball around the solution with high probability. More precisely, for each $k = 0,1,\ldots,K+1$ we consider probability event $E_k$ as follows: inequalities
    \begin{equation}
        R_t^2 \leq 2 \exp(-\gamma\mu t) R^2 \label{eq:induction_inequality_str_mon_SEG}
    \end{equation}
    hold for $t = 0,1,\ldots,k$ simultaneously. We want to prove $\PP\{E_k\} \geq  1 - \nicefrac{k\beta}{(K+1)}$ for all $k = 0,1,\ldots,K+1$ by induction. The base of the induction is trivial: for $k=0$ we have $R_0^2 \leq R^2 < 2R^2$ by definition. Next, assume that for $k = T-1 \leq K$ the statement holds: $\PP\{E_{T-1}\} \geq  1 - \nicefrac{(T-1)\beta}{(K+1)}$. Given this, we need to prove $\PP\{E_{T}\} \geq  1 - \nicefrac{T\beta}{(K+1)}$. Since $R_t^2 \leq 2\exp(-\gamma\mu t) R^2 \leq 9R^2$, we have $x^t \in B_{3R}(x^*)$, where operator $F$ is $L$-Lipschitz. Thus, $E_{T-1}$ implies
    \begin{eqnarray}
        \|F(x^t)\| &\leq& L\|x^t - x^*\| \overset{\eqref{eq:induction_inequality_str_mon_SEG}}{\leq} \sqrt{2}L\exp(- \nicefrac{\gamma\mu t}{2})R \overset{\eqref{eq:gamma_SEG_str_mon},\eqref{eq:lambda_SEG_str_mon}}{\leq} \frac{\lambda_t}{2} \label{eq:operator_bound_x_t_SEG_str_mon}
    \end{eqnarray}
    and
    \begin{eqnarray}
        \|\omega_t\|^2 &\leq& 2\|\tF_{\xi_1}(x^t)\|^2 + 2\|F(x^t)\|^2 \overset{\eqref{eq:operator_bound_x_t_SEG_str_mon}}{\leq} \frac{5}{2}\lambda_t^2 \overset{\eqref{eq:lambda_SEG_str_mon}}{\leq} \frac{\exp(-\gamma\mu t)R^2}{4\gamma^2} \label{eq:omega_bound_x_t_SEG_str_mon}
    \end{eqnarray}
    for all $t = 0, 1, \ldots, T-1$, where we use that $\|a+b\|^2 \leq 2\|a\|^2 + 2\|b\|^2$ holding for all $a,b \in \R^d$. 
    
    Next, we need to prove that $E_{T-1}$ implies $\|\tx^t - x^*\| \leq 3R$ and show several useful inequalities related to $\theta_t$. Lipschitzness of $F$ probability event $E_{T-1}$ implies
    \begin{eqnarray}
        \|\tx^t - x^*\|^2 &=& \|x^t - x^* - \gamma \tF_{\xi_1}(x^t)\|^2 \leq  2\|x^t - x^*\|^2 + 2\gamma^2\|\tF_{\xi_1}(x^t)\|^2 \notag \\
        &\leq& 2R_t^2 + 4\gamma^2\|F(x^t)\|^2 + 4\gamma^2\|\omega_t\|^2 \notag \\
        &\overset{\eqref{eq:L_Lip}}{\leq}& 2(1 + 2\gamma^2 L^2)R_t^2 + 4\gamma^2\|\omega_t\|^2 \notag \\
        &\overset{\eqref{eq:gamma_SEG_str_mon},\eqref{eq:omega_bound_x_t_SEG_str_mon}}{\leq}& 7\exp(-\gamma\mu t) R^2 \leq 9R^2 \label{eq:tilde_x_distance_SEG_str_mon}
    \end{eqnarray}
    and
    \begin{eqnarray}
        \|F(\tx^t)\| &\leq& L\|\tx^t - x^*\| \leq \sqrt{7}L\exp(- \nicefrac{\gamma\mu t}{2})R \overset{\eqref{eq:gamma_SEG_str_mon},\eqref{eq:lambda_SEG_str_mon}}{\leq} \frac{\lambda_t}{2} \label{eq:operator_bound_tx_t_SEG_str_mon}
    \end{eqnarray}
    for all $t = 0, 1, \ldots, T-1$. Therefore, $E_{T-1}$ implies that $x^t, \tx^t \in B_{3R}(x^*)$ for all $t = 0, 1, \ldots, T-1$. Using Lemma~\ref{lem:optimization_lemma_str_mon_SEG} and $(1 - \gamma\mu)^T \leq \exp(-\gamma\mu T)$, we obtain that $E_{T-1}$ implies
    \begin{eqnarray}
        R_T^2 &\leq& \exp(-\gamma\mu T)R^2 - 4\gamma^3 \mu \sum\limits_{l=0}^{T-1} (1-\gamma\mu)^{T-1-l}\langle F(x^l), \omega_l \rangle\notag\\
        &&\quad + 2\gamma \sum\limits_{l=0}^{T-1} (1-\gamma\mu)^{T-1-l} \langle x^l - x^* - \gamma F(\tx^l), \theta_l \rangle\notag\\
        &&\quad + \gamma^2 \sum\limits_{l=0}^{T-1} (1-\gamma\mu)^{T-1-l} \left(\|\theta_l\|^2 + 4\|\omega_l\|^2\right). \notag
    \end{eqnarray}
    To handle the sums above, we introduce a new notation:
    \begin{gather}
        \zeta_t = \begin{cases} F(x^t),& \text{if } \|F(x^t)\| \leq \sqrt{2}L\exp(-\nicefrac{\gamma\mu t}{2})R,\\ 0,& \text{otherwise}, \end{cases} \label{eq:zeta_t_SEG_str_mon}\\
        \eta_t = \begin{cases} x^t - x^* - \gamma F(\tx^t),& \text{if } \|x^t - x^* - \gamma F(\tx^t)\| \leq \sqrt{7}(1 + \gamma L) \exp(- \nicefrac{\gamma\mu t}{2})R,\\ 0,& \text{otherwise}, \end{cases} \label{eq:eta_t_SEG_str_mon}
    \end{gather}
    for $t = 0, 1, \ldots, T-1$. These vectors are bounded almost surely:
    \begin{equation}
        \|\zeta_t\| \leq \sqrt{2}L\exp(-\nicefrac{\gamma\mu t}{2})R,\quad \|\eta_t\| \leq \sqrt{7}(1 + \gamma L)\exp(-\nicefrac{\gamma\mu t}{2})R \label{eq:zeta_t_eta_t_bound_SEG_str_mon} 
    \end{equation}
    for all $t = 0, 1, \ldots, T-1$. We also notice that $E_{T-1}$ implies $\|F(x^t)\| \leq \sqrt{2}L\exp(-\nicefrac{\gamma\mu t}{2})R$ (due to \eqref{eq:operator_bound_x_t_SEG_str_mon}) and
    \begin{eqnarray*}
        \|x^t - x^* - \gamma F(\tx^t)\| &\leq& \|x^t - x^*\| + \gamma \|F(\tx^t)\|\\
        &\overset{\eqref{eq:tilde_x_distance_SEG_str_mon},\eqref{eq:operator_bound_tx_t_SEG_str_mon}}{\leq}& \sqrt{7}(1 + \gamma L)\exp(-\nicefrac{\gamma\mu t}{2})R
    \end{eqnarray*}
    for $t = 0, 1, \ldots, T-1$. In other words, $E_{T-1}$ implies $\zeta_t = F(x^t)$ and $\eta_t = x^t - x^* - \gamma F(\tx^t)$ for all $t = 0,1,\ldots,T-1$, meaning that from $E_{T-1}$ it follows that
    \begin{eqnarray}
        R_T^2 &\leq& \exp(-\gamma\mu T)R^2 - 4\gamma^3 \mu \sum\limits_{l=0}^{T-1} (1-\gamma\mu)^{T-1-l}\langle \zeta_l, \omega_l \rangle\notag\\
        &&\quad + 2\gamma \sum\limits_{l=0}^{T-1} (1-\gamma\mu)^{T-1-l} \langle \eta_l, \theta_l \rangle + \gamma^2 \sum\limits_{l=0}^{T-1} (1-\gamma\mu)^{T-1-l} \left(\|\theta_l\|^2 + 4\|\omega_l\|^2\right). \notag
    \end{eqnarray}

    To handle the sums appeared on the right-hand side of the previous inequality we consider unbiased and biased parts of $\theta_l, \omega_l$:
    \begin{gather}
        \theta_l^u \eqdef \EE_{\xi_2^l}\left[\tF_{\xi_2^l}(\tx^l)\right] - \tF_{\xi_2^l}(\tx^l),\quad \theta_l^b \eqdef F(\tx^l) - \EE_{\xi_2^l}\left[\tF_{\xi_2^l}(\tx^l)\right], \label{eq:theta_unbias_bias_SEG_str_mon}\\
        \omega_l^u \eqdef \EE_{\xi_1^l}\left[\tF_{\xi_1^l}(x^l)\right] - \tF_{\xi_1^l}(x^l),\quad \omega_l^b \eqdef F(x^l) - \EE_{\xi_1^l}\left[\tF_{\xi_1^l}(x^l)\right], \label{eq:omega_unbias_bias_SEG_str_mon}
    \end{gather}
    for all $l = 0,\ldots, T-1$. By definition we have $\theta_l = \theta_l^u + \theta_l^b$, $\omega_l = \omega_l^u + \omega_l^b$ for all $l = 0,\ldots, T-1$. Therefore, $E_{T-1}$ implies
    \begin{eqnarray}
        R_T^2 &\leq& \exp(-\gamma\mu T) R^2 \underbrace{- 4\gamma^3 \mu \sum\limits_{l=0}^{T-1} (1-\gamma\mu)^{T-1-l}\langle \zeta_l, \omega_l^u \rangle}_{\circledOne} \underbrace{- 4\gamma^3 \mu \sum\limits_{l=0}^{T-1} (1-\gamma\mu)^{T-1-l}\langle \zeta_l, \omega_l^b \rangle}_{\circledTwo}\notag\\
        &&\quad + \underbrace{2\gamma \sum\limits_{l=0}^{T-1} (1-\gamma\mu)^{T-1-l} \langle \eta_l, \theta_l^u \rangle}_{\circledThree} + \underbrace{2\gamma \sum\limits_{l=0}^{T-1} (1-\gamma\mu)^{T-1-l} \langle \eta_l, \theta_l^b \rangle}_{\circledFour} \notag\\
        &&\quad + \underbrace{2\gamma^2 \sum\limits_{l=0}^{T-1} (1-\gamma\mu)^{T-1-l} \left(\EE_{\xi_2^l}\left[\|\theta_l^u\|^2\right] + 4\EE_{\xi_1^l}\left[\|\omega_l^u\|^2\right]\right)}_{\circledFive} \notag\\
        &&\quad + \underbrace{2\gamma^2 \sum\limits_{l=0}^{T-1} (1-\gamma\mu)^{T-1-l} \left(\|\theta_l^u\|^2 + 4\|\omega_l^u\|^2 -\EE_{\xi_2^l}\left[\|\theta_l^u\|^2\right] - 4\EE_{\xi_1^l}\left[\|\omega_l^u\|^2\right]\right)}_{\circledSix}\notag\\
        &&\quad + \underbrace{2\gamma^2 \sum\limits_{l=0}^{T-1} (1-\gamma\mu)^{T-1-l} \left(\|\theta_l^b\|^2 + 4\|\omega_l^b\|^2\right)}_{\circledSeven}. \label{eq:SEG_str_mon_1234567_bound}
    \end{eqnarray}
    where we also apply inequality $\|a+b\|^2 \leq 2\|a\|^2 + 2\|b\|^2$ holding for all $a,b \in \R^d$ to upper bound $\|\theta_l\|^2$ and $\|\omega_l\|^2$. It remains to derive good enough high-probability upper-bounds for the terms $\circledOne, \circledTwo, \circledThree, \circledFour, \circledFive, \circledSix, \circledSeven$, i.e., to finish our inductive proof we need to show that $\circledOne + \circledTwo + \circledThree + \circledFour + \circledFive + \circledSix + \circledSeven \leq \exp(-\gamma\mu T) R^2$ with high probability. In the subsequent parts of the proof, we will need to use many times the bounds for the norm and second moments of $\theta_{t+1}^u$ and $\theta_{t+1}^b$. First, by definition of clipping operator, we have with probability $1$ that
    \begin{equation}
        \|\theta_l^u\| \leq 2\lambda_l,\quad \|\omega_l^u\| \leq 2\lambda_l.\label{eq:theta_omega_magnitude_str_mon}
    \end{equation}
    Moreover, since $E_{T-1}$ implies that $\|F(x^l)\| \leq \nicefrac{\lambda_l}{2}$ and $\|F(\tx^l)\| \leq \nicefrac{\lambda_l}{2}$ for all $l = 0,1, \ldots, T-1$ (see \eqref{eq:operator_bound_x_t_SEG_str_mon} and \eqref{eq:operator_bound_tx_t_SEG_str_mon}), from Lemma~\ref{lem:bias_and_variance_clip} we also have that $E_{T-1}$ implies
    \begin{gather}
        \left\|\theta_l^b\right\| \leq \frac{2^{\alpha}\sigma^\alpha}{\lambda_l^{\alpha-1}},\quad \left\|\omega_l^b\right\| \leq \frac{2^{\alpha}\sigma^\alpha}{\lambda_l^{\alpha-1}}, \label{eq:bias_theta_omega_str_mon}\\
        \EE_{\xi_2^l}\left[\left\|\theta_l\right\|^2\right] \leq 18 \lambda_l^{2-\alpha}\sigma^\alpha,\quad \EE_{\xi_1^l}\left[\left\|\omega_l\right\|^2\right] \leq 18 \lambda_l^{2-\alpha}\sigma^\alpha, \label{eq:distortion_theta_omega_str_mon}\\
        \EE_{\xi_2^l}\left[\left\|\theta_l^u\right\|^2\right] \leq 18 \lambda_l^{2-\alpha}\sigma^\alpha,\quad \EE_{\xi_1^l}\left[\left\|\omega_l^u\right\|^2\right] \leq 18 \lambda_l^{2-\alpha}\sigma^\alpha, \label{eq:variance_theta_omega_str_mon}
    \end{gather}
    for all $l = 0,1, \ldots, T-1$.

    \paragraph{Upper bound for $\circledOne$.} By definition of $\omega_{l}^u$, we have $\EE_{\xi_1^l}[\omega_{l}^u] = 0$ and
    \begin{equation*}
        \EE_{\xi_1^l}\left[-4\gamma^3\mu (1-\gamma\mu)^{T-1-l} \langle \zeta_l, \omega_l^u \rangle\right] = 0.
    \end{equation*}
    Next, sum $\circledOne$ has bounded with probability $1$ terms:
    \begin{eqnarray}
        |-4\gamma^3\mu (1-\gamma\mu)^{T-1-l} \langle \zeta_l, \omega_l^u \rangle | &\leq& 4\gamma^3\mu \exp(-\gamma\mu (T - 1 - l)) \|\zeta_l\|\cdot \|\omega_l^u\|\notag\\
        &\overset{\eqref{eq:zeta_t_eta_t_bound_SEG_str_mon},\eqref{eq:theta_omega_magnitude_str_mon}}{\leq}& 8\sqrt{2}\gamma^3\mu L \exp(-\gamma\mu (T - 1 - \nicefrac{l}{2})) R \lambda_l\notag\\
        &\overset{\eqref{eq:gamma_SEG_str_mon},\eqref{eq:lambda_SEG_str_mon}}{\leq}& \frac{\exp(-\gamma\mu T)R^2}{7\ln\tfrac{6(K+1)}{\beta}}  \eqdef c. \label{eq:SEG_str_mon_technical_1_1}
    \end{eqnarray}
    The summands also have bounded conditional variances $\sigma_l^2 \eqdef \EE_{\xi_1^l}\left[16\gamma^6\mu^2 (1-\gamma\mu)^{2T-2-2l} \langle \zeta_l, \omega_l^u \rangle^2\right]$:
    \begin{eqnarray}
        \sigma_l^2 &\leq& \EE_{\xi_1^l}\left[16\gamma^6\mu^2 \exp(-\gamma\mu (2T - 2 - 2l)) \|\zeta_l\|^2\cdot \|\omega_l^u\|^2\right]\notag\\
        &\overset{\eqref{eq:zeta_t_eta_t_bound_SEG_str_mon}}{\leq}& 36\gamma^6 \mu^2L^2 \exp(-\gamma\mu (2T - 2 - l)) R^2 \EE_{\xi_1^l}\left[\|\omega_l^u\|^2\right]\notag\\
        &\overset{\eqref{eq:gamma_SEG_str_mon}}{\leq}& \frac{4\gamma^2\exp(-\gamma\mu(2T - l))R^2}{2809\ln\tfrac{6(K+1)}{\beta}} \EE_{\xi_1^l}\left[\|\omega_l^u\|^2\right]. \label{eq:SEG_str_mon_technical_1_2}
    \end{eqnarray}
    In other words, we showed that $\{-4\gamma^3\mu (1-\gamma\mu)^{T-1-l} \langle \zeta_l, \omega_l^u \rangle\}_{l = 0}^{T-1}$ is a bounded martingale difference sequence with bounded conditional variances $\{\sigma_l^2\}_{l = 0}^{T-1}$. Next, we apply Bernstein's inequality (Lemma~\ref{lem:Bernstein_ineq}) with $X_l = -4\gamma^3\mu (1-\gamma\mu)^{T-1-l} \langle \zeta_l, \omega_l^u \rangle$, parameter $c$ as in \eqref{eq:SEG_str_mon_technical_1_1}, $b = \tfrac{1}{7}\exp(-\gamma\mu T) R^2$, $G = \tfrac{\exp(-2 \gamma\mu T) R^4}{294\ln\frac{6(K+1)}{\beta}}$:
    \begin{equation*}
        \PP\left\{|\circledOne| > \frac{1}{7}\exp(-\gamma\mu T) R^2 \text{ and } \sum\limits_{l=0}^{T-1}\sigma_l^2 \leq \frac{\exp(-2\gamma\mu T) R^4}{294\ln\tfrac{6(K+1)}{\beta}}\right\} \leq 2\exp\left(- \frac{b^2}{2G + \nicefrac{2cb}{3}}\right) = \frac{\beta}{3(K+1)}.
    \end{equation*}
    Equivalently, we have
    \begin{equation}
        \PP\{E_{\circledOne}\} \geq 1 - \frac{\beta}{3(K+1)},\quad \text{for}\quad E_{\circledOne} = \left\{\text{either} \quad \sum\limits_{l=0}^{T-1}\sigma_l^2 > \frac{\exp(-2\gamma\mu T) R^4}{294\ln\tfrac{6(K+1)}{\beta}}\quad \text{or}\quad |\circledOne| \leq \frac{1}{7}\exp(-\gamma\mu T) R^2\right\}. \label{eq:bound_1_SEG_str_mon}
    \end{equation}
    In addition, $E_{T-1}$ implies that
    \begin{eqnarray}
        \sum\limits_{l=0}^{T-1}\sigma_l^2 &\overset{\eqref{eq:SEG_str_mon_technical_1_2}}{\leq}& \frac{4\gamma^2\exp(-2\gamma\mu T)R^2}{2809 \ln\tfrac{6(K+1)}{\beta}} \sum\limits_{l=0}^{T-1} \frac{\EE_{\xi_1^l}\left[\|\omega_l^u\|^2\right]}{\exp(-\gamma\mu l)}\notag\\ &\overset{\eqref{eq:variance_theta_omega_str_mon}, T \leq K+1}{\leq}& \frac{72\gamma^2\exp(-2\gamma\mu T) R^2 \sigma^\alpha}{2809 \ln\tfrac{6(K+1)}{\beta}} \sum\limits_{l=0}^{K} \frac{\lambda_{l}^{2-\alpha}}{\exp(-\gamma\mu l)}\notag\\
        &\overset{\eqref{eq:lambda_SEG_str_mon}}{\leq}& \frac{72\gamma^\alpha\exp(-2\gamma\mu T) R^{4-\alpha} \sigma^\alpha}{2809\cdot 120^{2-\alpha} \ln^{3-\alpha}\tfrac{6(K+1)}{\beta}} \sum\limits_{l=0}^{K} \frac{1}{\exp(-\gamma\mu l)} \cdot \left(\exp(-\gamma\mu(1 + \nicefrac{l}{2}))\right)^{2-\alpha} \notag\\
        &\leq& \frac{72\gamma^\alpha\exp(-2\gamma\mu T) R^{4-\alpha} \sigma^\alpha}{2809\cdot 120^{2-\alpha} \ln^{3-\alpha}\tfrac{6(K+1)}{\beta}} \sum\limits_{l=0}^{K} \exp(\gamma\mu(\alpha-2)) \cdot \exp\left(\frac{\gamma\mu\alpha l}{2}\right) \notag\\
        &\leq& \frac{72\gamma^\alpha\exp(-2\gamma\mu T) R^{4-\alpha} \sigma^\alpha (K+1) \exp\left(\frac{\gamma\mu\alpha K}{2}\right)}{2809\cdot 120^{2-\alpha} \ln^{3-\alpha}\tfrac{6(K+1)}{\beta}}  \notag\\
        &\overset{\eqref{eq:gamma_SEG_str_mon}}{\leq}& \frac{\exp(-2\gamma\mu T)R^4}{294\ln\tfrac{6(K+1)}{\beta}}, \label{eq:bound_1_variances_SEG_str_mon}
    \end{eqnarray}
    where we also show that $E_{T-1}$ implies
    \begin{equation}
        \gamma^2 R^2\sum\limits_{l=0}^{K} \frac{\lambda_{l}^{2-\alpha}}{\exp(-\gamma\mu l)} \leq \frac{\gamma^\alpha R^{4-\alpha}(K+1)\exp(\frac{\gamma\mu\alpha K}{2})}{120^{2-\alpha}\ln^{2-\alpha} \tfrac{6(K+1)}{\beta}}. \label{eq:useful_inequality_on_lambda_SEG_str_mon}
    \end{equation}

    \textbf{Upper bound for $\circledTwo$.} From $E_{T-1}$ it follows that
    \begin{eqnarray}
        \circledTwo &\leq& 4\gamma^3 \mu \sum\limits_{l=0}^{T-1} \exp(-\gamma\mu (T-1-l)) \|\zeta_l\| \cdot \|\omega_l^b\| \notag\\
        &\overset{\eqref{eq:zeta_t_eta_t_bound_SEG_str_mon},\eqref{eq:bias_theta_omega_str_mon}}{\leq}& 2^{2+\alpha}\cdot\sqrt{2} \exp(-\gamma\mu (T-1)) \gamma^3 \mu L R \sum\limits_{l=0}^{T-1} \frac{\sigma^\alpha}{\lambda_l^{\alpha-1} \exp(-\nicefrac{\gamma\mu l}{2})} \notag\\
        &\overset{\eqref{eq:lambda_SEG_str_mon}}{=}& \frac{2^{2+\alpha}\cdot 120^{\alpha-1}\sqrt{2} \exp(-\gamma\mu (T-1)) \gamma^{2+\alpha} \mu L \sigma^\alpha \ln^{\alpha-1}\tfrac{6(K+1)}{\beta}}{R^{\alpha-2}} \sum\limits_{l=0}^{T-1} \frac{1}{\exp\left(-\gamma\mu(1 + \nicefrac{l}{2})\right)^{\alpha-1}\cdot\exp(-\nicefrac{\gamma\mu l}{2})} \notag\\
        &\overset{T \leq K+1}{\leq}& \frac{2^{3+\alpha}\cdot 120^{\alpha-1}\sqrt{2} \exp(-\gamma\mu (T-1)) \gamma^{2+\alpha} \mu L \sigma^\alpha \ln^{\alpha-1}\tfrac{6(K+1)}{\beta}}{R^{\alpha-2}} \sum\limits_{l=0}^{K} \exp\left(\frac{\gamma\mu \alpha l}{2}\right) \notag\\
        &\leq& \frac{2^{3+\alpha}\cdot 120^{\alpha-1}\sqrt{2} \exp(-\gamma\mu (T-1)) \gamma^{2+\alpha} \mu L \sigma^\alpha \ln^{\alpha-1}\tfrac{6(K+1)}{\beta} (K+1)\exp\left(\frac{\gamma\mu \alpha K}{2}\right)}{R^{\alpha-2}} \notag\\
        &\overset{\eqref{eq:gamma_SEG_str_mon}}{\leq}& \frac{1}{7}\exp(-\gamma\mu T) R^2, \label{eq:bound_2_SEG_str_mon}
    \end{eqnarray}
    where we also show that $E_{T-1}$ implies
    \begin{equation}
        \gamma R\sum\limits_{l=0}^{T-1} \frac{1}{\lambda_l^{\alpha-1}\exp(-\nicefrac{\gamma\mu l}{2})} \leq \frac{120^{\alpha-1}\gamma^\alpha (K+1)\exp(\frac{\gamma\mu \alpha K}{2})\ln^{\alpha-1} \tfrac{6(K+1)}{\beta}}{R^{\alpha-2}}. \label{eq:useful_inequality_on_lambda_SEG_str_mon_1}
    \end{equation}

    \paragraph{Upper bound for $\circledThree$.} By definition of $\theta_l^u$, we have $\EE_{\xi_2^l}[\theta_{l}^u] = 0$ and
    \begin{equation*}
        \EE_{\xi_2^l}\left[2\gamma (1-\gamma\mu)^{T-1-l} \langle \eta_l, \theta_l^u \rangle\right] = 0.
    \end{equation*}
    Next, sum $\circledThree$ has bounded with probability $1$ terms:
    \begin{eqnarray}
        |2\gamma (1-\gamma\mu)^{T-1-l} \langle \eta_l, \theta_l^u \rangle | &\leq& 2\gamma\exp(-\gamma\mu (T - 1 - l)) \|\eta_l\|\cdot \|\theta_l^u\|\notag\\
        &\overset{\eqref{eq:zeta_t_eta_t_bound_SEG_str_mon},\eqref{eq:theta_omega_magnitude_str_mon}}{\leq}& 4\sqrt{7}\gamma (1 + \gamma L) \exp(-\gamma\mu (T - 1 - \nicefrac{l}{2})) R \lambda_l\notag\\
        &\overset{\eqref{eq:gamma_SEG_str_mon},\eqref{eq:lambda_SEG_str_mon}}{\leq}& \frac{\exp(-\gamma\mu T)R^2}{7\ln\tfrac{6(K+1)}{\beta}} \eqdef c. \label{eq:SEG_str_mon_technical_3_1}
    \end{eqnarray}
    The summands also have bounded conditional variances $\widetilde\sigma_l^2 \eqdef \EE_{\xi_2^l}\left[4\gamma^2 (1-\gamma\mu)^{2T-2-2l} \langle \eta_l, \theta_l^u \rangle^2\right]$:
    \begin{eqnarray}
        \widetilde\sigma_l^2 &\leq& \EE_{\xi_2^l}\left[4\gamma^2\exp(-\gamma\mu (2T - 2 - 2l)) \|\eta_l\|^2\cdot \|\theta_l^u\|^2\right]\notag\\
        &\overset{\eqref{eq:zeta_t_eta_t_bound_SEG_str_mon}}{\leq}& 49\gamma^2 (1 + \gamma L)^2 \exp(-\gamma\mu (2T - 2 - l)) R^2 \EE_{\xi_2^l}\left[\|\theta_l^u\|^2\right]\notag\\
        &\overset{\eqref{eq:gamma_SEG_str_mon}}{\leq}& 50\gamma^2\exp(-\gamma\mu (2T - l))R^2 \EE_{\xi_2^l}\left[\|\theta_l^u\|^2\right]. \label{eq:SEG_str_mon_technical_3_2}
    \end{eqnarray}
    In other words, we showed that $\{2\gamma (1-\gamma\mu)^{T-1-l} \langle \eta_l, \theta_l^u \rangle\}_{l = 0}^{T-1}$ is a bounded martingale difference sequence with bounded conditional variances $\{\widetilde\sigma_l^2\}_{l = 0}^{T-1}$. Next, we apply Bernstein's inequality (Lemma~\ref{lem:Bernstein_ineq}) with $X_l = 2\gamma (1-\gamma\mu)^{T-1-l} \langle \eta_l, \theta_l^u \rangle$, parameter $c$ as in \eqref{eq:SEG_str_mon_technical_3_1}, $b = \tfrac{1}{7}\exp(-\gamma\mu T) R^2$, $G = \tfrac{\exp(-2 \gamma\mu T) R^4}{294\ln\frac{6(K+1)}{\beta}}$:
    \begin{equation*}
        \PP\left\{|\circledThree| > \frac{1}{7}\exp(-\gamma\mu T) R^2 \text{ and } \sum\limits_{l=0}^{T-1}\widetilde\sigma_l^2 \leq \frac{\exp(- 2\gamma\mu T) R^4}{294\ln\tfrac{6(K+1)}{\beta}}\right\} \leq 2\exp\left(- \frac{b^2}{2G + \nicefrac{2cb}{3}}\right) = \frac{\beta}{3(K+1)}.
    \end{equation*}
    Equivalently, we have
    \begin{equation}
        \PP\{E_{\circledThree}\} \geq 1 - \frac{\beta}{3(K+1)},\quad \text{for}\quad E_{\circledThree} = \left\{\text{either} \quad \sum\limits_{l=0}^{T-1}\widetilde\sigma_l^2 > \frac{\exp(- 2\gamma\mu T) R^4}{294\ln\tfrac{6(K+1)}{\beta}}\quad \text{or}\quad |\circledThree| \leq \frac{1}{7}\exp(-\gamma\mu T) R^2\right\}. \label{eq:bound_3_SEG_str_mon}
    \end{equation}
    In addition, $E_{T-1}$ implies that
    \begin{eqnarray}
        \sum\limits_{l=0}^{T-1}\widetilde\sigma_l^2 &\overset{\eqref{eq:SEG_str_mon_technical_3_2}}{\leq}& 50\gamma^2\exp(- 2\gamma\mu T)R^2\sum\limits_{l=0}^{T-1} \frac{\EE_{\xi_2^l}\left[\|\theta_l^u\|^2\right]}{\exp(-\gamma\mu l)}\notag\\ 
        &\overset{\eqref{eq:variance_theta_omega_str_mon}, T \leq K+1}{\leq}& 900\gamma^2\exp(-2\gamma\mu T) R^2 \sigma^\alpha \sum\limits_{l=0}^{K} \frac{\lambda_l^{2-\alpha}}{\exp(-\gamma\mu l)}\notag\\
        &\overset{\eqref{eq:useful_inequality_on_lambda_SEG_str_mon}}{\leq}& \frac{900\gamma^\alpha\exp(-2\gamma\mu T) R^{4-\alpha} \sigma^\alpha (K+1)\exp(\frac{\gamma\mu\alpha K}{2})}{120^{2-\alpha} \ln^{2-\alpha}\tfrac{6(K+1)}{\beta}}\notag\\
        &\overset{\eqref{eq:gamma_SEG_str_mon}}{\leq}& \frac{\exp(-2\gamma\mu T)R^4}{294\ln\tfrac{6(K+1)}{\beta}}. \label{eq:bound_3_variances_SEG_str_mon}
    \end{eqnarray}

    \paragraph{Upper bound for $\circledFour$.} From $E_{T-1}$ it follows that
    \begin{eqnarray}
        \circledFour &\leq& 2\gamma \exp(-\gamma\mu (T-1)) \sum\limits_{l=0}^{T-1} \frac{\|\eta_l\|\cdot \|\theta_l^b\|}{\exp(-\gamma\mu l)}\notag\\
        &\overset{\eqref{eq:zeta_t_eta_t_bound_SEG_str_mon}, \eqref{eq:bias_theta_omega_str_mon}}{\leq}& 2^{1+\alpha}\sqrt{7} \gamma (1+\gamma L) \exp(-\gamma\mu (T-1)) R \sigma^\alpha \sum\limits_{l=0}^{T-1} \frac{1}{\lambda_l^{\alpha-1} \exp(-\nicefrac{\gamma\mu l}{2})}\notag\\
        &\overset{\eqref{eq:useful_inequality_on_lambda_SEG_str_mon_1}}{\leq}& \frac{2^{3+\alpha}\cdot 120^{\alpha-1}\sqrt{7} \gamma^\alpha(1+\gamma L) \exp(-\gamma\mu T) (K+1) \exp\left(\frac{\gamma\mu\alpha K}{2}\right) \ln^{\alpha-1}\tfrac{6(K+1)}{\beta}}{R^{\alpha-2}} \notag \\
        &\overset{\eqref{eq:gamma_SEG_str_mon}}{\leq}& \frac{1}{7}\exp(-\gamma\mu T) R^2. \label{eq:bound_4_SEG_str_mon}
    \end{eqnarray}

    \paragraph{Upper bound for $\circledFive$.} From $E_{T-1}$ it follows that
    \begin{eqnarray}
        \circledFive &=& 2\gamma^2 \exp(-\gamma\mu (T-1)) \sum\limits_{l=0}^{T-1} \frac{\EE_{\xi_2^l}\left[\|\theta_l^u\|^2\right] + 4\EE_{\xi_1^l}\left[\|\omega_l^u\|^2\right]}{\exp(-\gamma\mu l)} \notag\\
        &\overset{\eqref{eq:variance_theta_omega_str_mon}}{\leq}& 180\gamma^2\exp(-\gamma\mu (T-1)) \sigma^\alpha\sum\limits_{l=0}^{T-1} \frac{\lambda_l^{2-\alpha}}{\exp(-\gamma\mu l)} \notag\\
        &\overset{\eqref{eq:useful_inequality_on_lambda_SEG_str_mon}}{\leq}&  \frac{180\gamma^\alpha R^{2-\alpha}\exp(-\gamma\mu (T-1)) \sigma^\alpha (K+1)\exp(\frac{\gamma\mu\alpha K}{2})}{120^{2-\alpha} \ln^{2-\alpha}\tfrac{6(K+1)}{\beta}} \notag\\
        &\overset{\eqref{eq:gamma_SEG_str_mon}}{\leq}& \frac{1}{7} \exp(-\gamma\mu T) R^2. \label{eq:bound_5_SEG_str_mon}
    \end{eqnarray}

    \paragraph{Upper bound for $\circledSix$.} First, we have
    \begin{equation*}
        2\gamma^2 (1-\gamma\mu)^{T-1-l}\EE_{\xi_1^l, \xi_2^l}\left[\|\theta_l^u\|^2 + 4\|\omega_l^u\|^2 -\EE_{\xi_2^l}\left[\|\theta_l^u\|^2\right] - 4\EE_{\xi_1^l}\left[\|\omega_l^u\|^2\right]\right] = 0.
    \end{equation*}
    Next, sum $\circledSix$ has bounded with probability $1$ terms:
    \begin{eqnarray}
        2\gamma^2 (1-\gamma\mu)^{T-1-l}\left| \|\theta_l^u\|^2 + 4\|\omega_l^u\|^2 -\EE_{\xi_2^l}\left[\|\theta_l^u\|^2\right] - 4\EE_{\xi_1^l}\left[\|\omega_l^u\|^2\right] \right| 
        &\overset{\eqref{eq:theta_omega_magnitude_str_mon}}{\leq}& \frac{80\gamma^2 \exp(-\gamma\mu T) \lambda_l^2}{\exp(-\gamma\mu (1+l))}\notag\\
        &\overset{\eqref{eq:lambda_SEG_str_mon}}{\leq}& \frac{\exp(-\gamma\mu T)R^2}{7\ln\tfrac{6(K+1)}{\beta}}\notag\\
        &\eqdef& c. \label{eq:SEG_str_mon_technical_6_1}
    \end{eqnarray}
    The summands also have conditional variances
    \begin{equation*}
        \widehat\sigma_l^2 \eqdef \EE_{\xi_1^l,\xi_2^l}\left[4\gamma^4 (1-\gamma\mu)^{2T-2-2l} \left| \|\theta_l^u\|^2 + 4\|\omega_l^u\|^2 -\EE_{\xi_2^l}\left[\|\theta_l^u\|^2\right] - 4\EE_{\xi_1^l}\left[\|\omega_l^u\|^2\right] \right|^2\right]
    \end{equation*}
    that are bounded
    \begin{eqnarray}
        \widehat\sigma_l^2 &\overset{\eqref{eq:SEG_str_mon_technical_6_1}}{\leq}& \frac{2\gamma^2\exp(-2\gamma\mu T)R^2}{7\exp(-\gamma\mu (1+l))\ln\tfrac{6(K+1)}{\beta}} \EE_{\xi_1^l,\xi_2^l}\left[\left| \|\theta_l^u\|^2 + 4\|\omega_l^u\|^2 -\EE_{\xi_2^l}\left[\|\theta_l^u\|^2\right] - 4\EE_{\xi_1^l}\left[\|\omega_l^u\|^2\right] \right|\right]\notag\\
        &\leq& \frac{4\gamma^2\exp(-2\gamma\mu T)R^2}{7\exp(-\gamma\mu (1+l))\ln\tfrac{6(K+1)}{\beta}} \EE_{\xi_1^l,\xi_2^l}\left[\|\theta_l^u\|^2 + 4\|\omega_l^u\|^2\right]. \label{eq:SEG_str_mon_technical_6_2}
    \end{eqnarray}
    In other words, we showed that $\left\{2\gamma^2 (1-\gamma\mu)^{T-1-l}\left( \|\theta_l^u\|^2 + 4\|\omega_l^u\|^2 -\EE_{\xi_2^l}\left[\|\theta_l^u\|^2\right] - 4\EE_{\xi_1^l}\left[\|\omega_l^u\|^2\right]\right)\right\}_{l = 0}^{T-1}$ is a bounded martingale difference sequence with bounded conditional variances $\{\widehat\sigma_l^2\}_{l = 0}^{T-1}$. Next, we apply Bernstein's inequality (Lemma~\ref{lem:Bernstein_ineq}) with $X_l = 2\gamma^2 (1-\gamma\mu)^{T-1-l}\left( \|\theta_l^u\|^2 + 4\|\omega_l^u\|^2 -\EE_{\xi_2^l}\left[\|\theta_l^u\|^2\right] - 4\EE_{\xi_1^l}\left[\|\omega_l^u\|^2\right]\right)$, parameter $c$ as in \eqref{eq:SEG_str_mon_technical_6_1}, $b = \tfrac{1}{7}\exp(-\gamma\mu T) R^2$, $G = \tfrac{\exp(-2 \gamma\mu T) R^4}{294\ln\frac{6(K+1)}{\beta}}$:
    \begin{equation*}
        \PP\left\{|\circledSix| > \frac{1}{7}\exp(-\gamma\mu T) R^2 \text{ and } \sum\limits_{l=0}^{T-1}\widehat\sigma_l^2 \leq \frac{\exp(-2\gamma\mu T) R^4}{294\ln\frac{6(K+1)}{\beta}}\right\} \leq 2\exp\left(- \frac{b^2}{2G + \nicefrac{2cb}{3}}\right) = \frac{\beta}{3(K+1)}.
    \end{equation*}
    Equivalently, we have
    \begin{equation}
        \PP\{E_{\circledSix}\} \geq 1 - \frac{\beta}{3(K+1)},\quad \text{for}\quad E_{\circledSix} = \left\{\text{either} \quad \sum\limits_{l=0}^{T-1}\widehat\sigma_l^2 > \frac{\exp(-2\gamma\mu T) R^4}{294\ln\tfrac{6(K+1)}{\beta}}\quad \text{or}\quad |\circledSix| \leq \frac{1}{7}\exp(-\gamma\mu T) R^2\right\}. \label{eq:bound_6_SEG_str_mon}
    \end{equation}
    In addition, $E_{T-1}$ implies that
    \begin{eqnarray}
        \sum\limits_{l=0}^{T-1}\widehat\sigma_l^2 &\overset{\eqref{eq:SEG_str_mon_technical_6_2}}{\leq}& \frac{4\gamma^2\exp(-\gamma\mu (2T-1))R^2}{7\ln\tfrac{6(K+1)}{\beta}} \sum\limits_{l=0}^{T-1} \frac{\EE_{\xi_1^l,\xi_2^l}\left[\|\theta_l^u\|^2 + 4\|\omega_l^u\|^2\right]}{\exp(-\gamma\mu l)}\notag\\ &\overset{\eqref{eq:variance_theta_omega_str_mon}, T \leq K+1}{\leq}& \frac{360\gamma^2\exp(-\gamma\mu (2T-1)) R^2 \sigma^\alpha}{7\ln\tfrac{6(K+1)}{\beta}} \sum\limits_{l=0}^{K} \frac{\lambda_l^{2-\alpha}}{\exp(-\gamma\mu l)}\notag\\
        &\overset{\eqref{eq:useful_inequality_on_lambda_SEG_str_mon}}{\leq}& \frac{360\gamma^\alpha\exp(-\gamma\mu (2T-1)) R^{4-\alpha} \sigma^\alpha (K+1)\exp(\frac{\gamma\mu\alpha K}{2})}{7\cdot 120^{2-\alpha}\ln^{3-\alpha}\tfrac{6(K+1)}{\beta}} \notag\\
        &\overset{\eqref{eq:gamma_SEG_str_mon}}{\leq}& \frac{\exp(-2\gamma\mu T)R^4}{294\ln\tfrac{6(K+1)}{\beta}}. \label{eq:bound_6_variances_SEG_str_mon}
    \end{eqnarray}

    \paragraph{Upper bound for $\circledSeven$.} From $E_{T-1}$ it follows that
    \begin{eqnarray}
        \circledSeven &=&  2\gamma^2 \sum\limits_{l=0}^{T-1} \exp(-\gamma\mu (T-1-l)) \left(\|\theta_l^b\|^2 + 4\|\omega_l^b\|^2\right)\notag\\
        &\overset{\eqref{eq:bias_theta_omega_str_mon}}{\leq}& 10\cdot 2^{2\alpha}\gamma^2 \exp(-\gamma\mu (T-1)) \sigma^{2\alpha} \sum\limits_{l=0}^{T-1} \frac{1}{\lambda_l^{2\alpha-2} \exp(-\gamma\mu l)} \notag\\
        &\overset{\eqref{eq:lambda_SEG_str_mon}, T \leq K+1}{\leq}& \frac{20\cdot 2^{2\alpha}\cdot 120^{2\alpha-2}\gamma^{2\alpha} \exp(-\gamma\mu T) \sigma^{2\alpha} \ln^{2\alpha-2}\tfrac{6(K+1)}{\beta}}{R^{2\alpha-2}} \sum\limits_{l=0}^{K} \exp\left(\gamma\mu(2\alpha-2)\left(1 + \frac{l}{2}\right)\right)\exp(\gamma\mu l)\notag\\
        &\leq& \frac{40\cdot 2^{2\alpha}\cdot 120^{2\alpha-2}\gamma^{2\alpha} \exp(-\gamma\mu T) \sigma^{2\alpha} \ln^{2\alpha-2}\tfrac{6(K+1)}{\beta}}{R^{2\alpha-2}} \sum\limits_{l=0}^{K} \exp(\gamma\mu \alpha l)\notag\\
        &\leq& \frac{40\cdot 2^{2\alpha}\cdot 120^{2\alpha-2}\gamma^{2\alpha} \exp(-\gamma\mu T) \sigma^{2\alpha} \ln^{2\alpha-2}\tfrac{6(K+1)}{\beta} (K+1) \exp(\gamma\mu \alpha K)}{R^{2\alpha-2}}\notag\\
        &\overset{\eqref{eq:gamma_SEG_str_mon}}{\leq}& \frac{1}{7}\exp(-\gamma\mu T) R^2. \label{eq:bound_7_SEG_str_mon}
    \end{eqnarray}

    Now, we have the upper bounds for  $\circledOne, \circledTwo, \circledThree, \circledFour, \circledFive, \circledSix, \circledSeven$. In particular, probability event $E_{T-1}$ implies
    \begin{gather*}
        R_T^2 \overset{\eqref{eq:SEG_str_mon_1234567_bound}}{\leq} \exp(-\gamma\mu T) R^2 + \circledOne + \circledTwo + \circledThree + \circledFour + \circledFive + \circledSix + \circledSeven,\\
        \circledTwo \overset{\eqref{eq:bound_2_SEG_str_mon}}{\leq} \frac{1}{7}\exp(-\gamma\mu T)R^2,\quad \circledFour \overset{\eqref{eq:bound_4_SEG_str_mon}}{\leq} \frac{1}{7}\exp(-\gamma\mu T)R^2,\\ \circledFive \overset{\eqref{eq:bound_5_SEG_str_mon}}{\leq} \frac{1}{7}\exp(-\gamma\mu T)R^2,\quad \circledSeven \overset{\eqref{eq:bound_7_SEG_str_mon}}{\leq} \frac{1}{7}\exp(-\gamma\mu T)R^2,\\
        \sum\limits_{l=0}^{T-1}\sigma_l^2 \overset{\eqref{eq:bound_1_variances_SEG_str_mon}}{\leq}  \frac{\exp(-2\gamma\mu T)R^4}{294\ln\tfrac{6(K+1)}{\beta}},\quad \sum\limits_{l=0}^{T-1}\widetilde\sigma_l^2 \overset{\eqref{eq:bound_3_variances_SEG_str_mon}}{\leq} \frac{\exp(-2\gamma\mu T)R^4}{294\ln\tfrac{6(K+1)}{\beta}},\quad \sum\limits_{l=0}^{T-1}\widehat\sigma_l^2 \overset{\eqref{eq:bound_6_variances_SEG_str_mon}}{\leq}  \frac{\exp(-2\gamma\mu T)R^4}{294\ln\tfrac{6(K+1)}{\beta}}.
    \end{gather*}
     Moreover, we also have (see \eqref{eq:bound_1_SEG_str_mon}, \eqref{eq:bound_3_SEG_str_mon}, \eqref{eq:bound_6_SEG_str_mon} and our induction assumption)
     \begin{gather*}
        \PP\{E_{T-1}\} \geq 1 - \frac{(T-1)\beta}{K+1},\\
        \PP\{E_{\circledOne}\} \geq 1 - \frac{\beta}{3(K+1)}, \quad \PP\{E_{\circledThree}\} \geq 1 - \frac{\beta}{3(K+1)}, \quad \PP\{E_{\circledSix}\} \geq 1 - \frac{\beta}{3(K+1)} ,
    \end{gather*}
    where
    \begin{eqnarray}
        E_{\circledOne}&=& \left\{\text{either} \quad \sum\limits_{l=0}^{T-1}\sigma_l^2 > \frac{\exp(-2\gamma\mu T) R^4}{294\ln\tfrac{6(K+1)}{\beta}}\quad \text{or}\quad |\circledOne| \leq \frac{1}{7}\exp(-\gamma\mu T) R^2\right\},\notag\\
        E_{\circledThree}&=& \left\{\text{either} \quad \sum\limits_{l=0}^{T-1}\widetilde\sigma_l^2 > \frac{\exp(-2\gamma\mu T) R^4}{294\ln\tfrac{6(K+1)}{\beta}}\quad \text{or}\quad |\circledThree| \leq \frac{1}{7}\exp(-\gamma\mu T) R^2\right\},\notag\\
        E_{\circledSix}&=& \left\{\text{either} \quad \sum\limits_{l=0}^{T-1}\widehat\sigma_l^2 > \frac{\exp(-2\gamma\mu T) R^4}{294\ln\tfrac{6(K+1)}{\beta}}\quad \text{or}\quad |\circledSix| \leq \frac{1}{7}\exp(-\gamma\mu T) R^2\right\}.\notag
    \end{eqnarray}
    Thus, probability event $E_{T-1} \cap E_{\circledOne} \cap E_{\circledThree} \cap E_{\circledSix}$ implies
    \begin{eqnarray*}
        R_T^2 &\overset{\eqref{eq:SEG_str_mon_1234567_bound}}{\leq}& \exp(-\gamma\mu T) R^2 + \circledOne + \circledTwo + \circledThree + \circledFour + \circledFive + \circledSix + \circledSeven\\
        &\leq& 2\exp(-\gamma\mu T) R^2,
    \end{eqnarray*}
    which is equivalent to \eqref{eq:induction_inequality_str_mon_SEG} for $t = T$, and
    \begin{equation}
        \PP\{E_T\} \geq \PP\{E_{T-1} \cap E_{\circledOne} \cap E_{\circledThree} \cap E_{\circledSix}\} = 1 - \PP\{\overline{E}_{T-1} \cup \overline{E}_{\circledOne} \cup \overline{E}_{\circledThree} \cup \overline{E}_{\circledSix}\} \geq 1 - \frac{T\beta}{K+1}. \notag
    \end{equation}
    This finishes the inductive part of our proof, i.e., for all $k = 0,1,\ldots,K+1$ we have $\PP\{E_k\} \geq 1 - \nicefrac{k\beta}{(K+1)}$. In particular, for $k = K+1$ we have that with probability at least $1 - \beta$
    \begin{equation}
        \|x^{K+1} - x^*\|^2 \leq 2\exp(-\gamma\mu (K+1))R^2. \notag
    \end{equation}
    Finally, if 
    \begin{eqnarray*}
        \gamma &=& \min\left\{\frac{1}{650 L \ln \tfrac{6(K+1)}{\beta}}, \frac{\ln(B_K)}{\mu(K+1)}\right\}, \notag\\
        B_K &=& \max\left\{2, \frac{(K+1)^{\frac{2(\alpha-1)}{\alpha}}\mu^2R^2}{264600^{\frac{2}{\alpha}}\sigma^2\ln^{\frac{2(\alpha-1)}{\alpha}}\left(\frac{6(K+1)}{\beta}\right)\ln^2(B_K)} \right\}  \\
        &=& \cO\left(\max\left\{2, \frac{K^{\frac{2(\alpha-1)}{\alpha}}\mu^2R^2}{\sigma^2\ln^{\frac{2(\alpha-1)}{\alpha}}\left(\frac{K}{\beta}\right)\ln^2\left(\max\left\{2, \frac{K^{\frac{2(\alpha-1)}{\alpha}}\mu^2R^2}{\sigma^2\ln^{\frac{2(\alpha-1)}{\alpha}}\left(\frac{K}{\beta}\right)} \right\}\right)} \right\}\right\}
    \end{eqnarray*}
    then with probability at least $1-\beta$
    \begin{eqnarray*}
        \|x^{K+1} - x^*\|^2 &\leq& 2\exp(-\gamma\mu (K+1))R^2\\
        &=& 2R^2\max\left\{\exp\left(-\frac{\mu(K+1)}{650 L \ln \tfrac{6(K+1)}{\beta}}\right), \frac{1}{B_K} \right\}\\
        &=& \cO\left(\max\left\{R^2\exp\left(- \frac{\mu K}{L \ln \tfrac{K}{\beta}}\right), \frac{\sigma^2\ln^{\frac{2(\alpha-1)}{\alpha}}\left(\frac{K}{\beta}\right)\ln^2\left(\max\left\{2, \frac{K^{\frac{2(\alpha-1)}{\alpha}}\mu^2R^2}{\sigma^2\ln^{\frac{2(\alpha-1)}{\alpha}}\left(\frac{K}{\beta}\right)} \right\}\right)}{K^{\frac{2(\alpha-1)}{\alpha}}\mu^2}\right\}\right).
    \end{eqnarray*}
    To get $\|x^{K+1} - x^*\|^2 \leq \varepsilon$ with probability at least $1-\beta$ it is sufficient to choose $K$ such that both terms in the maximum above are $\cO(\varepsilon)$. This leads to
    \begin{equation*}
         K = \cO\left(\frac{L}{\mu}\ln\left(\frac{R^2}{\varepsilon}\right)\ln\left(\frac{L}{\mu \beta}\ln\frac{R^2}{\varepsilon}\right), \left(\frac{\sigma^2}{\mu^2\varepsilon}\right)^{\frac{\alpha}{2(\alpha-1)}}\ln \left(\frac{1}{\beta} \left(\frac{\sigma^2}{\mu^2\varepsilon}\right)^{\frac{\alpha}{2(\alpha-1)}}\right)\ln^{\frac{\alpha}{\alpha-1}}\left(B_\varepsilon\right)\right),
    \end{equation*}
    where
    \begin{equation*}
        B_\varepsilon = \max\left\{2, \frac{R^2}{\varepsilon \ln \left(\frac{1}{\beta} \left(\frac{\sigma^2}{\mu^2\varepsilon}\right)^{\frac{\alpha}{2(\alpha-1)}}\right)}\right\}.
    \end{equation*}
    This concludes the proof.
\end{proof}

\clearpage

\section{Missing Proofs for \algname{clipped-SGDA}}\label{appendix:SGDA}

In this section, we provide the complete formulation of the main results for \algname{clipped-SGDA}  and the missing proofs. For brevity, we will use the following notation: $\tF_{\xi^k}(x^{k}) = \clip\left(F_{\xi^k}(x^{k}), \lambda_k\right)$ .

\begin{algorithm}[h]
\caption{Clipped Stochastic Gradient Descent Ascent (\algname{clipped-SGDA}) \citep{gorbunov2022clipped}}
\label{alg:clipped-SGDA}   
\begin{algorithmic}[1]
\REQUIRE starting point $x^0$, number of iterations $K$, stepsize $\gamma > 0$, clipping levels $\{\lambda_k\}_{k=0}^{K-1}$.
\FOR{$k=0,\ldots, K$}
\STATE Compute $\tF_{\xi^k}(x^{k}) = \clip\left(F_{\xi^k}(x^{k}), \lambda_k\right)$ using a fresh sample $\xi^k \sim \cD_k$
\STATE $x^{k+1} = x^k - \gamma \tF_{\xi^k}(x^{k})$
\ENDFOR
\ENSURE $x^{K+1}$ or $x_{\avg}^K = \frac{1}{K+1}\sum\limits_{k=0}^K x^K$
\end{algorithmic}
\end{algorithm}

\subsection{Monotone Star-Cocoercive Problems}

We start with the following lemma derived by \citet{gorbunov2022extragradient}. Since this lemma handles only deterministic part of the algorithm, the proof is the same as in the original work.
\begin{lemma}[Lemma D.1 from \citep{gorbunov2022extragradient}]\label{lem:optimization_lemma_gap_SGDA}
    Let Assumptions~\ref{as:monotonicity} and \ref{as:star-cocoercivity} hold for $Q = B_{3R}(x^*)$, where $R \geq \|x^0 - x^*\|$ and $0 < \gamma \leq \nicefrac{2}{\ell}$. If $x^k$ lies in $B_{3R}(x^*)$ for all $k = 0,1,\ldots, K$ for some $K\geq 0$, then for all $u \in B_{3R}(x^*)$ the iterates produced by \algname{clipped-SGDA} satisfy
    \begin{eqnarray}
        \langle F(u), x^K_{\avg} - u\rangle &\leq& \frac{\|x^0 - u\|^2 - \|x^{K+1} - u\|^2}{2\gamma(K+1)} + \frac{\gamma}{2(K+1)}\sum\limits_{k=0}^K\left(\|F(x^k)\|^2 + \|\omega_k\|^2\right)\notag\\
        &&\quad + \frac{1}{K+1}\sum\limits_{k=0}^K\langle x^k - u - \gamma F(x^k), \omega_k\rangle, \label{eq:optimization_lemma_SGDA}\\
        x^K_{\avg} &\eqdef& \frac{1}{K+1}\sum\limits_{k=0}^{K}x^k, \label{eq:x_avg_SGDA}\\
        \omega_k &\eqdef& F(x^k) - \tF_{\xi^k}(x^k). \label{eq:omega_k_SGDA}
    \end{eqnarray}
\end{lemma}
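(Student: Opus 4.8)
The plan is to follow the classical ``one-step energy inequality plus telescoping'' argument for projection-free methods, specialized to the clipped update. First I would rewrite the \algname{clipped-SGDA} recursion as $x^{k+1} = x^k - \gamma\tF_{\xi^k}(x^k) = x^k - \gamma\bigl(F(x^k) - \omega_k\bigr)$, using the definition $\omega_k = F(x^k) - \tF_{\xi^k}(x^k)$ from \eqref{eq:omega_k_SGDA}. Fixing any $u \in B_{3R}(x^*)$, I expand the squared distance to $u$:
\begin{equation*}
    \|x^{k+1} - u\|^2 = \|x^k - u\|^2 - 2\gamma\langle F(x^k) - \omega_k, x^k - u\rangle + \gamma^2\|F(x^k) - \omega_k\|^2 .
\end{equation*}

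The next step is a purely algebraic regrouping of the error terms so that they line up with the right-hand side of \eqref{eq:optimization_lemma_SGDA}. Using $\|a - b\|^2 = \|a\|^2 - 2\langle a, b\rangle + \|b\|^2$ with $a = F(x^k)$, $b = \omega_k$, the two pieces $2\gamma\langle \omega_k, x^k - u\rangle$ and $-2\gamma^2\langle F(x^k), \omega_k\rangle$ produced by the expansion combine into $2\gamma\langle \omega_k, x^k - u - \gamma F(x^k)\rangle$, leaving the identity
\begin{equation*}
    2\gamma\langle F(x^k), x^k - u\rangle = \|x^k - u\|^2 - \|x^{k+1} - u\|^2 + 2\gamma\langle \omega_k, x^k - u - \gamma F(x^k)\rangle + \gamma^2\bigl(\|F(x^k)\|^2 + \|\omega_k\|^2\bigr).
\end{equation*}
This is the point where the hypothesis $x^k \in B_{3R}(x^*) = Q$ enters: together with $u \in Q$ it permits invoking monotonicity (Assumption~\ref{as:monotonicity}) in the form $\langle F(u), x^k - u\rangle \leq \langle F(x^k), x^k - u\rangle$, which converts the identity above into an upper bound on $2\gamma\langle F(u), x^k - u\rangle$. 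The remaining hypotheses (star-cocoercivity and $\gamma \leq \nicefrac{2}{\ell}$) are not strictly needed for this particular estimate; they are kept from the cited lemma because they are used later, when this lemma is applied with $u = x^*$, to control $\|F(x^k)\|^2$ and to show the iterates stay in $Q$.

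Finally I would sum the resulting inequality over $k = 0, 1, \ldots, K$, which telescopes $\|x^k - u\|^2 - \|x^{k+1} - u\|^2$ into $\|x^0 - u\|^2 - \|x^{K+1} - u\|^2$, then divide by $2\gamma(K+1)$ and use linearity of $\langle F(u), \cdot\rangle$, so that $\tfrac{1}{K+1}\sum_{k=0}^K\langle F(u), x^k - u\rangle = \langle F(u), x^K_{\avg} - u\rangle$ with $x^K_{\avg}$ as in \eqref{eq:x_avg_SGDA}; this yields exactly \eqref{eq:optimization_lemma_SGDA}. There is no genuine obstacle here: the only care required is (i) the exact bookkeeping of the cross terms so the noise contribution appears precisely as $\langle x^k - u - \gamma F(x^k), \omega_k\rangle$ rather than with leftover pieces, and (ii) ensuring monotonicity is applied with both arguments inside $Q$, which is guaranteed by the standing assumption $\{x^k\}_{k=0}^K \subseteq B_{3R}(x^*)$ and by $u \in B_{3R}(x^*)$.
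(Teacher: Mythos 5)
Your proof is correct: the expansion of $\|x^{k+1}-u\|^2$, the regrouping of the cross terms into $2\gamma\langle \omega_k, x^k - u - \gamma F(x^k)\rangle$, the application of monotonicity on $Q$, and the telescoping sum are exactly the standard argument, and it matches the proof in the cited source (the paper itself only references Lemma~D.1 of \citet{gorbunov2022extragradient} without reproducing it). Your observation that star-cocoercivity and $\gamma \leq \nicefrac{2}{\ell}$ are not needed for this particular inequality but only in the companion lemma bounding $\sum_k\|F(x^k)\|^2$ is also accurate.
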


Also we need to use the following lemma to estimate the term $\sum\limits_{k=0}^K\|F(x^k)\|^2$ from the right hand side of \eqref{eq:optimization_lemma_SGDA} in the proof of the main theorem.

\begin{lemma}[Lemma D.2 from \citep{gorbunov2022extragradient}]\label{lem:optimization_lemma_SGDA}
    Let Assumption~\ref{as:star-cocoercivity} hold for $Q = B_{3R}(x^*)$, where $R \geq R_0 \eqdef \|x^0 - x^*\|$ and $0 < \gamma \leq \nicefrac{2}{\ell}$. If $x^k$ lies in $B_{3R}(x^*)$ for all $k = 0,1,\ldots, K$ for some $K\geq 0$, then the iterates produced by \algname{clipped-SGDA} satisfy
    \begin{eqnarray}
        \frac{\gamma}{K+1}\left(\frac{2}{\ell} - \gamma\right)\sum\limits_{k=0}^K\|F(x^k)\|^2 &\leq& \frac{\|x^0 - x^*\|^2 - \|x^{K+1} - x^*\|^2}{K+1}+ \frac{2\gamma}{K+1}\sum\limits_{k=0}^K\langle x^k - x^* - \gamma F(x^k), \omega_k\rangle\notag\\
        &&\quad + \frac{\gamma^2}{K+1}\sum\limits_{k=0}^K\|\omega_k\|^2,  \label{eq:optimization_lemma_norm_SGDA}
    \end{eqnarray}
    where $\omega_k$ is defined in \eqref{eq:omega_k_SGDA}.
\end{lemma}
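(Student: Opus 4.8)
The plan is to follow the same ``optimization part'' template used elsewhere in the paper (cf.\ Lemma~\ref{lem:main_opt_lemma_clipped_SGD_non_convex} and Lemma~\ref{lem:optimization_lemma_gap_SGDA}): expand the squared distance to the solution along one step of \algname{clipped-SGDA}, use star-cocoercivity to extract a negative multiple of $\|F(x^k)\|^2$, and then sum over $k$ and telescope, leaving the stochastic terms untouched on the right-hand side.

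First I would rewrite one step as $x^{k+1}-x^* = \bigl(x^k - x^* - \gamma F(x^k)\bigr) + \gamma\omega_k$, using $\tF_{\xi^k}(x^k) = F(x^k)-\omega_k$ from \eqref{eq:omega_k_SGDA}, and expand
\[
\|x^{k+1}-x^*\|^2 = \|x^k - x^* - \gamma F(x^k)\|^2 + 2\gamma\langle x^k - x^* - \gamma F(x^k),\, \omega_k\rangle + \gamma^2\|\omega_k\|^2.
\]
Then I would expand the first term once more and apply Assumption~\ref{as:star-cocoercivity} at the point $x^k$ --- legitimate because the hypothesis guarantees $x^k\in B_{3R}(x^*)=Q$ for every $k=0,\dots,K$ --- in the form $\langle F(x^k), x^k-x^*\rangle \ge \tfrac1\ell\|F(x^k)\|^2$. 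This turns $-2\gamma\langle x^k-x^*, F(x^k)\rangle + \gamma^2\|F(x^k)\|^2$ into $-\gamma\bigl(\tfrac2\ell-\gamma\bigr)\|F(x^k)\|^2$, giving the one-step estimate
\[
\gamma\Bigl(\tfrac2\ell-\gamma\Bigr)\|F(x^k)\|^2 \le \|x^k-x^*\|^2 - \|x^{k+1}-x^*\|^2 + 2\gamma\langle x^k - x^* - \gamma F(x^k),\, \omega_k\rangle + \gamma^2\|\omega_k\|^2.
\]
Summing over $k=0,\dots,K$, telescoping the distance differences, and dividing by $K+1$ yields exactly \eqref{eq:optimization_lemma_norm_SGDA}.

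There is essentially no genuine obstacle: the derivation is purely algebraic once star-cocoercivity is available, and the restriction $0<\gamma\le 2/\ell$ is not used in obtaining the inequality at all --- it only ensures that the coefficient $\gamma(2/\ell-\gamma)$ on the left is nonnegative, which is what makes the bound useful when the lemma is invoked later (for instance inside an induction showing the iterates stay in $B_{3R}(x^*)$, analogous to the proofs of Theorems~\ref{thm:main_result_gap_SEG} and \ref{thm:main_result_str_mon_SEG}). The one point deserving care is that star-cocoercivity must be applied at \emph{every} iterate $x^0,\dots,x^K$, so the hypothesis ``$x^k\in B_{3R}(x^*)$ for all $k=0,\dots,K$'' is used in full strength; in the downstream theorems this is supplied by the inductive hypothesis, exactly as in Lemma~\ref{lem:optimization_lemma_gap_SGDA}.
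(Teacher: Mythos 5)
Your proof is correct and is exactly the standard derivation behind this lemma (which the paper only cites from Gorbunov et al.\ 2022 rather than reproving): write $x^{k+1}-x^* = (x^k-x^*-\gamma F(x^k)) + \gamma\omega_k$, expand the square, use star-cocoercivity at each $x^k\in Q$ to convert $-2\gamma\langle x^k-x^*,F(x^k)\rangle + \gamma^2\|F(x^k)\|^2$ into $-\gamma(\nicefrac{2}{\ell}-\gamma)\|F(x^k)\|^2$, then telescope. Your side remarks --- that $\gamma\le\nicefrac{2}{\ell}$ only serves to make the left-hand coefficient nonnegative, and that the hypothesis $x^k\in B_{3R}(x^*)$ is needed precisely so star-cocoercivity applies at every iterate --- are also accurate.
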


Using those lemmas, we prove the main convergence result for \algname{clipped-SGDA} in the monotone star-cocoercive case.

\begin{theorem}[Case 1 in Theorem~\ref{thm:clipped_SGDA_main_theorem}]\label{thm:main_result_gap_SGDA}
    Let Assumptions~\ref{as:bounded_alpha_moment}, \ref{as:monotonicity}, \ref{as:star-cocoercivity} hold for $Q = B_{3R}(x^*)$, where $R \geq \|x^0 - x^*\|$, and    
    \begin{eqnarray}
        0< \gamma &\leq& \min\left\{\frac{1}{170\ell \ln \tfrac{6(K+1)}{\beta}}, \frac{R}{97200^{\tfrac{1}{\alpha}}(K+1)^{\frac{1}{\alpha}}\sigma \ln^{\frac{\alpha-1}{\alpha}} \tfrac{6(K+1)}{\beta}}\right\}, \label{eq:gamma_SGDA}\\
        \lambda_{k} \equiv \lambda &=& \frac{R}{60\gamma \ln \tfrac{6(K+1)}{\beta}}, \label{eq:lambda_SGDA}
    \end{eqnarray}
    for some $K \geq 0$ and $\beta \in (0,1]$ such that $\ln \tfrac{6(K+1)}{\beta} \geq 1$. Then, after $K$ iterations the iterates produced by \algname{clipped-SGDA} with probability at least $1 - \beta$ satisfy 
    \begin{equation}
        \gap_R(x_{\avg}^K) \leq \frac{5R^2}{\gamma(K+1)} \quad \text{and}\quad \{x^k\}_{k=0}^{K+1} \subseteq B_{3R}(x^*), \label{eq:main_result_SGDA}
    \end{equation}
    where $x_{\avg}^K$ is defined in \eqref{eq:x_avg_SGDA}. In particular, when $\gamma$ equals the minimum from \eqref{eq:gamma_SGDA}, then the iterates produced by \algname{clipped-SGDA} after $K$ iterations with probability at least $1-\beta$ satisfy
    \begin{equation}
        \gap_R(\tx_{\avg}^K) = \cO\left(\max\left\{\frac{\ell R^2\ln\frac{K}{\beta}}{K}, \frac{\sigma R \ln^{\frac{\alpha-1}{\alpha}}\frac{K}{\beta}}{K^{\frac{\alpha-1}{\alpha}}}\right\}\right), \label{eq:clipped_SGDA_monotone_case_2_appendix}
    \end{equation}
    meaning that to achieve $\gap_R(\tx_{\avg}^K) \leq \varepsilon$ with probability at least $1 - \beta$ \algname{clipped-SGDA} requires
    \begin{equation}
        K = \cO\left(\frac{\ell R^2}{\varepsilon}\ln\frac{\ell R^2}{\varepsilon\beta}, \left(\frac{\sigma R}{\varepsilon}\right)^{\frac{\alpha}{\alpha-1}}\ln\left(\frac{1}{\beta}\left(\frac{\sigma R}{\varepsilon}\right)^{\frac{\alpha}{\alpha-1}}\right)\right)\quad \text{iterations/oracle calls.} \label{eq:clipped_SGDA_monotone_case_complexity_appendix}
    \end{equation}
\end{theorem}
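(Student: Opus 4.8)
The plan is to mimic the induction-over-high-probability-events scheme already used for \algname{clipped-SEG} in the monotone case (Theorem~\ref{thm:main_result_gap_SEG}), but now driven by Lemma~\ref{lem:optimization_lemma_gap_SGDA} and Lemma~\ref{lem:optimization_lemma_SGDA} instead of the extragradient optimization lemma. First I would set $R_k = \|x^k - x^*\|$ and, for each $k=0,1,\dots,K+1$, define the event $E_k$ asserting that, for all $t \le k$, both (i) $\max_{u\in B_R(x^*)}\{\|x^0-u\|^2 + 2\gamma\sum_{l<t}\langle x^l - u - \gamma F(x^l),\omega_l\rangle + \gamma^2\sum_{l<t}\|\omega_l\|^2\} \le 9R^2$ (mirroring \eqref{eq:induction_inequality_1_SEG}, using that from Lemma~\ref{lem:optimization_lemma_gap_SGDA} one needs to also control $\sum\|F(x^k)\|^2$, which Lemma~\ref{lem:optimization_lemma_SGDA} lets one fold back into the $\omega_l$ terms since $\gamma \le \tfrac{1}{\ell}$ makes $\tfrac{2}{\ell}-\gamma \ge \gamma$) and (ii) $\|\gamma\sum_{l<t}\omega_l\| \le R$. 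The target is $\PP\{E_k\} \ge 1 - k\beta/(K+1)$, proved by induction on $k$; the base case is immediate since the stochastic sums vanish and $\|x^0 - u\|^2 \le 4R^2$.

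For the inductive step, assuming $E_{T-1}$, I would first show $E_{T-1}$ forces $R_t \le 3R$ for all $t \le T$ (a sub-induction, exactly as in the \algname{clipped-SEG} proof: from the gap bound, $\|x^{t+1}-x^*\|^2 \le 9R^2$; the clipping level \eqref{eq:lambda_SGDA} ensures the single SGDA step adds at most $\gamma\lambda \le R/(60\ln(\cdot))$). Hence all iterates stay in $B_{3R}(x^*)$, so Lemmas~\ref{lem:optimization_lemma_gap_SGDA} and \ref{lem:optimization_lemma_SGDA} apply. Since $E_{T-1}$ also gives $\|F(x^t)\| \le \ell\|x^t-x^*\|$ via star-cocoercivity combined with $\|F(x^t)\|^2 \le \ell\langle F(x^t), x^t-x^*\rangle \le \ell\|F(x^t)\|\|x^t-x^*\|$ — so $\|F(x^t)\| \le \ell\cdot 3R \le \lambda/2$ by the stepsize choice \eqref{eq:gamma_SGDA} — the bias/variance estimates \eqref{eq:bias_X_main}--\eqref{eq:variance_X_main} of Lemma~\ref{lem:bias_and_variance_clip} are available for $\omega_t = F(x^t) - \tF_{\xi^t}(x^t)$, giving $\|\omega_t^b\| \le 2^\alpha\sigma^\alpha/\lambda^{\alpha-1}$ and $\EE\|\omega_t^u\|^2 \le 18\lambda^{2-\alpha}\sigma^\alpha$. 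I would then split $\omega_t = \omega_t^u + \omega_t^b$ as in \eqref{eq:gap_thm_SEG_technical_5}, truncate $x^t - x^* - \gamma F(x^t)$ into an almost-surely bounded vector $\eta_t$ (bounded by $\sim 4R$ since $\gamma\ell \le 1$), and decompose the quantity $A_T$ into five pieces: a martingale inner-product term $\circledOne$, a bias inner-product term $\circledTwo$, an expected-second-moment term $\circledThree$, a centered-second-moment martingale term $\circledFour$, and a bias-squared term $\circledFive$. Each of $\circledOne$ and $\circledFour$ is handled by Bernstein's inequality (Lemma~\ref{lem:Bernstein_ineq}) with failure probability $\beta/(3(K+1))$ each; $\circledTwo,\circledThree,\circledFive$ are bounded deterministically using the stepsize and clipping choices. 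Additionally, as in the \algname{clipped-SEG} proof, I need a separate Bernstein application to bound $\|\gamma\sum_{l<T}\omega_l\|$ (introducing truncated partial sums $\zeta_l$ and terms $\circledSix,\circledSeven$), so that event (ii) propagates. A union bound over $E_{T-1}$ and the three new Bernstein events yields $\PP\{E_T\} \ge 1 - T\beta/(K+1)$.

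Once $E_{K+1}$ holds with probability $\ge 1-\beta$, Lemma~\ref{lem:optimization_lemma_gap_SGDA} at $t=K$ gives $\gap_R(x_{\avg}^K) = \max_{u\in B_R(x^*)}\langle F(u), x_{\avg}^K - u\rangle \le \tfrac{1}{2\gamma(K+1)}\cdot(\text{the bounded quantity}) \le \tfrac{5R^2}{\gamma(K+1)}$ (the constant being slightly better than the $9R^2/(2\gamma(K+1))$ in \algname{clipped-SEG} because SGDA has only one stochastic evaluation per step, hence fewer $\|\cdot\|^2$ terms). Substituting the explicit $\gamma$ from \eqref{eq:gamma_SGDA} and splitting on which term of the minimum is active gives \eqref{eq:clipped_SGDA_monotone_case_2_appendix}, and solving for $K$ to make each term $\cO(\varepsilon)$ gives the oracle-call bound \eqref{eq:clipped_SGDA_monotone_case_complexity_appendix}.

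The main obstacle I anticipate is the bookkeeping around $\sum_{k=0}^K\|F(x^k)\|^2$: Lemma~\ref{lem:optimization_lemma_gap_SGDA} contains this term with a positive coefficient on the right-hand side (unlike the EG lemma, where the extrapolation point absorbs it), so I must invoke Lemma~\ref{lem:optimization_lemma_SGDA} to rewrite $\sum\|F(x^k)\|^2$ in terms of telescoping distances plus $\omega$-sums, and then verify that the resulting combined inequality still has the clean ``$4R^2 + (\text{stochastic terms})$'' form needed for the induction — in particular checking that the coefficient $\tfrac{2}{\ell} - \gamma$ is bounded below by $\gamma$ and that the extra copies of $\langle x^k - x^* - \gamma F(x^k),\omega_k\rangle$ and $\|\omega_k\|^2$ merely change absolute constants. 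Everything else is a routine (if lengthy) adaptation of the \algname{clipped-SEG} argument, so the constants in \eqref{eq:gamma_SGDA} and \eqref{eq:lambda_SGDA} should be chosen at the end to make all the $\tfrac{1}{k}R^2$-type bounds sum to at most $R^2$ (for the gap event) and $R$ (for the partial-sum event).
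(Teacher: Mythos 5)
Your proposal is correct and follows essentially the same route as the paper's proof: induction over high-probability events, the optimization Lemmas~\ref{lem:optimization_lemma_gap_SGDA} and \ref{lem:optimization_lemma_SGDA} with the $\sum_k\|F(x^k)\|^2$ term absorbed using $\gamma\le\nicefrac{1}{\ell}$, the split $\omega_t=\omega_t^u+\omega_t^b$ combined with Lemma~\ref{lem:bias_and_variance_clip}, Bernstein's inequality for the martingale sums, and a separate truncated-partial-sum argument for $\gamma\bigl\|\sum_l\omega_l\bigr\|$. The only (cosmetic) difference is the induction invariant: the paper tracks the simpler pair $\|x^t-x^*\|^2\le 2R^2$ and $\gamma\bigl\|\sum_{l<t}\omega_l\bigr\|\le R$, since Lemma~\ref{lem:optimization_lemma_SGDA} already yields a direct recursion for the squared distance and the SEG-style max-over-$u$ invariant you carry is then needed only for the final gap bound, not for keeping the iterates in the ball.
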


\begin{proof}
    The proof follows similar steps as the proof of Theorem~D.1 from \citep{gorbunov2022clipped}. The key difference is related to the application of Bernstein inequality and estimating biases and variances of stochastic terms.
    
    Let $R_k = \|x^k - x^*\|$ for all $k\geq 0$. As in the previous results, the proof is based on the induction argument and showing that the iterates do not leave some ball around the solution with high probability. More precisely, for each $k = 0,1,\ldots,K+1$ we consider probability event $E_k$ as follows: inequalities
    \begin{gather}
        \|x^t - x^*\|^2 \leq 2R^2 \quad \text{and}\quad \gamma\left\|\sum\limits_{l=0}^{t-1} \omega_l\right\| \leq R \label{eq:induction_inequality_SGDA_gap}
    \end{gather}
    hold for $t = 0,1,\ldots,k$ simultaneously. We want to prove that $\PP\{E_k\} \geq  1 - \nicefrac{k\beta}{(K+1)}$ for all $k = 0,1,\ldots,K+1$ by induction. The base of the induction is trivial: for $k = 0$ we have $R_0^2 \leq 2R^2$ by definition and $\sum_{l=0}^{-1} \omega_l = 0$. Next, assume that the statement holds for $k = T \leq K$, i.e., we have $\PP\{E_{T}\} \geq 1 - \nicefrac{T\beta}{(K+1)}$. Given this, we need to prove that $\PP\{E_{T+1}\} \geq 1 - \nicefrac{(T+1)\beta}{(K+1)}$. Since  probability event $E_{T}$ implies $R^2_t \leq 2R^2$, we have $x^t \in B_{2R}(x^*)$ for all $t = 0, 1, \ldots, T$. According to this, the assumptions of Lemma~\ref{lem:optimization_lemma_SGDA} hold and  $E_{T}$ implies ($\gamma < \nicefrac{1}{\ell}$)
    \begin{eqnarray}
        \frac{\gamma}{\ell(T+1)}\sum\limits_{t=0}^T\|F(x^t)\|^2 &\leq& \frac{\|x^0 - x^*\|^2 - \|x^{T+1} - x^*\|^2}{T+1}\notag\\
        &&\quad + \frac{2\gamma}{T+1}\sum\limits_{t=0}^T\langle x^t - x^* - \gamma F(x^t), \omega_t\rangle + \frac{\gamma^2}{T+1}\sum\limits_{t=0}^T\|\omega_t\|^2
        \label{eq:thm_SGDA_technical_gap_0}
    \end{eqnarray}
    and by $\ell$-star-cocoersivity we have 
    \begin{eqnarray}
        \|F(x^t)\| &\leq& \ell\|x^t - x^*\| \overset{\eqref{eq:induction_inequality_SGDA_gap}}{\leq} \sqrt{2}\ell R \overset{\eqref{eq:gamma_SGDA},\eqref{eq:lambda_SGDA}}{\leq} \frac{\lambda}{2} \label{eq:operator_bound_x_t_SGDA_gap}
    \end{eqnarray}
    for all $t = 0, 1, \ldots, T$.
    Using \eqref{eq:thm_SGDA_technical_gap_0}, we obtain  
    \begin{eqnarray*}
       R_{T+1}^2 \leq R_0^2 + 2\gamma \sum\limits_{t=0}^T\langle x^t - x^* - \gamma F(x^t), \omega_t\rangle + \gamma^2\sum\limits_{t=0}^T\|\omega_t\|^2.
    \end{eqnarray*}
    Due to \eqref{eq:operator_bound_x_t_SGDA_gap}, we have 
    \begin{eqnarray}
        \|x^t - x^* - \gamma F(x^t)\| &\leq& \|x^t - x^*\| + \gamma\|F(x^t)\| \overset{\eqref{eq:star-cocoercivity},\eqref{eq:induction_inequality_SGDA_gap}}{\leq} 2R + \gamma \ell\|x^t - x^*\|\notag\\
        &\overset{\eqref{eq:induction_inequality_SGDA_gap}}{\leq}& 2R + 2R\gamma \ell \overset{\eqref{eq:gamma_SGDA}}{\leq} 3R, \label{eq:thm_SGDA_technical_gap_2}
    \end{eqnarray}
    for all $t = 0, 1, \ldots, T$. To handle the sum above, we introduce a new vector
    \begin{equation*}
        \eta_t = \begin{cases}x^t - x^* - \gamma F(x^t),& \text{if } \|x^t - x^* - \gamma F(x^t)\| \leq 3R,\\ 0,& \text{otherwise,} \end{cases}
    \end{equation*}
    for all $t = 0, 1, \ldots, T$. This vector $\eta_t$ is bounded with probability $1$:
    \begin{equation}
        \|\eta_t\| \leq 3R  \label{eq:thm_SGDA_technical_gap_3}
    \end{equation}
    for all $t = 0, 1, \ldots, T$. We also notice that  probability event $E_{T}$ implies $\eta_t = x^t - x^* - \gamma F(x^t)$ for all $t = 0, 1, \ldots, T$ Thus, thanks to \eqref{eq:thm_SGDA_technical_gap_2}, $E_{T}$ implies
    \begin{eqnarray*}
       R_{T+1}^2 \leq R^2 + 2\gamma\sum\limits_{t=0}^T\langle \eta_t, \omega_t\rangle + \gamma^2\sum\limits_{t=0}^T\|\omega_t\|^2.
    \end{eqnarray*}
    To handle the sums appeared on the right-hand side of the previous inequality we consider unbiased and biased parts of $\omega_t$:
    \begin{gather}
        \omega_t^u \eqdef \EE_{\xi^t}\left[\tF_{\xi^t}(x^t)\right] - \tF_{\xi^t}(x^t),\quad \omega_t^b \eqdef F(x^t) - \EE_{\xi^t}\left[\tF_{\xi^t}(x^t)\right] \label{eq:thm_SGDA_technical_gap_4}
    \end{gather}
    for all $t = 0,\ldots, T$. Also, by definition we have $\omega_t = \omega_t^u + \omega_t^b$ for all $t = 0,\ldots, T$. Therefore, $E_{T}$ implies
    \begin{eqnarray}
        R_{T+1}^2 &\leq& R^2 + \underbrace{2\gamma \sum\limits_{t = 0}^{T} \langle \eta_t, \omega_t^u \rangle}_{\circledOne} + \underbrace{2\gamma \sum\limits_{t = 0}^{T} \langle \eta_t, \omega_t^b \rangle}_{\circledTwo} + \underbrace{2\gamma^2 \sum\limits_{t=0}^{T}\left(\EE_{\xi^t}\left[\|\omega_t^u\|^2\right] \right)}_{\circledThree} \notag\\
        &&\quad + \underbrace{2\gamma^2 \sum\limits_{t=0}^{T}\left(\|\omega_t^u\|^2 - \EE_{\xi^t}\left[\|\omega_t^u\|^2\right]\right)}_{\circledFour} + \underbrace{2\gamma^2 \sum\limits_{t=0}^{T}\left(\|\omega_t^b\|^2\right)}_{\circledFive}.\label{eq:thm_SGDA_technical_gap_5}
    \end{eqnarray}
    We notice that the above inequality does not rely on monotonicity of $F$.
    
    According to the induction assumption, from probability event $E_T$ we have $x^t \in B_{2R}(x^*)$ for all $t = 0, 1, \ldots, T$. Thus, the assumptions of Lemma~\ref{lem:optimization_lemma_gap_SGDA} hold and  probability event $E_{T}$ implies
    \begin{eqnarray}
        2\gamma(T+1)\gap_R(x^T_{\avg}) &\leq& \max\limits_{u\in B_R(x^*)}\left\{\|x^0 - u\|^2 + 2\gamma \sum\limits_{t=0}^T\langle x^t - u - \gamma F(x^t), \omega_t\rangle\right\}\notag\\
        &&\quad + \gamma^2\sum\limits_{t=0}^T\left(\|F(x^t)\|^2 + \|\omega_t\|^2\right),  \notag\\
        &=& \max\limits_{u\in B_R(x^*)}\left\{\|x^0 - u\|^2 + 2\gamma \sum\limits_{t=0}^T\langle x^* - u, \omega_t\rangle\right\}\notag\\
        &&\quad + 2\gamma \sum\limits_{t=0}^T\langle x^t - x^* - \gamma F(x^t), \omega_t\rangle\notag\\
        &&\quad + \gamma^2\sum\limits_{t=0}^T\left(\|F(x^t)\|^2 + \|\omega_t\|^2\right).  \notag
    \end{eqnarray}
    As we mentioned before, $E_T$ implies $\eta_t = x^t - x^* - \gamma F(x^t)$ for all $t = 0, 1, \ldots, T$ as well as \eqref{eq:thm_SGDA_technical_gap_0} and $\gamma < \nicefrac{1}{\ell}$. Due to that, probability event $E_T$ implies
    \begin{eqnarray}
        2\gamma(T+1)\gap_R(x^T_{\avg}) &\leq& \max\limits_{u\in B_R(x^*)}\left\{\|x^0 - u\|^2\right\} + 2\gamma \max\limits_{u\in B_R(x^*)}\left\{\sum\limits_{t=0}^T\langle x^* - u, \omega_t\rangle\right\}\notag\\
        &&\quad + 2\gamma \sum\limits_{t=0}^T\langle \eta_t, \omega_t\rangle + \frac{\gamma}{\ell}\sum\limits_{t=0}^T\|F(x^t)\|^2 + \gamma^2\sum\limits_{t=0}^T\|\omega_t\|^2 \notag\\
        &\leq& 4R^2 + 2\gamma \max\limits_{u\in B_R(x^*)}\left\{\left\langle x^* - u, \sum\limits_{t=0}^T\omega_t\right\rangle\right\} \notag\\
        &&\quad + R^2 + 4\gamma \sum\limits_{t=0}^T\langle \eta_t, \omega_t\rangle + 2\gamma^2\sum\limits_{t=0}^T\|\omega_t\|^2\notag\\
        &\leq& 5R^2 + 2\gamma R\left\|\sum\limits_{t=0}^T\omega_t\right\| + 2\cdot\left(\circledOne + \circledTwo + \circledThree + \circledFour + \circledFive\right),\label{eq:thm_SGDA_technical_gap_5_1}
    \end{eqnarray}
    where we also aplly inequality $\|a+b\|^2 \leq 2\|a\|^2 + 2\|b\|^2$ holding for all $a,b \in \R^d$ to upper bound $\|\omega_t\|^2$.
    
    It remains to derive good enough high-probability upper-bounds for the terms $\circledOne, \circledTwo, \circledThree, \circledFour, \circledFive$  and $2\gamma R\left\|\sum_{t=0}^T\omega_t\right\| $, i.e., to finish our inductive proof we need to show  that $\circledOne + \circledTwo + \circledThree + \circledFour + \circledFive \leq R^2$ and $2\gamma R\left\|\sum_{t=0}^T\omega_t\right\| \leq 2R^2$ with high probability.In the subsequent parts of the proof, we will need to use many times the bounds for the norm and second moments of $\omega_t^u, \omega_t^b$. First, by Lemma~\ref{lem:bias_variance}, we have with probability $1$ that
    \begin{equation}
        \|\omega_t^u\| \leq 2\lambda \label{eq:omega_magnitude_SGDA_gap}
    \end{equation}
    for all $t = 0,1, \ldots, T$. Moreover, due to Lemma~\ref{lem:bias_variance}, we also have that $E_{T}$ implies 
    \begin{gather}
         \left\|\omega_t^b\right\| \leq \frac{2^{\alpha}\sigma^{\alpha}}{\lambda^{\alpha-1}}, \label{eq:bias_omega_SGDA_gap}\\
         \EE_{\xi^t}\left[\left\|\omega^b_t\right\|^2\right] \leq 18\lambda^{2-\alpha}\sigma^{\alpha}, \label{eq:distortion_omega_SGDA_gap}\\
         \EE_{\xi^t}\left[\left\|\omega_t^u\right\|^2\right] \leq 18\lambda^{2-\alpha}\sigma^{\alpha} \label{eq:variance_omega_SGDA_gap}
    \end{gather}
    for all $t = 0,1, \ldots, T$.
    
    \paragraph{Upper bound for $\circledOne$.} By definition of $\omega^u_t$, we have $\EE_{\xi^t}[\omega_t^u] = 0$ and 
    \begin{equation*}
        \EE_{\xi^t}\left[2\gamma\langle \eta_t, \omega_t^u \rangle\right] = 0.
    \end{equation*}
    Next, the sum $\circledOne$ has bounded with probability $1$ term:
    \begin{eqnarray}
        |2\gamma\langle \eta_t, \omega_t^u \rangle | \leq 2\gamma  \|\eta_t\|\cdot \|\omega_t^u\| \overset{\eqref{eq:thm_SGDA_technical_gap_3},\eqref{eq:omega_magnitude_SGDA_gap}}{\leq} 12 \gamma R \lambda \overset{\eqref{eq:lambda_SGDA}}{\leq} \frac{R^2}{5\ln\tfrac{6(K+1)}{\beta}} \eqdef c. \label{eq:SGDA_neg_mon_technical_1_1}
    \end{eqnarray}
    Moreover, these summands also have bounded conditional variances $\sigma_t^2 \eqdef \EE_{\xi^t}\left[4\gamma^2 \langle \eta_t, \omega_t^u \rangle^2\right]$:
    \begin{eqnarray}
        \sigma_t^2 \leq \EE_{\xi^t}\left[4\gamma^2 \|\eta_t\|^2\cdot \|\omega_t^u\|^2\right] \overset{\eqref{eq:thm_SGDA_technical_gap_3}}{\leq} 36\gamma^2 R^2 \EE_{\xi^t}\left[\|\omega_t^u\|^2\right]. \label{eq:SGDA_neg_mon_technical_1_2}
    \end{eqnarray}
    In other words, we showed that  $\{2\gamma \langle \eta_t, \omega_t^u \rangle\}_{t\geq 0}$ is a bounded martingale difference sequence with bounded conditional variances $\{\sigma_t^2\}_{t \geq 0}$. Next, we apply Bernstein's inequality (Lemma~\ref{lem:Bernstein_ineq}) with $X_t = 2\gamma \langle \eta_t, \omega_t^u \rangle$,  parameter $c$ as in \eqref{eq:SGDA_neg_mon_technical_1_1}, $b = \frac{R^2}{5}$, $G = \tfrac{R^4}{150\ln\frac{6(K+1)}{\beta}}$:
    \begin{equation*}
        \PP\left\{|\circledOne| > \frac{R^2}{5} \text{ and } \sum\limits_{t=0}^{T}\sigma_t^2 \leq \frac{R^4}{150\ln\tfrac{6(K+1)}{\beta}}\right\} \leq 2\exp\left(- \frac{b^2}{2G + \nicefrac{2cb}{3}}\right) = \frac{\beta}{3(K+1)}.
    \end{equation*}
    Equivalently, we have 
    \begin{equation}
        \PP\{E_{\circledOne}\} \geq 1 - \tfrac{\beta}{3(K+1)}, \text{ for }E_{\circledOne} = \left\{\text{either} \quad \sum\limits_{t=0}^{T}\sigma_t^2 > \frac{R^4}{150\ln\tfrac{6(K+1)}{\beta}}\quad \text{or}\quad |\circledOne| \leq \frac{R^2}{5}\right\}. \label{eq:bound_1_SGDA_neg_mon}
    \end{equation}
    In addition, $E_{T}$ implies that
    \begin{eqnarray}
        \sum\limits_{t=0}^{T}\sigma_t^2 &\overset{\eqref{eq:SGDA_neg_mon_technical_1_2}}{\leq}& 36\gamma^2R^2\sum\limits_{t=0}^{T} \EE_{\xi^t}\left[\|\omega_t^u\|^2\right] \notag\\ &\overset{\eqref{eq:variance_omega_SGDA_gap}, T \leq K+1}{\leq}& 648\gamma^2 R^2 \sigma^{\alpha} (K+1) \lambda^{2-\alpha} \notag\\
        &\overset{\eqref{eq:lambda_SGDA}}{\leq}& 648 \gamma^{\alpha}R^{4-\alpha}\sigma^{\alpha} (K+1)\ln^{\alpha-2}\frac{6(K+1)}{\beta} \notag\\
        &\overset{\eqref{eq:gamma_SGDA}}{\leq}& \frac{R^4}{150\ln\tfrac{6(K+1)}{\beta}}. \label{eq:bound_1_variances_SGDA_neg_mon}
    \end{eqnarray}
    
    \paragraph{Upper bound for $\circledTwo$.} From $E_{T}$ it follows that
    \begin{eqnarray}
        \circledTwo &\leq& 2\gamma \sum\limits_{t=0}^{T}\|\eta_l\| \cdot \|\omega_t^b\| \overset{\eqref{eq:thm_SGDA_technical_gap_3},\eqref{eq:bias_omega_SGDA_gap}, T \leq K+1}{\leq} 6\cdot 2^{\alpha}\gamma R(K+1) \frac{\sigma^{\alpha}}{\lambda^{\alpha-1}}\notag\\
        &\overset{\eqref{eq:lambda_SGDA}}{=}& 12 \cdot 120^{\alpha-1}\gamma^{\alpha}\sigma^{\alpha}R^{2-\alpha} (K+1)\ln^{\alpha-1}\frac{6(K+1)}{\beta} \overset{\eqref{eq:gamma_SGDA}}{\leq} \frac{R^2}{5}. \label{eq:bound_2_SGDA_neg_mon}
    \end{eqnarray}

    \paragraph{Upper bound for $\circledThree$.}  From $E_{T}$ it follows that
    \begin{eqnarray}
        \circledThree &=&  2\gamma^2 \sum\limits_{t = 0}^{T}\EE_{\xi^t}\left[\|\omega_t^u\|^2\right] \overset{\eqref{eq:variance_omega_SGDA_gap}, T \leq K+1}{\leq} 36\gamma^2 \lambda^{2-\alpha} \sigma^{\alpha} (K+1) \notag\\
        &\overset{\eqref{eq:lambda_SGDA}}{\leq}& 36 \gamma^{\alpha} R^{2-\alpha} \sigma^{\alpha} (K+1)  \ln^{\alpha-2}\frac{6(K+1)}{\beta}
        \overset{\eqref{eq:gamma_SGDA}}{\leq} \frac{R^2}{5}. \label{eq:bound_3_SGDA_neg_mon}
    \end{eqnarray}
    
    \paragraph{Upper bound for $\circledFour$.} First, we have
    \begin{equation*}
        2\gamma^2\EE_{\xi^t}\left[\|\omega_t^u\|^2 - \EE_{\xi^t}\left[\|\omega_t^u\|^2\right]\right] = 0.
    \end{equation*}
    Next, the sum $\circledFour$ has bounded with probability $1$ terms:
    \begin{eqnarray}
        2\gamma^2 \left|\|\omega_t^u\|^2 - \EE_{\xi^t}\left[\|\omega_t^u\|^2\right] \right| &\leq& 2\gamma^2\left( \|\omega_t^u\|^2 + \EE_{\xi^t}\left[\|\omega_t^u\|^2\right] \right) 
        \overset{\eqref{eq:omega_magnitude_SGDA_gap}}{\leq} 16\gamma^2 \lambda^2 \notag\\
        &\overset{\eqref{eq:lambda_SGDA}}{\leq}& \frac{R^2}{225\ln\tfrac{6(K+1)}{\beta}} \leq \frac{R^2}{5\ln\tfrac{6(K+1)}{\beta}} \eqdef c. \label{eq:SGDA_neg_mon_technical_4_1}
    \end{eqnarray}
    The summands also have conditional variances $\widetilde\sigma_t^2 \eqdef 4\gamma^4\EE_{\xi^t}\left[\left(\|\omega_t^u\|^2 - \EE_{\xi^t}\left[\|\omega_t^u\|^2\right]\right)^2\right]$ that are bounded
    \begin{eqnarray}
        \widetilde\sigma_t^2 \overset{\eqref{eq:SGDA_neg_mon_technical_4_1}}{\leq} \frac{2\gamma^2 R^2}{225\ln \frac{6(K+1)}{\beta}} \EE_{\xi^t}\left[\left| \|\omega_t^u\|^2 - \EE_{\xi^t}\left[\|\omega_t^u\|^2\right] \right|\right]  \leq \frac{4\gamma^2 R^2}{225\ln \frac{6(K+1)}{\beta}} \EE_{\xi^t}\left[\|\omega_t^u\|^2\right]. \label{eq:SGDA_neg_mon_technical_4_2}
    \end{eqnarray}
    In other words, we showed that  $\{\|\omega_t^u\|^2 - \EE_{\xi^t}[\|\omega_t^u\|^2]\}_{t\geq 0}$ is a bounded martingale difference sequence with bounded conditional variances $\{\widetilde\sigma_t^2\}_{t \geq 0}$.Next, we apply Bernstein's inequality (Lemma~\ref{lem:Bernstein_ineq}) with $X_t = \|\omega_t^u\|^2 - \EE_{\xi^t}[\|\omega_t^u\|^2]$, parameter $c$ as in \eqref{eq:SGDA_neg_mon_technical_4_1}, $b = \frac{R^2}{5}$, $G = \tfrac{R^4}{150\ln\frac{6(K+1)}{\beta}}$:
    \begin{equation*}
        \PP\left\{|\circledFour| > \frac{R^2}{5} \text{ and } \sum\limits_{t=0}^{T}\widetilde\sigma_t^2 \leq \frac{R^4}{150\ln\tfrac{6(K+1)}{\beta}}\right\} \leq 2\exp\left(- \frac{b^2}{2G + \nicefrac{2cb}{3}}\right) = \frac{\beta}{3(K+1)}.
    \end{equation*}
    Equivalently, we have
    \begin{equation}
        \PP\{E_{\circledFour}\} \geq 1 - \tfrac{\beta}{3(K+1)}, \text{ for }E_{\circledFour} = \left\{\text{either} \quad \sum\limits_{t=0}^{T}\widetilde\sigma_t^2 > \frac{R^4}{150\ln\tfrac{6(K+1)}{\beta}}\quad \text{or}\quad |\circledFour| \leq \frac{R^2}{5}\right\}. \label{eq:bound_4_SGDA_neg_mon}
    \end{equation}
    In addition, $E_{T}$ implies that
    \begin{eqnarray}
        \sum\limits_{t=0}^{T}\widetilde\sigma_t^2 &\overset{\eqref{eq:SGDA_neg_mon_technical_4_2}}{\leq}& \frac{4\gamma^2 R^2}{225\ln \frac{6(K+1)}{\beta}}\sum\limits_{t=0}^{T} \EE_{\xi^t}\left[\|\omega_t^u\|^2\right] \overset{\eqref{eq:variance_omega_SGDA_gap}, T \leq K+1}{\leq} \frac{8\gamma^2 R^2 (K+1)}{25 \ln\tfrac{6(K+1)}{\beta}} \lambda^{2-\alpha} \sigma^{\alpha} \notag\\
        &\overset{\eqref{eq:lambda_SGDA}}{\leq}& \frac{8}{25}\gamma^{\alpha}
         R^{4-\alpha} (K+1) \sigma^{\alpha}\ln^{\alpha-3}\frac{6(K+1)}{\beta}\notag\\
        &\overset{\eqref{eq:gamma_SGDA}}{\leq}&\frac{R^4}{150\ln\tfrac{6(K+1)}{\beta}}. \label{eq:bound_4_variances_SGDA_neg_mon}
    \end{eqnarray}

    \paragraph{Upper bound for $\circledFive$.} From $E_{T}$ it follows that
    \begin{eqnarray}
        \circledFive &=&  2\gamma^2 \sum\limits_{t = 0}^{T}\|\omega_t^b\|^2  \overset{\eqref{eq:bias_omega_SGDA_gap}, T \leq K+1}{\leq} 2^{2\alpha+1}\cdot 60^{2\alpha-2}\gamma^2 (K+1) \frac{\sigma^{2\alpha}}{\lambda^{2\alpha-2}}\notag\\
        &\overset{\eqref{eq:lambda_SGDA}}{=}& 2^{2\alpha+1}\cdot 60^{2\alpha-2} \gamma^{2\alpha} (K+1) \frac{\sigma^{2\alpha}}{R^{2\alpha-2}}\ln^{2\alpha-2}\frac{6(K+1)}{\beta} \notag\\ &\overset{\eqref{eq:gamma_SGDA}}{\leq}& \frac{R^2}{5}. \label{eq:bound_5_SGDA_neg_mon}
    \end{eqnarray}

    \paragraph{Upper bound for $\gamma \left\|\sum_{t=0}^T\omega_t\right\|$.} To estimate this term from above, we consider a new vector:
    \begin{equation*}
        \zeta_l = \begin{cases} \gamma \sum\limits_{r=0}^{l-1}\omega_r,& \text{if } \left\|\gamma \sum\limits_{r=0}^{l-1}\omega_r\right\| \leq R,\\ 0, & \text{otherwise} \end{cases}
    \end{equation*}
    for $l = 1, 2, \ldots, T-1$.This vector is bounded almost surely:
    \begin{equation}
        \|\zeta_l\| \leq R.  \label{eq:gap_thm_SGDA_technical_8}
    \end{equation}
    Thus, by \eqref{eq:induction_inequality_SGDA_gap}, probability event $E_{T}$ implies
    \begin{eqnarray}
        \gamma\left\|\sum\limits_{l = 0}^{T}\omega_l\right\| &=& \sqrt{\gamma^2\left\|\sum\limits_{l = 0}^{T}\omega_l\right\|^2}\notag\\
        &=& \sqrt{\gamma^2\sum\limits_{l=0}^{T}\|\omega_l\|^2 + 2\gamma\sum\limits_{l=0}^{T}\left\langle \gamma\sum\limits_{r=0}^{l-1} \omega_r, \omega_l \right\rangle} \notag\\
        &=& \sqrt{\gamma^2\sum\limits_{l=0}^{T}\|\omega_l\|^2 + 2\gamma \sum\limits_{l=0}^{T} \langle \zeta_l, \omega_l\rangle} \notag\\
        &\overset{\eqref{eq:thm_SGDA_technical_gap_5}}{\leq}& \sqrt{\circledThree + \circledFour + \circledFive + \underbrace{2\gamma \sum\limits_{l=0}^{T} \langle \zeta_l, \omega_l^u\rangle}_{\circledSix} + \underbrace{2\gamma \sum\limits_{l=0}^{T} \langle \zeta_l, \omega_l^b}_{\circledSeven}\rangle}. \label{eq:norm_sum_omega_bound_gap_SGDA}
    \end{eqnarray}
    Following similar steps as before, we bound $\circledSix$ and $\circledSeven$.

    \paragraph{Upper bound for $\circledSix$.} By definition of $\omega^t_u$, we have  $\EE_{\xi^t}[\omega_t^u] = 0$ and 
    \begin{equation*}
        \EE_{\xi^t}\left[2\gamma\langle \zeta_t, \omega_t^u \rangle\right] = 0.
    \end{equation*}
    Next, sum $\circledSix$ has bounded with probability $1$ terms:
    \begin{eqnarray}
        |2\gamma\langle \zeta_t, \omega_t^u \rangle | \leq 2\gamma  \|\zeta_t\|\cdot \|\omega_t^u\| 
        \overset{\eqref{eq:gap_thm_SGDA_technical_8},\eqref{eq:omega_magnitude_SGDA_gap}}{\leq} 4 \gamma R \lambda \overset{\eqref{eq:lambda_SGDA}}{\leq} \frac{R^2}{5\ln\tfrac{6(K+1)}{\beta}} \eqdef c. \label{eq:SGDA_neg_mon_technical_6_1}
    \end{eqnarray}
    The summands also have bounded conditional variances $\widehat\sigma_t^2 \eqdef \EE_{\xi^t}\left[4\gamma^2 \langle \zeta_t, \omega_t^u \rangle^2\right]$:
    \begin{eqnarray}
        \widehat\sigma_t^2 \leq \EE_{\xi^t}\left[4\gamma^2 \|\zeta_t\|^2\cdot \|\omega_t^u\|^2\right] \overset{\eqref{eq:gap_thm_SGDA_technical_8}}{\leq} 4\gamma^2 R^2 \EE_{\xi^t}\left[\|\omega_t^u\|^2\right]. \label{eq:SGDA_neg_mon_technical_6_2}
    \end{eqnarray}
    In other words, we showed that $\{2\gamma \langle \zeta_t, \omega_t^u \rangle\}_{t\geq 0}$ is a bounded martingale difference sequence with bounded conditional variances $\{\widehat\sigma_t^2\}_{t \geq 0}$. Applying Bernstein's inequality (Lemma~\ref{lem:Bernstein_ineq}) with $X_t = 2\gamma \langle \zeta_t, \omega_t^u \rangle$, parameter $c$ as in \eqref{eq:SGDA_neg_mon_technical_6_1}, $b = \frac{R^2}{5}$, $G = \tfrac{R^4}{150\ln\frac{6(K+1)}{\beta}}$:
    \begin{equation*}
        \PP\left\{|\circledSix| > \frac{R^2}{5} \text{ and } \sum\limits_{t=0}^{T}\widehat\sigma_t^2 \leq \frac{R^4}{150\ln\tfrac{6(K+1)}{\beta}}\right\} \leq 2\exp\left(- \frac{b^2}{2G + \nicefrac{2cb}{3}}\right) = \frac{\beta}{3(K+1)}.
    \end{equation*}
    Equivalently, we have 
    \begin{equation}
        \PP\{E_{\circledSix}\} \geq 1 - \tfrac{\beta}{3(K+1)} \text{ for }E_{\circledSix} = \left\{\text{either} \quad \sum\limits_{t=0}^{T}\widehat\sigma_t^2 > \frac{R^4}{150\ln\tfrac{6(K+1)}{\beta}}\quad \text{or}\quad |\circledSix| \leq \frac{R^2}{5}\right\}. \label{eq:bound_6_SGDA_neg_mon}
    \end{equation}
    In addition, $E_{T}$ implies that
    \begin{eqnarray}
        \sum\limits_{t=0}^{T}\widehat\sigma_t^2 &\overset{\eqref{eq:SGDA_neg_mon_technical_6_2}}{\leq}& 4\gamma^2R^2\sum\limits_{t=0}^{T} \EE_{\xi^t}\left[\|\omega_t^u\|^2\right] \notag\\ &\overset{\eqref{eq:variance_omega_SGDA_gap}, T \leq K+1}{\leq}& 72\gamma^2 R^2 \sigma^{\alpha} (K+1) \lambda^{2-\alpha} \notag\\
        &\overset{\eqref{eq:lambda_SGDA}}{\leq}& 72 \gamma^{\alpha}R^{4-\alpha}\sigma^{\alpha} (K+1)\ln^{\alpha-2}\frac{6(K+1)}{\beta} \notag\\
        &\overset{\eqref{eq:gamma_SGDA}}{\leq}& \frac{R^4}{150\ln\tfrac{6(K+1)}{\beta}} . \label{eq:bound_6_variances_SGDA_neg_mon}
    \end{eqnarray}
    
    \paragraph{Upper bound for $\circledSeven$.} From $E_{T}$ it follows that
    \begin{eqnarray}
        \circledSeven &\leq& 2\gamma \sum\limits_{t=0}^{T}\|\zeta_t\| \cdot \|\omega_t^b\| \overset{\eqref{eq:gap_thm_SGDA_technical_8},\eqref{eq:bias_omega_SGDA_gap}, T \leq K+1}{\leq} 8\cdot 2^{\alpha}\gamma R(K+1) \frac{\sigma^{\alpha}}{\lambda^{\alpha-1}}\notag\\
        &\overset{\eqref{eq:lambda_SGDA}}{=}& 16 \cdot 120^{\alpha-1}\gamma^{\alpha}\sigma^{\alpha}R^{2-\alpha}(K+1)\ln^{\alpha-1}\frac{6(K+1)}{\beta} \overset{\eqref{eq:gamma_SGDA}}{\leq} \frac{R^2}{5}. \label{eq:bound_7_SGDA_neg_mon}
    \end{eqnarray}

    Now, we have the upper bounds for  $\circledOne, \circledTwo, \circledThree, \circledFour, \circledFive, \circledSix, \circledSeven$. In particular, probability event $E_{T-1}$ implies
    \begin{gather*}
        R_{T+1}^2 \overset{\eqref{eq:thm_SGDA_technical_gap_5}}{\leq} R^2 + \circledOne + \circledTwo + \circledThree + \circledFour + \circledFive ,\\
        2\gamma(T+1)\gap_R(x^T_{\avg}) \overset{\eqref{eq:thm_SGDA_technical_gap_5_1}}{\leq} 5R^2 + 2\gamma R\left\|\sum\limits_{t=0}^T\omega_t\right\| + 2\cdot\left(\circledOne + \circledTwo + \circledThree + \circledFour + \circledFive\right), \\
        \gamma\left\|\sum\limits_{l = 0}^{T}\omega_l\right\| \overset{\eqref{eq:norm_sum_omega_bound_gap_SGDA}}{\leq} \sqrt{\circledThree + \circledFour + \circledFive + \circledSix + \circledSeven},\\
        \circledTwo \overset{\eqref{eq:bound_2_SGDA_neg_mon}}{\leq} \frac{R^2}{5},\quad \circledThree \overset{\eqref{eq:bound_3_SGDA_neg_mon}}{\leq} \frac{R^2}{5},\quad \circledFive \overset{\eqref{eq:bound_5_SGDA_neg_mon}}{\leq} \frac{R^2}{5},\quad \circledSeven \overset{\eqref{eq:bound_7_SGDA_neg_mon}}{\leq} \frac{R^2}{5},\\
        \sum\limits_{t=0}^{T}\sigma_t^2 \overset{\eqref{eq:bound_1_variances_SGDA_neg_mon}}{\leq}  \frac{R^4}{150\ln\tfrac{6(K+1)}{\beta}},\quad \sum\limits_{t=0}^{T}\widetilde\sigma_t^2 \overset{\eqref{eq:bound_4_variances_SGDA_neg_mon}}{\leq} \frac{R^4}{150\ln\tfrac{6(K+1)}{\beta}},\quad \sum\limits_{t=0}^{T}\widehat\sigma_t^2 \overset{\eqref{eq:bound_6_variances_SGDA_neg_mon}}{\leq} \frac{R^4}{150\ln\tfrac{6(K+1)}{\beta}}.
    \end{gather*}
    Moreover, we also have (see  \eqref{eq:bound_1_SGDA_neg_mon}, \eqref{eq:bound_4_SGDA_neg_mon}, \eqref{eq:bound_7_SGDA_neg_mon} and our induction assumption)
    \begin{gather*}
        \PP\{E_{T}\} \geq 1 - \frac{T\beta}{K+1},\\
        \PP\{E_{\circledOne}\} \geq 1 - \frac{\beta}{3(K+1)}, \quad \PP\{E_{\circledFour}\} \geq 1 - \frac{\beta}{3(K+1)}, \quad \PP\{E_{\circledSix}\} \geq 1 - \frac{\beta}{3(K+1)},
    \end{gather*}
    where
    \begin{eqnarray}
        E_{\circledOne}&=& \left\{\text{either} \quad \sum\limits_{t=0}^{T}\sigma_t^2 > \frac{R^4}{150\ln\tfrac{6(K+1)}{\beta}}\quad \text{or}\quad |\circledOne| \leq \frac{R^2}{5}\right\},\notag\\
        E_{\circledFour}&=& \left\{\text{either} \quad \sum\limits_{t=0}^{T}\widetilde\sigma_t^2 > \frac{R^4}{150\ln\tfrac{6(K+1)}{\beta}}\quad \text{or}\quad |\circledFour| \leq \frac{R^2}{5}\right\},\notag\\
        E_{\circledSix}&=& \left\{\text{either} \quad \sum\limits_{t=0}^{T}\widehat\sigma_t^2 > \frac{R^4}{150\ln\tfrac{6(K+1)}{\beta}}\quad \text{or}\quad |\circledSix| \leq \frac{R^2}{5}\right\}.\notag
    \end{eqnarray}
    Thus, probability event $E_{T} \cap E_{\circledOne} \cap E_{\circledFour} \cap E_{\circledSix}$ implies
    \begin{eqnarray*}
        R_{T+1}^2 &\leq& R^2 + \circledOne + \circledTwo + \circledThree + \circledFour + \circledFive \leq 2R^2,\\
        \gamma\left\|\sum\limits_{l = 0}^{T}\omega_l\right\| &\leq& \sqrt{\circledThree + \circledFour + \circledFive + \circledSix + \circledSeven} \leq R,\\
        2\gamma(T+1)\gap_R(x^T_{\avg}) &\leq& 6R^2 + 2\gamma R\left\|\sum\limits_{t=0}^T\omega_t\right\| + 2\cdot\left(\circledOne + \circledTwo + \circledThree + \circledFour + \circledFive\right)\\
        &\leq& 10R^2,
    \end{eqnarray*}
    which gives \eqref{eq:induction_inequality_SGDA_gap} for  $t = T$, and 
    \begin{equation}
        \PP\{E_{T+1}\} \geq \PP\{E_{T} \cap E_{\circledOne} \cap E_{\circledFour} \cap E_{\circledSix}\} = 1 - \PP\{\overline{E}_{T} \cup \overline{E}_{\circledOne} \cup \overline{E}_{\circledFour} \cup \overline{E}_{\circledSix}\} \geq 1 - \frac{T\beta}{K+1}. \notag
    \end{equation}
    This finishes the inductive part of our proof, i.e., for all $k = 0,1,\ldots,K+1$ we have $\PP\{E_k\} \geq 1 - \nicefrac{k\beta}{(K+1)}$.  In particular, for $k = K+1$ we have that with probability at least $1 - \beta$
    \begin{equation*}
        \gap_R(x_{\avg}^K) \leq \frac{5R^2}{\gamma(K+1)}.
    \end{equation*}
    Finally, if 
    \begin{equation*}
        \gamma = \min\left\{\frac{1}{170\ell \ln \tfrac{6(K+1)}{\beta}}, \frac{R}{97200^{\tfrac{1}{\alpha}}(K+1)^{\frac{1}{\alpha}}\sigma \ln^{\frac{\alpha-1}{\alpha}} \tfrac{6(K+1)}{\beta}}\right\}
    \end{equation*}
    then with probability at least $1-\beta$
    \begin{eqnarray*}
        \gap_{R}(\tx^{K}_{\avg}) &\leq& \frac{5R^2}{\gamma(K+1)} = \max\left\{\frac{800 LR^2 \ln \tfrac{6(K+1)}{\beta}}{K+1}, \frac{5 \cdot97200^{\tfrac{1}{\alpha}}\sigma R \ln^{\frac{\alpha-1}{\alpha}} \tfrac{6(K+1)}{\beta}}{(K+1)^{\frac{\alpha-1}{\alpha}}}\right\}\\
        &=& \cO\left(\max\left\{\frac{\ell R^2\ln\frac{K}{\beta}}{K}, \frac{\sigma R \ln^{\frac{\alpha-1}{\alpha}}\frac{K}{\beta}}{K^{\frac{\alpha-1}{\alpha}}}\right\}\right).
    \end{eqnarray*}
    To get $\gap_{R}(\tx^{K}_{\avg}) \leq \varepsilon$ with probability at least $1-\beta$ it is sufficient to choose $K$ such that both terms in the maximum above are $\cO(\varepsilon)$. This leads to
    \begin{equation*}
         K = \cO\left(\frac{\ell R^2}{\varepsilon}\ln\frac{\ell R^2}{\varepsilon\beta}, \left(\frac{\sigma R}{\varepsilon}\right)^{\frac{\alpha}{\alpha-1}}\ln\left(\frac{1}{\beta}\left(\frac{\sigma R}{\varepsilon}\right)^{\frac{\alpha}{\alpha-1}}\right)\right)
    \end{equation*}
    that concludes the proof.
\end{proof}

\subsection{Star-Cocoercive Problems}

\begin{theorem}[Case 2 in Theorem~\ref{thm:clipped_SGDA_main_theorem}]\label{thm:main_result_non_mon_SGDA}
    Let Assumptions~\ref{as:bounded_alpha_moment}, \ref{as:star-cocoercivity} hold for $Q = B_{2R}(x^*)$, where $R \geq \|x^0 - x^*\|$, and    
    \begin{eqnarray}
        0< \gamma &\leq& \min\left\{\frac{1}{170\ell \ln \tfrac{4(K+1)}{\beta}}, \frac{R}{97200^{\tfrac{1}{\alpha}}(K+1)^{\frac{1}{\alpha}}\sigma \ln^{\frac{\alpha-1}{\alpha}} \tfrac{4(K+1)}{\beta}}\right\}, \label{eq:gamma_SGDA_non_mon}\\
        \lambda_{k} \equiv \lambda &=& \frac{R}{60\gamma \ln \tfrac{4(K+1)}{\beta}}, \label{eq:lambda_SGDA_non_mon}
    \end{eqnarray}
    for some $K \geq 0$ and $\beta \in (0,1]$ such that $\ln \tfrac{4(K+1)}{\beta} \geq 1$. Then, after $K$ iterations the iterates produced by \algname{clipped-SGDA} with probability at least $1 - \beta$ satisfy 
    \begin{equation}
        \frac{1}{K+1}\sum^K_{k=0}\|F(x^k)\|^2 \leq \frac{2\ell R^2}{\gamma(K+1)} . \label{eq:main_result_non_mon}
    \end{equation}
     In particular, when $\gamma$ equals the minimum from \eqref{eq:gamma_SGDA_non_mon}, then the iterates produced by \algname{clipped-SGDA} after $K$ iterations with probability at least $1-\beta$ satisfy
    \begin{equation}
         \frac{1}{K+1}\sum^K_{k=0}\|F(x^k)\|^2 = \cO\left(\max\left\{\frac{\ell^2 R^2\ln\frac{K}{\beta}}{K+1}, \frac{\ell \sigma R \ln^{\frac{\alpha-1}{\alpha}}\frac{K}{\beta}}{K^{\frac{\alpha-1}{\alpha}}}\right\}\right), \label{eq:clipped_SGDA_non_monotone_case_2_appendix}
    \end{equation}
    meaning that to achieve $\frac{1}{K+1}\sum\limits^K_{k=0}\|F(x^k)\|^2 \leq \varepsilon$ with probability at least $1 - \beta$ \algname{clipped-SGDA} requires
    \begin{equation}
        K = \cO\left(\frac{\ell^2 R^2}{\varepsilon}\ln\frac{\ell^2 R^2}{\varepsilon\beta}, \left(\frac{\ell \sigma R}{\varepsilon}\right)^{\frac{\alpha}{\alpha-1}}\ln\left( \frac{1}{\beta} \left(\frac{\ell \sigma R}{\varepsilon}\right)^{\frac{\alpha}{\alpha-1}}\right)\right)\quad \text{iterations/oracle calls.} \label{eq:clipped_SGDA_non_monotone_case_complexity_appendix}
    \end{equation}
\end{theorem}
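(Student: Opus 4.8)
The plan is to follow exactly the inductive template used in the proof of Theorem~\ref{thm:main_result_gap_SGDA} (Case~1), but to keep only the squared-operator-norm estimate supplied by Lemma~\ref{lem:optimization_lemma_SGDA} and simply discard the part that relied on monotonicity (Lemma~\ref{lem:optimization_lemma_gap_SGDA}). Concretely, I would set $R_k = \|x^k - x^*\|$ and define, for each $k = 0,1,\ldots,K+1$, the event $E_k$ that the inequalities $R_t^2 \le 2R^2$ and $\gamma\bigl\|\sum_{l=0}^{t-1}\omega_l\bigr\| \le R$ hold for all $t = 0,1,\ldots,k$ simultaneously, with $\omega_k$ defined in \eqref{eq:omega_k_SGDA}. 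The goal is to show $\PP\{E_k\} \ge 1 - \nicefrac{k\beta}{(K+1)}$ by induction; the base case is immediate since $R_0 \le R$ and the empty sum vanishes.

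For the inductive step, assume $\PP\{E_T\} \ge 1 - \nicefrac{T\beta}{(K+1)}$ for some $T \le K$. On $E_T$ all iterates $x^0,\ldots,x^T$ lie in $B_{\sqrt2 R}(x^*) \subseteq B_{2R}(x^*) = Q$, so by star-cocoercivity $\|F(x^t)\| \le \ell\|x^t - x^*\| \le \sqrt2\,\ell R \le \nicefrac{\lambda}{2}$ by the choices \eqref{eq:gamma_SGDA_non_mon}--\eqref{eq:lambda_SGDA_non_mon}, which licenses Lemma~\ref{lem:bias_and_variance_clip} for the bias/variance of $\omega_t^u$ and $\omega_t^b$, where $\omega_t = \omega_t^u + \omega_t^b$ is the usual unbiased/biased split. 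Invoking Lemma~\ref{lem:optimization_lemma_SGDA} (whose hypotheses are met on $E_T$, since $\gamma \le \nicefrac{1}{\ell}$ by \eqref{eq:gamma_SGDA_non_mon}) and dropping the nonnegative left-hand side yields
\begin{equation*}
    R_{T+1}^2 \le R^2 + 2\gamma\sum_{t=0}^{T}\langle x^t - x^* - \gamma F(x^t), \omega_t\rangle + \gamma^2\sum_{t=0}^{T}\|\omega_t\|^2.
\end{equation*}
Then I would replace $x^t - x^* - \gamma F(x^t)$ by truncated random vectors $\eta_t$ (equal to it on $E_T$, bounded by $3R$ a.s.), split into the five sums $\circledOne$--$\circledFive$ exactly as in \eqref{eq:thm_SGDA_technical_gap_5}, and bound them: $\circledOne$ and $\circledFour$ (and later $\circledSix$, coming from the separate control of $\gamma\|\sum\omega_l\|$ via the vectors $\zeta_l$) are bounded martingale-difference sums handled by Bernstein's inequality (Lemma~\ref{lem:Bernstein_ineq}), each contributing a failure probability $\nicefrac{\beta}{3(K+1)}$; the bias terms $\circledTwo$, $\circledFive$, $\circledSeven$ and the second-moment term $\circledThree$ are controlled deterministically on $E_T$ using \eqref{eq:bias_X_main}--\eqref{eq:variance_X_main} and the step-size constraint. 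The arithmetic is identical to the corresponding blocks in the proof of Theorem~\ref{thm:main_result_gap_SGDA} (which already contains these exact estimates for $\sum\|F(x^k)\|^2$), so on $E_T \cap E_{\circledOne} \cap E_{\circledFour} \cap E_{\circledSix}$ one gets $R_{T+1}^2 \le 2R^2$ and $\gamma\|\sum_{l=0}^T\omega_l\| \le R$, hence $\PP\{E_{T+1}\} \ge 1 - \nicefrac{(T+1)\beta}{(K+1)}$.

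Finally, on $E_{K+1}$ (which holds with probability at least $1-\beta$) all iterates stay in $Q$, so Lemma~\ref{lem:optimization_lemma_SGDA} applies on the full horizon; using $\nicefrac{2}{\ell} - \gamma \ge \nicefrac{1}{\ell}$ (again from \eqref{eq:gamma_SGDA_non_mon}) and the high-probability bounds on the stochastic terms established in the induction, one concludes $\sum_{k=0}^{K}\|F(x^k)\|^2 \le \nicefrac{2\ell R^2}{\gamma}$, which is \eqref{eq:main_result_non_mon}. Substituting the value of $\gamma$ equal to the minimum in \eqref{eq:gamma_SGDA_non_mon} and requiring both resulting terms in the bound to be $\cO(\varepsilon)$ gives \eqref{eq:clipped_SGDA_non_monotone_case_2_appendix} and, after solving for $K$, the complexity \eqref{eq:clipped_SGDA_non_monotone_case_complexity_appendix}. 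The main obstacle is the usual one for this line of work: $\|F(x^k)\|$ is only controlled on the event that $x^k$ lies in the ball, so the clipping, the indicator/truncation vectors, and the three Bernstein applications must be arranged so that the induction closes with precisely the allotted failure budget $\nicefrac{\beta}{(K+1)}$ per step; the actual constant bookkeeping in \eqref{eq:gamma_SGDA_non_mon}--\eqref{eq:lambda_SGDA_non_mon} is the delicate (but routine) part, and it parallels the $\sum\|F(x^k)\|^2$ estimates already carried out in Theorem~\ref{thm:main_result_gap_SGDA}.
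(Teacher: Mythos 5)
Your proposal is essentially correct and follows the same inductive template as the paper, but it carries an unnecessary component that the paper's proof deliberately drops. The paper's proof of this case defines the event $E_k$ by the single condition $\|x^t-x^*\|^2\le 2R^2$ and observes that, since the metric is $\frac{1}{K+1}\sum_k\|F(x^k)\|^2$ rather than the restricted gap, one only ever needs Lemma~\ref{lem:optimization_lemma_SGDA} and the decomposition into $\circledOne$--$\circledFive$ from \eqref{eq:thm_SGDA_technical_gap_5}; the term $2\gamma R\bigl\|\sum_{l}\omega_l\bigr\|$ (and hence the truncated vectors $\zeta_l$ and the sums $\circledSix$, $\circledSeven$) arises \emph{only} from the maximization over $u\in B_R(x^*)$ in the gap bound, so it never appears here. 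Consequently the paper uses exactly two Bernstein applications per step ($E_{\circledOne}$ and $E_{\circledFour}$), each allotted failure probability $\nicefrac{\beta}{2(K+1)}$, which is precisely why the theorem's parameters are calibrated with $\ln\tfrac{4(K+1)}{\beta}$ (since $2\exp(-\cdot)=\nicefrac{\beta}{2(K+1)}$ forces the exponent $\ln\tfrac{4(K+1)}{\beta}$). Your version, which keeps the condition $\gamma\bigl\|\sum_{l=0}^{t-1}\omega_l\bigr\|\le R$ in $E_k$ and runs three Bernstein events at $\nicefrac{\beta}{3(K+1)}$ each, would still close the induction, but it requires exponents of the form $\ln\tfrac{6(K+1)}{\beta}$ and therefore does not reproduce the theorem with the stated constants \eqref{eq:gamma_SGDA_non_mon}--\eqref{eq:lambda_SGDA_non_mon}; it proves the same asymptotic result with a slightly smaller admissible step size. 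Dropping the $\|\sum\omega_l\|$ control buys both a shorter argument and the exact constants in the statement.
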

\begin{proof}
    Again, we will closely follow the proof of Theorem~D.2 from \citep{gorbunov2022clipped} and the main difference will be reflected in the application of Bernstein inequality and estimating biases and variances of stochastic terms.

    Let $R_k = \|x^k - x^*\|$ for all $k\geq 0$. As the previous result, the proof is based on on the induction argument and showing that the iterates do not leave some ball around the solution with high probability. More precisely, for each $k = 0,\ldots, K+1$ we define probability event $E_k$ as follows: inequalities
    \begin{gather}
        \|x^t - x^*\|^2 \leq 2R^2, \label{eq:induction_inequality_SGDA}
    \end{gather}
    hold for $t = 0,1,\ldots,k$ simultaneously. We want to prove that $\PP\{E_k\} \geq  1 - \nicefrac{k\beta}{(K+1)}$ for all $k = 0,1,\ldots,K+1$ by induction. One of the important things is that inequalities \eqref{eq:thm_SGDA_technical_gap_0} and \eqref{eq:thm_SGDA_technical_gap_5} are obtained without assuming monotonicity of $F$. Thus, if we do exactly the same steps as in the proof of Theorem~\ref{thm:main_result_gap_SGDA} (up to the replacement of $\ln \frac{6(K+1)}{\beta}$ by $\ln \frac{4(K+1)}{\beta}$), we gain that
    \begin{gather*}
        R_{T+1}^2 \overset{\eqref{eq:thm_SGDA_technical_gap_5}}{\leq} R^2 + \circledOne + \circledTwo + \circledThree + \circledFour + \circledFive ,\\
        \circledTwo \overset{\eqref{eq:bound_2_SGDA_neg_mon}}{\leq} \frac{R^2}{5},\quad \circledThree \overset{\eqref{eq:bound_3_SGDA_neg_mon}}{\leq} \frac{R^2}{5},\quad \circledFive \overset{\eqref{eq:bound_5_SGDA_neg_mon}}{\leq} \frac{R^2}{5},\\
        \sum\limits_{t=0}^{T}\sigma_t^2 \overset{\eqref{eq:bound_1_variances_SGDA_neg_mon}}{\leq}  \frac{R^4}{150\ln\tfrac{4(K+1)}{\beta}},\quad \sum\limits_{t=0}^{T}\widetilde\sigma_t^2 \overset{\eqref{eq:bound_4_variances_SGDA_neg_mon}}{\leq} \frac{R^4}{150\ln\tfrac{4(K+1)}{\beta}}.
    \end{gather*}
    Moreover, we also have (see \eqref{eq:bound_1_SGDA_neg_mon}, \eqref{eq:bound_4_SGDA_neg_mon} and our induction assumption)
    \begin{gather*}
        \PP\{E_{T}\} \geq 1 - \frac{T\beta}{K+1},\\
        \PP\{E_{\circledOne}\} \geq 1 - \frac{\beta}{2(K+1)}, \quad \PP\{E_{\circledFour}\} \geq 1 - \frac{\beta}{2(K+1)},
    \end{gather*}
    where 
    \begin{eqnarray}
        E_{\circledOne}&=& \left\{\text{either} \quad \sum\limits_{t=0}^{T}\sigma_t^2 > \frac{R^4}{150\ln\tfrac{4(K+1)}{\beta}}\quad \text{or}\quad |\circledOne| \leq \frac{R^2}{5}\right\},\notag\\
        E_{\circledFour}&=& \left\{\text{either} \quad \sum\limits_{t=0}^{T}\widetilde\sigma_t^2 > \frac{R^4}{150\ln\tfrac{4(K+1)}{\beta}}\quad \text{or}\quad |\circledFour| \leq \frac{R^2}{5}\right\}.\notag
    \end{eqnarray}
    Thus probability event $E_{T-1} \cap E_{\circledOne} \cap E_{\circledFour}$ implies
    \begin{eqnarray*}
        R_{T+1}^2 \leq R^2 + \circledOne + \circledTwo + \circledThree + \circledFour + \circledFive \leq 2R^2,
    \end{eqnarray*}
    and 
    \begin{equation}
        \PP\{E_{T+1}\} \geq \PP\{E_{T} \cap E_{\circledOne} \cap E_{\circledFour}\} = 1 - \PP\{\overline{E}_{T} \cup \overline{E}_{\circledOne} \cup \overline{E}_{\circledFour}\} \geq 1 - \frac{T\beta}{K+1}.
    \end{equation}
    This finishes the inductive part of our proof, i.e.  for all $k = 0,1,\ldots,K+1$ we have $\PP\{E_k\} \geq 1 - \nicefrac{k\beta}{(K+1)}$. In particular, for $k = K+1$ we have that with probability at least $1 - \beta$
    \begin{eqnarray*}
        \frac{1}{K+1}\sum\limits_{k=0}^{K}\|F(x^k)\|^2 &\overset{\eqref{eq:thm_SGDA_technical_gap_0}}{\leq}& \frac{\ell(R^2 - R_{K+1}^2)}{\gamma(K+1)}  + \frac{\ell(\circledOne + \circledTwo + \circledThree + \circledFour + \circledFive)}{\gamma(K+1)}\\
        &\leq& \frac{2\ell R^2}{\gamma (K+1)}.
    \end{eqnarray*}
     Finally, if 
    \begin{equation*}
        \gamma = \min\left\{\frac{1}{170\ell \ln \tfrac{4(K+1)}{\beta}}, \frac{R}{97200^{\tfrac{1}{\alpha}}(K+1)^{\frac{1}{\alpha}}\sigma \ln^{\frac{\alpha-1}{\alpha}} \tfrac{4(K+1)}{\beta}}\right\}
    \end{equation*}
    then with probability at least $1-\beta$
    \begin{eqnarray*}
        \frac{1}{K+1}\sum\limits_{k=0}^{K}\|F(x^k)\|^2 &\leq& \frac{2\ell R^2}{\gamma(K+1)} = \max\left\{\frac{340 \ell^2 R^2 \ln \tfrac{4(K+1)}{\beta}}{K+1}, \frac{2 \cdot97200^{\tfrac{1}{\alpha}}\ell\sigma R \ln^{\frac{\alpha-1}{\alpha}} \tfrac{4(K+1)}{\beta}}{(K+1)^{\frac{\alpha-1}{\alpha}}}\right\}\\
        &=& \cO\left(\max\left\{\frac{\ell^2 R^2\ln\frac{K}{\beta}}{K}, \frac{\ell \sigma R \ln^{\frac{\alpha-1}{\alpha}}\frac{K}{\beta}}{K^{\frac{\alpha-1}{\alpha}}}\right\}\right).
    \end{eqnarray*}
    To get $\frac{1}{K+1}\sum\limits_{k=0}^{K}\|F(x^k)\|^2 \leq \varepsilon$ with probability at least $1-\beta$ it is sufficient to choose $K$ such that both terms in the maximum above are $\cO(\varepsilon)$. This leads to
    \begin{equation*}
         K = \cO\left(\frac{\ell^2 R^2}{\varepsilon}\ln\frac{\ell^2 R^2}{\varepsilon\beta}, \left(\frac{\ell \sigma R}{\varepsilon}\right)^{\frac{\alpha}{\alpha-1}}\ln\left( \frac{1}{\beta} \left(\frac{\ell \sigma R}{\varepsilon}\right)^{\frac{\alpha}{\alpha-1}}\right)\right)
    \end{equation*}
    that concludes the proof.

\end{proof}

\subsection{Quasi-Strongly Monotone Star-Cocoercive Problems}

As in the monotone case, we use another lemma from \citep{gorbunov2022clipped} that handles the deterministic part of \algname{clipped-SGDA} in the quasi-strongly monotone case.
\begin{lemma}[Lemma~D.3 from \citep{gorbunov2022clipped}]\label{lem:optimization_lemma_str_mon_SGDA}
    Let Assumptions~\ref{as:QSM}, \ref{as:star-cocoercivity}  hold for $Q = B_{2R}(x^*) = \{x\in\R^d\mid \|x - x^*\| \leq 2R\}$, where $R \geq \|x^0 - x^*\|$, and $0 < \gamma \leq \nicefrac{1}{\ell}$. If $x^k$ lie in $B_{2R}(x^*)$ for all $k = 0,1,\ldots, K$ for some $K\geq 0$, then the iterates produced by \algname{clipped-SGDA} satisfy
    \begin{eqnarray}
        \|x^{K+1} - x^*\|^2 &\leq& (1 - \gamma \mu)^{K+1}\|x^0 - x^*\|^2  + 2\gamma \sum\limits_{k=0}^K (1-\gamma\mu)^{K-k} \langle x^k - x^* - \gamma F(x^k), \omega_k \rangle\notag\\
        &&\quad + \gamma^2 \sum\limits_{k=0}^K (1-\gamma\mu)^{K-k}\|\omega_k\|^2, \label{eq:optimization_lemma_SGDA_str_mon}
    \end{eqnarray}
    where $\omega_k$ are defined in \eqref{eq:omega_k_SGDA}.
\end{lemma}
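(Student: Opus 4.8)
The plan is to follow the standard one-step contraction argument for descent-type methods, exploiting the interplay between quasi-strong monotonicity and star-cocoercivity, and then to unroll the recursion. First I would rewrite the update rule as $x^{k+1} - x^* = \bigl(x^k - x^* - \gamma F(x^k)\bigr) + \gamma\omega_k$, using $\tF_{\xi^k}(x^k) = F(x^k) - \omega_k$ from \eqref{eq:omega_k_SGDA}, and expand the square:
\[
\|x^{k+1}-x^*\|^2 = \|x^k - x^* - \gamma F(x^k)\|^2 + 2\gamma\langle x^k - x^* - \gamma F(x^k),\,\omega_k\rangle + \gamma^2\|\omega_k\|^2,
\]
so that the last two terms already have exactly the form of the summands appearing in \eqref{eq:optimization_lemma_SGDA_str_mon}.

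Next I would bound the deterministic term $\|x^k - x^* - \gamma F(x^k)\|^2 = \|x^k-x^*\|^2 - 2\gamma\langle F(x^k), x^k-x^*\rangle + \gamma^2\|F(x^k)\|^2$. Since the hypothesis guarantees $x^k \in B_{2R}(x^*) = Q$ for $k = 0,\dots,K$, I may apply star-cocoercivity \eqref{eq:star-cocoercivity} to replace $\gamma^2\|F(x^k)\|^2$ by $\gamma^2\ell\langle F(x^k), x^k-x^*\rangle$, which collapses the cross term to $-\gamma(2-\gamma\ell)\langle F(x^k), x^k-x^*\rangle$. Because $\gamma \le \nicefrac{1}{\ell}$ we have $2-\gamma\ell \ge 1$, so this is at most $-\gamma\langle F(x^k), x^k-x^*\rangle$, and quasi-strong monotonicity \eqref{eq:QSM} then gives $-\gamma\langle F(x^k), x^k-x^*\rangle \le -\gamma\mu\|x^k-x^*\|^2$. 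Combining, $\|x^k - x^* - \gamma F(x^k)\|^2 \le (1-\gamma\mu)\|x^k-x^*\|^2$, which is precisely the contraction factor in the claim. (Incidentally this forces $1-\gamma\mu \ge 0$ as soon as $x^k \ne x^*$, which is all one needs below; it also reflects the elementary fact $\mu \le \ell$ that follows from \eqref{eq:QSM}, \eqref{eq:star-cocoercivity} and Cauchy--Schwarz.)

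Finally I would unroll the resulting recursion
\[
\|x^{k+1}-x^*\|^2 \le (1-\gamma\mu)\|x^k-x^*\|^2 + 2\gamma\langle x^k-x^*-\gamma F(x^k),\,\omega_k\rangle + \gamma^2\|\omega_k\|^2
\]
over $k = 0,\dots,K$: multiplying the remainder terms of the $k$-th inequality by $(1-\gamma\mu)^{K-k}$ and telescoping the $\|x^k - x^*\|^2$ contributions yields exactly \eqref{eq:optimization_lemma_SGDA_str_mon}, using $1-\gamma\mu \ge 0$ so that no sign flips occur.

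Honestly there is no serious obstacle in this lemma: it is a purely deterministic identity-plus-inequality computation, and the only points requiring care are the direction of the bound $2-\gamma\ell \ge 1$ and the nonnegativity of $1-\gamma\mu$, both immediate from $\gamma \le \nicefrac{1}{\ell}$. The substantive work in the overall argument — bounding the stochastic sums $\sum_{k}(1-\gamma\mu)^{K-k}\langle x^k-x^*-\gamma F(x^k),\omega_k\rangle$ and $\sum_{k}(1-\gamma\mu)^{K-k}\|\omega_k\|^2$ with high probability via Bernstein's inequality (Lemma~\ref{lem:Bernstein_ineq}) and the clipping estimates of Lemma~\ref{lem:bias_and_variance_clip}, together with the inductive argument that the iterates never leave $B_{2R}(x^*)$ — is deferred to the proof of the quasi-strongly monotone case of Theorem~\ref{thm:clipped_SGDA_main_theorem}.
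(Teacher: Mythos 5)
Your proof is correct and follows the same route as the cited argument (the paper itself only references Lemma~D.3 of \citet{gorbunov2022clipped} without reproducing the proof): expand $\|x^{k+1}-x^*\|^2$ around the deterministic step, absorb $\gamma^2\|F(x^k)\|^2$ via star-cocoercivity into the cross term, apply quasi-strong monotonicity to obtain the $(1-\gamma\mu)$ contraction, and unroll using $1-\gamma\mu\geq 0$. All applications of Assumptions~\ref{as:QSM} and \ref{as:star-cocoercivity} occur at points $x^k\in B_{2R}(x^*)$ guaranteed by the hypothesis, so no step is missing.
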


Using this lemma we prove the main convergence result for \algname{clipped-SGDA} in the quasi-strongly monotone case.

\begin{theorem}[Case 2 in Theorem~\ref{thm:clipped_SGDA_main_theorem}]\label{thm:main_result_str_mon_SGDA}
    Let Assumptions~\ref{as:bounded_alpha_moment}, \ref{as:QSM}, \ref{as:star-cocoercivity}, hold for $Q = B_{2R}(x^*) = \{x\in\R^d\mid \|x - x^*\| \leq 2R\}$, where $R \geq \|x^0 - x^*\|$, and
    \begin{eqnarray}
        0< \gamma &\leq& \min\left\{\frac{1}{400 \ell\ln \tfrac{4(K+1)}{\beta}}, \frac{\ln(B_K)}{\mu(K+1)}\right\}, \label{eq:gamma_SGDA_str_mon}\\
        B_K &=& \max\left\{2, \frac{(K+1)^{\frac{2(\alpha-1)}{\alpha}}\mu^2R^2}{5400^{\frac{2}{\alpha}}\sigma^2\ln^{\frac{2(\alpha-1)}{\alpha}}\left(\frac{4(K+1)}{\beta}\right)\ln^2(B_K)} \right\} \label{eq:B_K_SGDA_str_mon_1} \\
        &=& \cO\left(\max\left\{2, \frac{K^{\frac{2(\alpha-1)}{\alpha}}\mu^2R^2}{\sigma^2\ln^{\frac{2(\alpha-1)}{\alpha}}\left(\frac{K}{\beta}\right)\ln^2\left(\max\left\{2, \frac{K^{\frac{2(\alpha-1)}{\alpha}}\mu^2R^2}{\sigma^2\ln^{\frac{2(\alpha-1)}{\alpha}}\left(\frac{K}{\beta}\right)} \right\}\right)} \right\}\right\}, \label{eq:B_K_SGDA_str_mon_2} \\
        \lambda_k &=& \frac{\exp(-\gamma\mu(1 + \nicefrac{k}{2}))R}{120\gamma \ln \tfrac{4(K+1)}{\beta}}, \label{eq:lambda_SGDA_str_mon}
    \end{eqnarray}
    for some $K \geq 0$ and $\beta \in (0,1]$ such that $\ln \tfrac{4(K+1)}{\beta} \geq 1$. Then, after $K$ iterations the iterates produced by \algname{clipped-SGDA} with probability at least $1 - \beta$ satisfy 
    \begin{equation}
        \|x^{K+1} - x^*\|^2 \leq 2\exp(-\gamma\mu(K+1))R^2. \label{eq:main_result_str_mon_SGDA}
    \end{equation}
    In particular, when $\gamma$ equals the minimum from \eqref{eq:gamma_SGDA_str_mon}, then the iterates produced by \algname{clipped-SGDA} after $K$ iterations with probability at least $1-\beta$ satisfy
    \begin{equation}
       \|x^{K} - x^*\|^2 = \cO\left(\max\left\{R^2\exp\left(- \frac{\mu K}{\ell \ln \tfrac{K}{\beta}}\right), \frac{\sigma^2\ln^{\frac{2(\alpha-1)}{\alpha}}\left(\frac{K}{\beta}\right)\ln^2\left(\max\left\{2, \frac{K^{\frac{2(\alpha-1)}{\alpha}}\mu^2R^2}{\sigma^2\ln^{\frac{2(\alpha-1)}{\alpha}}\left(\frac{K}{\beta}\right)} \right\}\right)}{K^{\frac{2(\alpha-1)}{\alpha}}\mu^2}\right\}\right), \label{eq:clipped_SGDA_str_monotone_case_2_appendix}
    \end{equation}
    meaning that to achieve $\|x^{K} - x^*\|^2 \leq \varepsilon$ with probability at least $1 - \beta$ \algname{clipped-SGDA} requires
    \begin{equation}
        K = \cO\left(\frac{\ell}{\mu}\ln\left(\frac{R^2}{\varepsilon}\right)\ln\left(\frac{\ell}{\mu \beta}\ln\frac{R^2}{\varepsilon}\right), \left(\frac{\sigma^2}{\mu^2\varepsilon}\right)^{\frac{\alpha}{2(\alpha-1)}}\ln \left(\frac{1}{\beta} \left(\frac{\sigma^2}{\mu^2\varepsilon}\right)^{\frac{\alpha}{2(\alpha-1)}}\right)\ln^{\frac{\alpha}{\alpha-1}}\left(B_\varepsilon\right)\right) \label{eq:clipped_SGDA_str_monotone_case_complexity_appendix}
    \end{equation}
    iterations/oracle calls, where
    \begin{equation*}
        B_\varepsilon = \max\left\{2, \frac{R^2}{\varepsilon \ln \left(\frac{1}{\beta} \left(\frac{\sigma^2}{\mu^2\varepsilon}\right)^{\frac{\alpha}{2(\alpha-1)}}\right)}\right\}.
    \end{equation*}
\end{theorem}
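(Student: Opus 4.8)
The plan is to follow the template already used for Theorem~\ref{thm:main_result_str_mon_SEG} (\algname{clipped-SEG} under quasi-strong monotonicity), which is the closest analogue, with the simplification that \algname{clipped-SGDA} makes only one stochastic oracle call per iteration, so the single noise sequence $\omega_k = F(x^k) - \tF_{\xi^k}(x^k)$ is the only source of randomness and the recursion coming from Lemma~\ref{lem:optimization_lemma_str_mon_SGDA} carries five stochastic terms instead of seven. First I would set $R_k = \|x^k - x^*\|$ and, for each $k = 0,1,\dots,K+1$, define the probability event $E_k$ as the statement that $R_t^2 \le 2\exp(-\gamma\mu t)R^2$ holds for all $t \le k$ simultaneously. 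The goal is to prove $\PP\{E_k\} \ge 1 - \nicefrac{k\beta}{(K+1)}$ by induction on $k$; the base case $k=0$ is immediate from $R_0 \le R$.

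For the inductive step, assume $\PP\{E_{T-1}\} \ge 1 - \nicefrac{(T-1)\beta}{(K+1)}$. On $E_{T-1}$ we have $R_t^2 \le 2R^2$, hence $x^t \in B_{2R}(x^*)$ and Lemma~\ref{lem:optimization_lemma_str_mon_SGDA} applies, giving $R_T^2 \le \exp(-\gamma\mu T)R^2 + 2\gamma\sum_{l=0}^{T-1}(1-\gamma\mu)^{T-1-l}\langle x^l - x^* - \gamma F(x^l),\omega_l\rangle + \gamma^2\sum_{l=0}^{T-1}(1-\gamma\mu)^{T-1-l}\|\omega_l\|^2$ after using $(1-\gamma\mu)^T \le \exp(-\gamma\mu T)$. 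Star-cocoercivity yields $\|F(x^l)\| \le \ell\|x^l - x^*\| \le \sqrt{2}\,\ell\exp(-\nicefrac{\gamma\mu l}{2})R$, which by the choice of $\lambda_l$ in \eqref{eq:lambda_SGDA_str_mon} is at most $\nicefrac{\lambda_l}{2}$; therefore Lemma~\ref{lem:bias_and_variance_clip} provides the bias bound $\|\omega_l^b\| \le 2^\alpha\sigma^\alpha/\lambda_l^{\alpha-1}$ and the variance bound $\EE_{\xi^l}[\|\omega_l^u\|^2] \le 18\lambda_l^{2-\alpha}\sigma^\alpha$, where $\omega_l = \omega_l^u + \omega_l^b$ is the usual unbiased/biased split. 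Replacing $x^l - x^* - \gamma F(x^l)$ by a truncated vector $\eta_l$ that equals it on $E_{T-1}$ and satisfies $\|\eta_l\| \le \sqrt{2}(1+\gamma\ell)\exp(-\nicefrac{\gamma\mu l}{2})R$ almost surely, I would split the right-hand side into the deterministic lead term plus five pieces: $\circledOne = 2\gamma\sum_l(1-\gamma\mu)^{T-1-l}\langle\eta_l,\omega_l^u\rangle$, $\circledTwo = 2\gamma\sum_l(1-\gamma\mu)^{T-1-l}\langle\eta_l,\omega_l^b\rangle$, $\circledThree = 2\gamma^2\sum_l(1-\gamma\mu)^{T-1-l}\EE_{\xi^l}[\|\omega_l^u\|^2]$, $\circledFour = 2\gamma^2\sum_l(1-\gamma\mu)^{T-1-l}(\|\omega_l^u\|^2 - \EE_{\xi^l}[\|\omega_l^u\|^2])$, and $\circledFive = 2\gamma^2\sum_l(1-\gamma\mu)^{T-1-l}\|\omega_l^b\|^2$, using $\|a+b\|^2 \le 2\|a\|^2+2\|b\|^2$ for $\|\omega_l\|^2$.

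Next I would bound each of the five pieces by $\tfrac{1}{5}\exp(-\gamma\mu T)R^2$. For $\circledOne$ and $\circledFour$ this is a martingale argument: each is a sum of bounded martingale differences whose conditional variances are controlled on $E_{T-1}$ by the variance bound above, so Bernstein's inequality (Lemma~\ref{lem:Bernstein_ineq}) with $b = \tfrac{1}{5}\exp(-\gamma\mu T)R^2$ and the appropriate $G$ gives, off an event of probability at most $\nicefrac{\beta}{2(K+1)}$ each, the desired inequality provided the accumulated variances are at most $G$; and on $E_{T-1}$ those accumulated variances are indeed $\le G$ by the stepsize constraint. The terms $\circledTwo,\circledThree,\circledFive$ are $\mathcal{F}$-measurable given $E_{T-1}$ and reduce to geometric sums of the form $\sum_{l=0}^{T-1}(1-\gamma\mu)^{T-1-l}\lambda_l^{\,c}$ with $c\in\{2-\alpha,\,1-\alpha,\,2-2\alpha\}$; substituting $\lambda_l = \Theta(\exp(-\gamma\mu(1+\nicefrac{l}{2}))R/(\gamma A))$ turns these into $\exp(-\gamma\mu T)$ times $\sum_l \exp(\nicefrac{\gamma\mu\alpha l}{2}) \le (K+1)\exp(\nicefrac{\gamma\mu\alpha K}{2})$, and the constraint $\gamma \le \nicefrac{\ln(B_K)}{\mu(K+1)}$ together with the definition of $B_K$ makes $(K+1)\gamma^{\alpha}\exp(\nicefrac{\gamma\mu\alpha K}{2})$ small enough. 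A union bound over $E_{T-1}$ and the two Bernstein events then yields $\PP\{E_T\} \ge 1 - \nicefrac{T\beta}{(K+1)}$, closing the induction; at $k = K+1$ this is exactly \eqref{eq:main_result_str_mon_SGDA}. The complexity statements \eqref{eq:clipped_SGDA_str_monotone_case_2_appendix}--\eqref{eq:clipped_SGDA_str_monotone_case_complexity_appendix} then follow by substituting the explicit $\gamma$ from \eqref{eq:gamma_SGDA_str_mon}, unrolling $\exp(-\gamma\mu(K+1))$ into a maximum of the two regimes, and expanding $B_K$, exactly as in Theorem~\ref{thm:main_result_str_mon_SEG}.

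The main obstacle is the third step: arranging the exponentially-decaying clipping levels so that they interact correctly with the geometric weights $(1-\gamma\mu)^{T-1-l}$, so that every one of the five accumulated error terms collapses to a constant multiple of $\exp(-\gamma\mu T)R^2$ rather than leaving a residual growing factor; this requires tracking the exponents carefully through Bernstein's $G$-bound and through the deterministic geometric sums. A secondary point that needs verification is that the implicit definition of $B_K$ in \eqref{eq:B_K_SGDA_str_mon_1} is consistent — i.e.\ that $B_K \ge 2$ is well-defined and that the resulting stepsize upper bound genuinely forces all accumulated noise below the required thresholds.
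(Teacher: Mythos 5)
Your proposal matches the paper's proof essentially step for step: the same induction on the event $\{R_t^2 \le 2\exp(-\gamma\mu t)R^2\}$, the same application of Lemma~\ref{lem:optimization_lemma_str_mon_SGDA}, the same truncation $\eta_l$, the same five-term bias/variance decomposition with Bernstein's inequality on $\circledOne$ and $\circledFour$ (each at level $\tfrac{\beta}{2(K+1)}$) and deterministic geometric-sum bounds on $\circledTwo,\circledThree,\circledFive$, and the same final substitution of $\gamma$ and $B_K$. The two obstacles you flag (matching the decaying $\lambda_l$ against the weights $(1-\gamma\mu)^{T-1-l}$, and the well-posedness of the implicit $B_K$) are exactly the points the paper's proof handles via inequalities \eqref{eq:lambda_SGDA_str_mon}--\eqref{eq:gamma_SGDA_str_mon}, so the plan is sound as written.
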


\begin{proof}
    Again, we will closely follow the proof of Theorem~D.3 from \citep{gorbunov2022clipped} and the main difference will be reflected in the application of Bernstein inequality and estimating biases and variances of stochastic terms.
    
    Let $R_k = \|x^k - x^*\|$ for all $k\geq 0$. As in the previous results, the proof is based on the induction argument and showing that the iterates do not leave some ball around the solution with high probability. More precisely, for each $k = 0,1,\ldots,K+1$ we consider probability event $E_k$ as follows: inequalities
    \begin{equation}
        R_t^2 \leq 2 \exp(-\gamma\mu t) R^2 \label{eq:induction_inequality_str_mon_SGDA}
    \end{equation}
    hold for $t = 0,1,\ldots,k$ simultaneously. We want to prove $\PP\{E_k\} \geq  1 - \nicefrac{k\beta}{(K+1)}$ for all $k = 0,1,\ldots,K+1$ by induction. The base of the induction is trivial: for $k=0$ we have $R_0^2 \leq R^2 < 2R^2$ by definition. Next, assume that for $k = T-1 \leq K$ the statement holds: $\PP\{E_{T-1}\} \geq  1 - \nicefrac{(T-1)\beta}{(K+1)}$. Given this, we need to prove $\PP\{E_{T}\} \geq  1 - \nicefrac{T\beta}{(K+1)}$. Since $R_t^2 \leq 2\exp(-\gamma\mu t) R^2 \leq 2R^2$, we have $x^t \in B_{2R}(x^*)$, where operator $F$ is $\ell$-star-cocoersive. Thus, $E_{T-1}$ implies
    \begin{eqnarray}
        \|F(x^t)\| &\leq& \ell\|x^t - x^*\| \overset{\eqref{eq:induction_inequality_str_mon_SGDA}}{\leq} \sqrt{2}\ell\exp(- \nicefrac{\gamma\mu t}{2})R \overset{\eqref{eq:gamma_SGDA_str_mon},\eqref{eq:lambda_SGDA_str_mon}}{\leq} \frac{\lambda_t}{2} \label{eq:operator_bound_x_t_SGDA_str_mon}
    \end{eqnarray}
    and
    \begin{eqnarray}
        \|\omega_t\|^2 &\leq& 2\|\tF_{\xi}(x^t)\|^2 + 2\|F(x^t)\|^2 \overset{\eqref{eq:operator_bound_x_t_SGDA_str_mon}}{\leq} \frac{5}{2}\lambda_t^2 \overset{\eqref{eq:lambda_SGDA_str_mon}}{\leq} \frac{\exp(-\gamma\mu t)R^2}{4\gamma^2} \label{eq:omega_bound_x_t_SGDA_str_mon}
    \end{eqnarray}
    for all $t = 0, 1, \ldots, T-1$, where we use that $\|a+b\|^2 \leq 2\|a\|^2 + 2\|b\|^2$ holding for all $a,b \in \R^d$. 

    Using Lemma~\ref{lem:optimization_lemma_str_mon_SGDA} and $(1 - \gamma\mu)^T \leq \exp(-\gamma\mu T)$, we obtain that $E_{T-1}$ implies
    \begin{eqnarray}
        R_T^2 &\leq& \exp(-\gamma\mu T)R^2 + 2\gamma \sum\limits_{t=0}^{T-1} (1-\gamma\mu)^{T-1-t} \langle x^t - x^* - \gamma F(x^t), \omega_t \rangle\notag\\
        &&\quad + \gamma^2 \sum\limits_{t=0}^{T-1} (1-\gamma\mu)^{T-1-t}\|\omega_t\|^2. \notag
    \end{eqnarray}
    To handle the sums above, we introduce a new notation:
    \begin{gather}
        \eta_t = \begin{cases} x^t - x^* - \gamma F(x^t),& \text{if } \|x^t - x^* - \gamma F(x^t)\| \leq \sqrt{2}(1 + \gamma \ell) \exp(- \nicefrac{\gamma\mu t}{2})R,\\ 0,& \text{otherwise}, \end{cases} \label{eq:eta_t_SGDA_str_mon}
    \end{gather}
    for $t = 0, 1, \ldots, T-1$. This vector is bounded almost surely:
    \begin{equation}
         \|\eta_t\| \leq \sqrt{2}(1 + \gamma \ell)\exp(-\nicefrac{\gamma\mu t}{2})R \label{eq:eta_t_bound_SGDA_str_mon} 
    \end{equation}
    for all $t = 0, 1, \ldots, T-1$. We also notice that $E_{T-1}$ implies $\|F(x^t)\| \leq \sqrt{2}\ell\exp(-\nicefrac{\gamma\mu t}{2})R$ (due to \eqref{eq:operator_bound_x_t_SGDA_str_mon}) and
    \begin{eqnarray*}
        \|x^t - x^* - \gamma F(x^t)\| &\leq& \|x^t - x^*\| + \gamma \|F(x^t)\|\\
        &\overset{\eqref{eq:operator_bound_x_t_SGDA_str_mon}}{\leq}& \sqrt{2}(1 + \gamma \ell)\exp(-\nicefrac{\gamma\mu t}{2})R
    \end{eqnarray*}
    for $t = 0, 1, \ldots, T-1$. In other words, $E_{T-1}$ implies  $\eta_t = x^t - x^* - \gamma F(x^t)$ for all $t = 0,1,\ldots,T-1$, meaning that from $E_{T-1}$ it follows that
    \begin{eqnarray}
        R_T^2 &\leq& \exp(-\gamma\mu T)R^2 + 2\gamma \sum\limits_{t=0}^{T-1} (1-\gamma\mu)^{T-1-t} \langle \eta_t, \omega_t \rangle + \gamma^2 \sum\limits_{t=0}^{T-1} (1-\gamma\mu)^{T-1-t} \|\omega_t\|^2. \notag
    \end{eqnarray}

    To handle the sums appeared on the right-hand side of the previous inequality we consider unbiased and biased parts of $ \omega_t$:
    \begin{gather}
        \omega_t^u \eqdef \EE_{\xi^t}\left[F_{\xi^t}(x^t)\right] - \tF_{\xi^t}(x^t),\quad \omega_t^b \eqdef F(x^t) - \EE_{\xi_1^t}\left[F_{\xi^t}(x^t)\right], \label{eq:omega_unbias_bias_SGDA_str_mon}
    \end{gather}
    for all $t = 0,\ldots, T-1$. By definition we have  $\omega_t = \omega_t^u + \omega_t^b$ for all $t = 0,\ldots, T-1$. Therefore, $E_{T-1}$ implies
    \begin{eqnarray}
        R_T^2 &\leq& \exp(-\gamma\mu T) R^2 + \underbrace{2\gamma \sum\limits_{t=0}^{T-1} (1-\gamma\mu)^{T-1-t} \langle \eta_t, \omega_t^u \rangle}_{\circledOne} \notag\\ &&\quad + \underbrace{2\gamma \sum\limits_{t=0}^{T-1} (1-\gamma\mu)^{T-1-t} \langle \eta_t, \omega_t^b \rangle}_{\circledTwo} 
        + \underbrace{2\gamma^2 \sum\limits_{t=0}^{T-1} (1-\gamma\mu)^{T-1-t} \EE_{\xi}\left[\|\omega^u_t\|^2\right]}_{\circledThree}\notag\\
        &&\quad + \underbrace{2\gamma^2 \sum\limits_{t=0}^{T-1} (1-\gamma\mu)^{T-1-t}\left( \|\omega^u_t\|^2 -  \EE_{\xi}\left[\|\omega^u_t\|^2\right]\right)}_{\circledFour} + \underbrace{2\gamma^2 \sum\limits_{t=0}^{T-1} (1-\gamma\mu)^{T-1-t} \|\omega^b_t\|^2}_{\circledFive}. \label{eq:SGDA_str_mon_12345_bound}
    \end{eqnarray}
    where we also apply inequality $\|a+b\|^2 \leq 2\|a\|^2 + 2\|b\|^2$ holding for all $a,b \in \R^d$ to upper bound $\|\omega_t\|^2$. It remains to derive good enough high-probability upper-bounds for the terms $\circledOne, \circledTwo, \circledThree, \circledFour, \circledFive$, i.e., to finish our inductive proof we need to show that $\circledOne + \circledTwo + \circledThree + \circledFour + \circledFive  \leq \exp(-\gamma\mu T) R^2$ with high probability. In the subsequent parts of the proof, we will need to use many times the bounds for the norm and second moments of $\omega_{t}^u$ and $\omega_{t}^b$. First, by Lemma \ref{lem:bias_and_variance_clip}, we have with probability $1$ that
    \begin{equation}
        \|\omega_t^u\| \leq 2\lambda_t.\label{eq:omega_magnitude_str_mon}
    \end{equation}
    Moreover, since $E_{T-1}$ implies that $\|F(x^t)\| \leq \nicefrac{\lambda_t}{2}$ and $\|F(x^t)\| \leq \nicefrac{\lambda_t}{2}$ for all $t = 0,1, \ldots, T-1$ (see \eqref{eq:operator_bound_x_t_SGDA_str_mon}), from Lemma~\ref{lem:bias_and_variance_clip} we also have that $E_{T-1}$ implies
    \begin{gather}
        \left\|\omega_t^b\right\| \leq \frac{2^{\alpha}\sigma^\alpha}{\lambda_t^{\alpha-1}}, \label{eq:bias_omega_str_mon}\\
        \EE_{\xi^t}\left[\left\|\omega_t^b\right\|^2\right] \leq 18 \lambda_t^{2-\alpha}\sigma^\alpha, \label{eq:distortion_omega_str_mon}\\
        \EE_{\xi^t}\left[\left\|\omega_t^u\right\|^2\right] \leq 18 \lambda_t^{2-\alpha}\sigma^\alpha, \label{eq:variance_omega_str_mon}
    \end{gather}
    for all $t = 0,1, \ldots, T-1$.

\paragraph{Upper bound for $\circledOne$.} By definition of $\omega_t^u$, we have $\EE_{\xi^t}[\omega_{t}^u] = 0$ and
    \begin{equation*}
        \EE_{\xi^t}\left[2\gamma (1-\gamma\mu)^{T-1-t} \langle \eta_t, \omega_t^u \rangle\right] = 0.
    \end{equation*}
    Next, sum $\circledOne$ has bounded with probability $1$ terms:
    \begin{eqnarray}
        |2\gamma (1-\gamma\mu)^{T-1-t} \langle \eta_t, \omega_t^u \rangle | &\leq& 2\gamma\exp(-\gamma\mu (T - 1 - t)) \|\eta_t\|\cdot \|\omega_t^u\|\notag\\
        &\overset{\eqref{eq:eta_t_bound_SGDA_str_mon},\eqref{eq:omega_magnitude_str_mon}}{\leq}& 4\sqrt{2}\gamma (1 + \gamma \ell) \exp(-\gamma\mu (T - 1 - \nicefrac{t}{2})) R \lambda_t\notag\\
        &\overset{\eqref{eq:gamma_SGDA_str_mon},\eqref{eq:lambda_SGDA_str_mon}}{\leq}& \frac{\exp(-\gamma\mu T)R^2}{5\ln\tfrac{4(K+1)}{\beta}} \eqdef c. \label{eq:SGDA_str_mon_technical_1_1}
    \end{eqnarray}
    The summands also have bounded conditional variances $\sigma_t^2 \eqdef \EE_{\xi^t}\left[4\gamma^2 (1-\gamma\mu)^{2T-2-2t} \langle \eta_t, \omega_t^u \rangle^2\right]$:
    \begin{eqnarray}
        \sigma_t^2 &\leq& \EE_{\xi^t}\left[4\gamma^2\exp(-\gamma\mu (2T - 2 - 2t)) \|\eta_t\|^2\cdot \|\omega_t^u\|^2\right]\notag\\
        &\overset{\eqref{eq:eta_t_bound_SGDA_str_mon}}{\leq}& 8\gamma^2 (1 + \gamma \ell)^2 \exp(-\gamma\mu (2T - 2 - t)) R^2 \EE_{\xi^t}\left[\|\omega_t^u\|^2\right]\notag\\
        &\overset{\eqref{eq:gamma_SGDA_str_mon}}{\leq}& 10\gamma^2\exp(-\gamma\mu (2T - t))R^2 \EE_{\xi^t}\left[\|\omega_t^u\|^2\right]. \label{eq:SGDA_str_mon_technical_1_2}
    \end{eqnarray}

    In other words, we showed that $\{2\gamma (1-\gamma\mu)^{T-1-t} \langle \eta_t, \omega_t^u \rangle\}_{t = 0}^{T-1}$ is a bounded martingale difference sequence with bounded conditional variances $\{\sigma_t^2\}_{t = 0}^{T-1}$. Next, we apply Bernstein's inequality (Lemma~\ref{lem:Bernstein_ineq}) with $X_t = 2\gamma (1-\gamma\mu)^{T-1-t} \langle \eta_t, \omega_t^u \rangle$, parameter $c$ as in \eqref{eq:SGDA_str_mon_technical_1_1}, $b = \tfrac{1}{5}\exp(-\gamma\mu T) R^2$, $F = \tfrac{\exp(-2 \gamma\mu T) R^4}{300\ln\frac{4(K+1)}{\beta}}$:
    \begin{equation*}
        \PP\left\{|\circledOne| > \frac{1}{5}\exp(-\gamma\mu T) R^2 \text{ and } \sum\limits_{t=0}^{T-1}\sigma_t^2 \leq \frac{\exp(- 2\gamma\mu T) R^4}{300\ln\tfrac{4(K+1)}{\beta}}\right\} \leq 2\exp\left(- \frac{b^2}{2F + \nicefrac{2cb}{3}}\right) = \frac{\beta}{2(K+1)}.
    \end{equation*}
    Equivalently, we have
    \begin{equation}
        \PP\{E_{\circledOne}\} \geq 1 - \frac{\beta}{2(K+1)},\quad \text{for}\quad E_{\circledOne} = \left\{\text{either} \quad \sum\limits_{t=0}^{T-1}\sigma_t^2 > \frac{\exp(- 2\gamma\mu T) R^4}{300\ln\tfrac{4(K+1)}{\beta}}\quad \text{or}\quad |\circledOne| \leq \frac{1}{5}\exp(-\gamma\mu T) R^2\right\}. \label{eq:bound_1_SGDA_str_mon}
    \end{equation}
    In addition, $E_{T-1}$ implies that
    \begin{eqnarray}
        \sum\limits_{t=0}^{T-1}\sigma_t^2 &\overset{\eqref{eq:SGDA_str_mon_technical_1_2}}{\leq}& 10\gamma^2\exp(- 2\gamma\mu T)R^2\sum\limits_{t=0}^{T-1} \frac{\EE_{\xi^t}\left[\|\omega_t^u\|^2\right]}{\exp(-\gamma\mu t)}\notag\\ 
        &\overset{\eqref{eq:variance_omega_str_mon}, T \leq K+1}{\leq}& 180\gamma^2\exp(-2\gamma\mu T) R^2 \sigma^\alpha \sum\limits_{t=0}^{K} \frac{\lambda_t^{2-\alpha}}{\exp(-\gamma\mu t)}\notag\\
        &\overset{\eqref{eq:lambda_SGDA_str_mon}}{\leq}& \frac{180\gamma^\alpha\exp(-2\gamma\mu T) R^{4-\alpha} \sigma^\alpha (K+1)\exp(\frac{\gamma\mu\alpha K}{2})}{120^{2-\alpha} \ln^{2-\alpha}\tfrac{4(K+1)}{\beta}}\notag\\
        &\overset{\eqref{eq:gamma_SGDA_str_mon}}{\leq}& \frac{\exp(-2\gamma\mu T)R^4}{300\ln\tfrac{4(K+1)}{\beta}}. \label{eq:bound_1_variances_SGDA_str_mon}
    \end{eqnarray}

    \paragraph{Upper bound for $\circledTwo$.} From $E_{T-1}$ it follows that
    \begin{eqnarray}
        \circledTwo &\leq& 2\gamma \exp(-\gamma\mu (T-1)) \sum\limits_{t=0}^{T-1} \frac{\|\eta_t\|\cdot \|\omega_t^b\|}{\exp(-\gamma\mu t)}\notag\\
        &\overset{\eqref{eq:eta_t_bound_SGDA_str_mon}, \eqref{eq:bias_omega_str_mon}}{\leq}& 2^{1+\alpha}\sqrt{2} \gamma (1+\gamma \ell) \exp(-\gamma\mu (T-1)) R \sigma^\alpha \sum\limits_{t=0}^{T-1} \frac{1}{\lambda_t^{\alpha-1} \exp(-\nicefrac{\gamma\mu t}{2})}\notag\\
        &\overset{\eqref{eq:lambda_SGDA_str_mon}}{\leq}& \frac{2^{3+\alpha}\cdot 120^{\alpha-1}\sqrt{2} \gamma^\alpha(1+\gamma \ell) \exp(-\gamma\mu T) \sigma^{\alpha} (K+1) \exp\left(\frac{\gamma\mu\alpha T}{2}\right) \ln^{\alpha-1}\tfrac{4(K+1)}{\beta}}{R^{\alpha-2}} \notag \\
        &\overset{\eqref{eq:gamma_SGDA_str_mon}}{\leq}& \frac{1}{5}\exp(-\gamma\mu T) R^2. \label{eq:bound_2_SGDA_str_mon}
    \end{eqnarray}

    \paragraph{Upper bound for $\circledThree$.} From $E_{T-1}$ it follows that
    \begin{eqnarray}
        \circledThree &=& 2\gamma^2 \exp(-\gamma\mu (T-1)) \sum\limits_{t=0}^{T-1} \frac{\EE_{\xi^t}\left[\|\omega_t^u\|^2\right]}{\exp(-\gamma\mu t)} \notag\\
        &\overset{\eqref{eq:variance_omega_str_mon}}{\leq}& 144\gamma^2\exp(-\gamma\mu (T-1)) \sigma^\alpha\sum\limits_{t=0}^{T-1} \frac{\lambda_t^{2-\alpha}}{\exp(-\gamma\mu t)} \notag\\
        &\overset{\eqref{eq:lambda_SGDA_str_mon}}{\leq}&  \frac{144\gamma^\alpha R^{2-\alpha}\exp(-\gamma\mu (T-1)) \sigma^\alpha (K+1)\exp(\frac{\gamma\mu\alpha K}{2})}{120^{2-\alpha} \ln^{2-\alpha}\tfrac{4(K+1)}{\beta}} \notag\\
        &\overset{\eqref{eq:gamma_SGDA_str_mon}}{\leq}& \frac{1}{5} \exp(-\gamma\mu T) R^2. \label{eq:bound_3_SGDA_str_mon}
    \end{eqnarray}

    \paragraph{Upper bound for $\circledFour$.} First, we have
    \begin{equation*}
        2\gamma^2 (1-\gamma\mu)^{T-1-t}\EE_{\xi^t}\left[\|\omega_t^u\|^2  -\EE_{\xi^t}\left[\|\omega_t^u\|^2\right] \right] = 0.
    \end{equation*}
    Next, sum $\circledFour$ has bounded with probability $1$ terms:
    \begin{eqnarray}
        2\gamma^2 (1-\gamma\mu)^{T-1-t}\left| \|\omega_t^u\|^2  -\EE_{\xi^t}\left[\|\omega_t^u\|^2\right] \right| 
        &\overset{\eqref{eq:omega_magnitude_str_mon}}{\leq}& \frac{16\gamma^2 \exp(-\gamma\mu T) \lambda_l^2}{\exp(-\gamma\mu (1+t))}\notag\\
        &\overset{\eqref{eq:lambda_SGDA_str_mon}}{\leq}& \frac{\exp(-\gamma\mu T)R^2}{5\ln\tfrac{4(K+1)}{\beta}}\notag\\
        &\eqdef& c. \label{eq:SGDA_str_mon_technical_4_1}
    \end{eqnarray}
    The summands also have conditional variances
    \begin{equation*}
        \widetilde\sigma_t^2 \eqdef \EE_{\xi^t}\left[4\gamma^4 (1-\gamma\mu)^{2T-2-2t} \left|\|\omega_t^u\|^2  -\EE_{\xi^t}\left[\|\omega_t^u\|^2\right] \right|^2 \right]
    \end{equation*}
    that are bounded
    \begin{eqnarray}
        \widetilde\sigma_t^2 &\overset{\eqref{eq:SGDA_str_mon_technical_4_1}}{\leq}& \frac{2\gamma^2\exp(-2\gamma\mu T)R^2}{5\exp(-\gamma\mu (1+t))\ln\tfrac{4(K+1)}{\beta}} \EE_{\xi^t}\left[ \left|\|\omega_t^u\|^2  -\EE_{\xi^t}\left[\|\omega_t^u\|^2\right] \right|\right]\notag\\
        &\leq& \frac{4\gamma^2\exp(-2\gamma\mu T)R^2}{5\exp(-\gamma\mu (1+t))\ln\tfrac{4(K+1)}{\beta}} \EE_{\xi^t}\left[\|\omega_t^u\|^2\right]. \label{eq:SGDA_str_mon_technical_4_2}
    \end{eqnarray}
    In other words, we showed that $\left\{2\gamma^2 (1-\gamma\mu)^{T-1-t}\left( \|\omega_t^u\|^2 -\EE_{\xi^t}\left[\|\omega_t^u\|^2\right]\right)\right\}_{t = 0}^{T-1}$ is a bounded martingale difference sequence with bounded conditional variances $\{\widetilde\sigma_t^2\}_{t = 0}^{T-1}$. Next, we apply Bernstein's inequality (Lemma~\ref{lem:Bernstein_ineq}) with $X_t = 2\gamma^2 (1-\gamma\mu)^{T-1-t}\left( \|\omega_t^u\|^2 -\EE_{\xi^t}\left[\|\omega_t^u\|^2\right]\right)$, parameter $c$ as in \eqref{eq:SGDA_str_mon_technical_4_1}, $b = \tfrac{1}{5}\exp(-\gamma\mu T) R^2$, $G = \tfrac{\exp(-2 \gamma\mu T) R^4}{300\ln\frac{4(K+1)}{\beta}}$:
    \begin{equation*}
        \PP\left\{|\circledFour| > \frac{1}{5}\exp(-\gamma\mu T) R^2 \text{ and } \sum\limits_{l=0}^{T-1}\widetilde\sigma_t^2 \leq \frac{\exp(-2\gamma\mu T) R^4}{294\ln\frac{4(K+1)}{\beta}}\right\} \leq 2\exp\left(- \frac{b^2}{2G + \nicefrac{2cb}{3}}\right) = \frac{\beta}{2(K+1)}.
    \end{equation*}
    Equivalently, we have
    \begin{equation}
        \PP\{E_{\circledFour}\} \geq 1 - \frac{\beta}{2(K+1)},\quad \text{for}\quad E_{\circledFour} = \left\{\text{either} \quad \sum\limits_{t=0}^{T-1}\widetilde\sigma_t^2 > \frac{\exp(-2\gamma\mu T) R^4}{300\ln\tfrac{4(K+1)}{\beta}}\quad \text{or}\quad |\circledFour| \leq \frac{1}{5}\exp(-\gamma\mu T) R^2\right\}. \label{eq:bound_4_SGDA_str_mon}
    \end{equation}
    In addition, $E_{T-1}$ implies that
    \begin{eqnarray}
        \sum\limits_{l=0}^{T-1}\widetilde\sigma_t^2 &\overset{\eqref{eq:SGDA_str_mon_technical_4_2}}{\leq}& \frac{4\gamma^2\exp(-\gamma\mu (2T-1))R^2}{5\ln\tfrac{4(K+1)}{\beta}} \sum\limits_{t=0}^{T-1} \frac{\EE_{\xi^t}\left[\|\omega_l^u\|^2\right]}{\exp(-\gamma\mu t)}\notag\\ &\overset{\eqref{eq:variance_omega_str_mon}, T \leq K+1}{\leq}& \frac{72\gamma^2\exp(-\gamma\mu (2T-1)) R^2 \sigma^\alpha}{5\ln\tfrac{4(K+1)}{\beta}} \sum\limits_{t=0}^{K} \frac{\lambda_t^{2-\alpha}}{\exp(-\gamma\mu t)}\notag\\
        &\overset{\eqref{eq:lambda_SGDA_str_mon}}{\leq}& \frac{72\gamma^\alpha\exp(-\gamma\mu (2T-1)) R^{4-\alpha} \sigma^\alpha (K+1)\exp(\frac{\gamma\mu\alpha K}{2})}{5\cdot 120^{2-\alpha}\ln^{3-\alpha}\tfrac{4(K+1)}{\beta}} \notag\\
        &\overset{\eqref{eq:gamma_SGDA_str_mon}}{\leq}& \frac{\exp(-2\gamma\mu T)R^4}{300\ln\tfrac{4(K+1)}{\beta}}. \label{eq:bound_4_variances_SGDA_str_mon}
    \end{eqnarray}

    \paragraph{Upper bound for $\circledFive$.} From $E_{T-1}$ it follows that
    \begin{eqnarray}
        \circledFive &=&  2\gamma^2 \sum\limits_{l=0}^{T-1} \exp(-\gamma\mu (T-1-t)) \left(\|\omega_t^b\|^2\right)\notag\\
        &\overset{\eqref{eq:bias_omega_str_mon}}{\leq}& 2\cdot 2^{2\alpha}\gamma^2 \exp(-\gamma\mu (T-1)) \sigma^{2\alpha} \sum\limits_{t=0}^{T-1} \frac{1}{\lambda_t^{2\alpha-2} \exp(-\gamma\mu t)} \notag\\
        &\overset{\eqref{eq:lambda_SGDA_str_mon}, T \leq K+1}{\leq}& \frac{2\cdot 2^{2\alpha}\cdot 120^{2\alpha-2}\gamma^{2\alpha} \exp(-\gamma\mu (T-1)) \sigma^{2\alpha} \ln^{2\alpha-2}\tfrac{4(K+1)}{\beta}}{R^{2\alpha-2}} \sum\limits_{t=0}^{K} \exp\left(\gamma\mu(2\alpha-2)\left(1 + \frac{t}{2}\right)\right)\exp(\gamma\mu t)\notag\\
        &\leq& \frac{4\cdot 2^{2\alpha}\cdot 120^{2\alpha-2}\gamma^{2\alpha} \exp(-\gamma\mu (T-3)) \sigma^{2\alpha} \ln^{2\alpha-2}\tfrac{4(K+1)}{\beta}}{R^{2\alpha-2}} \sum\limits_{t=0}^{K} \exp(\gamma\mu \alpha t)\notag\\
        &\leq& \frac{4\cdot 2^{2\alpha}\cdot 120^{2\alpha-2}\gamma^{2\alpha} \exp(-\gamma\mu (T-3)) \sigma^{2\alpha} \ln^{2\alpha-2}\tfrac{4(K+1)}{\beta} (K+1) \exp(\gamma\mu \alpha K)}{R^{2\alpha-2}}\notag\\
        &\overset{\eqref{eq:gamma_SGDA_str_mon}}{\leq}& \frac{1}{5}\exp(-\gamma\mu T) R^2. \label{eq:bound_5_SGDA_str_mon}
    \end{eqnarray}

    Now, we have the upper bounds for  $\circledOne, \circledTwo, \circledThree, \circledFour, \circledFive$. In particular, probability event $E_{T-1}$ implies
    \begin{gather*}
        R_T^2 \overset{\eqref{eq:SGDA_str_mon_12345_bound}}{\leq} \exp(-\gamma\mu T) R^2 + \circledOne + \circledTwo + \circledThree + \circledFour + \circledFive ,\\
        \circledTwo \overset{\eqref{eq:bound_2_SGDA_str_mon}}{\leq} \frac{1}{5}\exp(-\gamma\mu T)R^2,\quad \circledThree \overset{\eqref{eq:bound_3_SGDA_str_mon}}{\leq} \frac{1}{5}\exp(-\gamma\mu T)R^2,\\ \circledFive \overset{\eqref{eq:bound_5_SGDA_str_mon}}{\leq} \frac{1}{5}\exp(-\gamma\mu T)R^2\\
        \sum\limits_{t=0}^{T-1}\sigma_t^2 \overset{\eqref{eq:bound_1_variances_SGDA_str_mon}}{\leq}  \frac{\exp(-2\gamma\mu T)R^4}{300\ln\tfrac{4(K+1)}{\beta}},\quad \sum\limits_{t=0}^{T-1}\widetilde\sigma_t^2 \overset{\eqref{eq:bound_4_variances_SGDA_str_mon}}{\leq} \frac{\exp(-2\gamma\mu T)R^4}{300\ln\tfrac{4(K+1)}{\beta}}.
    \end{gather*}
     Moreover, we also have (see \eqref{eq:bound_1_SGDA_str_mon}, \eqref{eq:bound_4_SGDA_str_mon} and our induction assumption)
     \begin{gather*}
        \PP\{E_{T-1}\} \geq 1 - \frac{(T-1)\beta}{K+1},\\
        \PP\{E_{\circledOne}\} \geq 1 - \frac{\beta}{2(K+1)}, \quad \PP\{E_{\circledFour}\} \geq 1 - \frac{\beta}{2(K+1)}.
    \end{gather*}
    where
    \begin{eqnarray}
        E_{\circledOne}&=&  \left\{\text{either} \quad \sum\limits_{t=0}^{T-1}\sigma_t^2 > \frac{\exp(- 2\gamma\mu T) R^4}{300\ln\tfrac{4(K+1)}{\beta}}\quad \text{or}\quad |\circledOne| \leq \frac{1}{5}\exp(-\gamma\mu T) R^2\right\},\notag\\
        E_{\circledFour}&=& \left\{\text{either} \quad \sum\limits_{t=0}^{T-1}\widetilde\sigma_t^2 > \frac{\exp(-2\gamma\mu T) R^4}{300\ln\tfrac{4(K+1)}{\beta}}\quad \text{or}\quad |\circledFour| \leq \frac{1}{5}\exp(-\gamma\mu T) R^2\right\}.\notag
    \end{eqnarray}
     Thus, probability event $E_{T-1} \cap E_{\circledOne} \cap E_{\circledFour} $ implies
    \begin{eqnarray*}
        R_T^2 &\overset{\eqref{eq:SGDA_str_mon_12345_bound}}{\leq}& \exp(-\gamma\mu T) R^2 + \circledOne + \circledTwo + \circledThree + \circledFour + \circledFive\\
        &\leq& 2\exp(-\gamma\mu T) R^2,
    \end{eqnarray*}
    which is equivalent to \eqref{eq:induction_inequality_str_mon_SGDA} for $t = T$, and
    \begin{equation}
        \PP\{E_T\} \geq \PP\{E_{T-1} \cap E_{\circledOne} \cap E_{\circledFour} \} = 1 - \PP\{\overline{E}_{T-1} \cup \overline{E}_{\circledOne} \cup \overline{E}_{\circledFour} \} \geq 1 - \frac{T\beta}{K+1}. \notag
    \end{equation}
    This finishes the inductive part of our proof, i.e., for all $k = 0,1,\ldots,K+1$ we have $\PP\{E_k\} \geq 1 - \nicefrac{k\beta}{(K+1)}$. In particular, for $k = K+1$ we have that with probability at least $1 - \beta$
    \begin{equation}
        \|x^{K+1} - x^*\|^2 \leq 2\exp(-\gamma\mu (K+1))R^2. \notag
    \end{equation}
    Finally, if 
    \begin{eqnarray*}
        \gamma &=& \min\left\{\frac{1}{400 \ell \ln \tfrac{4(K+1)}{\beta}}, \frac{\ln(B_K)}{\mu(K+1)}\right\}, \notag\\
        B_K &=& \max\left\{2, \frac{(K+1)^{\frac{2(\alpha-1)}{\alpha}}\mu^2R^2}{5400^{\frac{2}{\alpha}}\sigma^2\ln^{\frac{2(\alpha-1)}{\alpha}}\left(\frac{4(K+1)}{\beta}\right)\ln^2(B_K)} \right\}  \\
        &=& \cO\left(\max\left\{2, \frac{K^{\frac{2(\alpha-1)}{\alpha}}\mu^2R^2}{\sigma^2\ln^{\frac{2(\alpha-1)}{\alpha}}\left(\frac{K}{\beta}\right)\ln^2\left(\max\left\{2, \frac{K^{\frac{2(\alpha-1)}{\alpha}}\mu^2R^2}{\sigma^2\ln^{\frac{2(\alpha-1)}{\alpha}}\left(\frac{K}{\beta}\right)} \right\}\right)} \right\}\right\}
    \end{eqnarray*}
    then with probability at least $1-\beta$
    \begin{eqnarray*}
        \|x^{K+1} - x^*\|^2 &\leq& 2\exp(-\gamma\mu (K+1))R^2\\
        &=& 2R^2\max\left\{\exp\left(-\frac{\mu(K+1)}{400 \ell \ln \tfrac{4(K+1)}{\beta}}\right), \frac{1}{B_K} \right\}\\
        &=& \cO\left(\max\left\{R^2\exp\left(- \frac{\mu K}{\ell \ln \tfrac{K}{\beta}}\right), \frac{\sigma^2\ln^{\frac{2(\alpha-1)}{\alpha}}\left(\frac{K}{\beta}\right)\ln^2\left(\max\left\{2, \frac{K^{\frac{2(\alpha-1)}{\alpha}}\mu^2R^2}{\sigma^2\ln^{\frac{2(\alpha-1)}{\alpha}}\left(\frac{K}{\beta}\right)} \right\}\right)}{K^{\frac{2(\alpha-1)}{\alpha}}\mu^2}\right\}\right).
    \end{eqnarray*}
    To get $\|x^{K+1} - x^*\|^2 \leq \varepsilon$ with probability at least $1-\beta$ it is sufficient to choose $K$ such that both terms in the maximum above are $\cO(\varepsilon)$. This leads to
    \begin{equation*}
         K = \cO\left(\frac{\ell}{\mu}\ln\left(\frac{R^2}{\varepsilon}\right)\ln\left(\frac{\ell}{\mu \beta}\ln\frac{R^2}{\varepsilon}\right), \left(\frac{\sigma^2}{\mu^2\varepsilon}\right)^{\frac{\alpha}{2(\alpha-1)}}\ln \left(\frac{1}{\beta} \left(\frac{\sigma^2}{\mu^2\varepsilon}\right)^{\frac{\alpha}{2(\alpha-1)}}\right)\ln^{\frac{\alpha}{\alpha-1}}\left(B_\varepsilon\right)\right),
    \end{equation*}
    where
    \begin{equation*}
        B_\varepsilon = \max\left\{2, \frac{R^2}{\varepsilon \ln \left(\frac{1}{\beta} \left(\frac{\sigma^2}{\mu^2\varepsilon}\right)^{\frac{\alpha}{2(\alpha-1)}}\right)}\right\}.
    \end{equation*}
    This concludes the proof.
\end{proof}

\end{document}